\documentclass[11pt]{amsart}


\usepackage[a4paper,inner=3.6cm,outer=3.6cm,top=3.7cm,bottom=3.7cm]{geometry}

\usepackage{textcmds} 
\usepackage{amsmath, amssymb, amsfonts, amstext, amsthm, mathrsfs,mathtools}
\usepackage[all]{xy}
\usepackage[usenames,dvipsnames]{color}
\usepackage{enumitem}
\usepackage{bm}
\usepackage{subfigure}
\usepackage{array}
\usepackage{graphics,graphicx}

\usepackage[normalem]{ulem}

\usepackage{xfrac} 

\setcounter{tocdepth}{2}

\setlist[itemize]{leftmargin=*}
\setlist[enumerate]{leftmargin=*}

\let\oldtocsection=\tocsection

\let\oldtocsubsection=\tocsubsection

\renewcommand{\tocsection}[2]{\hspace{0em}\oldtocsection{#1}{#2}}
\renewcommand{\tocsubsection}[2]{\hspace{1em}\oldtocsubsection{#1}{#2}}
\usepackage[bookmarks=true,colorlinks=true,linkcolor=black,citecolor=red,urlcolor=blue,citebordercolor={0 0 1},urlbordercolor={0 0 1},linkbordercolor={0 0 1}]{hyperref} 
\usepackage{caption}

\newcommand{\m}{\overline{m}}

\newcommand{\C}{\mathbb{C}}

\newcommand{\R}{\mathbb{R}}
\newcommand{\Q}{\mathbb{Q}}
\newcommand{\Z}{\mathbb{Z}}

\newcommand{\M}{\mathcal{M}}
\newcommand{\pt}{\mathrm{pt}}

\newcommand{\ai}{$A_\infty$\ }
\renewcommand{\ss}{\mathfrak{s}}

\newcommand{\Lo}{\Lambda^{\textit{rel}}_0}
\newcommand{\Lop}{\Lambda^{\textit{rel}}_+}

\newcommand{\st}{{^s_t}}

\newcommand{\codim}{\text{codim}}

\newcommand{\om}{\omega}
\newcommand{\al}{\alpha}
\newcommand{\la}{\lambda}
\newcommand{\Om}{\Omega}

\newcommand{\eps}{\epsilon}

\newcommand{\CP}{\mathbb{C}P}
\newcommand{\PxP}{\mathbb{C}P^1 \times \mathbb{C}P^1}

\newcommand{\Pol}{\mathfrak{P}}

\newcommand{\br}{\textbf{r}}
\newcommand{\bv}{\textbf{v}}

\newcommand{\fr}{\mathfrak{r}}
\newcommand{\ff}{\mathfrak{f}}

\renewcommand{\t}{\tilde}

\newcommand{\id}{\mathrm{Id}}

\newcommand{\val}{\mathrm{val}}

\newcommand{\Symp}{\mathrm{Symp}}

\newcommand{\Flux}{\mathrm{Flux}}

\newcommand{\del}{\partial}
\renewcommand{\st}{\star}
\newcommand{\std}{\mathrm{std}}
\renewcommand{\P}{\mathcal{P}}
\newcommand{\Conv}{\mathrm{Conv}}

\renewcommand{\H}{\mathcal{H}}

\newcommand{\bH}{\mathbb{H}}

\newcommand{\bC}{\overline{C}}
\renewcommand{\o}{\circ}

\def\co{\colon\thinspace}

\newcommand{\blk}{\color{Black}} 

\numberwithin{equation}{section}

\newtheorem{thm}{Theorem}[section]
\newtheorem*{thm*}{Theorem}
\newtheorem{prp}[thm]{Proposition}
\newtheorem{prop}[thm]{Proposition}
\newtheorem{lem}[thm]{Lemma}
\newtheorem{lma}[thm]{Lemma}
\newtheorem{cor}[thm]{Corollary}

\newtheorem{qu}[thm]{Question}
\newtheorem{alphth}{Theorem}[section]

\newtheorem{alphprop}[alphth]{Proposition}

\theoremstyle{definition}
\newtheorem{dfn}[thm]{Definition}
\newtheorem{df}[thm]{Definition}

\theoremstyle{remark}
\newtheorem{rmk}{Remark}[section]
\newtheorem{ex}{Example}[section]

\numberwithin{equation}{section}

\begingroup 
\makeatletter 
\@for\theoremstyle:=definition,remark,plain,TheoremNum\do{%
	\expandafter\g@addto@macro\csname th@\theoremstyle\endcsname{%
		\addtolength\thm@preskip\parskip 
	}%
} 
\endgroup 

\title[Geometry of flux]{Geometry of symplectic flux
	\\and Lagrangian torus fibrations}

\author{Egor Shelukhin}
\thanks{}

\author{Dmitry Tonkonog}

\author{Renato Vianna}

\address{(ES) Department of Mathematics and Statistics,
University of Montreal,
C.P. 6128 Succ.  Centre-Ville
Montreal, QC, H3C 3J7, Canada}
\email{egor.shelukhin@umontreal.ca}

\address{(DT)
	University of California, Berkeley}

\address{
(RV)
Universidade Federal do Rio de Janeiro,
Centro de Tecnologia - Bloco C, Cidade Universitária, Av. Athos da Silveira Ramos 149, Ilha do Fundão, Rio de Janeiro RJ, 21941-909, Brazil}
\email{renato@im.ufrj.br}

\begin{document}

\let\thefootnote\relax\footnote{
	ES was supported by an NSERC Discovery Grant, by the Fonds de recherche du Qu\'{e}bec - Nature et technologies, by the Fondation Courtois, and by an Alfred P. Sloan research fellowship.
	DT was partially supported by the Simons
	Foundation grant \#385573, Simons Collaboration on Homological Mirror
	Symmetry. RV was supported by the Herchel Smith postdoctoral fellowship from
	the University of Cambridge and by the NSF under grant 
	No.~DMS-1440140 while the author was in residence at the MSRI during the
	Spring 2018 semester. RV was also supported by Brazil's National Council
of scientific and technological development CNPq, via the research fellowships
405379/2018-8 and 306439/2018-2, by the Serrapilheira Institute grant
Serra-R-1811-25965, by FAPERJ grant E-26/200.230/2023 (282916), and by the Coordenação de Aperfeiçoamento de Pessoal de Nível Superior - Brasil (CAPES) - Finance Code 001.}

 \begin{abstract} 
  Symplectic flux measures the areas of cylinders swept in the process of a
  Lagrangian isotopy. We study flux via a numerical invariant of a Lagrangian
  submanifold that we define using its Fukaya algebra. The main geometric
  feature of the invariant is its concavity over isotopies with linear flux.
  
  We derive constraints on flux, Weinstein neighbourhood embeddings and holomorphic
disk potentials for Gelfand-Cetlin fibres of Fano varieties in terms of
their polytopes. We also describe the space of fibres
of almost toric fibrations on the complex projective plane up to Hamiltonian
isotopy, and provide other applications.

  
  \end{abstract}

\maketitle

\setcounter{tocdepth}{1}
\tableofcontents


\section{Overview}

\label{sec:Intro}

This paper studies quantitative features of symplectic manifolds, namely the behaviour of \emph{symplectic
flux} and bounds on Weinstein neighbourhoods of Lagrangian submanifolds, using Floer theory. Besides providing
some constructive examples of flux, which in particular allows us to completely describe the flux for the
standard torus in $\C^n$, we provide constraints on (linear) flux, Weinstein neighbourhood embeddings and
holomorphic disk potentials for Gelfand-Cetlin fibres of Fano varieties in terms of their polytopes. We can
also use our invariant to distinguish between (non-monotone) Lagrangians, and in particular we provide a
description of the space of fibres of almost toric fibrations on the complex projective plane up to
Hamiltonian isotopy.


Our technique is heavily influenced by the ideas of Fukaya and the Family Floer homology approach to mirror
symmetry. However, it is hard to point at a precise connection because a discussion of the latter theory for
Fano varieties (or in other cases when the mirror should support a non-trivial Landau-Ginzburg potential) has
not appeared in the literature yet. Intuitively, the numerical invariant $\Psi$ introduced in this paper
measures the minimal area of holomorphic disks with boundary on the given Lagrangian. Alternatively, and with
respect to a Lagrangian torus fibration, $\Psi$ should be thought of as the tropicalisation of the
Landau-Ginzburg potential defined on the rigid analytic mirror to the given variety.

\subsection{Flux and shape}
We begin by reviewing the classical symplectic invariants of interest.
Let $(X,\omega)$ be a symplectic manifold and $\{L_t\}_{t\in[0,1]}$ a Lagrangian isotopy, i.e.~a family of Lagrangian submanifolds $L_t\subset X$ which vary smoothly with $t\in[0,1]$. Denote $L=L_0$. The \emph{flux of $L_t$}, $$\Flux(\{L_t\}_{t\in [0,1]})\in H^1(L;\R),$$
is defined in the following way. Fix an element $a\in H_1(L;\R)$, realise it by a real 1-cycle $\alpha_0\subset L_0$, and consider its trace under the isotopy, that is, a 2-chain $C_a$ swept by $\alpha_0$ in the process of isotopy. The 2-chain $C_a$ has boundary on $L_0\cup L_1$. One defines
$$
\Flux(\{L_t\})\cdot a=\textstyle\int_{C_\alpha}\omega\in \R.
$$   	
Above, the dot means  Poincar\'e pairing. It is easy to see that $\Flux(\{L_t\})$  depends only on $a\in H_1(L;\R)$, and is linear in $a$. Therefore it can be considered as an element of $H^1(L;\R)$.

Let $L\subset (X,\omega)$ be a Lagrangian submanifold. The \emph{shape} of $X$
relative to $L$ is the set of all possible fluxes of Lagrangian isotopies
beginning from $L$: 

$$ Sh_{L}(X) = \{\Flux(\{L_t\}_{t\in[0,1]}): L_t \subset X
\text{ a Lag.~isotopy, } L_0=L\}\subset H^1(L;\R). $$ 

At a first sight this is a very natural invariant of $L$, but we found out that
it frequently behaves wildly for compact symplectic manifolds. For example, the
shape of $\CP^2$ relative to the standard monotone Clifford torus is unbounded;
in fact, that torus has an unbounded product neighbourhood which symplectically
embeds into $\CP^2$, viewed in the almost toric fibration 
shown in Figure~\ref{fig:Sausage}. See Section~\ref{sec:Ex}
for details.

\begin{figure}[h!]   
	\begin{center}
		\includegraphics[bb=200 0 100 100]{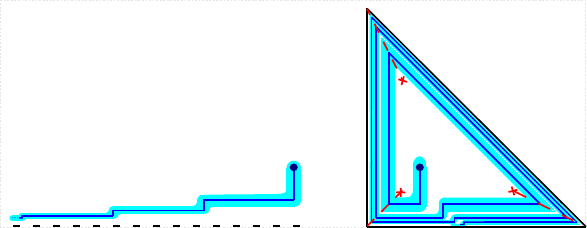}
		\caption{On the right, the ``wild'' unbounded non-convex product neighbourhood $L\times Q \subset T^*L = L \times \R^2$ of the Clifford torus $L$ in $\CP^2$. 
		The domain $Q\subset\R^2$ is shown on the left and is viewd also as a subset of $Sh_{L}(X)$.}
		\label{fig:Sausage} 
	\end{center} 
\end{figure}

To remedy this  and obtain a better behaved invariant of Lagrangian submanifolds using flux, it is natural to introduce the following notion.
We call a Lagrangian isotopy $\{L_t\}_{t\in[0;1]}$ a
 \emph{star-isotopy} if
 $$
 \Flux(\{L_t\}_{t\in[0;t_0]})=t_0\cdot \Flux(\{L_t\}_{t\in[0;1]}),\quad \text{for each }  0\le t_0\le 1.
 $$
 In other words,  flux must develop linearly in time along a fixed ray in $H^1(L_0;\R)$.
The \emph{star-shape} of $X$ relative to $L$ is defined to be
$$ Sh^\st_{L}(X) = \{\Flux(\{L_t\}_{t\in[0,1]}):
L_t \subset X \text{ a Lag.~ star-isotopy, } L_0=L\}\subset H^1(L;\R). $$
We shall soon see that this invariant captures the geometry of $X$ in a more robust way. One reason is that   star-shape is  invariant under Hamiltonian isotopies, while shape is invariant under all Lagrangian isotopies.

A historical note is due.
Symplectic shape was introduced by Sikorav \cite{Si89}, cf.~\cite{Eli91}, in the context of exact symplectic manifolds.
The paper \cite{EGM16} studied an  invariant $\mathit{def}_L\co H^1(L, \R) \to (0 , +\infty]$ which is equivalent to star-shape. We refer to that paper for further context surrounding flux in symplectic topology.
	
\begin{ex}
	Suppose $D\subset \R^n$ is an open domain,  $X = T^n \times D \subset T^*T^n \cong T^n
	\times \R^n$ is a \emph{product neighbourhood} of the $n$-torus with the standard symplectic form, and $0\in D$.
	Let $L=T^n\times\{0\}$ be the 0-section in this neighbourhood. The Benci-Sikorav theorem \cite{Si89} says that $Sh_L(X)=D$.
	If $D$ is
	star-shaped with respect to the origin, then $Sh^\st_{L}(X) = D$, see 
\cite[Theorem~1.3]{EGM16}.
\end{ex}

\begin{rmk}
Suppose $L \subset X$ is a Lagrangian torus. Then 
$Sh_{L}(X)$ gives an obvious bound on product Weinstein neighbourhoods of $L$ embeddable into $X$. Namely, if there is a symplectic embedding of
 $T^n\times D$ into $X$ taking the 0-section to $L$, then $D \subset Sh_{L}(X)$. If 
$D$ is star-shaped with respect to the origin, then also $D \subset Sh^{\st}_{L}(X)$.  
\end{rmk}

\subsection{The invariant $\Psi$ and its concavity}
We are going to study the geometry of flux, including star-shapes, with the help of a numerical invariant that associates a number $\Psi(L)\in(0,+\infty]$ 
(possibly $+\infty$) to any orientable and spin Lagrangian submanifold $L\subset X$. Fix a compatible almost complex structure $J$; 
the definition of $\Psi(L)$ will be given in terms of the Fukaya \ai algebra of $L$.

Roughly speaking, $\Psi(L)$ is the lowest symplectic area $\omega(\beta)$ of a class $\beta\in H_2(X,L;\Z)$ such that holomorphic disks in class $\beta$ exist and, moreover, contribute non-trivially to some symmetrised \ai structure map on odd degree elements of $H_*(L)$. 
The latter means, again roughly, that there exists a number $k\ge 0$ and a collection of cycles $c_1,\ldots,c_k\in H_{odd}(L;\R)$ such that holomorphic disks in class $\beta$ whose boundaries are incident $c_1,\ldots,c_k$ form a 0-dimensional moduli space, thus posing an enumerative problem. The count for this problem should be non-zero.

The  definition of $\Psi(L)$ appears in Section~\ref{sec:Psi}, and the background on \ai algebras is revised in Section~\ref{sec:Fuk}.
Quite differently from the above sketch, we take the primary definition to be the following:
$$
\Psi(L)=\inf\{\val\, m(e^b):b\in H^{odd}(L;\Lop)\}.
$$
Here is a quick outline of the notation:
$\Lop$ is the maximal ideal in the Novikov ring $\Lo$;
$\val\co H^{*}(L;\Lop)\to \R_{>0}$ is the valuation;
$$m(e^b)=m_0(1)+m_1(b)+m_2(b,b)+\ldots$$ is the expression called the \emph{Maurer-Cartan prepotential} of $b$; and the $m_i$ are the  structure maps of the (curved) Fukaya \ai algebra of $L$. 

An important technical detail, reflected in the formula for $\Psi(L)$, is that we define $\Psi(L)$ using a \emph{classically minimal} model of the Fukaya algebra of $L$, i.e.~one over the singular cohomology vector space $H^*(L;\Lo)$. Such models always exist, by a version of homological perturbation lemma.

Using the fact that the Fukaya algebra does not depend on the choice of $J$ and Hamiltonian isotopies of $L$ up to weak homotopy equivalence, we show in Section~\ref{sec:Psi} that $\Psi(L)$ is well-defined and invariant under Hamiltonian isotopies of $L$. 

The above definition is convenient for proving the invariance of $\Psi(L)$, but not quite so for computations and for understanding its geometric properties.
To this end, we give a more explicit formula which was hinted above, see Theorem~\ref{theorem: explicit}:

\[\Psi(L) =  \min \left\{ \omega(\beta): \exists\,  c_1,\ldots,c_k \in H^{odd}(L;\R)\textit{ s.t. } \sum_{\sigma \in S_k} m_{k,\beta} (c_{\sigma(1)},\ldots,c_{\sigma(k)}) \neq 0\right\}.\]
Here $m_{k,\beta}$ is the \ai operation coming from disks in class $\beta\in H_2(X,L;\Z)$. Below is the main result linking $\Psi$ to the geometry of flux.

\begin{alphth}[=Theorem~\ref{th:psi_concave}]
	Let $\{L_t\}_{t\in [0,1]}$ be a Lagrangian star-isotopy. Then the function $\Psi(L_t)\co [0,1]\to (0,+\infty]$ is continuous and concave in $t$.
\end{alphth}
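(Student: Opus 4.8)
Since $\Psi$ is invariant under Hamiltonian isotopies, and since any Lagrangian isotopy is determined up to Hamiltonian isotopy of its endpoint by its flux, I would first reduce to a model situation. Fix $t_*\in[0,1]$; for $s$ small the star-condition makes the flux of $\{L_{t_*+s}\}$ equal to $s f$ with $f:=\Flux(\{L_t\}_{t\in[0,1]})$, and concatenating $\{L_{t_*+s}\}$ with (the reverse of) the ``linear'' model isotopy inside a Weinstein neighbourhood of $L_{t_*}$ — the graph of the closed $1$-form $s\eta$ with $[\eta]=f$ — produces a zero-flux Lagrangian isotopy, hence a Hamiltonian one. Thus $\Psi(L_{t_*+s})=\Psi(\mathrm{graph}(s\eta))$ for small $s$. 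Because a function on $[0,1]$ that is continuous and concave on a neighbourhood of every point is concave, it suffices to prove, after relabelling $t_*=0$, that $s\mapsto\Psi(L_s)$ is continuous and concave for small $s\ge 0$ along such a graph deformation.

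\textbf{Linear area bookkeeping.} The isotopy provides canonical identifications $H_*(L_s;\Z)\cong H_*(L_0;\Z)$ and $\relpi\cong\pi_2(X,L_s)$, obtained by gluing a disk with boundary on $L_0$ to the trace cylinder of its boundary cycle. By the definition of flux, a class $\beta$ with $\partial\beta=\gamma$, realised on $L_s$, has symplectic area $\omega(\beta)+s\langle f,\gamma\rangle$: the trace cylinder over $\gamma$ has area $s\langle f,\gamma\rangle$, linear in $s$ precisely because the isotopy is a star-isotopy. In particular the ``slope'' $\langle f,\gamma\rangle$ depends only on the boundary class $\gamma$, not on $\beta$ or $s$.

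\textbf{Concavity as a lower envelope of affine functions.} Now choose a smooth path $\{J_s\}$ and assemble the Fukaya \ai algebras of the $L_s$ into a single pseudo-isotopy of classically minimal models over $H^*(L_0;\Lo)$, using the identifications above. The decisive structural point is that, once every contributing configuration is graded by the homology class of its boundary, the transition \ai morphisms of the pseudo-isotopy intertwine the structure maps $m_{k,\beta}$ — and hence the symmetrised operations $\sum_{\sigma\in S_k}m_{k,\beta}(c_{\sigma(1)},\ldots,c_{\sigma(k)})$ of Theorem~\ref{theorem: explicit} — up to the linear reparametrisation of areas recorded in the previous step. Granting this, the set $\mathcal C$ of classes $\beta$ that ``genuinely contribute'' (i.e. for which some choice of $c_1,\ldots,c_k\in H^{\odd}(L;\R)$ makes the symmetrised operation non-zero) is independent of $s$, and Theorem~\ref{theorem: explicit} applied fibrewise gives
\[
\Psi(L_s)=\min\bigl\{\,\omega(\beta)+s\langle f,\partial\beta\rangle\ :\ \beta\in\mathcal C\,\bigr\},
\]
with the convention $\min\varnothing=+\infty$. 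By Gromov compactness only finitely many boundary classes occur below any energy bound, so the right-hand side is a locally finite lower envelope of affine functions of $s$; it is therefore continuous and concave, valued in $(0,+\infty]$. Patching the local statements as above yields Theorem~\ref{th:psi_concave}.

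\textbf{The main obstacle.} The analytic heart is the structural claim in the last step: producing the pseudo-isotopy and verifying that, modulo the linear area shift, its transition morphisms preserve the symmetrised \ai operations class by class. Concretely one must show that wall-crossing along the family cannot change whether a minimal-area class $\beta$ genuinely contributes without the appearance of a strictly smaller-area contributing class — so that the formula above, read as a min over a fixed set $\mathcal C$ of affine functions, is legitimate. This is exactly the place where the hypothesis that $\{L_t\}$ is a star-isotopy (equivalently, that the area shift is linear with boundary-class-dependent slope, not an arbitrary function of $t$) is used in an essential way; for a general Lagrangian isotopy the same bookkeeping produces only a continuous function, with no concavity. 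A secondary, more formal point — that the infimum over all bounding cochains in the definition of $\Psi$ is governed by the genuinely contributing classes, cancellations being removed by small perturbations of $b$ — is precisely the content of Theorem~\ref{theorem: explicit}, invoked here at each fixed $s$.
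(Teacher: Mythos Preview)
Your overall architecture matches the paper's: show that locally $\Psi(L_t)$ equals $\min_{\beta\in\mathcal C}\bigl(\omega(\beta)+t\langle f,\partial\beta\rangle\bigr)$ for a fixed set $\mathcal C$ of contributing classes, then conclude because a pointwise minimum of affine functions is concave. The linear area bookkeeping and the appeal to Theorem~\ref{theorem: explicit} are exactly right.

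Where you diverge is in the mechanism for the ``decisive structural point.'' You invoke a pseudo-isotopy of minimal models and ask that its transition $A_\infty$ morphisms intertwine the symmetrised operations $m_{k,\beta}$ class by class. That is stronger than what the $A_\infty$ functor equations give (a weak homotopy equivalence can and will mix classes of different areas), and you rightly flag it as the main obstacle without resolving it. The paper sidesteps this entirely via \emph{Fukaya's trick} (Section~\ref{subsec:gromov_comp}): pick diffeomorphisms $f_t\colon X\to X$ with $f_t(L_0)=L_t$ and use the almost complex structures $J_t=(f_t)_*J_t'$ for a generic family $J_t'$ taming $f_t^*\omega$. Then $f_t^{-1}$ identifies $J_t$-disks on $L_t$ with $J_t'$-disks on the fixed Lagrangian $L_0$, so the operations $m^t_{k,\beta^t}(c_1^t,\ldots,c_k^t)$ are \emph{literally constant} in $t$ on a small interval --- no transition morphisms, no wall-crossing analysis. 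This makes the set $\mathcal C$ manifestly locally constant and dissolves your obstacle in one stroke. Your initial reduction to a graph deformation in a Weinstein neighbourhood is also unnecessary: Fukaya's trick applies directly to any Lagrangian isotopy and already absorbs the Hamiltonian ambiguity you were trying to quotient out.
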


\smallskip 
{\it Proof idea.}
The idea lies in Fukaya's trick, explained in Section~\ref{sec:stretch}. It says that there exist compatible almost complex structures $J_t$ such that the structure maps $m^t_{k,\beta^t}$ for the Lagrangians  $L_t$ are locally constant in a neighbourhood of a chosen moment of the isotopy. But the areas of classes $\beta^t$ change linearly in $t$ during a star-isotopy. So $\Psi(L_t)$ is locally computed as the \emph{minimum of several linear functions}; hence it is concave.
\smallskip

Now suppose that $X$ admits a singular Lagrangian torus fibration $X\to B$ over a base $B$; it is immaterial how complicated the singularities are, or what their nature is.  By the Arnold-Liouville theorem, the locus $B^\o\subset B$ supporting regular fibres carries a natural integral affine structure.
Consider the map $\Psi\co B^\o\to(0,+\infty]$ defined by $p\mapsto \Psi(L_p)$, where $L_p\subset X$ is the smooth Lagrangian torus fibre over $p\in B^\o$. The previous theorem implies that this function is concave on all affine line segments in $B^\o$. This is a strong property that allows to compute $\Psi$ for wide classes of fibrations on Calabi-Yau and Fano varieties, with interesting consequences. For instance, it suggests a possible approach to proving that the fibres are unobstructed in the Calabi-Yau case, which will be investigated in future work.

\subsection{Fano varieties}
 Fano varieties are discussed in Section~\ref{sec:Ex}.
We introduce a class of singular Lagrangian torus fibrations called \emph{Gelfand-Cetlin fibrations} (Section~\ref{sec:Ex}). Roughly speaking, they are \emph{continuous} maps $X\to \Pol$ onto a convex lattice polytope $\Pol\subset \R^n$ which look like usual smooth toric fibrations away from the union of codimension two faces of $\Pol$. This includes actual toric fibrations and  classical Gelfand-Cetlin systems on flag varieties (from which we derived the name). 
It is not unreasonable to conjecture that all Fano varieties admit a Gelfand-Cetlin fibration.

\begin{alphth}[=Theorem~\ref{thm:GCF2}] \label{th:GCF}
	Let $X$ be a Fano variety, $\mu: X\to \Pol\subset\R^n$ a Gelfand-Cetlin fibration, and  $L\subset X$ its monotone Lagrangian fibre. 
	
	Let $\P_L^\vee\subset H^1(L;\R)$ be the interior of the dual of the Newton polytope
	associated with the Landau-Ginzburg potential of $L$ (Section~\ref{subsec:LG_Pot}). Let $c$ be the monotonicity constant of $X$, and assume $\Pol$ is translated so that the origin corresponds to the fibre $L$. Then the following three subsets of $H^1(L;\R)\cong \R^n$ coincide:
	$$2c\cdot  \P_L^\vee = \Pol^0=Sh^\st_L(X).$$
\end{alphth}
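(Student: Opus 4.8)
The plan is to prove the chain of equalities $2c\cdot\P_L^\vee=\Pol^0=Sh^\st_L(X)$ in two steps, establishing the two equalities separately, and then to use Theorem~\ref{th:psi_concave} (the concavity of $\Psi$ along star-isotopies) as the crucial tool for the inclusion that is not elementary.

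First I would treat the identification $2c\cdot\P_L^\vee=\Pol^0$, which should be essentially a combinatorial/algebraic statement about the Landau-Ginzburg potential of a monotone Gelfand-Cetlin fibre. One computes the superpotential $W_L$ as a sum of monomials $\sum_\beta n_\beta T^{\omega(\beta)} y^{\partial\beta}$ over the Maslov-index-two classes $\beta$. For a toric fibration these are the classes dual to the facets of $\Pol$, with $\omega(\beta)=2c\cdot\langle\text{facet normal},\cdot\rangle+\text{const}$; for a genuine Gelfand-Cetlin system the relevant statement is that the disk classes that contribute, together with their areas, are still governed by the facets of $\Pol$ via the affine structure on $B^\circ$. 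The Newton polytope of $W_L$ then has vertices the exponents $\partial\beta$, and its (interior of the) dual $\P_L^\vee$ is, up to the scaling by the monotonicity constant, exactly the interior $\Pol^0$ translated so that $L$ sits at the origin. I expect this step to be bookkeeping with the definitions in Section~\ref{subsec:LG_Pot}, modulo knowing which disk classes are ``visible''; for toric $X$ it is classical (Cho-Oh, Fukaya-Oh-Ohta-Ono), and the Gelfand-Cetlin case is handled by the structural hypotheses on $\mu$.

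The geometric heart is $\Pol^0=Sh^\st_L(X)$. The inclusion $\Pol^0\subseteq Sh^\st_L(X)$ is constructive: moving the fibre $L_p$ along an affine segment in $B^\circ$ toward any boundary point of $\Pol$ is a Lagrangian star-isotopy whose flux traces out a ray, realizing every point of $\Pol^0$; this is just the Arnold-Liouville action coordinates and needs only that interior points of $\Pol$ correspond to honest smooth torus fibres. For the reverse inclusion $Sh^\st_L(X)\subseteq\Pol^0$, suppose $v\in H^1(L;\R)$ is realized as the flux of a star-isotopy $\{L_t\}$; I want to show $v\in\Pol^0$. The idea is to apply Theorem~A: the function $t\mapsto\Psi(L_t)$ is concave on $[0,1]$, with $\Psi(L_0)=\Psi(L)$ a known finite positive number (computed via the explicit formula of Theorem~\ref{theorem: explicit} from the superpotential, hence equal to the lattice distance from the origin to $\partial\Pol$ in the appropriate normalization, i.e.\ $2c$ times the ``radius'' of $\P_L^\vee$ in the direction of $v$). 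If $v$ were outside $\overline{\Pol^0}$, or on its boundary, then concavity plus the linear rate at which $\Psi$ must decrease (forced by the fact that $\Psi(L_t)$ is locally a minimum of linear functions whose slopes are pinned down by the disk areas, as in the proof idea of Theorem~A) would drive $\Psi(L_{t})$ to zero or below at or before $t=1$, contradicting $\Psi>0$. Making this precise means showing that the one-sided slope of $\Psi(L_t)$ at $t=0$ in the direction of $v/|v|$ is exactly the reciprocal of the distance from $0$ to $\partial\Pol^0$ in that direction, which is where the explicit formula for $\Psi$ and Fukaya's trick (Section~\ref{sec:stretch}) do the work.

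The main obstacle, and the place I would spend the most care, is exactly the last implication: extracting a sharp quantitative bound on $Sh^\st_L(X)$ from concavity of $\Psi$. Concavity alone only gives $\Psi(L_t)\ge (1-t)\Psi(L_0)+t\,\Psi(L_1)$, which is the wrong direction; what one actually needs is that the star-isotopy eventually ``exits'' every proper sub-polytope, detected by $\Psi$ dropping, and this requires pinning the initial slope of $\Psi$ along the ray to the boundary distance of $\Pol$. This in turn rests on (i) the explicit disk-counting formula for $\Psi(L)$ so that $\Psi$ of nearby fibres is controlled by the same finite set of disk classes, and (ii) Fukaya's trick guaranteeing those classes and their counts persist along the isotopy while only their areas vary linearly. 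I would also need to rule out that obstructedness or bubbling causes $\Psi(L_t)=+\infty$ to occur before the expected exit time — but the concavity statement of Theorem~A already asserts $\Psi(L_t)$ is a genuine finite-or-$+\infty$-valued concave function, and a concave function that is $+\infty$ at an interior point is $+\infty$ on a neighbourhood, which combined with the finite value at $t=0$ confines where this can happen; handling this edge case carefully completes the argument.
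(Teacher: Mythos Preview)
Your overall architecture is right---use concavity of $\Psi$ and the explicit formula to bound $Sh^\st_L(X)$, and get $\Pol^0\subset Sh^\st_L(X)$ for free from fibrewise isotopies---and this matches the paper. But you have misidentified where the hard work lies.

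You call $2c\cdot\P_L^\vee=\Pol^0$ ``essentially a combinatorial/algebraic statement'' and say the Gelfand-Cetlin case is ``handled by the structural hypotheses on $\mu$''. It is not. What must be shown is that for \emph{each} facet of $\Pol$, the corresponding Maslov-2 class $\beta$ actually contributes to $W_L$, i.e.\ $\#\M_\beta(J)\ne 0$ (equivalently $\eta_L(\beta)=1$). The structural hypothesis on $\mu$ only says the fibration is standard toric away from the codimension-two skeleton; it says nothing a priori about disks on the monotone fibre, which could in principle pass through the non-toric region. The paper establishes this via a genuine SFT neck-stretching argument (Lemmas~\ref{lem:Count1_p_epsilon} and~\ref{lem:Count=1}): first one stretches around a small neighbourhood of a facet point to show that fibres $L_t$ \emph{very close to that facet} see exactly one disk in class $\beta_t$ and nothing of smaller area; then one stretches around a Liouville neighbourhood of a segment joining $L$ to $L_t$ (using Lemma~\ref{lem:neck_stretch}) to rule out Maslov $\le 0$ bubbling and transfer the count back to the monotone fibre $L$. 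This is the technical heart, and you have waved it away.

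Conversely, the step you flag as ``the main obstacle''---extracting the sharp bound $Sh^\st_L(X)\subset\Pol^0$ from concavity---is, once the facet classes are known to contribute, a formal consequence already packaged as Corollary~\ref{cor:P_monot} (or Theorem~\ref{th:star_shape_char}): each $\beta$ with $\eta_L(\beta)=1$ confines $Sh^\st_L(X)$ to the half-space $\{\ff:\Psi(L)+\ff\cdot\partial\beta>0\}$, and intersecting over the facet classes gives exactly $\Pol^0$. Your ``initial slope'' discussion is a correct rederivation of this, but there is no further obstacle there. (The paper also gives an alternative route to $Sh^\st_L(X)\subset\Pol^0$ via Proposition~\ref{prop:Psi_GCF}: one first proves $\Psi$ is the cone over $\Pol$ and hence tends to $0$ at $\partial\Pol$, then any star-isotopy leaving $\Pol^0$ would force $\Psi\le 0$ by concavity. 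This again rests on the same stretching lemmas.) Your worry about $\Psi(L_t)=+\infty$ is a non-issue: $\Psi(L_0)<+\infty$ since $m_0\ne 0$ for the monotone torus, and concavity forces $\Psi$ to be finite everywhere once it is finite somewhere.
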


Note that there are obvious star-isotopies given by moving $L$ within the fibres
of the fibration, achieving any flux within $\Pol^0$. The equality $\Pol^0
=Sh^\st_L(X)$ says that there are no other star-isotopies of $L$ in $X$ (which
are not necessarily fibrewise) achieving different flux than that.

The enumerative geometry part of the theorem, about the Newton polytope of the
Landau-Ginzburg potential, is interesting in view of the program for classifying
Fano varieties via maximally mutable Laurent polynomials, or via corresponding
Newton polytopes which are supposed to have certain very special combinatorial
properties \cite{CCGGK14,CCGK16,CKP17}. Toric Fano varieties correspond via this
bijection to their toric polytopes. One wonders about the symplectic meaning of
a polytope corresponding this way to a non-toric Fano variety $X$. The answer
suggested by the above theorem is that it should be the polytope of some
Gelfand-Cetlin fibration on $X$; the equality $2c\cdot \P_L^\vee = \Pol^0$
supports this expectation.

On the way, we compute the $\Psi$-invariant of all fibres of a Gelfand-Cetlin system.

\begin{alphprop} \label{prop:Psi_GCF}
Consider a Gelfand-Cetlin fibration over a polytope, with point $p$ corresponding to a monotone fibre $L$.
 Consider the function over the interior of the polytope whose value at a point is given by the value of $\Psi$ of the corresponding fibre.
 Then this is a PL function whose graph 
is a cone over the polytope (see Figure~\ref{fig:psi_graph}).
The vertex of the cone is located over $p$,
and its height equals the area of the Maslov 
index 2 disks with boundary on $L$.
\end{alphprop}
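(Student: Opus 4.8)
The plan is to identify the function $q\mapsto\Psi(L_q)$ on $\Pol^0$ with $g(q):=\min_i a_i(q)$, where $a_i(x)=\langle v_i,x\rangle+\lambda_i$ is the affine defining function of the $i$-th facet of $\Pol$ ($v_i$ the primitive inward normal), normalised in the integral affine (action) coordinates so that $a_i(q)$ equals the symplectic area $\omega(\beta_i)$ of the basic Maslov-index-two disk class $\beta_i$ with boundary on $L_q$ meeting exactly once the component $D_i$ of the anticanonical divisor $D=\mu^{-1}(\partial\Pol)$ lying over the $i$-th facet. The relations $\mu(\beta)=2\sum_i\beta\cdot D_i$ and $\omega(\beta)=\sum_i(\beta\cdot D_i)\,a_i(q)$ for disk classes $\beta$ bounding a fibre, and the fact that $\mu$ is a genuine smooth toric fibration over a neighbourhood of the relative interior of each facet, all belong to the set-up of Section~\ref{sec:Ex}. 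Granting $\Psi(L_q)=g(q)$, the shape of the graph is immediate: at the monotone point $p$ all the numbers $a_i(p)$ coincide with the common area $2c$ of the Maslov-index-two disks on $L=L_p$, so $a_i(q)=2c+\langle v_i,q-p\rangle$; hence along every ray issuing from $p$ the function $g$ is affine and decreases to $0$ on reaching $\partial\Pol$, so the graph of $g$ over $\Pol$ is precisely the cone of height $2c$ with apex over $p$, and $g$ is piecewise linear with respect to the normal fan of $\Pol$.

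For the lower bound $\Psi(L_q)\ge g(q)$ I would combine the explicit formula of Theorem~\ref{theorem: explicit} with positivity of intersections for a compatible integrable complex structure $J$ on the Fano variety $X$. Suppose $\sum_{\sigma\in S_k}m_{k,\beta}(c_{\sigma(1)},\dots,c_{\sigma(k)})\neq 0$ for some $c_j\in H^{\mathrm{odd}}(L_q;\R)$. If $\beta=0$ the contribution vanishes, since $L_q\cong T^n$ is formal so that the only nonzero classical operation is the cup product and its symmetrisation $c_1\cup c_2+c_2\cup c_1$ on odd classes is zero. If $\beta\neq 0$, there is a $J$-holomorphic disk in class $\beta$ bounding $L_q$, and it must meet $D$: otherwise it would define a class in $\pi_2(X\setminus D,L_q)$, which vanishes because $X\setminus D=\mu^{-1}(\Pol^0)$ is a Lagrangian torus bundle over the convex, hence contractible, base $\Pol^0$ and is therefore diffeomorphic to $\Pol^0\times T^n$. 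By positivity of intersections, $\beta\cdot D_i\ge 0$ for all $i$ and $\sum_i\beta\cdot D_i\ge 1$, so $\omega(\beta)=\sum_i(\beta\cdot D_i)\,a_i(q)\ge\min_i a_i(q)=g(q)$. Minimising over the contributing classes gives $\Psi(L_q)\ge g(q)$.

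For the upper bound I would exhibit, for every $i$, a nonzero contribution of the class $\beta_i$ itself. In the classically minimal model the $\beta_i$-part of the curvature is $m_{0,\beta_i}(1)=n_{\beta_i}(q)\,T^{a_i(q)}\,\mathbf{1}$, with $\mathbf{1}$ the unit and $n_{\beta_i}(q)\in\Z$ the degree of the evaluation map to $L_q$ from the moduli space of $J$-holomorphic $\beta_i$-disks carrying one boundary marked point. This moduli space is compact for every $q\in\Pol^0$: a stable-map limit would split $\beta_i$ into a disk class plus sphere classes, but all effective curve classes on the Fano variety $X$ have positive Chern number so no sphere bubbles off, while $\beta_i$ has minimal Maslov index so no disk bubbles off either. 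Hence $n_{\beta_i}$ is locally constant, and therefore constant, on the connected set $\Pol^0$; and for $q$ near the relative interior of facet $i$, where $\mu$ is a genuine smooth toric fibration, Cho--Oh's computation gives $n_{\beta_i}(q)=1$. Thus $m_{0,\beta_i}(1)\neq 0$ for all $q\in\Pol^0$, so $\beta_i$ is an admissible class in the explicit formula (with $k=0$) and $\Psi(L_q)\le\omega(\beta_i)=a_i(q)$; minimising over $i$ gives $\Psi(L_q)\le g(q)$. Together with the lower bound, $\Psi(L_q)=g(q)$.

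The main obstacle is this compactness-and-constancy step: controlling the moduli of $\beta_i$-disks on a fibre over a point of $\Pol^0$ far from facet $i$ --- in particular over a point close to a singular codimension-two face, where the fibration is not toric and no toric model is directly available. The argument rests squarely on the Fano condition, which rules out both sphere and disk bubbling and so makes $n_{\beta_i}$ invariant under the (fibrewise) Lagrangian isotopy moving $L_q$ through $\Pol^0$. When $X$ carries a toric degeneration to the toric variety of $\Pol$ one could instead deduce $n_{\beta_i}\equiv 1$ from a degeneration comparison of disk counts in the style of Nishinou--Nohara--Ueda, but the bubbling argument applies to general Gelfand--Cetlin fibrations and is the route I would take.
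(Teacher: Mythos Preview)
Your route is genuinely different from the paper's, and the gap is exactly where you suspect it is---but it is a real gap, not just a difficulty.

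The relations $\mu(\beta)=2\sum_i\beta\cdot D_i$, $\omega(\beta)=\sum_i(\beta\cdot D_i)\,a_i(q)$, and above all the positivity $\beta\cdot D_i\ge 0$ for $J$-holomorphic disks, are \emph{not} part of the GCF set-up. Definition~\ref{dfn:GCF} only asks $\mu$ to be continuous over the codimension-two faces; the preimages of the facets need not close up to $J$-complex (or even smooth) cycles in $X$, and the Gelfand--Cetlin map is typically not holomorphic for the integrable $J$. The paper itself treats the existence of a well-behaved anticanonical divisor as an \emph{additional} hypothesis (see the Remark following Lemma~\ref{lem:area_u'}). Your lower bound rests directly on this positivity. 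Your upper bound uses it again, covertly: the assertion that ``$\beta_i$ has minimal Maslov index so no disk bubbles off'' needs every nonconstant $J$-disk on the \emph{non-monotone} fibre $L_q$ to have Maslov index $\ge 2$, and that is precisely what positivity against the $D_i$ would give. Without it, Maslov-zero disk bubbles are not excluded and $n_{\beta_i}$ is not obviously deformation-invariant. The Fano condition alone rules out sphere bubbling; it does not rule out disk bubbling on a non-monotone Lagrangian.

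The paper sidesteps both issues by using concavity of $\Psi$ (Theorem~\ref{th:psi_concave}) to globalise a purely local computation. Near the open part of a facet the fibration \emph{is} toric by hypothesis, and an SFT stretch (Lemma~\ref{lem:Count1_p_epsilon}) pins down $\Psi(L_t)=\omega(\beta_t)$ for all $t$ sufficiently close to that facet. Along the ray from the monotone fibre to the facet, $\omega(\beta_t)$ is affine; a short concavity-versus-positivity argument then forces $\Psi(L_t)\ge\omega(\beta_t)$ everywhere on the ray, and equality on an interval near the facet upgrades this to equality throughout. Nothing beyond the GCF axioms is used. Your argument would go through for smooth toric Fanos (where the $D_i$ are honest toric divisors and Cho--Oh applies globally), or whenever one adds a divisor hypothesis, but as written it imports more structure than the proposition provides.
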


\begin{figure}[h]
	\includegraphics[]{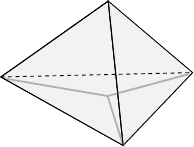}
	\caption{The graph of $\Psi$ over the moment triangle for $\CP^2$. The function $\Psi$ is non-linear over the three gray segments.}
	\label{fig:psi_graph}
\end{figure}

\subsection{Other results} In Section~\ref{sec:Ex} we determine shapes and star-shapes of certain tori in $\C^n$, and study the wild behaviour of (non-star) shape in $\CP^2$. In Section~\ref{sec:LagModuli_CP2} we determine the non-Hausdorff moduli space of all (not necessarily monotone) Lagrangian tori in $\CP^2$ arising as fibres of almost toric fibrations, modulo Hamiltonian isotopy. In Section~\ref{sec:stretch}, as a warm-up, we discuss bounds on flux within a convex neighbourhood $L\subset U\subset X$; this gives us an opportunity to recall the Fukaya trick and establish a non-bubbling lemma that is useful for
the arguments in Section~\ref{sec:Ex}. We also briefly discuss dynamical applications along the lines of \cite{EGM16} in Section \ref{sec: stab}.

\subsection{Technical remark} \label{subsec:IntroRmk}
Our main invariant, $\Psi(L)$, is defined using the Fukaya algebra of $L$.
We remind that
whenever the symplectic form on $X$ has rational cohomology class, 
the
Fukaya algebra of $L$ is defined via classical transversality methods
using the
technique of stabilising divisors \cite{CM07,CW15,CW17}.

In general, the
definition of the Fukaya algebra requires the choice of a virtual perturbation scheme. 
Our results are not sensitive to the details of how it is implemented. They rely on the general algebraic properties of Fukaya
algebras reminded in Section~\ref{sec:Fuk}. We shall use \cite{FO3Book} as the common
reference for these basic properties; in the setting with
stabilising divisors, they were established in \cite{CW15,CW17}.

\subsection*{Acknowledgements} We thank Denis Auroux for many valuable
conversations and Michael Entov for useful communications regarding dynamical applications.

This work was initiated during the ``Symplectic topology, sheaves
and mirror symmetry'' summer school at Institut de Math\'{e}matiques de Jussieu,
2016. We acknowledge the hospitality of the Institute of Advanced Study,
Princeton, and IBS Center for Geometry and Physics, Pohang, where part of the
work was carried out.
 
ES was partially supported by NSF grant No.~DMS-1128155 at the IAS, and by an NSERC Discovery Grant, and by the Fonds de recherche du Qu\'{e}bec - Nature et technologies, by the Fondation Courtois, and by an Alfred P. Sloan research fellowship, at the University of Montr\'{e}al.

DT was partially supported by the Simons
Foundation grant \#385573, Simons Collaboration on Homological Mirror
Symmetry, and carried out  initial stages of the work at Uppsala University, supported by the Geometry and Physics project grant from the Knut and Alice Wallenberg Foundation.

RV was supported by the Herchel Smith postdoctoral fellowship from
	the University of Cambridge and by the NSF under grant 
	No.~DMS-1440140 while the author was in residence at the MSRI during the
	Spring 2018 semester. RV was also supported by Brazil's National Council
of scientific and technological development CNPq, via the research fellowships
405379/2018-8 and 306439/2018-2, by the Serrapilheira Institute grant
Serra-R-1811-25965, and by FAPERJ grant E-26/200.230/2023 (282916).


\section{Enumerative geometry in a convex neighbourhood}

\label{sec:stretch}

This section is mainly a warm-up.
Suppose $L\subset X$ is a \emph{monotone} Lagrangian submanifold. Using standard Symplectic Field Theory stretching techniques and without using Fukaya-categorical invariants, we are going to obtain bounds on the shape of Lioville neighbourhoods $L\subset U\subset X$ of $L$ that are symplectically embeddable into $X$. 
Along the way we recall Fukaya's trick and establish a useful no-bubbling result, Lemma~\ref{lem:neck_stretch}.

Let $J$ be a tame almost complex
structure. For a class $\beta \in H_2(X,L;\Z)$ of Maslov index~2, let $\M_\beta(J)$
be the 0-dimensional moduli space of unparametrised $J$-holomorphic disks $(D,\del
D)\subset(X,L)$ with boundary on $L$, whose boundary passes through a specified
point $\pt\in L$, and whose relative homology class equals $\beta$.
We will be assuming that the above disks are regular, whenever $\M_\beta(J)$ is
computed. Their count $\# \M_\beta(J)\in \Z$ is invariant under choices of $J$ and Hamiltonian isotopies of $L$, by the monotonicity assumption.

\begin{dfn}
	\label{dfn:Liouv_nbhd}
	We call an open subset $U\subset (T^*M,\omega_\std)$ a {\it Liouville neighbourhood (of the zero-section)} if $U$ contains the zero-section, and there exists a Liouville 1-form $\theta$ on $U$ such that $d\theta=\omega_\std$, and the zero-section is $\theta$-exact.
\end{dfn}

The next theorem establishes a shape bound on a  Liouville
neighbourhood $U$ admitting a symplectic embedding $\phi\co U\to X$ which takes
the zero-section to $L$.

\begin{thm}
	\label{th:nbhood_comp}
	Let $L\subset X$ be a monotone Lagrangian submanifold and $J$ a tame almost
	complex structure. Let $U\subset X$ 
	be an open subdomain containing $L$ and symplectomorphic to a Liouville neighbourhood 
	of $L\subset U \hookrightarrow T^*L$. 
	For a Maslov index~2 class $\beta$, if
	
	 $\# \M_\beta(J) \ne 0$, then the shape $Sh_L(U)$ belongs to the following affine half-space:
	$$
	Sh_L(U)\subset B_\beta = 
	\{\ff \in H^1(X;\R): 2c + \ff \cdot \del \beta >0\},
	$$ 
	where $\cdot$ is the Poincar\'e pairing and $c$ is the monotonicity constant, i.e.~$2c = \omega(\beta)$.
\end{thm}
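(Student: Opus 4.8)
The plan is to prove the shape bound by combining a neck-stretching (SFT) argument with Fukaya's trick, exactly as foreshadowed by the warm-up discussion in this section. Fix $\ff \in Sh_L(U)$, realized by a Lagrangian isotopy $\{L_t\}_{t\in[0,1]}$ inside $U$ with $L_0 = L$ and $\Flux(\{L_t\}) = \ff$; after reparametrizing we may assume the isotopy is supported in a compact part of $U$. I want to show $2c + \ff\cdot\partial\beta > 0$, i.e.\ that the Maslov index $2$ count $\#\M_\beta(J)\neq 0$ forces a lower bound on $2c + \ff\cdot\partial\beta$ that is strict. The key mechanism: the nonzero count $\#\M_\beta(J)$ is a Hamiltonian/$J$-independent invariant of $L$, but when we transport it to $L_1$ along the isotopy, the relevant disk class $\beta$ is replaced by a class $\beta_1$ whose area has changed by precisely $-\ff\cdot\partial\beta$ relative to $\omega(\beta) = 2c$; since all areas of $J_1$-holomorphic disks on $L_1$ are positive (or the moduli space is empty, contradicting nonzero count), we get $\omega(\beta_1) = 2c + \ff\cdot\partial\beta > 0$.

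First I would set up Fukaya's trick: for the isotopy $\{L_t\}$ there is a family of diffeomorphisms $\psi_t\co X \to X$ (compactly supported in $U$) with $\psi_t(L) = L_t$, and pulling back a fixed tame $J$ by $\psi_t$ produces almost complex structures $J_t = \psi_t^* J$ that are tame for the (non-standard) forms $\psi_t^*\omega$, with respect to which the moduli space of disks on $L_t$ is literally identified with $\M_\beta(J)$ on $L$. The subtlety is that $\psi_t^*\omega \neq \omega$, so this does not immediately give $J_t$-holomorphic disks for the actual form $\omega$. Here is where I would invoke the non-bubbling lemma of this section (Lemma~\ref{lem:neck_stretch}) together with a neck-stretch: stretch the neck along the boundary $\partial U$ (or along a hypersurface in $U$ close to it), so that any $J$-holomorphic disk on $L_t\subset X$ with bounded area either stays inside $U$ — where we can compare it with the $\psi_t^*\omega$-story — or breaks off a punctured curve in the completion, which the monotonicity-plus-index bookkeeping and Lemma~\ref{lem:neck_stretch} rule out for index $2$. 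The upshot is that the count of Maslov $2$ disks on $L_t$ through a point, computed in $X$, equals $\#\M_\beta(J)$, and the disks achieving it lie in $U$; their $\omega$-area is therefore $\int_{C}\psi_t^*\omega$ plus a correction equal to the flux pairing, which is exactly $2c + t\,\ff\cdot\partial\beta$.

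The main obstacle is the neck-stretching compactness step: ensuring no nodal or broken configuration of total Maslov index $2$ can appear in the limit, so that the disk count is genuinely preserved along the whole isotopy and the disks remain confined to $U$ where Fukaya's trick applies. This is precisely the role of Lemma~\ref{lem:neck_stretch}; granting it, the rest is bookkeeping. To finish: since $\#\M_\beta(J)\neq 0$, for each $t$ there is a $J_t$-holomorphic (equivalently, an honest $J$-holomorphic after transport) Maslov $2$ disk on $L_t$ through a point, so its $\omega$-area $2c + t\,\ff\cdot\partial\beta$ is positive; letting $t\to 1$ (or just taking $t=1$) gives $2c + \ff\cdot\partial\beta \geq 0$. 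Strictness then follows because a disk of zero area would be constant, contradicting that it realizes a nontrivial class with nonzero count — so in fact $2c + \ff\cdot\partial\beta > 0$, which is the claimed containment $Sh_L(U)\subset B_\beta$. Finally, one observes the argument used only $\ff|_L = \ff\cdot\partial\beta$, i.e.\ the image of $\ff$ under $H^1(X;\R)\to H^1(L;\R)$ paired with $\partial\beta$, consistent with the statement.
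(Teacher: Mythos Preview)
Your overall strategy matches the paper's: invoke Lemma~\ref{lem:neck_stretch} together with Fukaya's trick to show that the Maslov~2 disk count $\#\M_\beta$ persists along the isotopy $\{L_t\}$, then compute the $\omega$-area of the disk on $L_1$ via flux to obtain $2c + \ff\cdot\partial\beta > 0$.

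However, you misread the role of the neck-stretching. You claim that after stretching, a Maslov~2 disk on $L_t$ in $X$ ``either stays inside $U$ \ldots\ or breaks off a punctured curve \ldots\ which Lemma~\ref{lem:neck_stretch} rules out for index~2.'' This is neither correct nor needed. Maslov~2 disks on $L_t$ in $X$ can and typically do leave $U$ (think of the Clifford torus in $\CP^2$: its Maslov~2 disks meet the toric divisor). Lemma~\ref{lem:neck_stretch} says nothing about confining index~2 disks; it only asserts that for a sufficiently stretched $J$ there are no Maslov~$\le 0$ disks of area~$\le E$ on any $L_t$. The paper uses this as follows: pick $J$ from Lemma~\ref{lem:neck_stretch} with $E>\omega(\beta)$, and perturb to a generic family $J_t$ still free of Maslov~$\le 0$ disks of that energy. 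Then the parametric Maslov~2 moduli space $\sqcup_t \M_{\beta_t}(J_t)$ over $[0,1]$ (set up via Fukaya's trick as disks on the fixed $L_0$) is a compact 1-cobordism, since any bubble would be an impossible Maslov~$\le 0$ disk. Hence $\#\M_{\beta_1}(J_1) = \#\M_\beta(J_0)\neq 0$, so $L_1$ bounds a $J_1$-holomorphic disk $D_1$ in class $\beta_1$ in $X$.

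Accordingly, your area computation is off: there is no ``$\int_C\psi_t^*\omega$ plus a correction'' coming from confinement. The area is computed directly in $X$: by the definition of flux, $\omega(\beta_1) - \omega(\beta) = \ff\cdot\partial\beta$, so $\omega(D_1) = 2c + \ff\cdot\partial\beta$, which is strictly positive because $D_1$ is $J_1$-holomorphic and nonconstant. (Also, the isotopy need not be a star-isotopy, so the intermediate expression $2c + t\,\ff\cdot\partial\beta$ is not justified; only the endpoint value matters.) Finally, the $H^1(X;\R)$ in the displayed statement is a typo for $H^1(L;\R)$; your closing remark attempting to reconcile it via restriction is unnecessary.
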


\subsection{Fukaya's trick}
\label{subsec:gromov_comp}
Fukaya's trick is a  useful observation which has been used as an ingredient to set up Family Floer homology \cite{Fuk10,Ab14}.
This trick will enable us to apply Gromov and SFT compactness theorems to holomorphic curves with boundary on a moving Lagrangian submanifold, when this isotopy is not Hamiltonian.
Let $L_t\subset X$ be a Lagrangian isotopy, $t\in [0,1]$. Choose a family of diffeomophisms
\begin{equation} 
\label{eq:f_t}
f_t\co X\to X,\quad f_t(L_0)=L_t.
\end{equation}
Denote $\omega_t=f_t^*\omega$. Let $ J_t'$ be a generic family of almost complex structures such that $J_t'$ tames $\omega_t$. When counting holomorphic disks (or other holomorphic curves) with boundary on $L_t$, we will do so using almost complex structures of the form 
\begin{equation}
\label{eqn:J_t_Lag_iso}
J_t=(f_t)_*J_t'
\end{equation}
 where $J_t'$ is as above. The idea is that $f_t^{-1}$ takes $J_t$-holomorphic curves with boundary on $L_t$ to $J_t'$-holomorphic curves with boundary on $L_0$. In this reformulation, the Lagrangian boundary condition $L_0$ becomes constant, which brings us to the standard setup for various aspects of holomorphic curve analysis, such as compactness theorems.
  
Although  there may not exist a single symplectic form taming all $J'_t$, for each $t_0$ there exists a $\delta>0$
such that for all $t\in[t_0-\delta;t_0+\delta]$, $J_t'$ tames $\omega_{t_0}$. For the purposes of holomorphic curve analysis, this property is as good as being tamed by a single symplectic form.
Here is a summary of our notation, where the right column and the left column differ by applying $f_t$:
$$
\begin{array}{cccc}
\textit{Almost complex~str.}& J_t'&\xrightarrow{f_t}  & J_t\\
\textit{Tamed by}&\omega_t& &\omega\\
\textit{Lag.~boundary cond.} & L_0&  &L_t\\
\end{array}
$$
This should be compared with $L_t$ actually being a Hamiltonian isotopy; in this case we could have taken $\omega_t\equiv \omega$ and $J_t'$ tamed by the fixed symplectic form $\omega$; this case is standard in the literature.

\subsection{Neck-stretching}
Recall the  setup of Theorem~\ref{th:nbhood_comp}:
$L\subset X$ is a monotone Lagrangian submanifold, and $L\subset U\subset X$ where $U$ is symplectomorphic to a Liouville neighbourhood in the sense of Definition~\ref{dfn:Liouv_nbhd}, which identifies $L$ with the zero-section.

\begin{lem}
\label{lem:neck_stretch}
For each $E>0$ and any family $\{L_b\}_{b\in B}$ of Lagrangian submanifolds
$L_b\subset U\subset X$ parametrised by a compact set $B$ which are Lagrangian isotopic to $L$,
 there exists an almost complex structure $J$ on $X$ such that each $L_b$ bounds no $J$-holomorphic disks of Maslov index $\le 0$ and area $\le E$ in $X$.
\end{lem}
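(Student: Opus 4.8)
\textbf{Proof plan for Lemma~\ref{lem:neck_stretch}.}

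The plan is to use a neck-stretching (SFT) argument along the contact-type boundary of the Liouville neighbourhood $U$, combined with Fukaya's trick to handle the fact that the $L_b$ need not be Hamiltonian isotopic to $L$. First I would set up the stretching data: since $U$ is symplectomorphic to a Liouville neighbourhood of the zero-section $L \subset T^*L$, after shrinking slightly we may assume $U$ contains a compact subdomain $U_0$ with contact-type boundary $\partial U_0 = (Y,\xi)$, and that all the $L_b$ lie in the interior of $U_0$; this uses compactness of $B$ together with the fact that each $L_b \subset U$. We then consider a family of almost complex structures $J_\tau$ on $X$ obtained by inserting a neck of length $\tau$ along $Y$, so that in the limit $\tau \to \infty$ the complement $X \setminus U_0$ and the completion $\widehat{U_0}$ are separated by a cylindrical region $\R \times Y$.

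Next I would argue by contradiction: suppose that for some fixed $E>0$ there is no such $J$, so for every $\tau$ there is a point $b_\tau \in B$ and a $J_\tau$-holomorphic disk $u_\tau$ with boundary on $L_{b_\tau}$ of Maslov index $\le 0$ and area $\le E$. By compactness of $B$ we may pass to a subsequence with $b_\tau \to b_\infty$. Here I invoke Fukaya's trick (Section~\ref{subsec:gromov_comp}): choose diffeomorphisms $f_b$ with $f_b(L_{b_\infty}) = L_b$ depending smoothly on $b$ near $b_\infty$, and replace each $J_\tau$ by $f_{b_\tau}^{-1}$-pushforward, so that the boundary condition becomes the fixed Lagrangian $L_{b_\infty}$ while the taming forms vary only slightly (and uniformly so, since $B$ is compact and the $f_b$ close to the identity for $b$ near $b_\infty$). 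This brings us into the standard SFT compactness setting with a fixed Lagrangian boundary condition, a fixed energy bound $E$, and a neck being stretched. Applying the SFT compactness theorem, the $u_\tau$ converge to a holomorphic building: its components in the completion of $U_0$ are holomorphic disks or spheres with boundary on $L_{b_\infty}$ and possibly punctures asymptotic to Reeb orbits of $Y$, together with cylindrical levels in $\R \times Y$ and components in $X \setminus U_0$.

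The key step, and the main obstacle, is to rule out such a limit building on energy and index grounds. The punctured disk components mapping into $\widehat{U_0}$ with boundary on $L_{b_\infty}$ must be constant, because $U_0$ (being a Liouville domain with $L_{b_\infty}$ an exact, or at least displaceable-in-its-completion, Lagrangian) admits no non-constant finite-energy punctured holomorphic disks with boundary on the zero-section of $T^*L_{b_\infty}$ and positive punctures — this is the standard exactness/maximum-principle vanishing, and is where Definition~\ref{dfn:Liouv_nbhd} (the zero-section is $\theta$-exact) is essential. Likewise any sphere or plane component entirely inside $\widehat{U_0}$ is constant by exactness of $\theta$. Hence all the action is pushed outside $U_0$, but then the total Maslov index of the building is carried by disk components with boundary on $L_{b_\infty}$ that are \emph{entirely} in $X \setminus U_0$ after capping — and a standard index count shows the sum of Maslov indices of the non-constant disk pieces, plus twice the Chern numbers of sphere pieces, must equal $\le 0$, forcing (by positivity of areas and the usual SFT index additivity) every non-constant piece to be absent. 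This contradicts the fact that $u_\tau$ is non-constant with definite area $\ge$ some $\hbar > 0$ bounded below (monotonicity of $L$, or a uniform area lower bound from Gromov). I would carry out the index bookkeeping carefully — tracking how Maslov and Conley-Zehnder indices split across the levels — as this is the only genuinely technical part; everything else is an assembly of standard compactness and exactness facts.
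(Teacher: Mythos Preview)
Your setup---neck-stretching along $\partial U$, arguing by contradiction, passing to a subsequence $b_\tau\to b_\infty$, invoking Fukaya's trick to fix the boundary condition, and applying SFT compactness---matches the paper exactly. The divergence, and the genuine gap, is in how you extract a contradiction from the limiting building.

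Your key step is the claim that the punctured disk component $S$ in $\widehat{U_0}$ with boundary on $L_{b_\infty}$ must be constant, by ``exactness/maximum-principle vanishing''. This is false on two counts. First, $L_{b_\infty}$ is only \emph{Lagrangian} isotopic to the zero-section $L$, not Hamiltonian isotopic, so it need not be $\theta$-exact; Definition~\ref{dfn:Liouv_nbhd} only guarantees exactness for $L$ itself. Second, even for an exact Lagrangian, punctured disks with \emph{positive} punctures are not ruled out by any maximum principle: their $d\theta$-area is $\sum_i\int_{\gamma_i}\theta - \int_{\partial S\cap L}\theta$, which is positive precisely because the Reeb actions are positive. (Think of half-cylinders over closed geodesics in $T^*L$.) So $S$ can perfectly well be non-constant, and your subsequent index bookkeeping, which relies on all the action being pushed outside $U_0$, never gets off the ground. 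A further warning sign: your argument never uses that $L$ is \emph{monotone in $X$}, which is the essential hypothesis.

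The paper's argument does not attempt to kill $S$. Instead it accepts $S$ as non-trivial and attaches a topological cylinder $C\subset U$ with $\partial C\subset L_{b_\infty}\cup L_0$, matching $\partial S\cap L_{b_\infty}$. A Stokes computation gives $\omega(S\cup C)=\sum_i\int_{\gamma_i}\theta - \int_l\theta$ where $l\subset L_0$; now exactness of $L_0$ (not $L_{b_\infty}$!) kills the second term, so $\omega(S\cup C)>0$. Gluing $C$ onto the entire building produces a topological disk $D'$ with boundary on $L_0$, Maslov index $\le 0$, and $\omega(D')>0$---contradicting monotonicity of $L_0$ in $X$. The point you are missing is this transfer back to $L_0$ via $C$: it is what lets you use both the $\theta$-exactness of $L_0$ in $U$ and its monotonicity in $X$.
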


\begin{proof}
We will show that almost complex structures which are sufficiently neck-stretched around $\del U$ have the desired property.
Pick a tame $J$ on $X$;
neck-stretching around $\del U$ produces a family of tame almost complex structures $J_n$, $n\to+\infty$, see e.g.~\cite{EGH00,CompSFT03}.
We claim that the statement of Lemma~\ref{lem:neck_stretch} holds with respect $J_{n}$ for a sufficiently large  $n$. 

Suppose, on the contrary, that  $L_{b_n}$ bounds a $J_{n}$-holomorphic disk of Maslov index $\mu_n\le 0$ and area $\le E$ all sufficiently large $n$. Passing to a subsequence if necessary, we can assume that $b_n\to b\in B$.
Apply the SFT compactness theorem, which is a version \cite[Theorem~10.6]{CompSFT03} for curves with Lagrangian boundary condition $L_{b_n}$. Using Fukaya's trick, one easily reduces the desired compactness statement to one about the fixed Lagrangian submanifold $L_b$.

 The outcome of SFT compactness is a broken holomorphic building; see Figure~\ref{fig:stretch} for an example of how the building may look like, ignoring $C$ for the moment.
We refer to \cite{CompSFT03} for the notion of holomorphic buildings and only  record the following basic properties:
\begin{itemize}
	\item all curves in the holomorphic building have positive $\omega$-area;
	\item  topologically, the curves glue to a disk with boundary on $L_{b}$ and $\mu\le 0$;
	\item there exists a single curve in the building, denoted by $S$,  which has boundary on $L_{b}$; this curve lies inside $U$ and may have several punctures asymptotic to  Reeb orbits $\{\gamma_i\}\subset \del U$.
\end{itemize}
The Reeb orbits mentioned above are considered with respect to the
contact form $\theta|_{\del U}$, where $\theta$ is the Liouville form on $U$
provided by Definition~\ref{dfn:Liouv_nbhd}; this means that $d\theta=\omega$
and $L$ is $\theta$-exact. (Recall that the choice of $\theta$ near $\del
U$ is built into the neck-stretching construction, and we assume that we have
used this particular $\theta$ for it.)

\begin{figure}[h]
\includegraphics[]{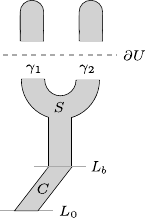}
\caption{A broken disk with boundary on $L_{b}$, consisting of $S$ and the upper disks. The attaching cylinder $C$ is not part of the broken disk.}
\label{fig:stretch}
\end{figure}

Next, consider a topological cylinder $C\subset U$ with boundary $\del C\subset L_{b}\cup L_0$,
and such that the $L_{b}$-component of the boundary of $C$ matches the one of $S$, see Figure~\ref{fig:stretch}.
Note that $S\cup C$, considered as a 2-chain in $U$, has boundary of the form:
$$
\del(S\cup C)=\textstyle \sum_i\gamma_i-l
$$
where $l\subset L_0$ is a 1-cycle.
Let us compute the area:
$$
 \omega(S\cup C)=\textstyle\sum_i \textstyle\int_{\gamma_i}\theta-\textstyle\int_l\theta=
 \textstyle\sum_i \textstyle\int_{\gamma_i}\theta>0.
$$
Indeed, $\int_l \theta=0$ because $L$ is $\theta$-exact, and $\int_{\gamma_i}\theta>0$
for any Reeb orbit $\gamma_i$. 

Let us now construct a topological disk $(D',\del D')\subset (X,L)$ by the
following procedure: first, glue together all pieces of the $\omega$-tamed
holomorphic building constructed above, including $S$, then additionally glue
$C$ on to the result. Clearly, we get a topological disk with boundary on $L_0$,
and moreover $\mu(D')  \le 0$. Finally, $\omega(D')>0$ because $\omega(C\cup
S)>0$ and all other curves in the building also have positive $\omega$-area.
These two properties of $D'$ contradict the fact that $L$ is monotone inside
$X$. \end{proof}

\subsection{Conclusion of proof}

%
%


\begin{proof}[Proof of Theorem~\ref{th:nbhood_comp}] Take $J$ from
Lemma~\ref{lem:neck_stretch} for energy level $E > \omega(\beta).$ Pick a point
$p_t \in L_t.$ By an application of Gromov compactness, and standard
transversality techniques, there exist almost complex structures $J_t$ for $0
\leq t \leq 1$, sufficiently close $J,$ such that $J_t$ still admit no
holomorphic disks of Maslov index $\leq 0$ and energy $\leq E$ with boundary on
$L_t,$ for all $t,$ and that the moduli spaces $\M_\beta(J_i),$ $i=0,1$ of
Maslov index~2 holomorphic disks with boundary on $L_i$ and $p_i$, are regular,
as well as the parametric moduli space $\sqcup_{0 \leq t \leq 1} \M_\beta(J_t)$
where now an element of $\M_\beta(J_t)$ passes through $p_t.$ As explained in
Subsection~\ref{subsec:gromov_comp}, these holomorphic disks can be understood
as $J_t'$-holomorphic disks on the fixed Lagrangian submanifold $L_0=L$ (and the
curve $p_t$ could be chosen to correspond to a fixed $p = p_0 \in L$). So Gromov
compactness, again, applies to show that the count $\# \M_\beta(J_i)$ is
independent of $i$ unless a bubbling occurs for some $t$. However, any such
bubbling will produce a $J_t$-holomorphic disk of Maslov index $\le 0$ with
boundary on $L_t$, which is impossible by construction. We conclude that $\#
\M_\beta(J_i)\neq 0$, so for $i=0,1$, $L_i$ bounds a $J_i$-holomorphic disk
$D_i$. As 2-chains, these disks differ by a cylinder swept by a cycle in class
$-\del \beta$. By the definition of flux: $$ \omega(D_0)-\omega(D_1)=-\ff\cdot
\del\beta. $$ Finally, we have $\omega(D_0)=2c$ by monotonicity, and
$\omega(D_1)>0$ because $D_1$ is holomorphic; therefore $\ff\cdot \del\beta\ge -
2c$. \end{proof}

\section{Fukaya algebra basics}

\label{sec:Fuk}

\subsection{Fukaya algebras}
Fix a ground field $\mathbb{K}$ of characteristic zero. We take $\mathbb{K}=\R$ throughout, although all arguments are not specific to this.
Let $X$ be a symplectic manifold and $L\subset X$ a Lagrangian submanifold. We shall use the following version of the Novikov ring with formal parameters $T$ and $q$:

\begin{multline*}
\Lo = \Biggr\{ 
\sum_{i=0}^{\infty} a_iT^{\omega(\beta_i)} q^{\beta_i} 
\ \Bigg|  \ a_i \in \R,  \ \beta_i \in 
H_2(X,L;\Z), \  \omega(\beta_i) \in \R_{\ge 0},
\\ 
 \lim_{i \to \infty}
 \omega(\beta_i) = +\infty, \
 \omega(\beta_i) = 0 \iff \beta_i = 0 
 \Biggr\}   
\end{multline*}

We also use the ideal:
\begin{multline*}\Lop = \Biggr\{ \sum_{i=0}^{\infty} a_iT^{\omega(\beta_i)} q^{\beta_i} 
\ \Bigg|  \ a_i \in \R,  \ \beta_i \in 
H_2(X,L;\Z), \ \omega(\beta_i) \in \R_{> 0},\\
\ \lim_{i \to \infty} \omega(\beta_i) = +\infty,\
\omega(\beta_i) = 0 \iff \beta_i = 0 \Biggr\}
\end{multline*}

This Novikov ring is bigger than the conventional Novikov ring $\Lambda_0$ used
in Floer theory, which only involves the $T$-variable. In the context of the
Fukaya \ai algebra of a Lagrangian submanifold, the exponents of the
$q$-variable are, by definition, placeholders for relative homology classes of
holomorphic disks contributing to the structure maps. Abstractly, the theory of
gapped \ai algebras used below works in the same way for $\Lo$ as it does for
$\Lambda_0$. 


There are valuation maps
$$
\val\co \Lo\to \R_{\ge 0},\quad \val\co\Lop\to \R_{>0}
$$
defined by
$$
\val\left(\textstyle\sum_i a_iT^{\omega(\beta_i)} q^{\beta_i}\right)=\min\{\omega(\beta_i):a_i\neq 0\}.
$$

Fix an orientation and a spin structure on $L\subset X$. Let $C^*(L;\Lo)$ be a
cochain complex on $L$ with coefficients in $\Lo$; to us, it is immaterial which
cochain model is used provided the Fukaya algebra can be defined over it, 
recall Section~\ref{subsec:IntroRmk}. We use
the natural grading on $C^*(L;\R)$ and the following gradings of the formal
variables: $|T|=0$, $|q^\beta|=\mu(\beta)$. Only the reduction of the grading to
$\Z/2$ will be important for us. Since the Maslov indices of all disks with
boundary on $L$ are even (by the orientability of $L$), the reduced grading
simply comes from the reduced grading on $C^*(L;\R)$

Fix a tame almost complex structure $J$ on $X$, and a suitable perturbation scheme $\ss$ turning the relevant moduli spaces of $J$-holomorphic disks with boundary on $L$ into transversely cut out manifolds,  see e.g.~\cite[Proposition~3.5.2]{FO3Book}. 
Holomorphic curve theory shows that
the vector space $C^*=C^*(L;\Lo)$ has  the structure of a \emph{gapped curved  \ai algebra} structure, called the \emph{Fukaya algebra} of $L$ \cite[Theorem~3.1.5]{FO3Book}. We denote it by
\begin{equation}
\label{eq:fuk_ai}
CF^*(L;\Lo;J,\ss)\quad \text{or} \quad (CF^*(L;\Lo),m)
\end{equation}
where $m=\{m_k\}_{k\ge 0}$ are the \ai structure operations.

Abstractly, let $C^*$ be a graded vector space over $\Lo$.
We remind that a  gapped curved \ai structure
$
(C^*,m)
$
is determined by a sequence of maps
$$
m_k\co (C^*)^{\otimes k}\to  C^*, \quad k\ge 0,
$$
of degree $2-k$, where $m_0\co\R\to C^2$
is called the \emph{curvature} and is determined by
$$
m_0(1)\in C^{2}.
$$
The curvature term is required to have non-zero valuation, that is:
\begin{equation}
\label{eq:m0}
m_0(1)=0\mod\Lop.
\end{equation}
Next, the operations
satisfy the curved \ai relations. If we denote 
$$\deg x=|x|-1$$
where $|x|$ is the grading of $x\in C^*$, see \cite[(3.2.2)]{FO3Book},
then the \ai relations read \cite[(3.2.22)]{FO3Book}
\begin{equation}
\label{eq:ai}
\sum_{i,j}(-1)^{\maltese_i}m_{k-j+1}(x_1,\ldots,x_{i},m_{j}(x_{i+1},\ldots,x_{i+j}),x_{i+j+1},\ldots,x_k)=0
\end{equation}
where
$$
(-1)^{\maltese_i}=\deg x_1+\ldots +\deg x_i+i
$$
(This convention differs from  \cite{SeiBook08} by reversing the order in which the inputs are written down.) The inner appearance of $m_j$ may be the curvature term $m_0(1)$ involving no $x_i$-inputs. For example, the first two relations read:
$$
\begin{array}{l}
m_1(m_0(1))=0,\\
m_2(m_0(1),x)+(-1)^{\deg x+1}m_2(x,m_0(1))+m_1(m_1(x))=0.
\end{array}
$$
Finally, the condition of being gapped means that the valuations of the $m_k$ operations ``do not accumulate'' anywhere except at infinity, which e.g.~guarantees the convergence of the left hand side of (\ref{eq:ai}) over the Novikov field (the \emph{adic convergence}).
We refer to \cite{FO3Book} for a precise definition of gappedness. The fact that the Fukaya algebra is gapped  follows from Gromov compactness.

The \ai relations can be packaged into a single equation by passing to the \emph{bar complex}.
First,
we extend the operations $m_k$
to
$$
\hat m_k\co (C^*)^{\otimes i}\to (C^*)^{\otimes (i-k+1)} 
$$
via
$$
\hat m_k(x_1\otimes \ldots\otimes x_i)=\sum_{l=0}^{i-k}(-1)^{\maltese_l}x_1\otimes \ldots,\otimes x_l\otimes m_k(x_{l+1},\ldots x_{l+j})\otimes x_{l+j-1}\otimes \ldots\otimes x_{k}.
$$
This in particular means that the operations are trivial whenever $k>i$, and the expression for $k=0$ reads
$$
\sum_{l=0}^{i}x_1\otimes \ldots,\otimes x_l\otimes m_0(1)\otimes x_{l+1}\otimes \ldots\otimes x_{k}.
$$
We introduce the bar complex
\begin{equation}
\label{eq:bar}
B(C^*)=\bigoplus_{i=0}^\infty (C^{*+1})^{\otimes i}
\end{equation}
and define
\begin{equation}
\label{eq:m_hat}
\hat m=\sum_{k=0}^\infty \hat m_k\co B(C^*)\to B(C^*).
\end{equation}
(This operation is denoted by $\hat d$ in \cite{FO3Book}.) The \ai relations are equivalent to the single relation
\begin{equation}
\label{eq:ai_bar}
\hat m\circ \hat m=0.
\end{equation}

\subsection{Breakdown into classes} 
Let $(C^*,m)$ be a gapped curved \ai algebra over $\Lo$.
We can decompose the \ai operations into classes $\beta\in H_2(X,L;\Z)$ as follows:
$$
m_k(x_1,\ldots,x_k)=\sum_{\beta\in H_2(X,L;\Z)}T^{\omega(\beta)}q^{\beta}m_{k,\beta}(x_1,\ldots,x_k).
$$
The operations $m_{k,\beta}$ are defined over the ground field $\R$:
\begin{equation}
\label{eq:m_k_b}
m_{k,\beta}\co C^*(L;\R)^{\otimes k}\to C^*(L;\R),
\end{equation}
and then extended linearly over $\Lo$; compare \cite[(3.5.7)]{FO3Book}.
The degree of (\ref{eq:m_k_b}) is $2-k+2\mu(\beta)$.
The gapped condition guarantees that the above sum converges adically: there is a finite number of classes $\beta$ of area bounded by a given constant that have non-trivial appearance in (\ref{eq:m_k_b}).

Geometrically, if $(C^*,m)=C^*(L;\Lo;J,\ss)$ is the Fukaya \ai algebra of a Lagrangian submanifold,
the $m_{k,\beta}$ are, by definition, the operations derived  from the moduli spaces of holomorphic disks in class $\beta\in H_2(X,L;\Z)$, see again \cite{FO3Book}.

\subsection{The classical part of an algebra}
Let $(C^*,m)$ be a gapped curved \ai algebra over $\Lo$.
Let 
$$\bC^*=C^*\otimes_{\Lo}\R$$
be the reduction of the vector space $C^*$ to the ground field $\R$.
Together with this, one can reduce the structure maps $m_k$  modulo $\Lop$. This is equivalent to setting $T=0$ or $q=0$ (equivalently: $T=q=0$ simultaneously), and gives an \ai structure defined over the ground field $\R$:
$$
\m_k\co (\bC^*)^{\otimes k}\to \bC^*,
$$
see \cite[Definition~3.2.20]{FO3Book}.
These operations are the same as the $m_{k,\beta}$ from (\ref{eq:m_k_b}) with $\beta=0$, by the gapped property. This \ai structure is no longer curved, meaning $\m_0=0$, by (\ref{eq:m0}). It is called the  \emph{classical part}  of $(C^*,m)$, and denoted by
$$
(\bC^*,\m).
$$

Now suppose that $(C^*,m)=C^*(L;\Lo;J,\ss)$ is the Fukaya algebra of a Lagrangian submanifold. Then on chain level, $\bC^*=C^*(L;\R)$. In this case $(\bC^*,\m)$ is called the \emph{topological \ai algebra} of $L$. 
The following is proven in \cite[Theorem~3.5.11 and Theorem~X]{FO3Book}.

\begin{thm}
\label{thm:top_ai}
The topological \ai algebra of $L$ 
 is quasi-isomorphic  to the de~Rham dg algebra of $L$.\qed
\end{thm}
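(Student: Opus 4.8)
\smallskip
\noindent{\it Proof proposal.} This is \cite[Theorem~3.5.11]{FO3Book}; I sketch the argument. The plan is to exploit the fact that the classical operations $\m_k=m_{k,0}$ are, by construction, extracted from the moduli spaces of \emph{constant} $J$-holomorphic disks with $k+1$ boundary marked points, and that these moduli spaces carry no information about the symplectic geometry of $X$. Concretely, a constant disk is recorded by the point $x\in L$ it maps to together with the cyclically ordered configuration of its $k+1$ boundary marked points, so the relevant moduli space is $\overline{\M}_{k+1}\times L$, where $\overline{\M}_{k+1}$ is the moduli space of disks with $k+1$ boundary marked points (homeomorphic to an associahedron), of real dimension $k-2$; crucially, every evaluation map $\ev_i\co\overline{\M}_{k+1}\times L\to L$ is the projection onto the second factor.

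First I would pin down the low-arity operations. One has $\m_0=0$ by \eqref{eq:m0}, as no constant disk contributes to the curvature, and in the de~Rham model $\m_1=d$ is the exterior differential, this being how the differential is normalised in the construction. Since $\overline{\M}_3$ is a single point, $\m_2(\alpha_1,\alpha_2)=\pm\,(\ev_0)_!\big(\ev_1^*\alpha_1\wedge\ev_2^*\alpha_2\big)=\pm\,\alpha_1\wedge\alpha_2$ is, up to the standard degree-shift sign, the wedge product. For $k\ge 3$ I would show $\m_k=0$ directly: every $\ev_i^*\alpha_i$ is pulled back from $L$, so the integrand $\ev_1^*\alpha_1\wedge\cdots\wedge\ev_k^*\alpha_k$ is constant along the positive-dimensional fibre $\overline{\M}_{k+1}$ of $\ev_0$, whence its fibre integral vanishes for dimension reasons. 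In this model one thus gets $(\bC^*,\m)=(\Omega^*(L),d,\wedge)$ on the nose, and for a general chain model the asserted $A_\infty$-quasi-isomorphism follows by the homological perturbation lemma.

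The hard part is that, in general, defining the Fukaya algebra requires a perturbation scheme $\ss$ (virtual perturbations, or stabilising divisors), which may perturb even the moduli spaces of constant disks: the perturbation datum on $\overline{\M}_{k+1}\times L$ is constrained by compatibility with the data on the larger moduli spaces of which it appears as a boundary stratum (with the non-constant part bubbling off). One must therefore show that these data can be chosen so that the resulting classical $A_\infty$ structure is the standard de~Rham one above, or at least is $A_\infty$-homotopy equivalent to it; carrying this out, together with the orientation and sign bookkeeping governed by the spin structure, is the technical core of \cite{FO3Book} and is precisely what makes the conclusion ``quasi-isomorphic'' rather than, naively, ``equal''.
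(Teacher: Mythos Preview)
The paper does not give its own proof of this statement; it simply records it as a citation to \cite[Theorem~3.5.11 and Theorem~X]{FO3Book} and places a \qed. Your sketch is a faithful outline of the standard argument behind that reference and is correct in its essentials: the $\beta=0$ moduli space of disks with $k+1$ boundary marked points is $\overline{\M}_{k+1}\times L$ with every evaluation map equal to the projection, so for $k\ge 3$ the integrand defining $\m_k$ is pulled back from $L$ and its integral over the positive-dimensional fibre $\overline{\M}_{k+1}$ vanishes. You also correctly flag the genuine technical content---arranging the perturbation scheme so that this naive computation survives the compatibility constraints coming from the boundary strata of the nonconstant moduli spaces---which is precisely what the cited chapters of \cite{FO3Book} carry out.
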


The definition of quasi-isomorphism will be reminded later in this section.
The term \emph{quasi-isomorphism} follows Seidel's terminology \cite{SeiBook08};
the same notion is termed a \emph{weak homotopy equivalence} (between non-curved \ai algebras) in \cite[Definition~3.2.10]{FO3Book}.

Recall that $L$ is called \emph{topologically formal} if its de~Rham dg algebra is quasi-isomorphic to the cohomology algebra
$
H^*(L;\R)$
with the trivial differential.
The theorem below is due to Deligne, Griffiths, Morgan and Sullivan \cite{DGMS75}.

\begin{thm}
If a compact manifold $L$ admits a K\"ahler structure, then it is topologically formal.\qed
\end{thm}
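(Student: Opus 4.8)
I will prove the final statement (formality of compact Kähler manifolds) by reproducing the Deligne--Griffiths--Morgan--Sullivan argument, deducing formality of the de~Rham algebra from the $\partial\overline{\partial}$-lemma. The plan is as follows. Fix a Kähler metric on $L$, let $\mathcal{A}^*$ denote its de~Rham complex, let $d=\partial+\overline{\partial}$ be the exterior differential on the complexified forms, and set $d^c=i(\overline{\partial}-\partial)$. One checks directly that $(d^c)^2=0$, that $d$ and $d^c$ anticommute, that $dd^c=2i\,\partial\overline{\partial}$, and that $d^c$ is a graded derivation which moreover preserves the \emph{real} forms (because $\partial\alpha$ and $\overline{\partial}\alpha$ are complex conjugate when $\alpha$ is real), so that the whole discussion can be carried out over $\R$. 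The inputs I will invoke from Hodge theory on compact Kähler manifolds are: $(i)$ the \emph{$\partial\overline{\partial}$-lemma}, i.e.\
\[
\ker d\cap\ker d^c\cap(\operatorname{im}d\cup\operatorname{im}d^c)=\operatorname{im}(dd^c),
\]
and $(ii)$ the Kähler identities, which force the Laplacians of $d$ and of $d^c$ to agree up to a positive scalar; consequently a form is $d$-harmonic iff it is $d^c$-harmonic, and every $d$-cohomology class and every $d^c$-cohomology class has a harmonic representative, which is simultaneously $d$-closed and $d^c$-closed.

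Next I would introduce two auxiliary differential graded algebras: the sub-dga $\mathcal{A}_c^*=\ker d^c\subset(\mathcal{A}^*,d)$ (a subalgebra since $d^c$ is a derivation, and $d$-stable since $d$ anticommutes with $d^c$), and the $d^c$-cohomology $\mathcal{H}^*_{d^c}=\ker d^c/\operatorname{im}d^c$, on which $d$ descends to a differential $\overline{d}$ because $\operatorname{im}d^c$ is a $d$-stable ideal of $\mathcal{A}_c^*$. This yields dga morphisms
\[
(\mathcal{A}^*,d)\;\xleftarrow{\;\iota\;}\;(\mathcal{A}_c^*,d)\;\xrightarrow{\;p\;}\;(\mathcal{H}^*_{d^c},\overline{d}),
\]
and I would prove three claims: $(1)$ $\overline{d}=0$; $(2)$ $\iota$ is a quasi-isomorphism; $(3)$ $p$ is a quasi-isomorphism. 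For $(1)$: if $\alpha\in\ker d^c$ then $d\alpha$ is $d$-closed, $d^c$-closed and $d$-exact, hence $d\alpha\in\operatorname{im}(dd^c)\subset\operatorname{im}d^c$ by $(i)$, so $\overline{d}[\alpha]=0$. The surjectivity statements in $(2)$ and $(3)$ follow by passing to harmonic representatives, which by $(ii)$ lie in $\ker d\cap\ker d^c\subset\mathcal{A}_c^*$; the injectivity statements in $(2)$ and $(3)$ follow because a form which is $d$-closed, $d^c$-closed and ($d$- or $d^c$-)exact lies in $\operatorname{im}(dd^c)=d\bigl(d^c(\,\cdot\,)\bigr)$ by $(i)$, hence is already exact inside $\mathcal{A}_c^*=\ker d^c$.

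Granting the three claims, $(\mathcal{H}^*_{d^c},\overline{d})=(\mathcal{H}^*_{d^c},0)$ is a dga with vanishing differential connected to $(\mathcal{A}^*,d)$ by a zig-zag of dga quasi-isomorphisms; hence the de~Rham algebra of $L$ is quasi-isomorphic, in the sense recalled in Section~\ref{sec:Fuk}, to its cohomology $H^*(L;\R)$ equipped with the trivial differential and the cup product, which is exactly the assertion that $L$ is topologically formal. The only place where compactness and the Kähler hypothesis are used is in establishing $(i)$ and $(ii)$; once those are in hand, the rest is formal manipulation of two quasi-isomorphisms. Thus the main obstacle is precisely the $\partial\overline{\partial}$-lemma — equivalently, the Hodge-theoretic package on compact Kähler manifolds (Kähler identities, existence and uniqueness of harmonic representatives); a minor additional point, avoided above by running the argument over $\R$ directly, would otherwise be the descent of formality from $\C$-coefficients to $\R$-coefficients.
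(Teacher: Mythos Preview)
Your argument is correct and is precisely the classical Deligne--Griffiths--Morgan--Sullivan proof via the $dd^c$-lemma and the zig-zag $(\mathcal{A}^*,d)\hookleftarrow(\ker d^c,d)\twoheadrightarrow(H^*_{d^c},0)$. Note, however, that the paper does not actually prove this theorem: it merely states it, attributes it to \cite{DGMS75}, and marks it with a \qed; so there is no proof in the paper to compare against, and what you have written is exactly the argument the paper is citing.
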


\begin{ex}
The $n$-torus is topologically formal.
\end{ex}

\subsection{Weak homotopy equivalences} \label{subsec:whe}
Suppose $(C^*,m)$ and $(C'^*,m')$ are two gapped curved \ai algebras over $\Lo$. We remind the notion of a \emph{gapped curved \ai morphism} between them:
$$
f\co (C^*,m)\to (C'^*,m'). 
$$
It is composed of maps
$$
f_k\co (C^*)^{\otimes k}\to C'^*,\quad k\ge 0,
$$
of degree $1-k$
with  the following properties.
The  term  
$
f_0\co \R\to (C')^1
$
is required to have non-zero valuation, that is:
\begin{equation}
\label{eq:f0}
f_0(1)=0\mod \Lop.
\end{equation}
Next, the maps
$\{f_k\}$ satisfy the equations for being a curved \ai functor, see e.g.~\cite[(1.6)]{SeiBook08}:
$$
\begin{array}{l}
\sum_{i,j}
(-1)^{\maltese_i} f_{k-j+1}(x_1,\ldots,x_i,m_{j}(x_{i+1},\ldots,x_{i+j}),x_{j+1},\ldots,x_k)
\\
=\sum_r\sum_{s_1,\ldots,s_r} m'_r(f_{s_1}(x_1,\ldots,x_{s_1}),\ldots,f_{s_r}(x_{s_r},\ldots, x_k)).
\end{array}
$$
For example, the first equation reads:
\begin{equation}
\label{eq:f_1}
f_1(m_0(1))=m_0'(1)+m_1'(f_0(1))+m_2'(f_0(1),f_0(1))+\ldots
\end{equation}
where the right hand side converges by (\ref{eq:f0}). Finally, the maps $f_k$ must be gapped: roughly speaking, this again means that their valuations have no finite accumulation points so that the above sums converge. We refer to \cite{FO3Book} for details.

As earlier, we can package the \ai functor equations into a single equation passing to the bar complex. To this end, introduce a single map between the bar complexes
\begin{equation}
\hat f\co B(C^*(L;\Lo))\to B(C^*(L;\Lo))
\end{equation}
by its action on homogeneous elements:
\begin{equation}
\label{eq:f_hat_expansion}
\hat f(x_1\otimes \ldots\otimes x_k)=\sum_{s_1+\ldots+s_r=k}f_{s_1}(x_1,\ldots,x_{s_1})\otimes\ldots \otimes f_{s_r}(x_{s_r},\ldots, x_k),
\end{equation}
and extend it linearly. In particular,
$$
\hat f(1)=1+f_0(1)+f_0(1)\otimes f_0(1)+\ldots
$$
The \ai functor equations are equivalent to:
\begin{equation}
\label{eq:f_hat_eqn}
\hat f\circ \hat m=\hat m'\circ \hat f,
\end{equation}
where $\hat m$, $\hat m'$ are as in (\ref{eq:m_hat}).

We proceed to the notion of weak homotopy equivalence. Note that reducing $f$ modulo $\Lop$ gives a non-curved \ai morphism
$$
\bar f\co(\bC^*,\m)\to (\bC'^*,\m')
$$
between the non-curved \ai algebras: it satisfies $\bar f_0=0$. The first non-trivial \ai relation says that $\bar f_1$ is a chain map with respect to the differentials $\m_1$ and $\m_1'$.

\begin{dfn}
Let $f$ be a gapped curved \ai morphism as above.
We say that $\bar f$ is a \emph{quasi-isomorphism} if $\bar f_1$ induces an isomorphism on the level of homology $H(C^*,\m_1)\to H(C'^*,\m'_1)$. We say that $f$ is a \emph{weak homotopy equivalence} if $\bar f$ is a quasi-isomorphism.
\end{dfn}

It is well known that weak homotopy equivalences of \ai algebras have weak inverses. See e.g.~\cite{SeiBook08} in the non-curved case, and \cite[Theorem~4.2.45]{FO3Book} in the gapped curved case. We  record a weaker version of this result.

\begin{thm}
	\label{thm:weak_inverse}
	Let $(C^*,m)$ and $(C'^*,m')$ be two gapped curved \ai algebras over $\Lo$. If there exists a weak homotopy equivalence $(C^*,m)\to (C'^*,m')$, then there also exists a weak homotopy equivalence $(C'^*,m')\to (C^*,m)$.\qed
\end{thm}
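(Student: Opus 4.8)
The plan is to build the inverse weak homotopy equivalence by a two-stage argument: first construct a weak inverse of the \emph{classical} (non-curved) part $\bar f$, and then promote it to a gapped curved \ai morphism by inductively solving the curved \ai functor equations order by order in the Novikov filtration. The existence of a weak inverse at the classical level is the standard fact that a quasi-isomorphism of non-curved \ai algebras over a field admits a quasi-inverse; since $\bar f_1$ induces an isomorphism on cohomology of $(\bar C^*, \bar m_1)$, we can produce $\bar g\co (\bar C'^*,\bar m')\to(\bar C^*,\bar m)$ with $\bar g_0=0$ together with homotopies witnessing that $\bar g\circ\bar f\simeq \id$ and $\bar f\circ\bar g\simeq\id$. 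This uses only the cited non-curved theory (e.g.~\cite{SeiBook08}).

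Next I would lift $\bar g$ to a genuine morphism $g\co (C'^*,m')\to (C^*,m)$. Write $g=\sum_k g_k$ with $g_k$ of degree $1-k$, and set $g_k^{(0)}=\bar g_k$ as the zeroth-order term. The gapped condition gives a discrete set of "energies" $0=E_0<E_1<E_2<\dots$ at which the operations $m,m'$ have nontrivial components; I would solve the curved \ai functor equation $\hat g\circ\hat m'=\hat m\circ\hat g$ by induction on this energy level. At each step, collecting the terms of a fixed energy $E_i$ in the functor equation produces an equation of the shape $\bar m_1\circ (\text{new } g_k) \pm (\text{new }g_k)\circ \bar d' = (\text{known lower-order expression})$, i.e.~the unknown appears linearly through the classical differential, and the right-hand side is built from the already-constructed lower-energy pieces of $g$, from $m$, $m'$, and from $\bar f,\bar g$. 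Using the explicit contracting homotopy for $\bar g$ one inverts this equation and defines the energy-$E_i$ part of each $g_k$. The verification that the right-hand side is a cocycle for the relevant differential (so that the equation is solvable) is exactly the place where one invokes the lower-order instances of the functor equation together with the \ai relations \eqref{eq:ai} and \eqref{eq:ai_bar}; this is a cohomological obstruction computation of the kind that is routine in homological perturbation theory. Gappedness of the resulting $g$ follows because at each energy level only finitely many classes contribute, so valuations do not accumulate.

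Finally, I would check that $g$ so constructed is a weak homotopy equivalence: by construction its classical reduction is $\bar g$, which is a quasi-isomorphism, so $g$ is a weak homotopy equivalence by definition. This completes the argument; alternatively, and more economically, one may simply cite \cite[Theorem~4.2.45]{FO3Book}, which already contains the full inversion statement in the gapped curved setting, with the construction above being the sketch of why it holds.

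\textbf{Main obstacle.} The delicate point is the inductive lifting: one must verify at each energy level that the accumulated lower-order data assembles into a cocycle so that the homotopy $\bar g$ can be applied to produce the next term, and one must track the Koszul signs $(-1)^{\maltese_i}$ carefully through the bar-complex reformulation \eqref{eq:f_hat_eqn}. The bookkeeping is nontrivial but contains no conceptual difficulty beyond what is already established in \cite{FO3Book}; for the purposes of this paper it is cleanest to state the result as a consequence of the cited theorem and only indicate the homological-perturbation mechanism behind it.
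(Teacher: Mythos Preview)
The paper does not prove this theorem at all: it is stated with a terminal \qed and preceded by the sentence ``It is well known that weak homotopy equivalences of \ai algebras have weak inverses. See e.g.~\cite{SeiBook08} in the non-curved case, and \cite[Theorem~4.2.45]{FO3Book} in the gapped curved case.'' So the paper's entire ``proof'' is the citation you mention in your last paragraph.

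Your sketch of the actual mechanism (invert on the classical level, then lift through the energy filtration by an obstruction argument) is along the right lines and is indeed roughly how \cite[Theorem~4.2.45]{FO3Book} is proved, but it goes well beyond what the paper itself supplies. For the purposes of matching the paper, your final sentence---simply cite \cite[Theorem~4.2.45]{FO3Book}---is exactly what is done.
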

The theorem above  can be strengthened by asserting that the two morphisms in question are weak inverses of each other; we shall not need this addition.

As a separate but more elementary property, 
being weakly homotopy equivalent is a transitive relation, because
there is an explicit formula for the composition of two \ai morphisms. Combining it with Theorem~\ref{thm:weak_inverse}, we conclude that  weak homotopy equivalence is indeed an equivalence relation.

Finally, we recall the fundamental invariance property of the Fukaya algebra of a Lagrangian submanifold.

\begin{thm}
	\label{th:inv}
	Let $L\subset X$ be a Lagrangian submanifold.
	Given two choices $J,\ss$ and $J',\ss'$, there is is weak homotopy equivalence between the Fukaya \ai algebras $CF^*(L;\Lo;J,\ss)$ and $CF^*(L;\Lo;J',\ss')$.\qed
\end{thm}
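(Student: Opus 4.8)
The plan is to establish the invariance of the Fukaya algebra under changes of the auxiliary data by the standard continuation/interpolation argument, reducing it to the structural facts already assembled in this section (in particular Theorem~\ref{thm:weak_inverse} on weak inverses, and Theorem~\ref{thm:top_ai} identifying the classical part). First I would interpolate between the two choices: pick a path $(J_s,\ss_s)$, $s\in[0,1]$, of tame almost complex structures and compatible perturbation data with $(J_0,\ss_0)=(J,\ss)$ and $(J_1,\ss_1)=(J',\ss')$, which is possible since the space of such choices is non-empty and connected (indeed contractible, cf.~the discussion around \cite[Proposition~3.5.2]{FO3Book}). Then I would construct an \ai morphism
$$
\mathfrak{f}\co CF^*(L;\Lo;J,\ss)\to CF^*(L;\Lo;J',\ss')
$$
by counting parametrised moduli spaces of $J_s$-holomorphic disks with boundary on $L$ (the usual ``moving boundary condition'' construction for continuation maps of curved \ai algebras, \cite[Theorem~4.6.1]{FO3Book}); gappedness of $\mathfrak{f}$ again follows from Gromov compactness, exactly as for the $m_k$ themselves.

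The next step is to verify that $\mathfrak{f}$ is a \emph{weak} homotopy equivalence, i.e.~that $\bar{\mathfrak{f}}_1$ is a quasi-isomorphism on the classical parts. Here the point is that reducing $\mathfrak{f}$ modulo $\Lop$ kills all the disk contributions of positive area, so $\bar{\mathfrak{f}}$ is the classical (constant-disk) continuation morphism between the topological \ai algebras $(\bC^*,\m)$ and $(\bC'^*,\m')$ of $L$. By Theorem~\ref{thm:top_ai} both of these are quasi-isomorphic to the de~Rham dg algebra of $L$, and the classical continuation morphism is compatible with these identifications; in particular $\bar{\mathfrak{f}}_1$ is the identity (or a chain homotopy equivalence) on de~Rham cohomology, hence induces an isomorphism $H(\bC^*,\m_1)\to H(\bC'^*,\m'_1)$. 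Therefore $\mathfrak{f}$ is a weak homotopy equivalence in the sense of the definition above, which is exactly the assertion of the theorem.

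I expect the main obstacle to be the analytic/combinatorial setup of the parametrised moduli spaces defining $\mathfrak{f}$: one must choose the interpolating perturbation data $\ss_s$ so that all the relevant parametrised moduli spaces are transversely cut out with the correct boundary strata (encoding the curved \ai functor equations \eqref{eq:f_hat_eqn}), and one must check gappedness of the resulting $\mathfrak{f}_k$. However, as emphasised in Section~\ref{subsec:IntroRmk}, we are free to take this as a black box: in the rational/stabilising-divisor setting it is carried out in \cite{CW15,CW17}, and in general it is part of the package of \cite[Chapter~4]{FO3Book}. The only genuinely new observation needed is the identification of $\bar{\mathfrak{f}}$ with the classical de~Rham continuation morphism, and this is immediate from the gapped structure together with Theorem~\ref{thm:top_ai}. (Alternatively, one can bypass the explicit construction of $\mathfrak{f}$ entirely and invoke \cite[Theorem~4.6.1]{FO3Book} directly; the proof sketch above is the conceptual reason it holds.)
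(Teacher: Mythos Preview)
Your sketch is the standard continuation argument and is essentially correct as an outline, but note that the paper does not actually prove this theorem: it is stated with a \qed\ and treated as a black-box input from the literature (\cite[Theorem~4.6.1]{FO3Book}, and \cite{CW15,CW17} in the stabilising-divisor setting), in line with the remark in Section~\ref{subsec:IntroRmk}. So there is nothing to compare against; your proposal is simply a summary of what those references do, and indeed you say as much in your final parenthetical.
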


\subsection{Classically minimal algebras}

The definition of the $\Psi$-invariant given in the next section will use classically minimal models of \ai~algebras. The book \cite{FO3Book} uses a different term: it calls them \emph{canonical} algebras.

\begin{dfn}
	\label{dfn:clas_min}
Let $(C^*,m)$ be a gapped curved \ai algebra over $\Lo$. It is called \emph{classically minimal} if $\m_1=0$.
\end{dfn}

The following is a version of the homological perturbation lemma, see e.g.~\cite[Theorem~5.4.2]{FO3Book}.

\begin{thm}
	\label{thm:equiv_min}
	Let $(C^*,m)$ be a gapped curved \ai algebras over $\Lo$. 	
	Then it is weakly homotopy equivalent to a classically minimal one. 
	
	Moreover, suppose that the classical part $(\bC^*,\m)$ is formal, i.e.~quasi-isomorphic to an algebra 
	$$(H^*,\mu)$$ with an associative product $\mu$ and all other structure maps trivial. Then $(C^*,m)$ is weakly homotopy equivalent to a gapped curved \ai structure 
	$$
	(H^*\otimes_\R\Lo,m')
	$$ 
	where 
\begin{equation*}
	\m'_2=\mu,\quad\text{and}\quad  \m'_k=0\quad\text{for all}\quad k\neq 2.
\end{equation*}
\end{thm}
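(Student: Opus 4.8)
The plan is to prove the two assertions in sequence, treating the first as the general homological perturbation statement and the second as a refinement that exploits formality. For the first assertion, I would invoke the standard tree-summation (homological perturbation) construction: choose a homotopy retract of the classical complex $(\bC^*,\m_1)$ onto its cohomology $H^*(\bC^*,\m_1)$, i.e.\ a projection $\pi$, an inclusion $\iota$, and a homotopy $h$ with $\id - \iota\pi = \m_1 h + h \m_1$. Lifting $\iota$, $\pi$, $h$ $\Lo$-linearly and transporting the curved \ai structure $m$ along them by the usual sum over planar rooted trees (with $h$ inserted at internal edges, the $m_k$ at vertices, and $\iota,\pi$ at the leaves/root) produces a new gapped curved \ai structure $m''$ on $H^*(\bC^*,\m_1)\otimes_\R\Lo$ together with a weak homotopy equivalence to $(C^*,m)$. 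The output is classically minimal because the linear part of the transported structure is exactly the induced differential on cohomology, which vanishes; gappedness of $m''$ follows because each tree contributes area at least the minimum of the areas of its vertices and the energy filtration is preserved, so valuations cannot accumulate. This is precisely \cite[Theorem~5.4.2]{FO3Book}, so I would simply cite it; the only thing to check is that the ambient Novikov ring is $\Lo$ rather than $\Lambda_0$, but as remarked in the excerpt the gapped theory is insensitive to this enlargement.

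For the second assertion, I would proceed in two stages. First, by the hypothesis the classical part $(\bC^*,\m)$ is formal, meaning there is a chain of quasi-isomorphisms of (non-curved) \ai algebras connecting it to $(H^*,\mu)$ with $\mu$ associative and all higher $\mu_k$ trivial. By Theorem~\ref{thm:weak_inverse} and transitivity of weak homotopy equivalence, I may assume a single weak homotopy equivalence in either direction. Now I want to upgrade this to the curved setting: I would run the homological perturbation construction of the first part but organized relative to the formality retract, so that the transported curved \ai structure lives on $H^*\otimes_\R\Lo$ and has classical part exactly $(H^*,\mu)$ — that is, $\m''_1 = 0$, $\m''_2 = \mu$, and $\m''_k = 0$ for $k\ge 3$ and $k=0$. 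Concretely: first pass to a classically minimal model $(\bC^*_{\min},\m_{\min})$ of the classical part whose underlying complex is $H^*$ and whose only nonzero operation is $\mu$ (this uses formality plus uniqueness of minimal models up to isomorphism), then lift this identification $\Lo$-linearly and transport the curved structure $m$ along the corresponding weak homotopy equivalence. The resulting $(H^*\otimes_\R\Lo, m')$ satisfies $\m'_k = 0$ for $k\ne 2$ and $\m'_2 = \mu$ on the nose, and remains gapped curved with a weak homotopy equivalence to $(C^*,m)$.

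I expect the main obstacle to be bookkeeping rather than conceptual: making sure that the classical part of the \emph{curved} transported structure really equals the chosen formal model $(H^*,\mu)$ and not merely something quasi-isomorphic to it. This requires either (a) absorbing the entire chain of classical quasi-isomorphisms witnessing formality into a single retraction datum before lifting, or (b) first transporting $m$ to an arbitrary classically minimal model via part one and then applying a further \ai isomorphism (over $\Lo$, with identity linear part) to rectify the classical operations to $(H^*,\mu)$, using that an \ai isomorphism of the classical parts lifts to one of the curved algebras because the obstruction theory for such lifts is governed by the energy filtration and is therefore unobstructed order by order. Either route reduces to repeated use of the homological perturbation lemma and Theorem~\ref{thm:weak_inverse}; the gappedness and convergence claims at each stage are routine consequences of the fact that every tree contributing to a transported operation has total area bounded below by that of its smallest vertex, so I would not belabor them.
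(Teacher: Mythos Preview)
Your proposal is correct and follows the standard homological perturbation approach. The paper, however, gives no proof of its own: it simply states the theorem with the attribution ``see e.g.~\cite[Theorem~5.4.2]{FO3Book}'' and moves on. So your sketch is strictly more detailed than what appears in the paper, but the underlying method (homological perturbation / tree summation, with the same citation) is identical.
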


The following obvious lemma will play a key role in the invariance property of the $\Psi$-invariant which we will define soon.

\begin{lem}
	\label{lem:top_min}
Let $f\co (C^*,m)\to(C'^*,m')$ be a weak homotopy equivalence between two classically minimal gapped curved \ai algebras over $\Lo$. Then $\bar f_1\co \bC^*\to \bC'^*$ is an isomorphism.
\end{lem}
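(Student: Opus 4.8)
The plan is to unwind the definitions of the relevant $\mA_\infty$-structures after reducing modulo $\Lop$. Since both $(C^*,m)$ and $(C'^*,m')$ are classically minimal, their classical parts $(\bC^*,\m)$ and $(\bC'^*,\m')$ have vanishing differential: $\m_1=0$ and $\m'_1=0$ by Definition~\ref{dfn:clas_min}. Consequently the homology of $(\bC^*,\m_1)$ is simply $\bC^*$ itself (every element is a cycle, no element is a boundary), and likewise $H(\bC'^*,\m'_1)=\bC'^*$.

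Next I would apply the hypothesis that $f$ is a weak homotopy equivalence. By definition this means $\bar f$ is a quasi-isomorphism, i.e.~$\bar f_1$ induces an isomorphism on homology $H(\bC^*,\m_1)\xrightarrow{\ \sim\ } H(\bC'^*,\m'_1)$. Combining with the previous step, the induced map on homology is just the map $\bar f_1\co \bC^*\to \bC'^*$ itself, because the homology spaces coincide with the cochain spaces and the induced map on homology is represented by $\bar f_1$ on representatives. Hence $\bar f_1$ is an isomorphism of $\R$-vector spaces, which is precisely the assertion.

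There is really no serious obstacle here — the statement is essentially a bookkeeping observation, which is why the excerpt labels it ``obvious''. The only point requiring a sentence of care is the identification of the map induced on homology with $\bar f_1$ on the nose: when the differentials vanish, the quotient $\ker/\im$ is the identity on each cochain group, and the map induced by a chain map on these trivial quotients is literally the chain map. After noting that, one concludes that the isomorphism on homology guaranteed by the quasi-isomorphism hypothesis is exactly $\bar f_1\co \bC^*\to \bC'^*$, completing the proof. \qed
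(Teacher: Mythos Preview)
Your proof is correct and follows exactly the same approach as the paper's: classical minimality means the differentials $\m_1,\m'_1$ vanish, so the homology groups coincide with $\bC^*,\bC'^*$ and the quasi-isomorphism $\bar f_1$ is an isomorphism on the nose. The paper's proof compresses this into a single sentence, but the content is identical.
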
	
\begin{proof}
By definition, $\bar f_1$ is a quasi-isomorphism between the vector spaces $C^*$ and $C'^*$ with trivial differential; hence it is an isomorphism. 
\end{proof}
We remind that, by definition,
\begin{equation}
\label{eq:f1}
f_1=\bar f_1\mod \Lop.
\end{equation}

\begin{rmk}
Suppose $J,\ss$ and $J',\ss'$ are two choices of a tame almost complex structures  with a  perturbation scheme. Denote by
$m_k$, $m_k'$
the corresponding Fukaya \ai structures on $C^*=C^*(L;\Lo)$ extending \emph{the same} topological \ai algebra structure
on $L$. 
Then there is a weak homotopy equivalence	$f\co (C^*,m)\to (C'^*,m')$ satisfying
$$
\bar f_1=\id
$$
regardless of whether  the given topological \ai structure is minimal. See \cite{FO3Book}, compare \cite[Section~5]{Sei08}.
\end{rmk}	
	
\subsection{Around the Maurer-Cartan equation}
\label{subsec:mc}
Let $(C^*,m)$ be a gapped curved \ai algebra over $\Lo$.
Consider an element 
\begin{equation}
\label{eq:a}
a\in C^{*},\quad 
a=0\mod \Lop.
\end{equation}
The \emph{prepotential} of $a$ is
\begin{equation}
\label{eq:mea}
m(e^a)=m_0(1)+m_1(a)+m_2(a,a)+\ldots\in C^*.
\end{equation}
The condition that $a=0\mod\Lop$ guarantees that the above sum converges. Moreover, (\ref{eq:m0}) implies that $m(e^a)=0\mod\Lop$.

One says that $a$ is a \emph{Maurer-Cartan element} or a \emph{bounding cochain}
if $m(e^a)=0$. Maurer-Cartan elements are of central importance for Floer theory
as they allow to deform the initial \ai\ algebra into a \emph{non-curved} one,
so that one can compute its homology. A Lagrangian $L$ whose Fukaya algebra has
a Maurer-Cartan element is called \emph{weakly unobstructed}.

Our interest in the Maurer-Cartan equation is somewhat orthogonal to this. We shall look at the prepotential itself, and will not be interested in Maurer-Cartan elements per se.

Denote
\begin{equation}
e^a=1+a+a\otimes a+\ldots\in B(C^*),
\end{equation}
where $B(C^*)$ is the bar complex (\ref{eq:bar}). For $\hat m$ as in (\ref{eq:m_hat}), it holds that
\begin{equation}
\label{eq:hmea}
\hat m(e^a)=\sum_{i,j\ge 0}a^{\otimes i}\otimes m(e^a)\otimes a^j\ \in\  B(C^*).
\end{equation}
In particular the Maurer-Cartan equation on $a$ is equivalent to the equation
$$
\label{eq:mc}
\hat m(e^a)=0,
$$
see \cite[Definition~3.6.4]{FO3Book}. More relevantly for our purposes, observe the following identity:
\begin{equation}
\label{eq:val_eb}
\val(m(e^a))=\val(\hat m(e^a)).
\end{equation}
This is because the lowest-valuation summand in (\ref{eq:hmea}) is obviously the one with $i=j=0$. Here we have extended the valuation from $C^*$ to $B(C^*)$ in the natural way: for a general element 
$$
x=x_{i^1_1}\otimes \ldots \otimes x_{i^1_{r_1}}+x_{i^2_1}\otimes \ldots \otimes x_{i^2_{r_2}}+\ldots\in B(C^*(L;\Lo)),\quad x_{i^j_k}\in C^*,
$$
we set
\begin{equation}
\val(x)=\min\{\val(x_{i^1_1})\ldots \val(x_{i^1_{r_1}}),\ \val(x_{i^1_1})\ldots \val(x_{i^2_{r_2}}),\ \ldots\}.
\end{equation}
This minimum exists for gapped $A_\infty$ algebras.
We proceed to the functoriality properties of the expression $\hat m(e^a)$.
Consider a gapped \ai morphism
$$
f\co (C^*,m)\to (C'^*,m').
$$
Take an element $a$ as in (\ref{eq:a}) and denote after \cite[(3.6.37)]{FO3Book}:
\begin{equation}
\label{eq:fstar}
f_*a=f_0(1)+f_1(a)+f_2(a,a)+\ldots\in C'^*.
\end{equation}
Note that  by (\ref{eq:f0}), it holds that $f_*(a)=0\mod\Lop$. 
Next, one has the identities
\begin{equation}
\label{eq:fea}
\hat f(e^a)=e^{f_*a}
\end{equation}
and
\begin{equation}
\label{eq:mfea}
\hat m'(\hat f(e^a))=\hat f(\hat m(e^a)),
\end{equation}
see \cite[Proof of Lemma 3.6.36]{FO3Book}. In particular, $f_*$ takes Maurer-Cartan elements to Maurer-Cartan elements; however instead of this property, we shall need the  key proposition below.
\begin{prop}
\label{prop:vals}	
Suppose $(C^*,m)$ and $(C'^*,m')$ are classically minimal (see Definition~\ref{dfn:clas_min})  gapped curved \ai algebras over $\Lo$, and $f\co (C^*,m)\to (C'^*,m')$ is a weak homotopy equivalence.
	For any element $a$ as in (\ref{eq:a}), it holds that 
	$$
	\val(m'(e^{f_*a}))=\val (m (e^a)).
	$$
\end{prop}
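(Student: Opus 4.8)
The plan is to move everything into the bar complex, where the functoriality identities \eqref{eq:fea} and \eqref{eq:mfea} make the comparison transparent, and then to reduce the statement to the single fact that $\hat f$ preserves valuations — which is precisely where the classical minimality of \emph{both} algebras enters, through Lemma~\ref{lem:top_min}. Concretely, I would write down the chain
\begin{align*}
\val\bigl(m'(e^{f_*a})\bigr) &= \val\bigl(\hat m'(e^{f_*a})\bigr) = \val\bigl(\hat m'(\hat f(e^a))\bigr)\\
&= \val\bigl(\hat f(\hat m(e^a))\bigr) = \val\bigl(\hat m(e^a)\bigr) = \val\bigl(m(e^a)\bigr).
\end{align*}
The two outer equalities are instances of \eqref{eq:val_eb}: the first for $(C'^*,m')$ applied to $f_*a$, which lies in $\Lop$ by \eqref{eq:f0}; the last for $(C^*,m)$ applied to $a$. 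The two middle equalities are \eqref{eq:fea} and \eqref{eq:mfea}. So the whole statement reduces to the claim that $\val\bigl(\hat f(z)\bigr)=\val(z)$ for $z=\hat m(e^a)$; in fact I would prove this for every $z\in B(C^*)$.

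The inequality $\val(\hat f(z))\ge\val(z)$ is formal: every $f_k$ is gapped, so $\val\bigl(f_k(y_1,\dots,y_k)\bigr)\ge\val(y_1)+\dots+\val(y_k)$, and since the valuation of a tensor word is the sum of the valuations of its letters, each summand in the expansion \eqref{eq:f_hat_expansion} of $\hat f(x_1\otimes\dots\otimes x_k)$ has valuation at least $\val(x_1\otimes\dots\otimes x_k)$. For the reverse inequality I would reduce $\hat f$ modulo $\Lop$: because $f_0(1)\in\Lop$ the summands with some $s_i=0$ die, and what remains is $\hat f\equiv\hat{\bar f}\pmod{\Lop}$, the bar map of the classical morphism $\bar f\co(\bC^*,\bar m)\to(\bC'^*,\bar m')$. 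By Lemma~\ref{lem:top_min} the linear part $\bar f_1$ is an isomorphism; since both classical algebras are minimal ($\bar m_1=\bar m'_1=0$), $\bar f$ is then an $A_\infty$ isomorphism, so $\hat{\bar f}\co B(\bC^*)\to B(\bC'^*)$ is a linear isomorphism. Extending $\hat{\bar f}$ $\Lo$-linearly gives a valuation-preserving bijection $\Phi_0$, and $\hat f=\Phi_0+E$ where, by gappedness, $E$ strictly raises valuation (its effect is controlled by the positive gap constant); hence $\val(\hat f(z))=\val(\Phi_0(z))=\val(z)$.

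The step to get right carefully is this last one — controlling $\hat f$ by its mod-$\Lop$ reduction together with the gap constant, plus the standard fact that an $A_\infty$ isomorphism at the classical level induces one of bar complexes. A slightly more hands-on alternative, which avoids talking about bar-complex isomorphisms, is to observe via \eqref{eq:hmea} and $\val(a)>0$ that the valuation of $\hat m(e^a)$ is realised exactly by the leading term of $m(e^a)$ sitting in the length-one summand of $B(C^*)$, and then to check directly that $f_1$ is valuation-preserving (its reduction $\bar f_1$ is an isomorphism, so the associated graded of $f_1$ is injective) while $f_0(1)\in\Lop$ prevents the length-raising parts of $\hat f$ from lowering the valuation. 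Either way, minimality of both algebras is essential: without it $\bar f_1$ would be only a quasi-isomorphism, $\hat f$ could annihilate the leading term of $z$, and the valuation could jump.
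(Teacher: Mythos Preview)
Your proof is correct, and the reduction chain via \eqref{eq:val_eb}, \eqref{eq:fea}, \eqref{eq:mfea} is identical to the paper's. Your ``hands-on alternative''---observing that the leading term of $\hat m(e^a)$ sits in the length-one summand and then using that $\bar f_1$ is an isomorphism---is precisely the argument the paper gives. Your primary route, showing that $\hat f$ is valuation-preserving on \emph{all} of $B(C^*)$ because its reduction $\hat{\bar f}$ is a linear isomorphism of bar complexes, is a valid and slightly more general statement; the paper avoids it by exploiting the specific shape of $\hat m(e^a)$, which keeps the argument shorter and sidesteps the bookkeeping you flag around $\hat f=\Phi_0+E$.
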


\begin{rmk}
	Observe that we are using $m,m'$ not $\hat m,\hat m'$ here, so that $m'(e^{f_*a})$ and $m(e^a)$ are elements of $C'^*$ rather than the bar complex, see (\ref{eq:mea}). But in view of (\ref{eq:val_eb}), we could have used the hat-versions instead.
\end{rmk}	

\begin{proof}
Using (\ref{eq:val_eb}), (\ref{eq:fea}) and (\ref{eq:mfea}) we obtain
$$
\val(m'(e^{f_*a}))
=\val(\hat m'(e^{f_*a}))=\val(\hat m'(\hat f(e^a)))=\val(\hat f(\hat m(e^a))).
$$  
Therefore we need to show that
\begin{equation}
\label{eq:fmea_val}
\val(\hat f(\hat m(e^a)))=
\val(m(e^a)).
\end{equation}
Denote 
$\lambda=\val(m(e^a))$, so that $m(e^a)$ has the form
$$
m(e^a)=T^\lambda y+o(T^\lambda)
$$
for some $y\in \bC^*$, i.e. $\val(y)=0$. In view of (\ref{eq:hmea}), it also holds that
$$
\hat m(e^a)=T^\lambda y+o(T^\lambda)\in B(C^*)
$$
where $y$ is considered as a length-1 element of the bar complex;
compare (\ref{eq:val_eb}).
Because the application of $\hat f$ (\ref{eq:f_hat_expansion}) to length-1 elements of the bar complex reduces to the application of $f_1$, we have:
\begin{equation}
\label{eq:proof_barf1y}
\hat f(T^\lambda y+o(T^\lambda))=T^\lambda f_1(y)+o(T^\lambda)=T^\lambda \bar f_1(y)+o(T^\lambda).
\end{equation}
Let us now use the hypothesis  that  $C^*,C'^*$ are classically minimal; by Lemma~\ref{lem:top_min}, it implies that $\bar f_1$ is an isomorphism. In particular $\bar f_1(y)\neq 0$, therefore $\val(\bar f_1(y))=0$.
We have shown that $\val(\hat f(\hat m(e^a)))=\lambda$, which amounts to (\ref{eq:fmea_val}).
\end{proof}

\begin{rmk}
	The above proof breaks down if $\bar f_1$ has kernel. Indeed, suppose $\bar f_1(y)=0$; this means that $\val(\bar f_1(y))$ equals $+\infty$ rather than $0$, and the valuation of the right hand side of (\ref{eq:proof_barf1y}) is strictly greater than $\lambda$. Hence, classical minimality is an essential condition for Proposition~\ref{prop:vals}.
\end{rmk}

	\begin{rmk} \label{rmk:etabeta}
	Although the above argument is inspecific to this, recall that we are working with $\Lop$ which has an additional variable $q$.
In this ring, a term  $T^\lambda y$ can be written in full form as $T^\lambda = c_\beta q^\beta T^{\omega(\beta)} \bar{y}$ where
		$\bar{y} \in \bC^*$, $c_\beta\in\R$ and $\lambda=\omega(\beta)$. 
		
		Now in the above proof, take
		$a\in C^*$ which is zero modulo $\Lop$, and let 
		$c_\beta q^\beta T^{\omega(\beta)} \bar{y}$
		be one of the lowest-valuation terms of $m(e^a)$:
 $\val(m(e^a))=\omega(\beta)$. 
 Then $m'(e^{f_*a})$  has the corresponding lowest-valuation term $c_\beta  q^\beta T^{\omega(\beta)}\bar f_1(\bar y)$. This 
		fact will be used to prove Proposition \ref{prp:etabeta} below.
	\end{rmk}

\section{The invariant $\Psi$}

\label{sec:Psi}

\subsection{Definition and invariance}
\label{subsec:psi_inv}
We are ready to define the $\Psi$-invariant of a  curved, gapped \ai algebra over a Novikov ring; see the previous section for the terminology.
The Novikov ring could be $\Lambda_0$ or $\Lo$, and we choose the second option. For simplicity, we reduce all gradings modulo~2, and denote by $C^{odd}$ the subspace of odd-degree elements in a graded vector space $C^*$. We denote $[0,+\infty]=[0,+\infty)\cup\{+\infty\}$.

\begin{dfn}
\label{dfn:psi}

Let $(C^*,m)$ be a \emph{classically minimal} (see Definition~\ref{dfn:clas_min})  curved gapped \ai algebra over the Novikov ring $\Lo$.
The \emph{$\Psi$-invariant} of this algebra is defined as follows:
$$
\Psi(C^*,m)=\inf\{\val(m(e^b)) : b\in C^{odd}, \ b=0\mod \Lop\}\in[0,+\infty]
$$
Recall that the expression $m(e^b)$ was introduced in (\ref{eq:mea}).
Now suppose $(C^*,m)$ is a curved gapped \ai algebra over $\Lop$ which is not necessarily classically minimal. Let $(C'^*,m')$ be a weakly homotopy equivalent classically minimal \ai algebra, which exists by Theorem~\ref{thm:equiv_min}. We define
$$
\Psi(C^*,m)=\Psi(C'^*,m').
$$
\end{dfn}

Two remarks are due.

\begin{rmk}
The definition requires to look at \emph{odd degree} elements $b$. This is opposed to the discussion in Subsection~\ref{subsec:mc} where we imposed no degree requirements on the element $a$. Indeed one could give a definition of the $\Psi$-invariant using elements $b$ of any degree; this would be a well-defined invariant which, however, frequently vanishes. Suppose the classical part $(\bC^*,\m)$ is a minimal topological algebra of a smooth manifold $L$; then $\m_2$ is graded-commutative. If we take 
$$
b=T^\epsilon y,\quad y\in \bC^*
$$
with $\epsilon$ sufficiently small,
the expansion of $m(e^b)$ according to (\ref{eq:mea}) will contain the lowest-energy term
$$
T^{2\epsilon}\, \m_2(y,y).
$$
If $y$ has odd degree, this term vanishes by the graded-commutativity. However, if $y$ has even degree, its square may happen to be non-zero, in which case letting $\epsilon\to 0$ implies  $\Psi=0$. Note that this is not an issue when $(\bC^*,\m)$ is the (formal) topological \ai algebra of an $n$-torus (which is the most important case as far as our applications to symplectic geometry are concerned), since the cohomology of the torus is generated as a ring by odd-degree elements. But with the general case in mind, and guided by the analogy with Maurer-Cartan theory, we decided that Definition~\ref{dfn:psi}
is most natural if we only allow odd-degree elements $b$. See also Remark~\ref{rmk:l_infty}.
 \end{rmk}

\begin{rmk}
The condition that $b=0\mod\Lop$ is important due to guarantee the convergence of $m(e^b)$; compare with (\ref{eq:a}) from Subsection~\ref{subsec:mc}.
\end{rmk}

\begin{thm}
\label{thm:psi_inv}	
Let $(C^*,m)$ be a curved gapped \ai algebra over $\Lo$. Then $\Psi(C^*,m)$ is well-defined and is an invariant of the weak homotopy equivalence class of $(C^*,m)$.
\end{thm}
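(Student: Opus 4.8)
The plan is to establish two things: first, that the definition of $\Psi(C^*,m)$ does not depend on the choice of classically minimal model $(C'^*,m')$ used in Definition~\ref{dfn:psi}, and second, that the resulting number depends only on the weak homotopy equivalence class of $(C^*,m)$. Both statements will follow from Proposition~\ref{prop:vals}, which is the technical heart of the matter. The key observation is that Proposition~\ref{prop:vals} gives, for any weak homotopy equivalence $f\co (C^*,m)\to(C'^*,m')$ \emph{between classically minimal algebras}, the equality $\val(m'(e^{f_*a})) = \val(m(e^a))$ for every admissible $a$.

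First I would prove the well-definedness. Suppose $(C'^*,m')$ and $(C''^*,m'')$ are two classically minimal algebras, both weakly homotopy equivalent to $(C^*,m)$. Since weak homotopy equivalence is an equivalence relation (transitivity plus Theorem~\ref{thm:weak_inverse}), there is a weak homotopy equivalence $f\co (C'^*,m')\to(C''^*,m'')$. By Lemma~\ref{lem:top_min}, $\bar f_1$ is an isomorphism $\bC'^*\to\bC''^*$, and in particular it restricts to an isomorphism on odd-degree parts. Now I claim that $b\mapsto f_*b$ gives a bijection between $\{b\in C'^{odd}: b=0\mod\Lop\}$ and $\{b'\in C''^{odd}: b'=0\mod\Lop\}$: it lands in the right set because $f_*$ preserves the mod-$\Lop$ vanishing condition (by \eqref{eq:f0}) and preserves parity, and it is bijective because $f$ has a weak inverse $g$ with $g_*\circ f_*$ and $f_*\circ g_*$ equal to the identity up to the gauge action — more carefully, one reduces to the fact that the leading term of $f_*b$ is $\bar f_1$ applied to the leading term of $b$ (cf.\ Remark~\ref{rmk:etabeta}), so $f_*$ is "triangular" with invertible linear part, hence bijective on the relevant set of cochains. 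Combining this bijection with Proposition~\ref{prop:vals}, the two infima
\[
\inf\{\val(m'(e^b))\} \quad\text{and}\quad \inf\{\val(m''(e^{b'}))\}
\]
are taken over matched families with equal values, so they coincide. This shows $\Psi(C'^*,m')=\Psi(C''^*,m'')$, and $\Psi(C^*,m)$ is well-defined.

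Next I would prove invariance under weak homotopy equivalence. Suppose $(C^*,m)$ and $(D^*,n)$ are weakly homotopy equivalent gapped curved \ai algebras over $\Lo$. By Theorem~\ref{thm:equiv_min} choose classically minimal models $(C'^*,m')$ and $(D'^*,n')$ respectively. By transitivity of weak homotopy equivalence, $(C'^*,m')$ and $(D'^*,n')$ are themselves weakly homotopy equivalent — both being classically minimal. By the well-definedness argument just given (applied with $(C'^*,m')$ in the role of "$(C^*,m)$"), $\Psi(C'^*,m')=\Psi(D'^*,n')$. But by Definition~\ref{dfn:psi}, $\Psi(C^*,m)=\Psi(C'^*,m')$ and $\Psi(D^*,n)=\Psi(D'^*,n')$, so $\Psi(C^*,m)=\Psi(D^*,n)$, as desired.

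The main obstacle I anticipate is the bijectivity claim for $f_*$ on the set of admissible odd cochains. Proposition~\ref{prop:vals} by itself only asserts equality of valuations; to match the two infima I need to know that as $b$ ranges over admissible odd elements of $C'^*$, the element $f_*b$ ranges over \emph{all} admissible odd elements of $C''^*$ (or at least a cofinal-in-valuation subset). This requires either constructing a weak inverse $g$ with $f_*\circ g_*$ close enough to the identity, or running an order-by-order (valuation filtration) argument showing that any target cochain $b''$ is hit: write $b''$ as a limit $\sum b''_j$ graded by valuation, and inductively solve $f_*b = b''$ using invertibility of $\bar f_1$ at each stage, with gappedness guaranteeing convergence. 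This is a standard but slightly delicate "filtered Newton iteration" that I would need to carry out carefully; everything else reduces cleanly to Proposition~\ref{prop:vals}, Lemma~\ref{lem:top_min}, and the equivalence-relation property from Theorem~\ref{thm:weak_inverse}.
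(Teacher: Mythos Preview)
Your approach is essentially the paper's approach: reduce to Proposition~\ref{prop:vals} via Theorem~\ref{thm:weak_inverse} and the fact that weak homotopy equivalence is an equivalence relation. However, the ``main obstacle'' you worry about --- surjectivity of $f_*$ on admissible odd cochains --- is a detour you don't need. Proposition~\ref{prop:vals} applied to $f\co (C'^*,m')\to(C''^*,m'')$ immediately gives $\Psi(C''^*,m'')\le\Psi(C'^*,m')$, since every admissible odd $b$ produces an admissible odd $f_*b$ with $\val(m''(e^{f_*b}))=\val(m'(e^b))$. For the reverse inequality, simply invoke Theorem~\ref{thm:weak_inverse} to obtain a weak homotopy equivalence $g\co (C''^*,m'')\to(C'^*,m')$ and apply Proposition~\ref{prop:vals} again with $g$ in place of $f$. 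This yields $\Psi(C'^*,m')\le\Psi(C''^*,m'')$ directly, with no need to show that $f_*$ (or $g_*$) is surjective, and no filtered Newton iteration. The paper's two-sentence proof is exactly this symmetric application.
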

\begin{proof}
Any two classically minimal \ai algebras which are both weakly homotopy equivalent to $(C^*,m)$ are weakly homotopy equivalent to each other by Theorem~\ref{thm:weak_inverse}. Therefore the $\Psi$-invariants of these two classically minimal algebras are equal by Proposition~\ref{prop:vals}. Hence $\Psi(C^*,m)$ does not depend on the choice of the classically minimal model used to compute it. The same argument proves the invariance under weak homotopy equivalences of $(C^*,m)$. 
\end{proof}

\begin{dfn}
Let $L\subset X$ be a Lagrangian submanifold.
We define the $\Psi$-invariant of $L$,
$$
\Psi(L)\in [0,+\infty],
$$
to be the $\Psi$-invariant of its Fukaya algebra $CF^*(L;\Lo;J,\ss)$ for some choice of a tame almost complex structure $J$ and a perturbation scheme $\ss$. In Corollary~\ref{cor:psi_pos} we will show that $\Psi(L)$ is strictly positive.
\end{dfn}

\begin{thm}
	\label{thm:psi_L_inv}
The invariant $\Psi(L)$ is well-defined and is invariant under Hamiltonian isotopies of $L$.
\end{thm}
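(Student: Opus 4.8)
The plan is to prove Theorem~\ref{thm:psi_L_inv} by assembling two ingredients already available in the text: the invariance results for Fukaya algebras, and the invariance of the $\Psi$-invariant under weak homotopy equivalence (Theorem~\ref{thm:psi_inv}).

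\smallskip

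First I would address well-definedness. The invariant $\Psi(L)$ is defined as $\Psi(CF^*(L;\Lo;J,\ss))$ for some auxiliary choice of a tame almost complex structure $J$ and a perturbation scheme $\ss$. Given two such choices $(J,\ss)$ and $(J',\ss')$, Theorem~\ref{th:inv} provides a weak homotopy equivalence between the two Fukaya \ai algebras $CF^*(L;\Lo;J,\ss)$ and $CF^*(L;\Lo;J',\ss')$. Since by Theorem~\ref{thm:psi_inv} the quantity $\Psi$ is an invariant of the weak homotopy equivalence class of a curved gapped \ai algebra, we conclude $\Psi(CF^*(L;\Lo;J,\ss)) = \Psi(CF^*(L;\Lo;J',\ss'))$. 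Hence $\Psi(L)$ does not depend on the auxiliary data, so it is well-defined.

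\smallskip

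Second I would treat Hamiltonian invariance. Let $\phi\co X\to X$ be a Hamiltonian diffeomorphism and put $L' = \phi(L)$. One wants $\Psi(L) = \Psi(L')$. Choose auxiliary data $(J,\ss)$ for $L$; then $(\phi_*J, \phi_*\ss)$ is admissible data for $L'$, and the diffeomorphism $\phi$ identifies the relevant moduli spaces of holomorphic disks, so it induces an isomorphism of curved gapped \ai algebras $CF^*(L;\Lo;J,\ss)\xrightarrow{\ \sim\ } CF^*(L';\Lo;\phi_*J,\phi_*\ss)$ (an isomorphism is in particular a weak homotopy equivalence). Alternatively, and perhaps more cleanly, one invokes the fundamental invariance property of Fukaya algebras under Hamiltonian isotopies: the Fukaya algebra of $L$ and of $L'$ are weakly homotopy equivalent (this is part of the standard package cited from \cite{FO3Book}, in the same spirit as Theorem~\ref{th:inv}). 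Either way, applying Theorem~\ref{thm:psi_inv} again yields $\Psi(L)=\Psi(L')$. Combining with the well-definedness step, the proof is complete.

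\smallskip

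The only real subtlety — the ``main obstacle'' such as it is — is bookkeeping about which Novikov ring to use. The parameter $q^\beta$ records relative homology classes in $H_2(X,L;\Z)$, and under a Hamiltonian isotopy one must check that the identification of $H_2(X,L;\Z)$ with $H_2(X,L';\Z)$ (via the trace of the isotopy, which is canonical for Hamiltonian isotopies since the flux vanishes) is compatible with the weak homotopy equivalence and preserves $\omega(\beta)$, so that valuations match. Since a Hamiltonian isotopy has zero flux, areas of disk classes are preserved under this identification, and the whole argument goes through verbatim; this is exactly why $\Psi$ is a Hamiltonian invariant but not, a priori, an invariant under general Lagrangian isotopy. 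I would state this compatibility explicitly and otherwise keep the proof short, since each analytic input has already been quoted.
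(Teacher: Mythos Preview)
Your proof is correct and follows essentially the same approach as the paper: well-definedness from Theorem~\ref{th:inv} combined with Theorem~\ref{thm:psi_inv}, and Hamiltonian invariance by pulling back $J$ (and the perturbation scheme) under the Hamiltonian diffeomorphism. Your additional remarks on the $q$-variable bookkeeping and the zero-flux observation are accurate elaborations, though the paper dispenses with them in a single sentence.
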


\begin{proof}
The invariance under 
the choices of $J$ and $\ss$ follows from Theorem~\ref{thm:psi_inv} and Theorem~\ref{th:inv}.
The invariance under Hamiltonian isotopies is automatic  by pulling back $J$ under the given Hamiltonian diffeomorphism.
\end{proof}

\subsection{Expanding the Maurer-Cartan equation}
From now on, let $(C^*,m)$ be a \emph{classically minimal} gapped curved \ai algebra over $\Lo$.
Choose a basis $b_1,\ldots,b_N$ of $\bC^{odd}$; it induces a basis of $C^{odd}$ denoted by the same symbols. Recall that $\bC$ is a \emph{minimal} \ai\ algebra over $\R$. In practice, we shall use \ai algebras based on vector spaces
$$
\bC^{*}=H^{*}(L;\R),\quad C^{*}=H^{*}(L;\Lo),
$$
but the discussion applies generally. 

We introduce the following notation. A \emph{$k$-type} is a function 
$$\nu: [N]=\{1,\ldots,N\} \to \Z_{\geq 0}$$ such that $\sum_{i \in [N]} \nu(i) = k.$ 
A map $f\co [k] \to [N]$ is said to belong to a $k$-type $\nu$ if $\nu(i) = \#f^{-1}(\{i\})$ for all $i \in [N]$. When $f$ belongs to the $k$-type $\nu$, we write $f\in \nu$. 

It is clear that an arbitrary pair of functions $f,f'\in\nu$ belonging to the same type differs by a permutation in $S_k$, i.e. $f'$ is the composition
$$
[k]\xrightarrow{\sigma}[k]\xrightarrow{f}[N]
$$ 
for $\sigma\in S_k$.

The language of $k$-types is useful for expanding polynomial expressions in non-commuting variables.
Specifically, consider an arbitrary linear combination of basic classes:
$$b = \sum_{i \in [N]} l_i b_i\in C^{odd}, \; b_i \in \bC^{odd}, \; l_i \in \Lop.$$
Denote  $l(\nu)  = (l_1)^{\nu(1)} \cdot \ldots \cdot (l_N)^{\nu(N)}.$ We also put = $l^0 = 1$ for any $l \in \Lop$.
Expanding $m(e^b)$ by  definition we obtain: 
\begin{equation}
\label{eq:meb_exp}
m(e^b) = \sum_{k \geq 0} \ \sum_{\nu \in k\text{-types}} l(\nu) \sum_{\alpha \in H_2(X,L)} T^{\omega(\alpha)} q^\alpha \sum_{f \in \nu} m_{k,\alpha} (b_{f(1)},\ldots,b_{f(k)}).
\end{equation}
Given $\alpha \in H_2(X,L;\Z),$ $k \geq 0$ and a $k$-type $\nu$, define 
\begin{equation}
\label{eq:sm_def}
sm_{k,\alpha}(\nu) : = \sum_{f \in \nu} m_{k,\alpha} (b_{f(1)},\ldots,b_{f(k)}).
\end{equation}
Now let $(i^\nu_1,\ldots,i^\nu_k)=(f(1),\ldots, f(k))$ be the sequence of values some {\it fixed} $f\in \nu$; we make one choice of $f$ for each $k$-type $\nu$ and each $k\ge 0$.
Then, by the above observation about the $S_k$-symmetry,
\begin{equation}
\label{eq:sm_sym}
sm_{k,\alpha}(\nu)=\sum_{\sigma\in S_k}m_{k,\alpha}(b_{i_{\sigma(1)}^\nu},\ldots,b_{i_{\sigma(k)}^\nu}).
\end{equation}
The meaning of $sm_{k,\alpha}(\nu)$ is that it is the sum of symmetrised \ai operations, evaluated on a collection of basic vectors $b_i$ such that the repetitions among those inputs are governed by the type $\nu$.

\begin{rmk}
	\label{rmk:sm}
	Tautologically, any sequence of numbers $(i_1,\ldots,i_k)$ where $i_j\in[N]$ is the sequence of values of a function $f\in \nu$ for \emph{some} $k$-type $\nu$. Therefore
	$$
	\sum_{\sigma\in S_k}m_{k,\alpha}(b_{i_{\sigma(1)}},\ldots,b_{i_{\sigma(k)}})=sm_{k,\alpha}(\nu)
	$$
	for some $\nu$.
\end{rmk}

\subsection{Irrationality}
Recall the valuation $\val\co \Lo\to \R_{\ge 0}$ and $\val\co \Lop\to \R_{>0}$. It induces a valuation on $C^{odd}$ denoted by the same symbol. Recall the crucial property 
$$\val(x+y) \geq \min\{\val(x),\, \val(y)\},$$ which turns into the equality whenever the lowest-valuation terms of $x$ do not cancel with those of $y$.

Denote 
$$\Om := \{\omega(\beta):\beta \in H_2(X,L; \Q)\}\subset\R.$$ This is a finite-dimensional vector subspace of $\R$ over $\Q.$ Recall that $\R$ itself is infinite-dimensional over $\Q$, in fact uncountably so. 

A $k$-type $\nu$
defines a linear map $\nu:\R^N \to \R/\Om$ by  \[(\la_i)_{i \in [N]} \mapsto \lambda(\nu) = \sum \nu(i) \lambda_i.\] 

\begin{df}\label{definition: generic}
Consider a vector $\lambda = (\lambda_i)_{i \in [N]}\in \R^N$. We call it: 
	
	\begin{enumerate}
		\item {\em generic} if the map $\{k\textit{-types}\} \to \R/\Om$ given by $\nu \mapsto \lambda(\nu)$ is injective.
		\item {\em $\Q$-independent} if $(\lambda_i)_{i \in [N]}$ induces an independent collection of $N$ vectors of in $\R/\Om$ considered as a vector space over $\Q$.
	\end{enumerate}
We call an element
$$b = \sum_{i \in [N]} l_i b_i\in C^{odd},\quad  b_i \in \bC^{odd},\quad  l_i \in \Lop,$$ generic (respectively $\Q$-independent) if $(\la_i)_{i \in [N]} = (\val(l_i))_{i \in [N]}$ is generic (respectively $\Q$-independent).
\end{df}

\begin{rmk}
It is clear that   $\Q$-independence implies  genericity. We shall further use $\Q$-independence, but genericity would also suffice for most statements.
\end{rmk}

\begin{rmk}
	Since $\R/\Omega$ is (uncountably) infinite-dimensional, the set of $\Q$-independent elements of $\R^N$ is dense.
\end{rmk}

\begin{lma}\label{lemma: independent no cancellations}
	If $b \in C^{odd}$ is $\Q$-independent, the expansion (\ref{eq:meb_exp}): \[m(e^b) = \sum_{k \geq 0}\ \sum_{\nu \in k\text{-types}} l(\nu) \sum_{\alpha \in H_2^{+}(X,L; J)} T^{\omega(\alpha)} q^\alpha sm_{k,\alpha} (\nu),\] has the following property.
	All non-zero summands in \[s(b,\nu) = l(\nu) \sum_{\alpha \in H_2(X,L;\Z)} T^{\omega(\alpha)} q^\alpha sm_{k,\alpha} (\nu)\] corresponding to different types $\nu$ have different valuations, and $s(b,\nu)$ is zero if and only if $sm_{k,\alpha} (\nu) = 0$ for each $\alpha \in H_2(X,L;\Z).$
\end{lma}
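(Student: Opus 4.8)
The plan is to track valuations through the expansion (\ref{eq:meb_exp}) and show that the $\Q$-independence hypothesis prevents any two distinct types from contributing terms of the same valuation. First I would record that $\val(l(\nu)) = \sum_{i\in[N]}\nu(i)\val(l_i) = \lambda(\nu)$ where $\lambda_i = \val(l_i)$, and that $\val(l(\nu))$ is determined modulo $\Omega$ by the image of $\nu$ under the linear map $\nu\mapsto\lambda(\nu)\in\R/\Omega$. The key point is that inside $s(b,\nu)$ every power $T^{\omega(\alpha)}q^\alpha$ that appears has $\omega(\alpha)\in\Omega$ (since $\alpha\in H_2(X,L;\Z)\subset H_2(X,L;\Q)$), so the valuation of any nonzero summand of $s(b,\nu)$ lies in the coset $\lambda(\nu)+\Omega\subset\R$. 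Hence the valuation of $s(b,\nu)$ itself, if $s(b,\nu)\neq 0$, lies in $\lambda(\nu)+\Omega$.

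Next I would invoke $\Q$-independence: by Definition~\ref{definition: generic}(2) the vectors $(\lambda_i)_{i\in[N]}$ are $\Q$-linearly independent in $\R/\Omega$, so the induced map $\{k\text{-types}\}\to\R/\Omega$, $\nu\mapsto\lambda(\nu)$, is injective — indeed if $\lambda(\nu)=\lambda(\nu')$ in $\R/\Omega$ then $\sum_i(\nu(i)-\nu'(i))\lambda_i=0$ in $\R/\Omega$ with integer coefficients, forcing $\nu=\nu'$ by independence. (This is exactly the implication ``$\Q$-independence $\Rightarrow$ genericity'' noted in the remark, and it holds across all $k$ simultaneously because the target coset only depends on $\nu$, not on $k$.) Therefore for $\nu\neq\nu'$ the cosets $\lambda(\nu)+\Omega$ and $\lambda(\nu')+\Omega$ are disjoint, so nonzero summands coming from different types land in disjoint subsets of $\R$ and in particular have pairwise different valuations; no cancellation between distinct types is possible.

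For the second assertion, fix a type $\nu$ and expand $s(b,\nu) = l(\nu)\sum_{\alpha} T^{\omega(\alpha)}q^\alpha sm_{k,\alpha}(\nu)$. Since $l(\nu)$ is a unit up to the monomial $T^{\lambda(\nu)}$ (more precisely its leading term is a nonzero scalar times $T^{\lambda(\nu)}q^{\beta(\nu)}$ for the class $\beta(\nu)$ assembled from the leading terms of the $l_i$, and $l(\nu)$ is not a zero divisor in the Novikov ring), $s(b,\nu)=0$ if and only if $\sum_{\alpha} T^{\omega(\alpha)}q^\alpha sm_{k,\alpha}(\nu)=0$. The elements $sm_{k,\alpha}(\nu)\in\bC^*$ are defined over $\R$ and the distinct $\alpha$ carry distinct, linearly independent monomials $q^\alpha$ in the $q$-graded Novikov ring (they are bona fide formal placeholders indexed by $H_2(X,L;\Z)$), so the sum vanishes if and only if each $sm_{k,\alpha}(\nu)=0$ — here one uses the gapped property to ensure the sum over $\alpha$ is well-defined and that comparing coefficients of $q^\alpha$ makes sense.

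I expect the main obstacle to be purely bookkeeping: making precise the claim that the valuation of $s(b,\nu)$ is genuinely confined to the coset $\lambda(\nu)+\Omega$ and that leading terms of $l(\nu)$ cannot be killed. One must be slightly careful that a nonzero $s(b,\nu)$ might a priori have its minimal-valuation term cancel against a higher-$\nu$-contribution \emph{within} the same type after summing over $\alpha$ — but this is harmless, since all such terms still have valuation in $\lambda(\nu)+\Omega$, which is all that is needed for the cross-type non-cancellation. No deep input is required beyond the definitions of $\val$, the gapped condition, and $\Q$-independence.
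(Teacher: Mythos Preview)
Your proposal is correct and follows essentially the same argument as the paper: both compute $\val\big(l(\nu)\,T^{\omega(\alpha)}q^\alpha sm_{k,\alpha}(\nu)\big)=\lambda(\nu)+\omega(\alpha)$, use $\Q$-independence to conclude that these values lie in pairwise disjoint cosets $\lambda(\nu)+\Omega$ for distinct types $\nu$, and then invoke the distinctness of the $q^\alpha$ placeholders to rule out cancellation among different classes $\alpha$ within a fixed type. The paper's write-up is terser (it asserts the valuation inequality directly rather than phrasing it via cosets), but the content is identical.
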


\begin{proof}[Proof of Lemma \ref{lemma: independent no cancellations}]
Denote the summands in $s(b,\nu)$ by 
	$$s_{\alpha}(b,\nu) = l(\nu) T^{\omega(\alpha)} q^\alpha sm_{k,\alpha} (\nu).$$
	Consider the vector $\la=(\la_i)_{i\in[N]} = (\val(l_i))$. First, one has $$\val(s_{\alpha}(b,\nu)) = \omega(\alpha) + \lambda(\nu)$$ if $sm_{k,\alpha} (b,\nu) \neq 0$, and $+\infty$ otherwise. Since $\lambda$ is $\Q$-independent, it holds that $$\val(s_{\alpha}(b,\nu)) \neq  \val(s_{\beta}(b,\nu'))$$ for $\nu \neq \nu',$ and each $\alpha,\beta \in H_2(X,L;\Z)$. Moreover, it holds that 
	$$s_{\alpha}(b,\nu) \neq s_{\beta}(b,\nu)$$ whenever $\alpha \neq \beta,$ and $s_{\al}(b,\nu) \neq 0$, $s_{\beta}(b,\nu) \neq 0$ because the factors $q^{\al}$, $q^{\beta}$ distinguish them. 
\end{proof}

\begin{prop}\label{prop: inf independent}
	If $(C^*,m)$ is classically minimal, it holds that
	$$\Psi(C^*,m) = \inf \{\val(m(e^b)) : b\in  C^{odd},\ b \textit{ is }\Q\textit{-independent}\}.$$
\end{prop}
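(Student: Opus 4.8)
The plan is to show two inequalities. Since the defining infimum for $\Psi(C^*,m)$ ranges over \emph{all} $b \in C^{odd}$ with $b = 0 \bmod \Lop$, whereas the right-hand side restricts to $\Q$-independent $b$, the inequality $\Psi(C^*,m) \le \inf\{\val(m(e^b)) : b \text{ is } \Q\text{-independent}\}$ is immediate: any $\Q$-independent $b$ is in particular an admissible element in Definition~\ref{dfn:psi}. So the content is the reverse inequality: for an arbitrary admissible $b$, we must find $\Q$-independent $b'$ with $\val(m(e^{b'})) \le \val(m(e^b)) + \epsilon$, or better, a sequence of $\Q$-independent elements whose prepotential valuations approach $\val(m(e^b))$ from above (or at least get within $\epsilon$), so the infimum over $\Q$-independent elements is $\le \val(m(e^b))$, and hence $\le \Psi(C^*,m)$.

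The key idea is a perturbation/approximation argument on the valuation vector $\la = (\la_i) = (\val(l_i))$. Write $b = \sum_i l_i b_i$ with $l_i \in \Lop$; decompose each $l_i = T^{\la_i} q^{\beta_i}\bar c_i + (\text{higher valuation terms})$. I would fix a small $\epsilon > 0$ and perturb the leading exponents: replace $l_i$ by $l_i' = T^{\la_i + \delta_i} q^{\beta_i} \bar c_i + (\text{the same higher-order tail of } l_i, \text{ suitably truncated/shifted})$ for small real shifts $\delta_i \ge 0$ chosen so that the new vector $\la' = (\la_i + \delta_i)$ is $\Q$-independent (possible since $\Q$-independent vectors are dense, as remarked after Definition~\ref{definition: generic}, and since $\R/\Om$ is uncountably infinite-dimensional over $\Q$). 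Actually the cleanest device is: pick $N$ real numbers $\delta_1,\ldots,\delta_N \in [0,\epsilon)$ that are $\Q$-linearly independent modulo $\Om + \Q\la_1 + \cdots + \Q\la_N$ — again available by density — and set $b' = \sum_i T^{\delta_i} l_i b_i$, so that $\val(l_i') = \la_i + \delta_i$ and the perturbed valuation vector is $\Q$-independent. Then $b'$ is still $0 \bmod \Lop$ and is $\Q$-independent, hence admissible for the right-hand infimum.

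It remains to control $\val(m(e^{b'}))$ in terms of $\val(m(e^b))$. Here I would use the expansion (\ref{eq:meb_exp})–(\ref{eq:sm_sym}): both $m(e^b)$ and $m(e^{b'})$ are built from the same $\R$-linear data $\{sm_{k,\alpha}(\nu)\}$ (these depend only on the $m_{k,\alpha}$ and the type $\nu$, not on the scalars $l_i$), and $m(e^{b'})$ is obtained from $m(e^b)$ by multiplying the $\nu$-summand $s(b,\nu)$ by the scalar $T^{\sum_i \nu(i)\delta_i} = T^{\delta(\nu)}$ with $0 \le \delta(\nu) < k\epsilon$. Thus $\val(s(b',\nu)) = \val(s(b,\nu)) + \delta(\nu)$, and $s(b',\nu) = 0 \iff s(b,\nu) = 0$. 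By Lemma~\ref{lemma: independent no cancellations} applied to the $\Q$-independent $b'$, the nonzero $\nu$-summands of $m(e^{b'})$ have pairwise distinct valuations, so $\val(m(e^{b'})) = \min_{\nu : s(b,\nu)\neq 0} \big(\val(s(b,\nu)) + \delta(\nu)\big)$. Since $\val(m(e^b)) \ge \min_{\nu : s(b,\nu)\neq 0}\val(s(b,\nu))$ and the lowest-valuation summand of $m(e^b)$ is attained by some type $\nu_0$ of some bounded length $k_0$ (gappedness guarantees only finitely many types contribute below any fixed valuation), one gets $\val(m(e^{b'})) \le \val(s(b,\nu_0)) + \delta(\nu_0) \le \val(m(e^b)) + k_0\epsilon$ — wait, more carefully: $\val(m(e^b))$ equals $\val(s(b,\nu_0))$ for whichever contributing type $\nu_0$ realizes the minimal valuation with no cancellation against other types; if there is cancellation the minimum of the $\val(s(b,\nu))$ is $\le \val(m(e^b))$ which only helps. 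Either way, letting $\epsilon \to 0$ (and noting $k_0$ stays bounded by gappedness as long as we keep the target valuation bounded), the infimum over $\Q$-independent elements is $\le \val(m(e^b))$. Taking the infimum over all admissible $b$ gives $\inf\{\val(m(e^{b'})) : b' \text{ } \Q\text{-independent}\} \le \Psi(C^*,m)$, completing the proof.

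The main obstacle I anticipate is the bookkeeping around gappedness needed to make the "$\epsilon \to 0$" limit rigorous: one must ensure that the type $\nu_0$ (equivalently its length $k_0$) achieving the minimal valuation of $m(e^b)$ can be taken from a finite set independent of $\epsilon$, so that the error term $\delta(\nu_0) \le k_0\epsilon$ genuinely goes to $0$. This follows from the gapped condition — only finitely many $(k,\nu,\alpha)$ contribute summands of valuation below any given bound, applied with the bound $\val(m(e^b)) + 1$, say — but it should be spelled out. A secondary, purely cosmetic point is confirming that $b' = \sum_i T^{\delta_i} l_i b_i$ indeed lies in $C^{odd}$ and is $0 \bmod \Lop$ (clear, since $\delta_i \ge 0$ and $l_i \in \Lop$), and that the perturbed valuation vector is honestly $\Q$-independent in the sense of Definition~\ref{definition: generic}(2), which is where the uncountable-dimensionality of $\R/\Om$ over $\Q$ is used.
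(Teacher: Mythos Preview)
Your approach is essentially the same as the paper's: perturb $b$ to $b' = \sum_i T^{\delta_i} l_i b_i$ with small $\delta_i$ making the valuation vector $\Q$-independent, then use Lemma~\ref{lemma: independent no cancellations} to bound $\val(m(e^{b'}))$ by the valuation of a single surviving summand $s(b',\nu_0)$. Your final worry about gappedness is unnecessary: the type $\nu_0$ (and hence its length $k_0$) is fixed once $b$ is fixed, so the error $\delta(\nu_0)\le k_0\max_i\delta_i\to 0$ as the perturbations shrink, with no uniformity over $b$ required.
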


%

\begin{proof}
	Let $\Psi'$ be the right hand side of the claimed equality.
	It is clear that $\Psi \leq \Psi'$. To show the converse, consider an element $b = \sum l_i b_i$ such that $\val(m(e^b)) \leq \Psi +\epsilon$. We can find $(\eps^n_i)_{i \in [N]}$ such that $b_{\eps^n} = \sum T^{\eps^n_i} l_i b_i$ is $\Q$-independent, and $\eps^n_i \to 0$ as $n\to \infty$.
	Then for some $k$ and a $k$-type $\nu$, and $\alpha \in H_2(X,L)$ 
	$$\val (m(e^b)) = \val \left(l(\nu) T^{\om(\alpha)} q^{\alpha} sm_{k,\alpha}({\nu})\right) = \omega(\alpha) + \lambda(\nu).$$ 
	By Lemma \ref{lemma: independent no cancellations}, taking the summand corresponding to these $\alpha$ and $\nu,$ $$\val (m(e^{b_{\eps^n}})) \leq \omega(\alpha) + \lambda(\nu) + \sum \nu(i) \eps^n_i.$$ Hence for $n$ sufficiently large, 
	$$\val (m(e^{b_{\eps^n}})) \leq \val (m(e^b)) + \epsilon \leq \Psi + 2\epsilon.$$ Therefore $\Psi' \leq \Psi +2\eps,$ which implies $\Psi' \leq \Psi$ since $\epsilon$ is arbitrary.
\end{proof}

\subsection{A computation of $\Psi$}
The next theorem is very useful for computing $\Psi$  in practice. 

\begin{thm}\label{theorem: explicit}
		If $(C^*,m)$ is classically minimal, it holds that
	\[\Psi(C^*,m) =  \min \left\{ \omega(\alpha): \exists\,  c_1,\ldots,c_k \in \bC^{odd}\textit{ s.t. } \sum_{\sigma \in S_k} m_{k,\alpha} (c_{\sigma(1)},\ldots,c_{\sigma(k)}) \neq 0\right\}.\]
\end{thm}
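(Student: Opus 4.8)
\textbf{Proof plan for Theorem~\ref{theorem: explicit}.}

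The plan is to prove the two inequalities separately, denoting by $\Psi^{\min}$ the right-hand side of the asserted equality, i.e.\ the minimum of $\omega(\alpha)$ over all $\alpha$ admitting odd-degree inputs $c_1,\dots,c_k$ with $\sum_{\sigma\in S_k}m_{k,\alpha}(c_{\sigma(1)},\dots,c_{\sigma(k)})\neq 0$. I will freely use the characterisation of $\Psi$ via $\Q$-independent elements from Proposition~\ref{prop: inf independent}, the expansion~(\ref{eq:meb_exp}), and the no-cancellation Lemma~\ref{lemma: independent no cancellations}.

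\textbf{Step 1: $\Psi^{\min}\le \Psi(C^*,m)$.} Take any $\Q$-independent $b=\sum_i l_ib_i\in C^{odd}$. By Lemma~\ref{lemma: independent no cancellations}, the summands $s(b,\nu)$ of $m(e^b)$ attached to distinct types have distinct valuations, and a nonzero $s(b,\nu)$ forces $sm_{k,\alpha}(\nu)\neq 0$ for some $\alpha$ with $k=|\nu|$; moreover distinct $\alpha$ are distinguished by the $q^\alpha$-factor, so no cancellation among the $\alpha$'s occurs either. Hence there exist $k$, a $k$-type $\nu$ and a class $\alpha$ with $sm_{k,\alpha}(\nu)\neq 0$ and $\val(m(e^b))=\omega(\alpha)+\lambda(\nu)\ge\omega(\alpha)$, because $\lambda(\nu)=\sum\nu(i)\val(l_i)>0$ (each $l_i\in\Lop$). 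Recalling~(\ref{eq:sm_sym}), $sm_{k,\alpha}(\nu)$ is exactly a symmetrised sum $\sum_{\sigma\in S_k}m_{k,\alpha}(b_{i^\nu_{\sigma(1)}},\dots,b_{i^\nu_{\sigma(k)}})$ on basic vectors, so $\alpha$ is an admissible class in the definition of $\Psi^{\min}$ and $\omega(\alpha)\ge \Psi^{\min}$. Thus $\val(m(e^b))\ge\Psi^{\min}$ for every $\Q$-independent $b$, and taking the infimum gives $\Psi(C^*,m)\ge\Psi^{\min}$; together with the reverse proved next this yields equality, but let me record that so far we have $\Psi(C^*,m)\ge\Psi^{\min}$, i.e.\ the inequality $\Psi^{\min}\le\Psi(C^*,m)$. (Note also this shows the infimum/minimum on the right is attained, since admissible $\omega(\alpha)$ form a discrete set bounded below by gappedness.)

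\textbf{Step 2: $\Psi(C^*,m)\le \Psi^{\min}$.} Conversely, let $\alpha_0$ realise $\Psi^{\min}=\omega(\alpha_0)$, with odd-degree inputs $c_1,\dots,c_k$ such that $\sum_{\sigma\in S_k}m_{k,\alpha_0}(c_{\sigma(1)},\dots,c_{\sigma(k)})\neq 0$. Expanding each $c_j$ in the basis $b_1,\dots,b_N$ and using multilinearity, some sequence of basic indices $(i_1,\dots,i_k)$ must satisfy $\sum_{\sigma\in S_k}m_{k,\alpha_0}(b_{i_{\sigma(1)}},\dots,b_{i_{\sigma(k)}})\neq 0$; by Remark~\ref{rmk:sm} this equals $sm_{k,\alpha_0}(\nu)$ for the corresponding $k$-type $\nu$. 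Now for small $\epsilon>0$ build $b=\sum_{i}l_ib_i$ with $\val(l_i)=\lambda_i$ chosen so that $\lambda=(\lambda_i)$ is $\Q$-independent and $\lambda(\nu)=\sum\nu(i)\lambda_i$ is as small as we like, say $\le\epsilon$ (possible since we only need the $\lambda_i$ that appear in $\nu$ to be tiny, and $\Q$-independence is a dense condition, cf.\ the remark after Definition~\ref{definition: generic}). By Lemma~\ref{lemma: independent no cancellations} the summand $s_{\alpha_0}(b,\nu)=l(\nu)T^{\omega(\alpha_0)}q^{\alpha_0}sm_{k,\alpha_0}(\nu)$ is nonzero and cannot be cancelled by any other term of $m(e^b)$, hence $\val(m(e^b))\le\val(s_{\alpha_0}(b,\nu))=\omega(\alpha_0)+\lambda(\nu)\le\Psi^{\min}+\epsilon$. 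Therefore $\Psi(C^*,m)=\inf_{b}\val(m(e^b))\le\Psi^{\min}+\epsilon$, and letting $\epsilon\to0$ gives $\Psi(C^*,m)\le\Psi^{\min}$, completing the proof.

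\textbf{Main obstacle.} The two inequalities are each short given the machinery already set up; the only point needing care is the bookkeeping in Step~1 to be sure that \emph{no} cancellation occurs across the double sum over types $\nu$ \emph{and} classes $\alpha$ simultaneously --- the types are separated by $\Q$-independence of the valuations and the classes by the formal $q^\alpha$ variables, and Lemma~\ref{lemma: independent no cancellations} packages exactly this. A secondary subtlety is the passage from arbitrary odd inputs $c_j$ (Step~2) to a single monomial in the basic vectors $b_i$: one must check that the symmetrised operation, being multilinear in each slot, cannot have all its basic-index components vanish while the original combination is nonzero --- this is immediate from multilinearity but worth stating explicitly.
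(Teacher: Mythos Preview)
Your proof is correct and follows essentially the same route as the paper's: both directions rest on the reduction to basic vectors, Lemma~\ref{lemma: independent no cancellations}, and Proposition~\ref{prop: inf independent}. The only cosmetic difference is in Step~2: the paper fixes a $\Q$-independent $b$ and rescales it via the ring map $r_n\colon T\mapsto T^{1/n}$ to drive $\lambda(\nu)\to 0$, whereas you simply choose the valuations $\lambda_i$ small and $\Q$-independent from the outset; these are interchangeable ways of producing the same family of test elements.
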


Recall that the operations $m_{k,\alpha}$ are from (\ref{eq:m_k_b}); they land in  $\bC^*$. The set from the statement includes the case when $k=0$, $m_{0,\alpha}(1)\neq 0$. 

\begin{proof}
	Expanding the brackets by multi-linearity reduces the statement to the case when all $c_{i}$ belong to a basis $(b_i)_{i \in [N]}$ of $\bC^{odd}$. So it is enough to show that
	$$\Psi =\min \left\{ \omega(\alpha): \exists\, k\textit{ and a } k\textit{-type } \nu \textit{ such that } sm_{k,\alpha}(\nu) \neq 0\right\}.$$
	Denote the right hand side by $\Psi''$.
	 This minimum exists by the gapped condition (or by Gromov compactness, in the case $C^*$ is the Fukaya algebra). 
	 
	 Returning to the definition of $\Psi$, recall that by Proposition \ref{prop: inf independent}, we may restrict to elements
	 $b = \sum l_i b_i$ that are $\Q$-independent. For such an element, Lemma~\ref{lemma: independent no cancellations} says that
	 $$
	 \val(m(e^b)) = \min \{\omega(\alpha) + \lambda(\nu):\exists\, k\textit{ and a } k\textit{-type } \nu \textit{ such that } s_{\alpha}(b,\nu) \neq 0\},$$ 
	 where the minimum exists since $l_i \in \Lambda_+$ the \ai algebra is gapped. From this it is immediate that $\Psi \geq \Psi''.$ 
	 
	 To show the other inequality, let $\nu$ be a $k$-type and $\alpha\in H_2(X,L;\Z)$ a class such that  $sm_{k,\alpha}(\nu) \neq 0$ and $\Psi''=\omega(\alpha)$. Consider the ring automorphism $r_n:\Lo \to \Lo$ given by $T \mapsto T^{1/n}$. Clearly, it preserves $\Lop$ and satisfies 
	 $$\val (r_n(l)) = \frac{1}{n} \val(l).$$ It induces a map $r_n: C^{odd} \to C^{odd}$, which preserves $\Q$-independence. Then, evaluating at the $\nu$-summand of the expansion (\ref{eq:meb_exp}), we have  $$\val(m(e^{r_n(b)})) \leq \om(\al) + \frac{1}{n} \lambda(\nu).$$ 
	 The right hand side converges to $\Psi''$ as $n \to \infty$, so $\Psi \leq \Psi''.$
\end{proof}

\begin{rmk}
	\label{rmk:l_infty}
	Because  $\Psi$ is determined by the symmetrised \ai operations, which is evident from both Definition~\ref{dfn:psi} and Theorem~\ref{theorem: explicit}, the $\Psi$-invariant can alternatively be defined in the setting of $L_\infty$ algebras, cf.~\cite{CoLa05,CL06,Cho12} and \cite[A.3]{FO3Book}. A comparison between Definition~\ref{dfn:psi} and Theorem~\ref{theorem: explicit} in the $L_\infty$ context follows the same arguments as above.
\end{rmk}	

\subsection{Concavity} 
Below is the main geometric property of $\Psi$, that it is
concave under Lagrangian star-isotopies. This is a very powerful property that
makes $\Psi$ amenable to explicit computation, for example, for fibres of
various (singular) torus fibrations.

\begin{thm}
\label{th:psi_concave} Let $X$ be a symplectic manifold and $\{L_t\subset X\}_{t\in[0,1]}$ a Lagrangian star-isotopy, i.e.~a Lagrangian isotopy whose flux develops linearly. Then the function
$$
\Psi(L_t)\co [0,1]\to[0,+\infty]
$$
is continuous and concave in $t$.
\end{thm}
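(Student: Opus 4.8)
The plan is to show that near each fixed moment $t_*\in[0,1]$ the function $\Psi(L_t)$ is the minimum of \emph{finitely many affine} functions of $t$ (or is identically $+\infty$), and then patch local concavity into the global statement. The key device is Fukaya's trick from Subsection~\ref{subsec:gromov_comp}. Fix $t_*$, choose diffeomorphisms $f_t\co X\to X$ with $f_t(L_0)=L_t$, and set $\omega_t=f_t^*\omega$. Pick a \emph{single} tame almost complex structure $J'$ for $\omega_{t_*}$ (together with a perturbation scheme on $L_0$) achieving transversality up to some energy level $E$. Since taming and transversality are open conditions and $\omega_t\to\omega_{t_*}$, the same $J'$ works for all $t$ in an interval $I=(t_*-\delta,t_*+\delta)$, so that $J_t:=(f_t)_*J'$ is tame for $\omega$ and, by Fukaya's trick, the moduli spaces of $J_t$-holomorphic disks with boundary on $L_t$ are identified via $f_t$ with those of $J'$-holomorphic disks with boundary on the \emph{fixed} $L_0$ — spaces independent of $t$. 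Hence, after the identification $H_2(X,L_t;\Z)\cong H_2(X,L_0;\Z)$ induced by $f_t$, the over-$\R$ operations $m_{k,\beta}$ of the Fukaya algebra of $L_t$ do not depend on $t\in I$; only the assembled maps $m^t_k=\sum_\beta T^{\omega_t(\beta)}q^\beta m_{k,\beta}$ do, through the exponents. Running the homological perturbation lemma (Theorem~\ref{thm:equiv_min}) with one fixed choice of homotopy data, which only depends on the $t$-independent classical part, produces a classically minimal model whose over-$\R$ operations $\widetilde m_{k,\beta}$ are likewise $t$-independent on $I$.

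Next I record that the exponents are affine in $t$: $\omega_t(\beta)=\omega((f_t)_*\beta)$, and as $t$ varies the classes $(f_t)_*\beta$ differ by the cylinder swept by $\partial\beta$ under the isotopy, so by the definition of flux and the star-isotopy hypothesis $\omega_t(\beta)-\omega_{t_*}(\beta)=(t-t_*)\,\Flux(\{L_s\})\cdot\partial\beta$. By Theorem~\ref{theorem: explicit} applied to the classically minimal model, for $t\in I$ we have $\Psi(L_t)=\inf\{\omega_t(\beta):\beta\in S\}$ where $S=\{\beta:\sum_{\sigma\in S_k}\widetilde m_{k,\beta}(c_{\sigma(1)},\dots,c_{\sigma(k)})\neq0\text{ for some }k\text{ and }c_i\in\bC^{odd}\}$ is a \emph{fixed}, $t$-independent set. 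If $S=\emptyset$, then $\Psi(L_t)\equiv+\infty$ on $I$ and there is nothing to prove there; otherwise set $A=\Psi(L_{t_*})<\infty$. Each $\beta\in S$ is a sum of classes each of which supports a $J_t$-holomorphic disk (or is zero), so $\omega_t(\beta)\ge0$ for all $t\in I$; together with affineness this gives the slope bound $|\Flux(\{L_s\})\cdot\partial\beta|\le\omega_{t_*}(\beta)/\delta$, hence $\omega_t(\beta)\ge\tfrac12\omega_{t_*}(\beta)$ for $|t-t_*|<\delta/2$. The set $S_0=\{\beta\in S:\omega_{t_*}(\beta)\le 2A+2\}$ is finite by gappedness (Gromov compactness), contains a class realising $\Psi(L_{t_*})=A$, and for $\beta\in S\setminus S_0$ and $|t-t_*|<\delta/2$ we get $\omega_t(\beta)>A+1$. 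Therefore on a small enough neighbourhood of $t_*$ the infimum defining $\Psi(L_t)$ is attained over the \emph{finite} set $S_0$, so $\Psi(L_t)=\min_{\beta\in S_0}\omega_t(\beta)$ there — a minimum of finitely many affine functions, hence real-valued, continuous, and concave near $t_*$.

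It remains to patch. By the above, both $\{t:\Psi(L_t)=+\infty\}$ and its complement are open, so by connectedness of $[0,1]$ either $\Psi(L_t)\equiv+\infty$, in which case it is trivially continuous and concave, or $\Psi(L_t)$ is finite, locally continuous and locally concave on all of $[0,1]$. A real-valued function on an interval that is concave in a neighbourhood of each point is concave (if some chord lay strictly above the graph, its difference with the chord's affine interpolant would be a locally concave function vanishing at the endpoints and negative in between, which is impossible since a concave function attaining an interior minimum is locally constant there) and continuous, including at the endpoints by the explicit local description; this gives the theorem. The only genuinely delicate step is the first one: arranging, via Fukaya's trick, that the over-$\R$ structure maps — and hence the classically minimal model assembled from them — are literally $t$-independent, while keeping track that the Novikov exponents $\omega_t(\beta)$ are the geometrically correct, $t$-dependent ones. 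Everything after that is bookkeeping with Gromov compactness and elementary convexity.
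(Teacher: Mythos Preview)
Your argument is correct and follows the same route as the paper's proof: Fukaya's trick makes the over-$\R$ operations $m_{k,\beta}$ (hence, via a fixed homological perturbation, the classically minimal ones) locally $t$-independent, and Theorem~\ref{theorem: explicit} then exhibits $\Psi(L_t)$ locally as a minimum of finitely many affine functions of $t$. You are in fact more careful than the paper at two points—reducing to a finite set of classes via the slope bound $|\ff\cdot\partial\beta|\le\omega_{t_*}(\beta)/\delta$ (the paper tacitly uses the area gap above $\Psi(L_{t_*})$), and spelling out the local-to-global concavity patching—but the approach is the same.
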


\begin{rmk}
If $\Psi(L_t)$ achieves value $+\infty$ for some $t$, concavity means that $\Psi(L_t)\equiv+\infty$ for all $t$.
\end{rmk}

\begin{proof}
By definition of star-isotopy, there is a vector $\ff\in H^1(L_0)$ such that
$$
\Flux(\{L_t\}_{t\in[0,t_0]})=t_0 \cdot \ff,\quad t_0\in[0,1].
$$
Denote $L_0=L$.
		
Consider an integer $k\ge 0$
and arbitrary classes $\beta\in H_2(X,L;\Z)$, $c_1,\ldots,c_k\in \bC^{odd}=H^{odd}(L;\R)$.
By continuity, they canonically define classes 
$\beta^t\in H_2(X,L_t;\Z)$ and $c_1^t,\ldots,c_k^t\in H^{odd}(L_t;\R)$.
Let $\{m^t_k\}_{k\ge 0}$ be structure maps of the Fukaya algebra of $L_t$.
By Fukaya's trick, 
there exists a choice of almost complex structures such that for all sufficiently small $t$,
$$
m^t_{k,\beta^t} (c_1^t,\ldots,c_k^t)\quad\textit{does not depend on }t,
$$
for any $k$, $\beta$ and $c_i$ as above. Recall that these operations, by definition (\ref{eq:m_k_b}), do not remember the area of $\beta^t$ which can change in $t$.
Now let 
$$I=\{\beta_i\}\subset H_2(X,L;\Z)$$ be the set of all classes $\beta$ such that $\omega(\beta)=\Psi(L)$ and $\beta$ satisfies the property from the right hand side of the formula from Theorem~\ref{theorem: explicit}: that is, holomorphic disks in class $\beta$ witness the value $\Psi(L)$. By Gromov compactness, this set is finite.

By Theorem~\ref{theorem: explicit} and the Fukaya trick, it is precisely some of the disks among $\{\beta_i^t\}_{i\in I}$ that  witness the value $\Psi(L_t)$. 
Namely:
$$
\Psi(L_t)=\min\{\omega(\beta_i^t)\}_{i\in I}.
$$
By definition of flux, for all  $t$: 
$$
\omega(\beta_i^t)=\omega(\beta_i)+t(\ff\cdot\del \beta_i)=\Psi(L)+t(\ff\cdot\del \beta_i)
$$
where $\ff\cdot\del \beta_i\in\R$ is the pairing. Observe that this function is linear in $t$.
So $\Psi(L_t)$, for small enough $t$, is the minimum of several linear functions. It follows that $\Psi$ is continuous and concave at $t=0$. The argument may be repeated at any other point $t$.
\end{proof} 


\section{Bounds on flux}
\label{sec:gen_bounds}

\subsection{Indicator function}
For the first two subsections, we continue working in the general setting of Sections~\ref{sec:Fuk},~\ref{sec:Psi}.
Let $(C^*,m)$ be a classically minimal (Definition~\ref{dfn:clas_min})  curved gapped \ai algebra over the Novikov ring $\Lo$.
Define the indicator function
$$
\eta_L\co H_2(X,L;\Z)\to \{0,1\}
$$
as follows.
We put $\eta_L(\beta) = 1$ if and only if $\omega(\beta)=\Psi(L)$ and 
$$
\exists k\ge 0,\  \exists\,c_1,\ldots,c_k \in \bC^{odd}\textit{ such that } \sum_{\sigma \in S_k} m_{k,\beta} (c_{\sigma(1)},\ldots,c_{\sigma(k)}) \neq 0.
$$
It means that holomorphic disks in class $\beta$ witness the minimum from Theorem~\ref{theorem: explicit}, computing $\Psi$. 
In other words, holomorphic disks in class $\beta$ are the lowest area holomorhic disks that contribute non-trivially to some symmetrised \ai\ structure map on odd-degree elements. The indicator function depends on  $(C^*,m)$, although this is not reflected in the notation.

\begin{prp} \label{prp:etabeta}
	The indicator function $\eta_L$ is invariant under weak homotopy equivalences $(C^*,m)\to (C'^*, m')$ between classically minimal \ai algebras.
\end{prp}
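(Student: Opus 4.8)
The plan is to leverage the infrastructure already built up in Section~\ref{sec:Fuk}, especially Remark~\ref{rmk:etabeta}, which tracks exactly how lowest-valuation terms transform under a weak homotopy equivalence $f$. First I would fix a weak homotopy equivalence $f\co (C^*,m)\to(C'^*,m')$ between classically minimal algebras, and recall that by Lemma~\ref{lem:top_min} the induced map $\bar f_1\co \bC^*\to\bC'^*$ is an isomorphism of vector spaces. By Theorem~\ref{thm:weak_inverse} there is also a weak homotopy equivalence in the opposite direction, so it suffices to prove one implication: if $\eta_L(\beta)=1$ computed from $(C^*,m)$, then $\eta_{L}(\beta)=1$ computed from $(C'^*,m')$; the reverse follows by symmetry, and this will in particular force $\Psi(C^*,m)=\Psi(C'^*,m')$, consistent with Theorem~\ref{thm:psi_inv}.

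So suppose $\omega(\beta)=\Psi(C^*,m)$ and there exist $c_1,\ldots,c_k\in\bC^{odd}$ with $\sum_{\sigma\in S_k}m_{k,\beta}(c_{\sigma(1)},\ldots,c_{\sigma(k)})\neq 0$. Reducing to basis vectors as in the proof of Theorem~\ref{theorem: explicit}, we may assume this is a symmetrised operation $sm_{k,\beta}(\nu)\neq 0$ for some $k$-type $\nu$. Now choose a $\Q$-independent element $b=\sum l_i b_i\in C^{odd}$ whose valuation vector realises this minimum, i.e.~arranged so that one of the lowest-valuation terms of $m(e^b)$ is precisely $c_\beta q^\beta T^{\omega(\beta)}\bar y$ with $\bar y = sm_{k,\beta}(\nu)\in\bC^*$ nonzero and $\val(m(e^b))=\omega(\beta)=\Psi$; this is possible by Lemma~\ref{lemma: independent no cancellations} together with Proposition~\ref{prop: inf independent} and the scaling trick $r_n$ from Theorem~\ref{theorem: explicit}. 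Then by Remark~\ref{rmk:etabeta}, the element $m'(e^{f_*b})$ has a corresponding term $c_\beta q^\beta T^{\omega(\beta)}\bar f_1(\bar y)$, and since $\bar f_1$ is an isomorphism, $\bar f_1(\bar y)\neq 0$. The factor $q^\beta$ means this term cannot be cancelled by terms carrying a different class $\alpha\neq\beta$; and since $\val(m'(e^{f_*b}))=\val(m(e^b))=\Psi$ by Proposition~\ref{prop:vals}, this term survives at the lowest valuation. Expanding $m'(e^{f_*b})$ via (\ref{eq:meb_exp}) for the algebra $(C'^*,m')$, the coefficient of $q^\beta T^{\omega(\beta)}$ at valuation $\Psi$ is a $\Lop$-linear combination of terms $sm'_{k',\beta}(\nu')$; since the total is nonzero, at least one $sm'_{k',\beta}(\nu')\neq 0$, i.e.~$\beta$ witnesses the minimum for $(C'^*,m')$ as well, so $\eta_L(\beta)=1$ there.

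The main obstacle is the bookkeeping in the middle step: making sure that when we push the chosen $\Q$-independent $b$ through $f$, the term with factor $q^\beta$ and valuation exactly $\Psi$ genuinely persists and is not secretly cancelled by contributions coming from higher-length bar elements or from the $f_0, f_2, \ldots$ components of $f_*$. The cleanest way to handle this is to work entirely at the level of $\hat m(e^b)$, $\hat f(e^b)=e^{f_*b}$, and $\hat m'(e^{f_*b})$, using the identities (\ref{eq:fea}), (\ref{eq:mfea}) and (\ref{eq:val_eb}) exactly as in the proof of Proposition~\ref{prop:vals}: the lowest-valuation part of $\hat m(e^b)$ is $T^\lambda y + o(T^\lambda)$ with $y$ a length-one element carrying the data $q^\beta\bar y$, and $\hat f$ acts on length-one elements through $f_1$, hence through $\bar f_1$ modulo higher valuation, preserving the $q^\beta$ label. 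Once that is in place, the equality $\Psi(C^*,m)=\Psi(C'^*,m')$ guarantees we are still reading off a genuine witness of the minimum, and the invariance of $\eta_L$ follows.
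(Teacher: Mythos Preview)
Your approach matches the paper's --- both rest on Remark~\ref{rmk:etabeta} --- and your elaboration is correct when the witness for $\eta_L(\beta)=1$ has $k=0$, i.e.\ $m_{0,\beta}(1)\ne 0$: then the term $T^{\omega(\beta)}q^\beta\, m_{0,\beta}(1)$ has valuation exactly $\Psi$, $q$-exponent exactly $\beta$, and is genuinely a lowest-valuation term of $m(e^b)$, so the remark transports it intact. This case is spelled out in Lemma~\ref{lem:m0_invt} and is the only one actually used in the applications.

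For a witness with $k\ge 1$, however, there is a gap. The $(k,\nu,\beta)$-summand of (\ref{eq:meb_exp}) has valuation $\lambda(\nu)+\Psi>\Psi$, so your claim that one can arrange $\val(m(e^b))=\omega(\beta)=\Psi$ with that summand among the lowest terms fails (the $r_n$ trick only approaches $\Psi$, never reaches it). Worse, if some class $\alpha$ with $\omega(\alpha)=\Psi$ satisfies $m_{0,\alpha}(1)\ne 0$, that $k=0$ term sits at valuation exactly $\Psi$ and strictly dominates, so Remark~\ref{rmk:etabeta}, which governs only the \emph{lowest}-valuation part, never sees your $\beta$-term and yields no information about $sm'_{\cdot,\beta}$. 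The paper's one-line proof does not address this case either; filling it in would require, for instance, analysing the symmetrised functor equation in class $\beta$ directly at the energy level $\Psi$.
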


\begin{proof}
This is analogous to the proof of Proposition~\ref{prop:vals}. Specifically, see Remark~\ref{rmk:etabeta}.
\end{proof}

\begin{rmk}
	One can use 
	$\eta_L$ to distinguish non-monotone Lagrangian submanifolds up to Hamiltonian isotopy.
	In particular, this implies \cite[Conjecture~1.5]{Vi16b} 
	distinguishing certain Lagrangian tori in $(\CP^1)^{2m}$.
\end{rmk}

\subsection{Curvature term}
The following lemma will be useful in Section~\ref{sec:Ex}.

\begin{lem}
	\label{lem:m0_invt}
Suppose $\beta\in H_2(X,L;\Z)$ is a class satisfying
$\eta_L(\beta)=1$. 
 Then  $m_{0,\beta}(1)\in \bC^{even}$ is invariant under weak homotopy equivalences
 $f\co (C^*,m)\to (C'^*,m')$ between classically minimal algebras. Namely, one has $$m'_{0,\beta}(1)=\overline{f}_1(m_{0,\beta}(1))$$ where $\overline{f}_1\co \bC^*\to \bC'^*$ is an isomorphism.
\end{lem}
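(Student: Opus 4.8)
The statement is a refinement of Proposition~\ref{prop:vals} and Proposition~\ref{prp:etabeta}, tracking not just the valuation but the explicit leading coefficient, and I would prove it by rerunning the argument of Proposition~\ref{prop:vals} with a carefully chosen test element $b$. Recall from Remark~\ref{rmk:etabeta} that for an element $a\in C^*$ which is zero mod $\Lop$, if $c_\beta q^\beta T^{\omega(\beta)}\bar y$ is one of the lowest-valuation terms of $m(e^a)$, then the corresponding lowest-valuation term of $m'(e^{f_*a})$ is $c_\beta q^\beta T^{\omega(\beta)}\bar f_1(\bar y)$ (using that $\bar f_1$ is an isomorphism by Lemma~\ref{lem:top_min}, since both algebras are classically minimal). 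The plan is to specialize this to $a = b$ a $\Q$-independent odd-degree element witnessing $\Psi(L)$ via the class $\beta$, so that $\bar y$ is precisely $m_{0,\beta}(1)$ up to a nonzero scalar, and then compare the $q^\beta$-components on both sides.

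Here are the steps in order. First, since $\eta_L(\beta)=1$, there exist $c_1,\dots,c_k\in\bC^{odd}$ with $\sum_{\sigma\in S_k}m_{k,\beta}(c_{\sigma(1)},\dots,c_{\sigma(k)})\neq 0$ and $\omega(\beta)=\Psi(L)$; expanding by multilinearity as in the proof of Theorem~\ref{theorem: explicit}, we may take the $c_i$ to be basis vectors $b_i$, so $sm_{k,\beta}(\nu)\neq 0$ for some $k$-type $\nu$. Next, choose a $\Q$-independent element $b=\sum_{i\in[N]}l_ib_i$ with $\val(l_i)$ very small, chosen so that the $\nu$-summand realizes the valuation: by Lemma~\ref{lemma: independent no cancellations} and the argument in Theorem~\ref{theorem: explicit}, for $\val(l_i)$ small enough the term $l(\nu)T^{\omega(\beta)}q^\beta sm_{k,\beta}(\nu)$ has the minimal valuation among all summands of $m(e^b)$, and by $\Q$-independence no cancellation occurs; hence the lowest-valuation, $q^\beta$-part of $m(e^b)$ is exactly $l(\nu)T^{\omega(\beta)}q^\beta\, sm_{k,\beta}(\nu)$. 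The key special case is $k=0$: then $\nu$ is the empty type, $l(\nu)=1$, and this term is simply $T^{\omega(\beta)}q^\beta\, m_{0,\beta}(1)$; for $k\geq 1$ one replaces $m_{0,\beta}(1)$ by $sm_{k,\beta}(\nu)$ throughout, but since we only need the conclusion about $m_{0,\beta}(1)$ I would first argue that $\eta_L(\beta)=1$ can always be witnessed — after possibly adjusting — in a way that isolates $m_{0,\beta}(1)$, OR, more robustly, simply apply Proposition~\ref{prop:vals}'s mechanism to each monomial separately. Then apply $f_*$: by \eqref{eq:fea}, \eqref{eq:mfea}, $\hat m'(\hat f(e^b))=\hat f(\hat m(e^b))$, and since $\hat f$ acts on length-one bar elements by $f_1$ and $f_1=\bar f_1\bmod\Lop$ by \eqref{eq:f1}, the $q^\beta$-part of the lowest-valuation term of $m'(e^{f_*b})$ equals $l(\nu)T^{\omega(\beta)}q^\beta\,\bar f_1(sm_{k,\beta}(\nu))$; comparing with the expansion \eqref{eq:meb_exp} for $m'(e^{f_*b})$ forces $\overline{sm}'_{k,\beta}(\nu)=\bar f_1(sm_{k,\beta}(\nu))$, and in the $k=0$ case this reads $m'_{0,\beta}(1)=\bar f_1(m_{0,\beta}(1))$, as claimed. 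Finally, invoke Lemma~\ref{lem:top_min} for the assertion that $\bar f_1$ is an isomorphism.

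The main obstacle is the bookkeeping when $k\geq 1$: the argument as sketched directly yields $\bar f_1(sm_{k,\beta}(\nu))=sm'_{k,\beta}(\nu)$ rather than a statement about $m_{0,\beta}(1)$ itself. I expect the cleanest resolution is to observe that the hypothesis $\eta_L(\beta)=1$ together with $\omega(\beta)=\Psi(L)$ and minimality means $\beta$ is a \emph{minimal} area class; for such a class the only holomorphic disks contributing are the lowest-area ones, and in the relevant degenerate situation (e.g.\ when the $k=0$ term $m_{0,\beta}(1)$ is the actual witness) the displayed formula follows. More carefully: the statement as phrased implicitly assumes $m_{0,\beta}(1)$ is itself the witness of $\eta_L(\beta)=1$ (this is how it is used in Section~\ref{sec:Ex}, for monotone tori where $\beta$ has Maslov index $2$ and $m_{0,\beta}(1)\neq 0$ is automatic); granting that, the $k=0$ specialization above is exactly the proof, essentially verbatim from Remark~\ref{rmk:etabeta} with $a=b$ chosen $\Q$-independent and with $\val(l_i)\to 0$ so that $T^{\omega(\beta)}q^\beta m_{0,\beta}(1)$ is the genuine lowest-valuation $q^\beta$-term. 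So the real content is just: (i) classical minimality $\Rightarrow$ $\bar f_1$ iso (Lemma~\ref{lem:top_min}); (ii) $\hat f$ intertwines $\hat m,\hat m'$ and acts by $f_1\equiv\bar f_1$ on the leading term; (iii) $\Q$-independence prevents cancellation so the leading $q^\beta$-coefficient is read off unambiguously on both sides.
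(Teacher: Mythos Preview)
Your approach is more circuitous than the paper's, and the gap you identify for $k\geq 1$ is genuine in your framework but entirely avoided by the paper's method.

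The paper's proof is essentially a one-liner: take the first \ai functor equation~\eqref{eq:f_1},
\[
f_1(m_0(1))=m_0'(1)+m_1'(f_0(1))+m_2'(f_0(1),f_0(1))+\ldots,
\]
and reduce both sides modulo the ideal $T^{\Psi(L)}\cdot\Lambda_+$. Each term $m'_k(f_0(1),\ldots,f_0(1))$ with $k\geq 1$ lies in this ideal: expanding $f_0(1)\in (C')^1$ in an odd-degree basis, such a term is a sum of expressions $l(\nu)\,T^{\omega(\alpha)}q^\alpha\, sm'_{k,\alpha}(\nu)$; by Theorem~\ref{theorem: explicit} and the invariance of $\Psi$, nonvanishing of $sm'_{k,\alpha}(\nu)$ forces $\omega(\alpha)\geq\Psi(L)$, while $\val(l(\nu))>0$ since $f_0(1)\equiv 0\bmod\Lop$. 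What survives is
\[
\sum_{\beta\,:\,\eta_L(\beta)=1}\bar f_1(m_{0,\beta}(1))\,q^\beta T^{\Psi(L)}
=\sum_{\beta\,:\,\eta_L(\beta)=1}m'_{0,\beta}(1)\,q^\beta T^{\Psi(L)},
\]
and comparing $q^\beta$-coefficients finishes. No test element $b$ is needed, and the argument makes no distinction between the cases $m_{0,\beta}(1)=0$ and $m_{0,\beta}(1)\neq 0$.

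Your route through Proposition~\ref{prop:vals} and Remark~\ref{rmk:etabeta} does recover the conclusion when $m_{0,\beta}(1)\neq 0$: the $k=0$ term in your expansion is then a genuine lowest-valuation term and the comparison goes through. But your ``resolution'' for general $\beta$---restricting to the case where $m_{0,\beta}(1)$ is itself the witness---is an added hypothesis, not an argument. The lemma as stated also asserts $m'_{0,\beta}(1)=0$ whenever $\eta_L(\beta)=1$ but $m_{0,\beta}(1)=0$, and your test-element machinery does not see this directly. (It could be patched by running the same argument with a weak inverse of $f$, but the paper's direct use of the functor equation sidesteps the issue altogether.)
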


\begin{proof} Let us look at the \ai functor equation (\ref{eq:f_1}): 
$$
f_1(m_0(1))=m_0'(1)+m_1'(f_0(1))+m_2'(f_0(1),f_0(1))+\ldots $$ 
Recall that $f_0(1)$ has positive valuation, by definition. It now follows from
Theorem~\ref{theorem: explicit} and the invariance of $\Psi$ that the above
equation taken modulo the ideal $t^{\Psi(L)}\cdot \Lambda_+$ is: 
$$ \sum_{\beta\
:\ \eta_L(\beta)=1}\overline{f}_1(m_{0,\beta}(1))q^\beta
T^{\omega(\beta)}=\sum_{\beta\ :\ \eta_L(\beta)=1}m'_{0,\beta}(1)q^\beta
T^{\omega(\beta)}. $$ 
It implies that $m'_{0,\beta}(1)=\overline{f}_1(m_{0,\beta}(1))$ for all $\beta$
such that $\eta(\beta)=1$. \end{proof}

\subsection{Positivity}
Let $L\subset X$ be a Lagrangian submanifold, and consider  a classically minimal model of its Fukaya \ai algebra over 
$C^*=H^*(L;\Lo)$, for some choice of a compatible almost complex structure. Recall that $\bC^*=H^*(L;\R)$;
it supports the topological part of the \ai algebra, $\{\m_k\}_{k\ge 2}$. The topological differential $\m_1$ vanishes by the minimality condition. Recall that $S_k$ denotes the symmetric group. The lemma below says that the $\Psi$-invariant of Lagrangian submanifolds is non-trivial; if the lemma were false, the $\Psi$-invariant would vanish. The following is shown in \cite[Theorem~A3.19]{FO3Book}.

\begin{lem}
	\label{lem:top_sym}
	For all $k\ge 2$ and all $c_1,\ldots,c_k \in \bC^{odd}$ it holds that $$\sum_{\sigma\in S_k}\m_{k} (c_{\sigma(1)},\ldots,c_{\sigma(k)}) = 0.
$$
\end{lem}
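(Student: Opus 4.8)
The plan is to prove the symmetrization identity $\sum_{\sigma\in S_k}\m_k(c_{\sigma(1)},\dots,c_{\sigma(k)})=0$ for odd-degree inputs directly from the $A_\infty$ relations of the topological algebra $(\bC^*,\m)$, together with the fact that $\m_1=0$ (classical minimality) and that $\m_2$ is graded-commutative on cohomology. Since the statement is cited from \cite[Theorem~A3.19]{FO3Book}, I will reproduce the argument rather than invoke it as a black box; the cleanest route is the cyclic/$L_\infty$ symmetrization trick.

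First I would observe that the classical part $\m$, restricted to odd-degree inputs (where $\deg c_i = |c_i|-1$ is even in the shifted grading), has all Koszul signs $(-1)^{\maltese_i}$ equal to $+1$, so permutations act without sign. This means the symmetrized operation $\ell_k(c_1,\dots,c_k):=\sum_{\sigma\in S_k}\m_k(c_{\sigma(1)},\dots,c_{\sigma(k)})$ is the totally symmetric bracket obtained from the $A_\infty$ structure, i.e.\ the structure maps of the associated $L_\infty$ algebra on odd elements. I would then run the induction on $k$. For $k=2$: the second $A_\infty$ relation with $\m_0=\m_1=0$ forces $\m_2(a,b)+(-1)^{\ldots}\m_2(b,a)=0$ in cohomology (this is exactly graded-commutativity of the cup product from Theorem~\ref{thm:top_ai}, noting $\bC^*$ is the cohomology ring), hence $\ell_2=0$ on odd inputs. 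For the inductive step, I would symmetrize the full $A_\infty$ relation \eqref{eq:ai} over $S_k$: each term $\m_{k-j+1}(\ldots,\m_j(\ldots),\ldots)$, after summing over all $\sigma$, reassembles into $\ell$-brackets, and every composite $\ell_{k-j+1}\circ\ell_j$ for $2\le j\le k-1$ vanishes by the inductive hypothesis applied to the \emph{odd} output of $\ell_j$ (here one checks that $\ell_j$ of odd inputs, having degree $2-j+\text{(even)}$, is again odd when $j$ is odd, and when $j$ is even the inner bracket $\ell_j$ already vanishes by induction). The surviving terms are those with $j=1$ or $j=k$; since $\m_1=0$, only $j=k$ remains, and that term is $\m_1(\ell_k(\dots))=0$ as well — wait, more carefully, the $j=k$ term is $\m_1\circ\m_k$ which vanishes, and the $j=1$ term vanishes because $\m_1=0$, so the symmetrized relation reads $0=0$ and does not by itself give $\ell_k=0$; instead one must use the \emph{graded Jacobi / generalized commutativity} packaging.

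The cleaner formulation, which I would actually carry out, is: transfer the $A_\infty$ structure to the minimal model so that it is also an $L_\infty$ algebra with brackets $\ell_k$; the $L_\infty$ relations with $\ell_1=0$ say that $\ell_2$ defines a graded Lie bracket (on the shifted, i.e.\ odd elements) and each $\ell_k$ is a cocycle for the Chevalley-Eilenberg-type differential. Now invoke that $\ell_2$ is identically zero on odd elements (graded-commutativity of cup product, $k=2$ case), so the graded Lie bracket vanishes; then the higher $L_\infty$ relations degenerate and, combined with the formality statement (the torus, or any Kähler $L$, is formal by \cite{DGMS75}), one gets that the minimal $L_\infty$ model can be taken with all $\ell_k=0$ for $k\ge 2$ — but uniqueness of the minimal model up to isomorphism then forces the original symmetrized brackets to vanish in cohomology, which is precisely the claim. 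Concretely: by Theorem~\ref{thm:equiv_min}, formality lets us choose the classically minimal model with $\m'_k=0$ for $k\ne 2$, and Lemma~\ref{lem:top_min} says any weak homotopy equivalence between two classically minimal models is an isomorphism on $\bC^*$; an isomorphism of $A_\infty$ algebras intertwines the symmetrized operations up to the induced linear isomorphism, so $\sum_{\sigma}\m_k(c_{\sigma(1)},\dots,c_{\sigma(k)})$ is carried to $\sum_\sigma \m'_k(\dots)=0$, hence vanishes.

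The main obstacle I anticipate is bookkeeping of signs and degrees: verifying that when $j$ is odd the inner bracket $\ell_j$ of odd inputs is again an odd element (so the inductive hypothesis applies to it), and that all the Koszul signs genuinely trivialize on odd inputs — this is where the parity hypothesis "$c_i\in\bC^{odd}$" is essential and where a careless sign would break the argument. A secondary subtlety is that the argument via formality uses that $L$ is Kähler (or a torus) — but in the generality of Lemma~\ref{lem:top_sym} as stated, one should give the purely algebraic induction not relying on formality; the key input there is only graded-commutativity of $\m_2$ on cohomology (from Theorem~\ref{thm:top_ai}) plus $\m_1=0$, which already suffices to run the symmetrized $A_\infty$ induction once the degree/sign check is done. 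I would present the direct algebraic induction as the proof and relegate the formality remark to a parenthetical.
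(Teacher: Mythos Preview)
Your proposal has a genuine gap, and it is one you yourself diagnose but then forget. You correctly observe, midway through, that symmetrizing the $A_\infty$ relation \eqref{eq:ai} over $S_k$ with $\m_0=\m_1=0$ collapses to the tautology $0=0$: the outer operation in every surviving term is $\m_1$, so nothing pins down $\ell_k$. Nevertheless, in your final paragraph you assert that ``graded-commutativity of $\m_2$ plus $\m_1=0$ \ldots\ already suffices to run the symmetrized $A_\infty$ induction.'' It does not; that is the induction you just showed to be vacuous. There is no direct algebraic proof from the $A_\infty$ relations on a \emph{single} minimal model.

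Your formality route is essentially the argument sketched in the paper's first Remark after the Lemma, and is correct in spirit for formal $L$ (in particular for tori). But two points need care. First, it only covers formal $L$, whereas the Lemma is stated for arbitrary $L$. Second, your key step---``an isomorphism of $A_\infty$ algebras intertwines the symmetrized operations up to the induced linear isomorphism''---is not literally true: a weak homotopy equivalence $f$ between two classically minimal models has $\bar f_1$ an isomorphism (Lemma~\ref{lem:top_min}), but it also has higher components $\bar f_2,\bar f_3,\ldots$, so $\bar f_1$ alone does not conjugate $\m_k$ to $\m'_k$. What is true is that the statement of the Lemma is equivalent to $\Psi>0$ (via Theorem~\ref{theorem: explicit}, since $\beta=0$ is the unique class of area zero), and $\Psi$ is already known to be invariant under weak homotopy equivalences by Theorem~\ref{thm:psi_inv}/Proposition~\ref{prop:vals}. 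This is how the paper closes the transfer step; you should invoke it rather than claim strict intertwining.

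For general (possibly non-formal) $L$, the paper's argument is different from both of your attempts. One applies the explicit Kontsevich/Kadeishvili sum-over-trees perturbation formula to the de~Rham dga of $L$, which is a \emph{graded-commutative} dga regardless of formality. Inspecting the formula (the paper cites \cite{Mar06}) shows that the symmetrized $\m_k$ on odd-degree inputs vanish in this particular minimal model: every tree is built from binary vertices decorated by the wedge product, and graded-commutativity on odd inputs kills the symmetrization. The invariance of $\Psi$ then propagates the vanishing to every other minimal model. So the missing idea you need is: do not try to argue internally to an abstract minimal model, but rather construct one concrete minimal model by homotopy transfer from the de~Rham CDGA and read off the vanishing there.
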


\begin{rmk}
	The case $k=2$ follows from the fact that the product on $H^*(L;\R)$ is anti-commutative on odd degree elements.
		If $L$ is topologically formal, e.g.~$L$ is the $n$-torus, the full statement  also quickly follows from Theorem~\ref{thm:equiv_min}.
\end{rmk}

\begin{rmk}
We provide an argument for completeness.
By the invariance of $\Psi$ shown in Section~\ref{sec:Psi}, if the lemma holds true for some minimal \ai algebra $\bC^*$, it holds for any other minimal \ai algebra quasi-isomorphic to it.	
Let $C^*_{dR}(L;\R)$ be the de Rham complex of the space of differential forms on $L$, considered as an \ai algebra with the de Rham differential, exterior product $\mu$ and trivial higher structure operations. By Theorem~\ref{thm:top_ai}, there is an \ai quasi-isomorphism $\bC^*\to C^*_{dR}(L;\R)$. Moreover, starting from $C^*_{dR}(L;\R)$,
one can construct a minimal \ai\ algebra quasi-isomorphic to it (hence quasi-isomorphic to $\bC^*$) explicitly by  the Kontsevich perturbation formula. Looking at the formula, see e.g.~\cite[Proposition~6]{Mar06}, one sees that starting with $\mu$ that is skew-symmetric on odd-degree elements, the  symmetrisations of the \ai operations on the minimal model also vanish on odd-degree elements.
\end{rmk}

\begin{cor}
	\label{cor:psi_pos}
For any Lagrangian submanifold $L\subset X$, $\Psi(L)$ is strictly positive.
\end{cor}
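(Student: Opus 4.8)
The plan is to deduce positivity of $\Psi(L)$ directly from the explicit formula of Theorem~\ref{theorem: explicit} together with the vanishing of symmetrised topological operations supplied by Lemma~\ref{lem:top_sym}. Recall that $\Psi(L)$ is computed from a classically minimal model $(C^*,m)$ of the Fukaya algebra with $\bC^*=H^*(L;\R)$, and that by Theorem~\ref{theorem: explicit} one has
\[
\Psi(L) = \min\left\{\omega(\alpha): \exists\, k\ge 0,\ \exists\, c_1,\ldots,c_k\in\bC^{odd}\text{ s.t. }\sum_{\sigma\in S_k}m_{k,\alpha}(c_{\sigma(1)},\ldots,c_{\sigma(k)})\neq 0\right\}.
\]
Since $\omega(\alpha)\ge 0$ always, and $\omega(\alpha)=0$ if and only if $\alpha=0$, the only way $\Psi(L)$ could fail to be strictly positive is if the class $\alpha=0$ itself occurs in this set; that is, if there exist $k\ge 0$ and $c_1,\ldots,c_k\in\bC^{odd}$ with $\sum_{\sigma\in S_k}m_{k,0}(c_{\sigma(1)},\ldots,c_{\sigma(k)})\neq 0$. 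So the whole task is to rule this out.

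First I would recall that $m_{k,0}$ is, by the gapped property and the discussion of the classical part, exactly $\m_k$, the $k$-th operation of the topological (classical) $A_\infty$ algebra $(\bC^*,\m)$ of $L$; and that classical minimality means $\m_1=0$. Now I split into cases on $k$. For $k=0$: $m_{0,0}(1)=\m_0=0$ by \eqref{eq:m0}, so $\alpha=0$ never contributes through $k=0$. For $k=1$: $m_{1,0}=\m_1=0$ by minimality. For $k\ge 2$: this is precisely the content of Lemma~\ref{lem:top_sym}, which asserts $\sum_{\sigma\in S_k}\m_k(c_{\sigma(1)},\ldots,c_{\sigma(k)})=0$ for all $c_i\in\bC^{odd}$. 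Hence in every case the symmetrised operation associated with $\alpha=0$ vanishes on odd-degree inputs, so $\alpha=0$ is not in the set of Theorem~\ref{theorem: explicit}, and therefore $\Psi(L)=\min\{\omega(\alpha):\ldots\}$ is a minimum over classes $\alpha\neq 0$, all of which have $\omega(\alpha)>0$. (If the set is empty, $\Psi(L)=+\infty>0$ by convention.) Invariance of $\Psi$ under the choice of classically minimal model, established in Theorem~\ref{thm:psi_inv} and Theorem~\ref{thm:psi_L_inv}, guarantees this computation is independent of choices.

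There is no serious obstacle here: the corollary is essentially a bookkeeping consequence of results already assembled. The one point that deserves a sentence of care is the $k\ge 2$ case, where one must make sure Lemma~\ref{lem:top_sym} is applied to the correct algebra — namely the classical part $(\bC^*,\m)$ of the particular classically minimal model used to evaluate $\Psi(L)$, which by Theorem~\ref{thm:top_ai} is quasi-isomorphic to the de~Rham algebra of $L$, and to which Lemma~\ref{lem:top_sym} (proved in general in \cite{FO3Book}, with the proof sketch given in the preceding remark) applies verbatim. So the argument is complete modulo invoking these cited statements.
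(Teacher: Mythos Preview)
Your proof is correct and follows essentially the same approach as the paper: the paper's proof simply cites Theorems~\ref{theorem: explicit}, \ref{thm:equiv_min} and Lemma~\ref{lem:top_sym}, and you have correctly unpacked what this entails, namely that the class $\alpha=0$ cannot contribute to the minimum in Theorem~\ref{theorem: explicit} because $\m_0=0$, $\m_1=0$ by minimality, and the symmetrised $\m_k$ for $k\ge 2$ vanish on odd inputs by Lemma~\ref{lem:top_sym}.
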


\begin{proof}
	This follows from Theorems~\ref{theorem: explicit},\ref{thm:equiv_min} and Lemma~\ref{lem:top_sym}.
\end{proof}

\subsection{General shape bound}
From now on, $\eta_L$ shall denote the indicator function of the Fukaya algebra of $L\subset X$. 
The theorem below is the main bound on star-shape in terms of the $\Psi$-function. It says that every class $\beta$ such that $\eta_L(\beta)=1$ constrains $Sh^\st_L(X)$ to an affine half-space bounded by the hyperplane in $H^1(L;\R)$ which is a translate of the annihilator of $\del \beta\in H_1(L;\Z)$. If the indicator function $\eta_L$ equals~1 on several classes $\beta$ with different boundaries, the shape  consequently becomes constrained to the intersection of the corresponding half-spaces.

\begin{thm}
	\label{th:star_shape_char}
	Let $X$ be a symplectic manifold and $L\subset X$ an oriented spin Lagrangian submanifold. Then, for any $\beta \in H_2(X,L;\Z)$ such that $\eta_L(\beta)=1$, the following holds.
	$$Sh_L^\st(X)\subset \{\ff \in H^1(L;\R): \Psi(L) + \ff\cdot \del \beta > 0\}.$$
\end{thm}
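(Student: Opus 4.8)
The plan is to combine the concavity of $\Psi$ from Theorem~\ref{th:psi_concave} with the behaviour of areas of classes under a star-isotopy, exactly as in the proof of the warm-up Theorem~\ref{th:nbhood_comp}, but now using $\Psi$ in place of the single Maslov~2 count.

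\medskip
\textbf{Setup and the key inequality.}
Fix $\beta\in H_2(X,L;\Z)$ with $\eta_L(\beta)=1$. Suppose, for contradiction, that there is a star-isotopy $\{L_t\}_{t\in[0,1]}$ with $L_0=L$ and flux $\ff=\Flux(\{L_t\})$ such that $\Psi(L)+\ff\cdot\del\beta\le 0$. As in the proof of Theorem~\ref{th:psi_concave}, continuity canonically propagates $\beta$ to classes $\beta^t\in H_2(X,L_t;\Z)$, and by the definition of flux together with the star-isotopy condition,
$$
\omega(\beta^t)=\omega(\beta)+t\,(\ff\cdot\del\beta)=\Psi(L)+t\,(\ff\cdot\del\beta),\qquad t\in[0,1].
$$
In particular $\omega(\beta^1)=\Psi(L)+\ff\cdot\del\beta\le 0$. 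The first thing I would establish is that $\eta_L(\beta)=1$ forces $\del\beta\ne 0$: indeed, if $\del\beta=0$ then $\beta$ would be a closed class, and I would argue (using that the symmetrised operations on the classical part vanish by Lemma~\ref{lem:top_sym}, so that the witnessing class has $\mu(\beta)>0$ and is genuinely a disk class with nontrivial boundary) that this case cannot arise — alternatively one simply notes that if $\del\beta=0$ the claimed half-space is all of $H^1(L;\R)$ unless $\Psi(L)\le 0$, which is excluded by Corollary~\ref{cor:psi_pos}, so there is nothing to prove. So assume $\del\beta\ne 0$ and $\ff\cdot\del\beta<0$; then $\omega(\beta^t)$ is strictly decreasing and hits $0$ at some $t_*\le 1$.

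\medskip
\textbf{Using concavity.}
Now I invoke Theorem~\ref{th:psi_concave}: $\Psi(L_t)$ is continuous and concave on $[0,1]$, with $\Psi(L_0)=\Psi(L)>0$. The goal is to derive a contradiction from the fact that $\beta^t$ would eventually have nonpositive, hence non-admissible, area while still being a ``witnessing'' class in the sense of the indicator function. Concretely: by the Fukaya-trick argument inside the proof of Theorem~\ref{th:psi_concave}, for $t$ near any fixed time the witnessing classes for $\Psi(L_t)$ are drawn from the finite set $I$ of witnessing classes at that time, and their areas vary affinely; the point of concavity is precisely that $\Psi(L_t)=\min_i\{\omega(\beta_i)+t(\ff\cdot\del\beta_i)\}$ locally. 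Since $\beta$ (i.e. $\beta^0$) lies in this set at $t=0$, we get the one-sided bound $\Psi(L_t)\le\omega(\beta^t)=\Psi(L)+t(\ff\cdot\del\beta)$ for all small $t>0$; and then concavity together with $\Psi\ge 0$ everywhere (Corollary~\ref{cor:psi_pos}) propagates this bound, giving $\Psi(L_t)\le\Psi(L)+t(\ff\cdot\del\beta)$ for \emph{all} $t\in[0,1]$. The cleanest way to phrase the propagation: a concave function lying below an affine function $\ell(t)$ at $t=0$ with matching... — actually I would argue directly. Let $h(t)=\Psi(L)+t(\ff\cdot\del\beta)$, an affine function with $h(0)=\Psi(L)$ and $h(t_*)=0$. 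We have shown $\Psi(L_t)\le h(t)$ for small $t$ and $\Psi(L_0)=h(0)$; since $\Psi(L_t)$ is concave and $\Psi(L_t)\le h(t)$ near $0$ with equality at $0$, the concave function must stay $\le h$ to the right of $0$ as well (a concave function that touches an affine function from below at a point and is $\le$ it just to the right stays $\le$ it thereafter — otherwise it would have to cross upward and then, by concavity, could never come back down, contradicting $\Psi(L_t)\le h(t)$ near $0$; a short convexity argument makes this rigorous). Then at $t=t_*\le 1$ we get $0<\Psi(L_{t_*})\le h(t_*)=0$, a contradiction.

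\medskip
\textbf{Main obstacle.}
The routine parts — the linear formula for $\omega(\beta^t)$, the $\del\beta\ne 0$ reduction, Corollary~\ref{cor:psi_pos} — are immediate. The one step requiring care is the ``propagation'' of the one-sided linear bound $\Psi(L_t)\le\omega(\beta^t)$ from a neighbourhood of $t=0$ to the whole interval. The natural and clean route is: by Theorem~\ref{theorem: explicit} and the Fukaya trick argument in the proof of Theorem~\ref{th:psi_concave}, at $t=0$ the class $\beta$ is one of the $\beta_i$, $i\in I$, and locally $\Psi(L_t)=\min_{i\in I}\omega(\beta_i^t)$, so in particular $\Psi(L_t)\le\omega(\beta^t)$ for small $t\ge 0$. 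Since $\Psi(L_t)$ is concave on all of $[0,1]$ and $\omega(\beta^t)=h(t)$ is affine, the inequality $\Psi\le h$ can only fail to the right of $0$ if the graph of $\Psi$ first separates upward from the line $h$; but then, by concavity, $\Psi$ would remain strictly above $h$ for all subsequent $t$, contradicting $\Psi(L_t)\le h(t)$ on the initial interval where they must be compared near $t=0$. I would spell this last lemma out as a tiny self-contained fact about concave-vs-affine functions (if $g$ is concave on $[0,1]$, $\ell$ is affine, $g(0)=\ell(0)$, and $g\le\ell$ on $[0,\delta)$, then $g\le\ell$ on $[0,1]$), and then conclude as above: $\ell=h$ vanishes at $t_*\le 1$, forcing $\Psi(L_{t_*})\le 0$, contradicting positivity. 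This completes the proof.
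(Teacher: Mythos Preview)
Your proof is correct and follows essentially the same route as the paper: use Fukaya's trick to get the local bound $\Psi(L_t)\le \omega(\beta^t)=\Psi(L)+t(\ff\cdot\del\beta)$ near $t=0$, then invoke concavity of $\Psi(L_t)$ to propagate this affine bound to all of $[0,1]$, and conclude from strict positivity of $\Psi(L_1)$. The paper argues directly (evaluating at $t=1$ to get $0<\Psi(L_1)\le \Psi(L)+\ff\cdot\del\beta$) rather than by contradiction, which lets it avoid your separate treatment of the case $\del\beta=0$; but this is purely cosmetic.
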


\begin{proof}
	Suppose $\eta_L(\beta)=1$, and consider a star-isotopy $\{L_t\}_{t\in [0,1]}$, $L_0=L$ with total flux $\ff\in H^1(L;\R)$.
	We begin as in the proof of Theorem~\ref{th:psi_concave}.
	By definition of star-isotopy:
	$$
	\Flux(\{L_t\}_{t\in[0,\epsilon]})=\epsilon \cdot \ff,\quad \epsilon\in[0,1].
	$$
	Let $\{m^t_k\}_{k\ge 0}$ be the structure maps of the Fukaya algebra of $L_t$.
	Recall Fukaya's trick used in the proof of proof of Theorem~\ref{th:psi_concave}: there exists a sufficiently small $t_0>0$ such that for all $\epsilon<t_0$,
$$
m^\epsilon_{k,\beta^\epsilon} (c_1^\epsilon,\ldots,c_k^\epsilon)\quad\textit{does not depend on }\epsilon.
$$
So by Theorem~\ref{theorem: explicit},
$$
\Psi(L_\epsilon)\le \omega(\beta^\epsilon).
$$
But
$$
\omega(\beta^\epsilon)=\omega(\beta)+\epsilon(\ff\cdot\del \beta)=\Psi(L)+\epsilon\ff\cdot\del \beta.
$$
It follows that
$$
\Psi(L_\epsilon)\le \Psi(L)+\epsilon (\ff\cdot\del \beta),\quad \epsilon\in[0,t_0].
$$	
Because $\Psi(L_t)$ is concave and continuous in $\epsilon$, while the right hand side depends linearly on $\epsilon$, the same bound holds globally in time:
$$
\Psi(L_t)\le \Psi(L)+t (\ff\cdot\del \beta),\quad t\in[0,1].
$$	
The theorem follows by taking $t=1$ and using the fact that $\Psi(L_1)$ is positive.
\end{proof}

Here is a convenient reformulation of Theorem~\ref{th:star_shape_char}.
Define the \emph{low-area Newton polytope} to be the following convex hull:
$$
\P_L^{low}=\Conv\left\{\del \beta\in H_1(L;\Z): \eta_L(\beta)=1\right \}\subset H_1(L;\R).
$$
Now consider its (open) dual polytope:
$$
(\P_L^{low})^\vee=\left\{\alpha\in H^1(L;\R):\alpha\cdot a>-1 \ \ \forall a\in \P_L^{low}\right\}\subset H^1(L;\R)
$$ and call \begin{equation}\label{def: QL} Q(L) = \Psi(L)\cdot (\P_L^{low})^\vee,\end{equation} where $\Psi(L)$ is used as a scaling factor, the {\em obstruction polytope} of $L$. Then Theorem~\ref{th:star_shape_char} may be rewritten as follows:
\begin{equation}
\label{eq:sh_p_low}
Sh^\st_L(X) \subset Q(L).
\end{equation}

\subsection{Stabilizations}\label{sec: stab}

For dynamical applications, for instance extending \cite[Remark 2.13]{EGM16}, it is useful to consider the following situation. Let $L \subset X$ as before be an orientable spin Lagrangian submanifold. Consider its product $L'=L \times 0_{S^1} \subset X'=X \times T^*S^1,$ where $0_{S^1} \subset T^*S^1$ is the zero-section. Then, since $0_{S^1}$ is exact, $\Psi(L') = \Psi(L),$ and we have the following calculation. 

\begin{cor}\label{cor: stab}
The following identity of obstruction polytopes holds \[Q(L') = Q(L) \times \R.\] Moreover, if $Sh^\st_L(X) = Q(L)$ then $Sh^{\st}_{L'}(X') = Sh^{\st}_L(X) \times \R.$
\end{cor}

Indeed, choosing a split almost complex structure $J' = J \times j_0$ on $X',$ for the standard almost complex structure $j_0$ on $T^*S^1,$ we see that the classes $\beta' \in H_2(X',L';\Z)$ with $n_{L'}(\beta') = 1$ are given by $\beta' = i(\beta)$ for $\beta \in H_2(X,L;\Z)$ with $n_L(\beta) = 1,$ where $i: H_2(X,L;\Z) \to H_2(X',L';\Z)$ is the natural map. The identity of obstruction polytopes follows immediately. The moreover part holds by \eqref{eq:sh_p_low} applied to $L'$ in $X'$ and lifting a star-isotopy $\{L_t\}$ in $X$ to a star-isotopy $\{L'_t = L_t \times \mathrm{graph}(t \cdot c d\theta)\}$ in $X',$ for an arbitrary $c \in \R,$ where $d\theta$ is the standard one-form on $S^1.$

\subsection{Landau-Ginzburg potential} \label{subsec:LG_Pot}
	Let $L\subset X$ be a \emph{monotone} Lagrangian submanifold and $J$ a tame almost complex structure.
	The Landau-Ginzburg potential \cite{Au07,Au09, FO3Book} is the following Laurent polynomial in  $d=\dim H_1(L;\R)$ variables:
	\begin{equation}
	\label{eq:potential}
	W_L(J)=\sum_{\beta\in H_2(X,L)\ :\ \mu(\beta)=2}\#\M_\beta(J)\cdot\mathbf{ x}^{\del \beta}.
	\end{equation}
	Here $\# \M_\beta(J)$ counts $J$-holomorphic disks in class $\beta$ passing through a specified point on $L$.
	We use the notation
	$\mathbf{x}^l=x_1^{l_1}\ldots x_d^{l_d}$ where $l_i$ are the coordinates of an integral vector $l\in H_1(L;\Z)/Tors\cong\Z^d$ in a chosen basis. 
	Since $L$ is monotone, its LG potential does not depend on $J$ and on Hamiltonian isotopies of $L$. 

Consider the Newton polytope of $W_L$, denoted by $\P_L$. Explicitly:
$$
\P_L=\Conv\left\{\del \beta\in H_1(L;\Z):\mu(\beta)=2,\ \# \M_\beta(J)\neq 0\right\}\subset H_1(L;\R).
$$
The convex hull is taken inside $H_1(L;\R)$, where we consider each $[\del
\beta]\in H_1(L;\Z)$ as a point in $H_1(L;\R)$ via the obvious map $H_1(L;\Z)\to
H_1(L;\R)$. We define $\P_L = H_1(L;\Z)$ if the considered set is empty.

\begin{cor} \label{cor:P_monot} 
	Let $L\subset X$ be a monotone Lagrangian submanifold with monotonicity constant $c$, i.e.~$\omega=c\mu\in H^2(X,L)$. Then
	$$
	Sh^\st_L(X) \subset 2
	c\,\cdot\P_L^\vee. 
	$$
\end{cor}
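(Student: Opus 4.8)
The plan is to deduce Corollary~\ref{cor:P_monot} from Theorem~\ref{th:star_shape_char} by comparing the low-area Newton polytope $\P_L^{low}$ with the honest Newton polytope $\P_L$ of the Landau--Ginzburg potential, under the monotonicity hypothesis. First I would unwind what monotonicity buys us: if $\omega=c\mu\in H^2(X,L;\R)$, then $\omega(\beta)=c\,\mu(\beta)$ for every $\beta\in H_2(X,L;\Z)$, so the area of a disk is determined by its Maslov index. Hence any class $\beta$ that can possibly contribute nontrivially to a symmetrised \ai\ operation (and thus witness $\Psi(L)$) must have the smallest Maslov index among such contributing classes. By Lemma~\ref{lem:top_sym}, the purely topological operations $\m_k$ (i.e.\ the $\beta=0$, Maslov $0$ part) symmetrise to zero on odd inputs, so $\beta=0$ never satisfies the condition in the definition of $\eta_L$; the lowest available Maslov index is therefore $2$ (recall all Maslov indices are even by orientability). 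This already forces $\Psi(L)=2c$ via Theorem~\ref{theorem: explicit}, \emph{provided} some Maslov $2$ class contributes nontrivially.

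The second step is to identify which Maslov $2$ classes $\beta$ have $\eta_L(\beta)=1$, and to relate this to $\#\M_\beta(J)\neq 0$. For a Maslov $2$ class, the relevant symmetrised operation is $\sum_{\sigma}m_{k,\beta}(c_{\sigma(1)},\ldots,c_{\sigma(k)})$; for degree reasons in a classically minimal model over $H^*(L;\R)$ the only $k$ that can land in $\bC^{even}$ (indeed in $\bC^0$) with odd inputs and Maslov $2$ is $k=0$, so the condition becomes $m_{0,\beta}(1)\neq 0$. Now I would invoke the standard fact (e.g.\ from \cite{FO3Book}, and already implicit in the setup around \eqref{eq:potential}) that $m_{0,\beta}(1)$ for a Maslov $2$ class equals $\#\M_\beta(J)\cdot[\mathrm{pt}]^\vee$ up to the usual normalisation — that is, the curvature term of the Fukaya algebra in class $\beta$ is precisely the disk count appearing in $W_L$. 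Consequently $\eta_L(\beta)=1$ exactly for those Maslov $2$ classes $\beta$ with $\#\M_\beta(J)\neq 0$, so
$$
\P_L^{low}=\Conv\{\del\beta:\eta_L(\beta)=1\}=\Conv\{\del\beta:\mu(\beta)=2,\ \#\M_\beta(J)\neq 0\}=\P_L.
$$
(One should note the degenerate case: if no Maslov $2$ disks exist at all, then $\Psi(L)$ could be larger — coming from higher Maslov classes — or $+\infty$; in that case $\P_L=H_1(L;\Z)$ by convention, $\P_L^\vee$ is all of $H^1(L;\R)$, and the claimed inclusion is vacuous. So the interesting case is exactly when Maslov $2$ disks exist, where $\Psi(L)=2c$.)

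The third step is purely formal: plug $\Psi(L)=2c$ and $\P_L^{low}=\P_L$ into the reformulation \eqref{eq:sh_p_low} of Theorem~\ref{th:star_shape_char}, which reads $Sh^\st_L(X)\subset \Psi(L)\cdot(\P_L^{low})^\vee$, to obtain $Sh^\st_L(X)\subset 2c\cdot\P_L^\vee$. This is exactly the statement of Corollary~\ref{cor:P_monot}.

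I expect the main obstacle to be the second step — pinning down the precise relationship between $m_{0,\beta}(1)$ and the open Gromov--Witten count $\#\M_\beta(J)$, and in particular justifying that for a Maslov $2$ class the \emph{only} way to witness $\Psi(L)$ is through the curvature term $m_{0,\beta}(1)$ rather than through some $m_{k,\beta}$ with $k\geq 1$. This is a degree/dimension bookkeeping argument (the output must have even degree, the inputs are odd, Maslov $2$ contributes $+2$ to the degree, and one checks the only consistent option over $\bC^*=H^*(L;\R)$ is $k=0$ landing in $\bC^0$), but it is the place where one must be careful and cite the appropriate normalisation conventions from \cite{Au07,FO3Book}. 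Everything else is a direct substitution into results already established in the excerpt.
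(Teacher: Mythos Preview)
Your approach is essentially the paper's: use monotonicity to see that Maslov~$2$ classes carry the lowest positive area, observe that $m_{0,\beta}(1)=\#\M_\beta(J)\cdot[1_L]$ for such $\beta$, invoke Lemma~\ref{lem:top_sym} to rule out $\beta=0$, conclude $\Psi(L)=2c$ and $\eta_L(\beta)=1$ whenever $\mu(\beta)=2$ and $\#\M_\beta(J)\neq 0$, and then apply Theorem~\ref{th:star_shape_char}.

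One point of overreach: you assert $\P_L^{low}=\P_L$ via a degree argument that ``for Maslov~$2$, the only $k$ that can contribute is $k=0$''. This reverse implication is not obviously true in general (nothing prevents some $m_{k,\beta}$ with $k\ge 1$ from symmetrising nontrivially on odd inputs for a Maslov~$2$ class $\beta$ with $\#\M_\beta(J)=0$), and the paper does not claim it. Fortunately you do not need it: the easy direction $\#\M_\beta(J)\neq 0\Rightarrow m_{0,\beta}(1)\neq 0\Rightarrow \eta_L(\beta)=1$ already gives $\P_L\subset\P_L^{low}$, hence $(\P_L^{low})^\vee\subset\P_L^\vee$, and plugging $\Psi(L)=2c$ into \eqref{eq:sh_p_low} yields the corollary. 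This is exactly what the paper does, and it sidesteps the ``main obstacle'' you anticipated. Your handling of the degenerate case (no Maslov~$2$ disks) is a nice addition the paper leaves implicit.
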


\begin{proof}
	Since $L$ is monotone (and orientable), Maslov index~2 classes have lowest positive symplectic area in $H_2(X,L;\Z)$.
	Note that when $\mu(\beta)=2$, $\#\M_\beta(J)$ contributes to the $m_0$-operation of the Fukaya algebra as follows:
	$$
	m_{0,\beta}(1)=T^{\omega(\beta)}q^\beta\#\M_\beta(J)\cdot [1_L]
	$$
	where $[1_L]\in H^0(L;\R)$ is the unit.

In view of this and Lemma~\ref{lem:top_sym}, we conclude the following about the characteristic function 
$\eta_L$. 
If $\mu(\beta)=2$ and $\M_\beta(J)\neq 0$, one has $\eta_L(\beta)=1$. By definition, the classes $\del\beta$ for such $\beta$ span $\P_L$. Furthermore, $\Psi(L)=\omega(\beta)=2c$. The statement follows from Theorem~\ref{th:star_shape_char}.
\end{proof}

\subsection{A modification in dimension four}
\label{subsec:sh_bound_dimf}
	Suppose  $\dim X = 4$ and $L\subset X$ is a Lagrangian submanifold. 
	Fixing a compatible almost complex structure $J$, define $\Psi_2(L)$ as the lowest area among
	Maslov index~2 classes $\beta$ such that $\# \M_\beta(J) \ne 0$. Let
	$\P_L^{low,\, 2}$ be the convex hull of the boundaries $\del\beta$ of such classes. We
	claim that with these adjustments, it still holds that 
	$$Sh^\st_L(X) \subset \Psi(L)(\P_L^{low,\, 2})^\vee.$$ 
	Indeed, for a generic path $J_t$ of almost complex structures, all
	$J_t$-holomorphic disks have Maslov index $\ge 0$, hence disks of Maslov
	index~$>2$ cannot bubble from disks of Maslov index~$2$ and do not interfere with the counts. We leave the details to the reader.

\section{Computations of shape}

\label{sec:Ex}

In this section we 
 study the $\Psi$-function and shapes for \emph{Gelfand-Cetlin fibrations}, which are generalisations of toric fibrations.
Next, we 
compute shapes and star-shapes of Clifford and Chekanov tori in $\C^n$; and study the wild behaviour of (non-star) shape in $\CP^2$.

\subsection{Gelfand-Cetlin fibrations} \label{subsec:ATF-GCF}
Only a small fraction of Fano varieties are toric, but one may broaden this class by allowing more singular Lagrangian torus fibrations which are reminiscent of the toric ones. 
We shall work with a class of fibrations which we call  Gelfand-Cetlin fibrations.
The name is derived from Gelfand-Cetlin fibrations on partial flag varieties \cite{NNU10,CKO17}; fibrations with similar properties on $\CP^2$ and $\CP^2\times\CP^1$ appeared in
\cite{Wu15}. More generally, 
 any toric Fano degeneration gives rise to a Gelfand-Cetlin fibration by the result of \cite{HaKa15}.

We axiomatise the general properties of those constructions in the following notion.

\begin{dfn} \label{dfn:GCF} 
	A Gelfand-Cetlin fibration (GCF) on a symplectic $2n$-manifold $X$ is given by a \emph{continuous} map $\mu: X \to \Pol = \mathrm{Im}(\mu) \subset \R^n$, called the moment map, whose image is a compact convex lattice polytope $\Pol$ with the following property. Denote by $\Pol^{codim\ge 2}$ the union of all faces of $\Pol$ of dimension at most $n-2$ (i.e.~all faces  except for the facets). 
	
	The requirement is that
	$\mu$ is a smooth map over  $\Pol\setminus \Pol^{codim\ge 2}$, and is an actual toric fibration over it.
	This means that over the interior of $\Pol$, $\mu$ is a Lagrangian torus fibration without singularities, and over the open parts of  the facets of $\Pol$ it and has
	standard elliptic corank one singularities.
\end{dfn}

Note that in most examples, $\mu$ is not  smooth (only continuous) over $\Pol^{codim\ge 2}$, and the preimage of any point in $\Pol^{codim\ge 2}$ is a smooth isotropic submanifold of $X$. 
Observe that it is a reasonable conjecture that every Fano variety admits a toric Fano degeneration, and if this holds true, it follows every Fano variety admits a Gelfand-Cetlin fibration.



Recall the statement of Theorem~\ref{th:GCF}:

\begin{thm}\label{thm:GCF2}
	Let $X$ be a Fano variety, $X\to \Pol\subset\R^n$ a Gelfand-Cetlin fibration, and  $L\subset X$ its monotone Lagrangian fibre. 
	
	Let $\P_L^\vee\subset H^1(L;\R)$ be the interior of the dual of the Newton polytope
	associated with the Landau-Ginzburg potential of $L$ (Section~\ref{subsec:LG_Pot}). Let $c$ be the monotonicity constant of $X$, and assume $\Pol$ is translated so that the origin corresponds to the fibre $L$. Then the following three subsets of $H^1(L;\R)\cong \R^n$ coincide:
	$$2c\cdot  \P_L^\vee = \Pol^0=Sh^\st_L(X).$$
\end{thm}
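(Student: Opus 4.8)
The plan is to prove the three claimed equalities in two steps: first the combinatorial/enumerative identity $2c\cdot\P_L^\vee=\Pol^0$, and then the ``hard'' symplectic identity $\Pol^0=Sh^\st_L(X)$, which in turn splits into the easy inclusion $\Pol^0\subseteq Sh^\st_L(X)$ (realized by fibrewise isotopies) and the constraining inclusion $Sh^\st_L(X)\subseteq\Pol^0$ (which is where the invariant $\Psi$ does the work).

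For the first equality, I would compute the Landau--Ginzburg potential $W_L$ of the monotone Gelfand--Cetlin fibre. Over the open part $\Pol\setminus\Pol^{codim\ge2}$ the fibration is an honest toric fibration with standard elliptic corank-one singularities over the facets, so the Maslov-index-$2$ holomorphic disks through a point on $L$ that stay in this open region are exactly the ``Cho--Oh'' disks normal to the facets, contributing one monomial $\mathbf x^{\del\beta}$ per facet with $\del\beta$ the primitive inward normal. One must argue (by a positivity/area or SFT-type argument, or by invoking the monotone-disk count being a Hamiltonian-isotopy invariant together with a limiting toric degeneration as in \cite{HaKa15,NNU10}) that no extra disks touching $\Pol^{codim\ge2}$ contribute, so that $\P_L=\Conv\{$facet normals$\}$ and its dual polytope, rescaled by $2c$, is precisely $\Pol$ shifted to put $L$ at the origin, i.e.\ $\Pol^0$. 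Concretely this is the standard toric fact that the moment polytope is the dual (up to the monotonicity scaling) of the fan spanned by facet normals; here it works verbatim because away from codimension two the GCF literally is toric. This gives $2c\cdot\P_L^\vee=\Pol^0$, and Corollary~\ref{cor:P_monot} already yields $Sh^\st_L(X)\subseteq 2c\cdot\P_L^\vee=\Pol^0$.

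It remains to prove $\Pol^0\subseteq Sh^\st_L(X)$. This is the geometric input: given any $\ff\in\Pol^0=H^1(L;\R)$ lying in the interior of the (shifted) polytope, I must produce a Lagrangian star-isotopy from $L$ with total flux $\ff$. The construction is to move $L$ within the fibration: pick the affine segment in $\Pol^0$ from the point corresponding to $L$ in the direction dual to $\ff$, and let $L_t=\mu^{-1}(p_t)$ be the fibres over that segment. By the Arnold--Liouville integral affine structure on $\Pol^0=B^\o$, this fibrewise isotopy has flux developing linearly in $t$ (the flux of a path of toric fibres is computed by the affine displacement in the base), so it is a genuine star-isotopy, and its total flux is exactly $\ff$. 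One needs the technical point that these fibres vary smoothly in $X$ (true over the open region where $\mu$ is a smooth toric fibration) and that every interior point is reachable — which is exactly the statement that $\Pol^0$ is the interior of the polytope and hence convex, so the segment stays inside. Combining, $\Pol^0\subseteq Sh^\st_L(X)\subseteq\Pol^0$, closing the loop.

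I expect the main obstacle to be the enumerative step: showing that the Newton polytope $\P_L$ of the LG potential of the monotone GCF fibre is exactly the facet-normal polytope, with no contributions from disks that hit or pass near $\Pol^{codim\ge2}$. In the smooth toric case this is classical (Cho--Oh), but a GCF is only continuous over the codimension-two skeleton, so one has to rule out ``extra'' Maslov-$2$ disks created by the degenerate fibres. The cleanest route is probably a neck-stretching / SFT-compactness argument in the spirit of Lemma~\ref{lem:neck_stretch} and Theorem~\ref{th:nbhood_comp}, combined with the fact that the monotone disk count is invariant and can be computed in a toric model (via the toric degeneration producing the GCF, \cite{HaKa15}), but getting this invariance precisely right across the singular locus is the delicate part; everything else — duality of polytopes, linearity of flux for fibrewise isotopies, and the bound from Corollary~\ref{cor:P_monot} — is routine.
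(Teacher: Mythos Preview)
Your overall architecture is right, and the easy inclusion $\Pol^0\subseteq Sh^\st_L(X)$ via fibrewise isotopies is exactly what the paper does. But the step you flag as the main obstacle --- showing that $\P_L$ equals exactly the convex hull of facet normals, with no extra Maslov-$2$ disks --- is never proved in the paper and is not needed; it is a \emph{consequence} of the theorem, not an input. The paper only proves one direction (Lemma~\ref{lem:Count=1}): each facet class $\beta$ has $\#\M_\beta\neq 0$, by SFT-stretching near that facet to isolate the single local toric disk (Lemma~\ref{lem:Count1_p_epsilon}) and then a Liouville-neighbourhood no-bubbling argument (Lemma~\ref{lem:neck_stretch}) to transport this back to the monotone fibre. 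This yields only the inclusion (facet normals)~$\subseteq\P_L$, equivalently $2c\cdot\P_L^\vee\subseteq\Pol^0$. Further disk classes touching $\Pol^{codim\ge 2}$ are never excluded.

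The reverse inclusion then comes for free from the sandwich you already have in hand: $\Pol^0\subseteq Sh^\st_L(X)\subseteq 2c\cdot\P_L^\vee\subseteq\Pol^0$, the middle inclusion being Corollary~\ref{cor:P_monot}. So if you simply reorder your argument --- prove the facet contribution first (one inclusion only), then the fibrewise inclusion, then invoke Corollary~\ref{cor:P_monot} and sandwich --- your ``main obstacle'' evaporates. The paper in fact establishes $Sh^\st_L(X)\subseteq\Pol^0$ by a separate route, via Proposition~\ref{prop:Psi_GCF} (the graph of $\Psi$ over $\Pol^0$ is the cone vanishing at $\del\Pol$, so any star-isotopy with flux leaving $\Pol^0$ would force $\Psi\le 0$ by concavity, contradicting positivity); but as the sandwich shows, this stronger computation of $\Psi$ on all fibres is not strictly required for Theorem~\ref{thm:GCF2} itself.
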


\blk 

\begin{rmk}
	\label{rmk:shape_atf}
	The statement is also true if $\dim X=4$, $L$ is the 
	monotone fibre of an almost toric fibration over a disk, and $\Pol$ is the polytope of the limiting orbifold
	\cite[Definition~2.14]{Vi16a}. We leave the obvious modifications to the reader.
\end{rmk}

Using the fact that $\mu$ is the standard toric fibration away from $\Pol^{codim\ge 2}$, the normal segment $I$ from the point $x_L=\mu(L)$ onto that facet determines a Maslov index~2 class in $H_2(X,L;\Z)$; see Figure~\ref{fig:facet}. Using the identification $H^1(L;\R) \cong \R^n$ coming from the embedding $\Pol\subset \R^n$, we see that the boundary of $\beta$ is given by the exterior normal to the chosen facet. Given a class $\beta$ arising from this construction using some facet, we will denote that facet by $\Pol_\beta$.

\begin{lem}
	\label{lem:Count=1}
	For each facet of $\Pol$, the corresponding class $\beta\in H_2(X,L;\Z)$ satisfies $\eta_L(\beta)=1$ where $\eta_L$ is the characteristic function from Section~\ref{sec:gen_bounds}.
\end{lem}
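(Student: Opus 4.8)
The plan is to reduce the lemma to a single enumerative fact — that the Maslov index~$2$ class $\beta$ associated to the facet $\Pol_\beta$ is represented by a nonzero count of holomorphic disks through a generic boundary point — and then to prove that count is nonzero by confining all such disks to the toric part of $\mu$, where the answer is the classical Cho--Oh count.

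\textbf{Step 1: reduction to the disk count.} First I would recall, following the proof of Corollary~\ref{cor:P_monot}, that it suffices to exhibit a tame $J$ with $\#\M_\beta(J)\neq 0$, where $\#\M_\beta(J)$ counts Maslov index~$2$ $J$-holomorphic disks in class $\beta$ whose boundary passes through a chosen generic point $\pt\in L$. Indeed $L$ is monotone, so $\omega=c\mu$ on $H_2(X,L;\Z)$; the Novikov ring constraints together with monotonicity force every class $\alpha$ occurring in the structure maps to satisfy $\mu(\alpha)\ge 2$ except for $\alpha=0$, and the $\alpha=0$ symmetrised operations vanish on odd-degree inputs by Lemma~\ref{lem:top_sym}. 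Hence $\omega(\beta)=2c$ is the smallest positive area, and since $\mu(\beta)=2$ one has $m_{0,\beta}(1)=\#\M_\beta(J)\cdot[1_L]$ with $[1_L]\in H^0(L;\R)$ the unit; so as soon as $\#\M_\beta(J)\neq 0$ the $k=0$ instance of the defining condition of $\eta_L$ holds and $\Psi(L)=\omega(\beta)=2c$, i.e.\ $\eta_L(\beta)=1$. The count $\#\M_\beta(J)$ is independent of the tame $J$ by monotonicity, and transversality for this single Maslov~$2$ count is classical (alternatively via stabilising divisors, as $X$ is Fano and $\omega$ rational, cf.\ Section~\ref{subsec:IntroRmk}).

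\textbf{Step 2: the count is nonzero.} Let $Z=\mu^{-1}(\Pol^{codim\ge 2})$. By the GCF axioms $\mu$ is an honest toric fibration over $\Pol\setminus\Pol^{codim\ge 2}$ with standard elliptic corank one singularities over the open facets, so $X^\circ:=X\setminus Z$ is modelled, near the segment $I$ from $x_L$ to $\Pol_\beta$, on the corresponding local toric picture. I would fix $J$ to agree there with the standard toric complex structure and neck-stretch around a contact-type hypersurface enclosing $Z$ (the fibres over $\Pol^{codim\ge 2}$ being isotropic, $Z$ admits a suitable Weinstein-type neighbourhood). Arguing as in Lemma~\ref{lem:neck_stretch} and the conclusion of Theorem~\ref{th:nbhood_comp}: for a sufficiently stretched $J_n$ and a generic $\pt\in L$, every $J_n$-holomorphic disk in class $\beta$ through $\pt$ is disjoint from a fixed neighbourhood of $Z$. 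If not, SFT compactness (via Fukaya's trick) produces a broken building whose pieces have positive $\omega$-area, assemble topologically to the class $\beta$ of area $2c$ and Maslov index~$2$, and include a piece $S$ with boundary on $L$ inside $X^\circ$; capping $S$ with a cylinder in $X^\circ$ as in Lemma~\ref{lem:neck_stretch} and using monotonicity of $X$ and $L$ (no spheres or disks of nonpositive Maslov index and positive area), one checks that no genuine breaking can occur for generic data. Thus all disks in class $\beta$ through $\pt$ lie in $X^\circ$, where the local toric model and the Cho--Oh description of basic disks identify them with the single basic disk attached to $\Pol_\beta$; their signed count is $\pm 1\neq 0$.

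\textbf{Main obstacle.} The technical heart is the confinement in Step~2: producing a contact-type hypersurface around the merely continuously-fibred, possibly singular locus $Z$ so that neck-stretching applies, and running the SFT/Gromov-compactness and Maslov/area bookkeeping to rule out that a Maslov~$2$ disk in class $\beta$ breaks or bubbles against $Z$. Once the disks are forced into the toric region, the count is the standard Cho--Oh computation and the rest is the formal reduction of Step~1.
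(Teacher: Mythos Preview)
Your Step~1 is correct and matches the paper's reduction exactly: monotonicity plus Lemma~\ref{lem:top_sym} forces $\Psi(L)=2c$ and $\eta_L(\beta)=1$ as soon as $\#\M_\beta(J)\neq 0$.

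Step~2 has a genuine gap, and it is precisely the obstacle you name. Under Definition~\ref{dfn:GCF} the map $\mu$ is only \emph{continuous} over $\Pol^{codim\ge 2}$, so $Z=\mu^{-1}(\Pol^{codim\ge 2})$ need not admit a contact-type hypersurface in any controlled way; you have no Liouville model to stretch against. Even granting such a hypersurface, your energy/Maslov bookkeeping does not go through at the monotone fibre: $\omega(\beta)=2c$ is not small compared to the Reeb actions on $\partial(\text{nbhd of }Z)$, so nothing prevents a piece of the SFT building from living on the $Z$-side, and the sentence ``one checks that no genuine breaking can occur'' is doing all the work without an argument. Finally, confining to $X^\circ$ is not the same as confining to the local toric model near the segment $I$, so even if disks avoid $Z$ you have not reduced to the Cho--Oh picture.

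The paper avoids stretching near $Z$ altogether and instead performs two stretches in the \emph{regular} part of the fibration. First (Lemma~\ref{lem:Count1_p_epsilon}) it moves to a fibre $L_t$ very close to the facet, where a neighbourhood $U$ of the facet point is symplectomorphic to a piece of $T^*T^{n-1}\times\C$; stretching around $\partial U$ and using that $\omega(\beta_t)\to 0$ as $t$ approaches the facet (so it drops below the smallest Reeb action) forces all disks in class $\beta_t$ to stay in $U$, where the count is~$1$ by the local model. Then (proof of Lemma~\ref{lem:Count=1}) it takes a convex Liouville neighbourhood $V$ of $L$ containing the segment $[x_L,t]$ and applies Lemma~\ref{lem:neck_stretch} to rule out Maslov~$\le 0$ disks along the isotopy from $L$ to $L_t$, so the Maslov~$2$ count is preserved. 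The point is that both $U$ and $V$ live entirely over the toric part of $\Pol$, so their Liouville structures are unproblematic; the singular locus $Z$ is never enclosed by a neck.
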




We need some preliminary lemmas first.
For each $x\in I$, denote by $L_x\subset X$ the fibre over it, and by $\beta_x\in H_2(X,L_x;\Z)$ the class obtained from  a class $\beta\in H_2(X,L;\Z)$ by continuity.

Pick a facet $\Pol_\beta$ of the moment polytope, and  consider
a star-isotopy $\{L_t\}_{t\in[0,1)}$ 
corresponding to the segment $I \cong [0,1] \subset \Pol$ starting from the
monotone torus $L = L_0$, and going towards $\Pol_\beta$ in the direction normal to it. 
Here $\beta$ stands for the class in $H_2(X,L;\Z)$ corresponding to the chosen facet.

\begin{lem} \label{lem:Count1_p_epsilon}
	For any $t \in I\setminus\{1\}$ sufficiently close to the facet in question, there exists a compatible almost complex structure $J'$ for which
	the algebraic count of $J'$-holomorphic disks in class $\beta_{t}$ passing through a 
	fixed point on $L_{t}$ equals one. Moreover, $\beta_{t}$ 
	has minimal area among all $J'$-holomorphic disks on $(X,L_{t})$. Thus, $\Psi(L_{t}) 
	= \omega(\beta_{t})$ and $\eta_{L_t}(\beta_t)=1$.
\end{lem}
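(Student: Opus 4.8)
The plan is to analyze the geometry near a chosen facet $\Pol_\beta$ of the moment polytope, where the Gelfand-Cetlin fibration is (by Definition~\ref{dfn:GCF}) an honest toric fibration with standard elliptic corank-one singularities. First I would take a point $t\in I$ close enough to the facet so that, in a neighborhood of the corresponding fibre, the fibration looks like the standard toric model near the interior of a facet, i.e.\ like $\C\times T^*T^{n-1}$ with the circle factor degenerating along the facet. In this local model, the class $\beta_t$ is the disk class normal to the facet: it is the familiar ``Maslov index~2 disk through the origin in $\C$'', whose moduli space of $J'$-holomorphic disks through a fixed point is a single transversally cut-out point, hence the count equals $1$. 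This is the standard toric computation (Cho--Oh type), and the regularity can be arranged with a suitable $J'$ adapted to the local product structure.

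Next I would establish the minimality of $\omega(\beta_t)$ among all $J'$-holomorphic disks on $(X,L_t)$. Here the key input is the neck-stretching Lemma~\ref{lem:neck_stretch} together with monotonicity: strictly speaking, $L_t$ is monotone only for the central value; but one can instead argue directly via energy. Because the fibre $L_t$ lies very close to the facet, the normal disk class $\beta_t$ has very small area $\omega(\beta_t)\to 0$ as $t\to 1$, while any other class $\alpha\in H_2(X,L_t;\Z)$ carried by a holomorphic disk has area bounded below by a positive constant independent of $t$ (this uses that the areas of a spanning set of classes vary continuously and only the ``$\beta_t$-direction'' degenerates, plus Gromov compactness to rule out a sequence of disks with area $\to 0$ in a class other than a positive multiple of $\beta_t$). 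So for $t$ sufficiently close to the facet, $\beta_t$ realizes the unique lowest positive area, and moreover no bubbling can occur in the moduli space $\M_{\beta_t}(J')$ because any bubble component would have to carry positive area strictly less than $\omega(\beta_t)$, which is impossible. Thus the count is genuinely well-defined and equals $1$.

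With these two facts in hand, the conclusion $\Psi(L_t)=\omega(\beta_t)$ and $\eta_{L_t}(\beta_t)=1$ follows from Theorem~\ref{theorem: explicit}: the non-vanishing count $\#\M_{\beta_t}(J')\neq 0$ feeds into the $m_0$-operation, $m_{0,\beta_t}(1)=T^{\omega(\beta_t)}q^{\beta_t}\#\M_{\beta_t}(J')\cdot[1_L]\neq 0$, which is exactly the $k=0$ case of the symmetrized-structure-map condition in Theorem~\ref{theorem: explicit}; and since $\omega(\beta_t)$ is the minimal area of any holomorphic disk, it is a fortiori the minimal area appearing in that theorem's set, so it computes $\Psi(L_t)$. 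That $\omega(\beta_t)$ is minimal also forces $\eta_{L_t}(\beta_t)=1$ by the definition of the indicator function in Section~\ref{sec:gen_bounds}.

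The main obstacle I expect is the minimality/no-bubbling step: one must rule out, uniformly for all $t$ close to the facet, the existence of low-area holomorphic disks in classes other than $\beta_t$ (or other than positive multiples of it) on $L_t$, for an almost complex structure $J'$ that is simultaneously adapted to the toric local model and achieves transversality. The clean way to do this is to combine the local toric structure near the facet (controlling all disk classes whose area degenerates as $t\to 1$) with a neck-stretching argument à la Lemma~\ref{lem:neck_stretch} and Gromov compactness to bound from below the areas of all remaining classes; this requires a little care because the Lagrangian is moving and not monotone, so one uses the Fukaya trick of Section~\ref{subsec:gromov_comp} to convert the moving boundary condition into a fixed one before invoking compactness.
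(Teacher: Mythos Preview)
Your overall structure is correct and matches the paper: local toric model near the facet gives the disk count~$1$ in class $\beta_t$, and then one must rule out low-area disks escaping that local neighbourhood. However, the mechanism you propose for the second step has a gap.

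Your minimality argument relies on a Gromov-compactness sketch (``rule out a sequence of disks with area $\to 0$ in a class other than a positive multiple of $\beta_t$'') and a reference to Lemma~\ref{lem:neck_stretch}. Neither of these works as stated. Lemma~\ref{lem:neck_stretch} genuinely needs the monotonicity of the zero-section to conclude, so it does not apply to the non-monotone $L_t$; you note this but do not replace the mechanism. The Gromov-compactness sketch is not a proof: as $t$ varies the Lagrangian, the almost complex structure, and possibly the class all change, and a sequence of disks with area $\to 0$ simply limits to a constant map, which yields no contradiction. You need a uniform positive lower bound on the area of any disk that is \emph{not} contained in the local toric chart, and you have not supplied one.

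The paper's device is to fix the local neighbourhood $U\cong T^*T^{n-1}\times\C$ once and for all, and SFT-stretch the almost complex structure around $\partial U$. If some $J'$-disk of area $\le\omega(\beta_t)$ escaped $U$, the SFT limit would contain a nontrivial holomorphic piece $u'$ in $X\setminus U$ with punctures asymptotic to Reeb orbits in $\partial U$; a short computation (the paper isolates it as Lemma~\ref{lem:area_u'}) shows $\omega(u')$ exceeds the sum of the Reeb actions, hence exceeds the minimal Reeb action of $\partial U$. Since $\omega(\beta_t)\to 0$ as $t\to 1$, taking $t$ close enough to the facet makes $\omega(\beta_t)$ smaller than that minimal Reeb action, a contradiction. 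This simultaneously proves that (i) every $J'$-disk in class $\beta_t$ stays in $U$, so the local count~$1$ is the global count, and (ii) every $J'$-disk on $(X,L_t)$ has area $\ge\omega(\beta_t)$. Once you have this, your final paragraph deducing $\Psi(L_t)=\omega(\beta_t)$ and $\eta_{L_t}(\beta_t)=1$ from Theorem~\ref{theorem: explicit} is correct, modulo the remark (which the paper also makes) that one must check $m_{0,\beta_t}(1)\neq 0$ survives to a classically minimal model.
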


\begin{proof}
	Denote by $o$ the endpoint $I \cap \Pol_\beta$. There is a symplectomorphism
 between the $\mu$-preimage of a neighbourhood $U$ of $o$ in $\Pol$ and a
 neighbourhood of $T^{n-1} \times \{0\}$ inside $$T^*T^{n-1} \times \C$$ with
 the standard symplectic form; see Figure~\ref{fig:facet}. Moreover, one can
 arrange this symplectomorphism to map $L_t$ to a product torus of the form
 $T^{n-1}\times \{\text{a circle}\}$.
	
 The class $\beta_{t}$ is represented in $H_2(U,L_{t})$ and is identified in
 this model with the disk class in the second $\C$-factor. Moreover, for the
 standard Liouville structure on $T^*T^{n-1}\times \C$, the algebraic count of
 disks in $(U,L_{t})$ equals one, by an explicit computation. We want to show
 that for some $J'$ on $X$, there are no holomorphic disks in the same class
 that escape $U$.
	
\begin{figure}[h]
	\includegraphics[]{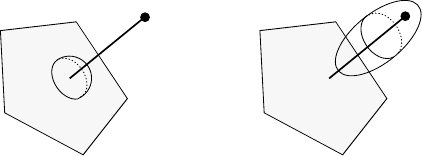}
	\caption{Domains $U$ and $V$ used for stretching in the proof of Lemma~\ref{lem:Count1_p_epsilon} resp.~Lemma~\ref{lem:Count=1}. The dotted point corresponds to the monotone fibre $L$.}
	\label{fig:facet}
\end{figure}

 The idea is to fix $U$ and consider almost complex structures $J'$ which are
 sufficiently stretched around $\del U$, with respect to some Liouville form on
 $U$. Note that as $t \to o$, $\omega(\beta_{t})\to 0$, in particular this area
 becomes eventually smaller than the action of any 1-periodic Reeb orbit in
 $\del U$.  	Now suppose that for each almost complex structure in the
 stretching sequence, there is a holomorphic disk in $ (X,L_{t})$ in class
 $\beta_{t}$ that escapes $U$. In the SFT limit, such disks converge to a
 holomorphic building with a non-trivial holomorphic piece $u'$ in  $X\setminus
 U$ having punctures asymptotic to Reeb chords in $\del U$. In particular, $u'$
 defines a 2-chain in $(X\setminus U,\del U)$.
	
	Let $\omega$ be the initial symplectic form on $X$. First, we have that
	$$
	\omega(u')<\omega(\beta_{t}).
	$$ 	
	But the Lemma~\ref{lem:area_u'} below gives a bound in the other direction; this contradiction proves Lemma~\ref{lem:Count1_p_epsilon}.

 As in the proof of Corollary~\ref{cor:P_monot}, our computation of Maslov
 index~2 disks shows that $m_{0,\beta_t}(1)\neq 0$ for the stretched almost
 complex structure. To argue that $\Psi(L_{t}) = \omega(\beta_{t})$ and
 $\eta_{L_t}(\beta_t)=1$, we must show that $m_{0,\beta_t}(1)\neq 0$ in some
 classically minimal model, while strictly speaking we have computed
 $m_{0,\beta_t}(1)=[L_t]$ as the fundamental chain in some (not necessarily
 minimal) chain model, depending on the setting of the Fukaya algebra. In the
 case of the stabilising divisor approach, one can take a perfect Morse function
 on $L_t$ which automatically gives the computation in a minimal model. In
 general, the application of homological perturbation lemma will take
 $m_{0,\beta_t}(1)$ to its cohomology class (the fundamental class).
\end{proof}

\begin{rmk}
If one uses the stabilising divisor approach to Fukaya algebras, one needs to make sure that the above SFT stretchings are compatible with keeping the stabilising divisor complex. The simplest way to ensure this is by imposing an extra condition in the definition of the Gelfand-Cetlin fibration, which is again satisfied in examples and should  be generally satisfied for the fibrations arising by the general mechanism of \cite{HaKa15}. The condition is that $X$ has a smooth anticanonical divisor projecting, after a suitable Hamiltonian isotopy, to an arbitrarily small neighbourhood of $\del \Pol$, and which coincides with the preimages of the facets of $\Pol$ away from an arbitrarily small neighbourhood of the set of codimension~$\ge 2$ faces of $\Pol$. This divisor should be stabilising for the monotone torus (and hence it will be stabilising for any torus which is the preimage of an interior point of $\Pol,$ after a Hamiltonian isotopy if necessary). This way, the neighbourhood in Figure~\ref{fig:facet} (left) intersects the divisor in the standard way which makes it possible to stretch the almost complex structure keeping it complex. The neighbourhood in Figure~\ref{fig:facet} (right) does not intersect the divisor at all, which again makes consistent stretching possible.
\end{rmk}

\begin{lem} \label{lem:area_u'}
	In the setting of the previous proof, it holds that $\omega(u')$ greater than the sum of the actions of its asymptotic  Reeb orbits.
\end{lem}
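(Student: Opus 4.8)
The plan is to exploit the Liouville (exactness) structure on the neighbourhood $U$, which by construction is chosen to be a Liouville domain with primitive $\theta$, together with the fact that the complement piece $u'$ lives in $X \setminus U$ where we still have the \emph{original} symplectic form $\omega$, and that $\omega$ is exact precisely where it needs to be for a Stokes-type argument. More concretely, recall that $u'$ is the holomorphic piece of the SFT limit lying in $X \setminus U$; topologically it is a $2$-chain with boundary consisting of the asymptotic Reeb chords $\{\gamma_i\} \subset \partial U$ (with appropriate signs), i.e.\ $\partial u' = \sum_i \gamma_i$ as a cycle in $\partial U$ (there is no Lagrangian boundary component on $u'$, since $u'$ is on the far side of the neck — all of the Lagrangian boundary of the broken disk sits on the piece inside $U$).

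First I would set up the area computation. Since $u'$ is $J'$-holomorphic for an $\omega$-tame $J'$ away from the neck, $\omega(u') = \int_{u'} \omega > 0$; but I want the sharper \emph{lower} bound in terms of Reeb actions. For this, cap off $u'$: the moment polytope picture shows that $X \setminus U$ retracts in a controlled way, and in fact (using that $\mu$ is an honest toric fibration over $\Pol \setminus \Pol^{codim \ge 2}$ and that $U$ is a small neighbourhood of the facet endpoint $o$) one can find a $2$-chain $C' \subset \partial U$, or rather extend $u'$ across a collar, so that $u' \cup (\text{collar caps})$ becomes a chain whose boundary lies in a region where $\omega = d\theta$. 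Alternatively, and more cleanly: the neck-stretching is performed with respect to a contact form $\alpha = \theta|_{\partial U}$ on $\partial U$, and in the symplectization collar $[0,\infty) \times \partial U$ glued to $X \setminus U$ the symplectic form is $d(e^s \alpha)$, which is exact. The standard SFT energy/action inequality for a holomorphic curve in a symplectization with negative punctures asymptotic to Reeb orbits $\gamma_i$ gives
$$
\int_{u'} \omega \ \ge\ \sum_i \int_{\gamma_i} \alpha \ =\ \sum_i \mathcal{A}(\gamma_i),
$$
because the $d\alpha$-energy is non-negative and the $\alpha$-energy is controlled by the asymptotics; the positive ends (if any) only increase the area, and here $u'$ has only negative ends at $\partial U$ as the boundary of the broken disk is absorbed on the $U$-side.

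The key steps in order: (1) identify precisely the boundary structure of $u'$ — it has no Lagrangian boundary, only punctures asymptotic to the Reeb chords $\gamma_i$ on $\partial U$, with the Reeb dynamics taken with respect to $\theta|_{\partial U}$ from the Liouville structure used in the stretching; (2) observe that in $X \setminus U$, equipped with the symplectization collar attached along $\partial U$, the relevant symplectic form is exact with primitive $e^s\,\theta$, so Stokes' theorem applies to $u'$; (3) apply Stokes / the SFT area–action inequality to conclude $\omega(u') \ge \sum_i \mathcal{A}(\gamma_i)$, using positivity of the $d\theta$-area of the holomorphic piece; (4) note strictness is not needed — the non-strict inequality as stated suffices, since in the proof of Lemma~\ref{lem:Count1_p_epsilon} it is combined with the \emph{strict} inequality $\omega(u') < \omega(\beta_t)$ and the fact that $\omega(\beta_t)$ is eventually smaller than the action of any $1$-periodic Reeb orbit in $\partial U$, which is itself a lower bound for $\sum_i \mathcal{A}(\gamma_i)$.

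The main obstacle I anticipate is bookkeeping around the punctures and the precise form of the SFT energy inequality for curves with boundary that are only partially in the symplectization — one must be careful that $u'$ genuinely has no Lagrangian boundary arcs (so that the naive Stokes argument goes through without boundary-integral corrections), and that the Reeb \emph{chords} versus \emph{orbits} distinction is handled correctly (the asymptotics at $\partial U$ of a broken disk are Reeb chords for the Legendrian $\partial U \cap L_t$, but the relevant action bound is still $\int \theta$ along them, and $\theta$-exactness of $L_t$ near $\partial U$ kills the boundary contribution exactly as in the proof of Lemma~\ref{lem:neck_stretch}). Once the exactness of $\theta$ on $U$ and the positivity of the holomorphic $d\theta$-energy are in hand, the inequality is essentially the standard symplectization area estimate, so I do not expect genuine analytic difficulty beyond this careful accounting.
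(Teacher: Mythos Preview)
Your approach is essentially the same as the paper's, but one step of your justification is misstated and would not go through as written. In step~(2) you claim that on $X\setminus U$ with the symplectization collar attached, ``the relevant symplectic form is exact with primitive $e^s\theta$, so Stokes' theorem applies to $u'$.'' This is false: $\omega$ need not be exact on the compact piece $X\setminus U$ (think of $X=\CP^2$), so you cannot run Stokes globally on $u'$. Exactness only holds on the collar.

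The paper's argument, which is what your invocation of ``the standard SFT area--action inequality'' really amounts to, avoids this by working with the completed form. One attaches the negative half-cylinder $((-\infty,0]\times\partial U,\, d(e^t\theta))$ to $(X\setminus U,\omega)$ to obtain $(X\setminus U,\omega_+^\infty)$. The SFT limit curve $u'$ is holomorphic for a cylindrical $J$ taming $\omega_+^\infty$, hence $\omega_+^\infty(u')>0$ (strict, since $u'$ is non-trivial). Now the difference $\omega(u')-\omega_+^\infty(u')$ is supported entirely in the collar, where $\omega-\omega_+^\infty = d(\theta - e^t\theta)$ \emph{is} exact, so Stokes on the collar alone gives
\[
\omega(u') \;=\; \omega_+^\infty(u') \;+\; \sum_i \int_{\gamma_i}\theta \;>\; \sum_i \int_{\gamma_i}\theta.
\]
This is the correct mechanism; no global exactness is used or needed. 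Your step~(1) is right (the Lagrangian boundary is entirely inside $U$, so $u'$ has only Reeb-orbit asymptotics and your worry about chords is unnecessary here), and once step~(2) is replaced by the completed-form positivity above, the rest follows.
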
	

\begin{proof}
	Consider the space
	\[ (X \setminus U, \omega_+^\infty) \cong (X \setminus U, \omega) \cup ([-\infty,0] \times
	\del U, d(e^t\theta))\]
	obtained by attaching the infinite negative Liouville collar to $(X\setminus U,\omega)$. 
	Here $\theta$ is the Liouville contact form on $\del U$,
	$d\theta = \omega|_{\del U}$. 
	By the construction of neck-stretching, $u'$ is a curve  which is holomorphic with respect to a cylindrical almost complex structure taming $\omega_+^\infty$.
	It implies that $\omega_+^\infty(u')>0$.
	Finally, one has that 
	\[ \omega(u') = \omega_+^{\infty}(u') + \textstyle\sum_i \int_{\gamma_i}\theta > \textstyle\sum_i \int_{\gamma_i}\theta\]
	where $\lambda_i$ are the asymptotic Reeb orbits of $u'$ and $\int_{\gamma_i}\theta$ are their actions. 
\end{proof}

\begin{proof}[Proof of Lemma \ref{lem:Count=1}]
	In the proof of Lemma~\ref{lem:Count1_p_epsilon} we have shown that Maslov index~2 disks $\beta_t$ satisfy $\eta_{L_t}(\beta_t)=1$.
	
 Consider the domain $V\subset X$ which is the $\mu$-preimage of a convex open
 neighbourhood of the segment $[x_L=0,t]\subset I\subset \Pol$ connecting $x_L$
 to the point $t$ that is sufficiently close to the facet of $\Pol$, so that
 Lemma~\ref{lem:Count1_p_epsilon} applies. See Figure~\ref{fig:facet}. Then $V$
 is a Weinstein neighbourhood of $L\subset X$, which is moreover a Liouville
 neighbourhood. By Lemma~\ref{lem:neck_stretch}, one finds an almost complex
 structure $J''$ on $X$, sufficiently stretched around $V$, for which the fibres
 over the segment $[x_L,t]$ bound no holomorphic disks of non-positive Maslov
 index. Note that this stretching happens along a different domain than
 considered in the proof of Lemma~\ref{lem:Count1_p_epsilon}.
	
 So Maslov index~2 disks undergo no bubbling as we move the Lagrangian torus
 from $L$ to $L_t$ along the segment. Since $\eta_{L_t}(\beta_t)=1$, it follows
 that $\eta_L(\beta)=1$.
	\end{proof}

\begin{proof}[Proof of Proposition~\ref{prop:Psi_GCF}]
	
	Let $\{L_t\}_{t\in[0,1)}$ be the star-isotopy corresponding to a segment
	starting from the monotone torus $L = L_0$ going towards a codimension one facet
	of $\Pol$ in the normal direction. Let $\beta\in H_2(X,L;\Z)$ be class of the corresponding the Maslov index~2 disk, and $\beta_t\in
	H_2(X,L_t;\Z)$ the continuation of this class.
	
	Observe that showing that the values of $\Psi$ on fibres are not
	below the cone specified in Proposition~\ref{prop:Psi_GCF}
	is equivalent to showing that
$$\Psi(t) = \Psi(\{L_t\}) \ge \omega(\beta_t).$$ Assume for a contradiction that $\Psi(t_0) < \omega(\beta_{t_0})$. There are two posibilities.

The first possibility is that
	for some $t_0 \in [0,1)$, the (right) derivative of $\Psi(t)$ at that point
	is smaller than the derivative of $\omega(\beta_t)$. In this case concavity of $\Psi$ forces $\Psi(s) < 0$ for some 
	$t_0 < s < 1$, contradicting the positivity of $\Psi$.

	The second possibility is that  
	the left derivative of $\Psi(t)$ at some point
	is greater or equal  to the derivative of $\omega(\beta_t)$.
	In this case we would get $0 < \Psi(0) < \omega(\beta)$, contradicting 
	the fact that $L$ is orientable monotone Lagrangian and $\beta$ has Maslov 
	index 2. 
	
	To prove the desired equality, it remains to check it for some $t$, by concavity of 
	$\Psi(t)$. For $t$ close to $1$, we have that $\Psi(t) = \omega(\beta_t)$ by Lemma~\ref{lem:Count1_p_epsilon}.
\end{proof}

\begin{proof}[Proof of Theorem~\ref{thm:GCF2}]  

Clearly, $\Pol^0 \subset Sh^\st_L(X)$. To show the converse, assume that there exists a star-isotopy with flux leaving
$\Pol^0$. This would mean $\Psi(t) \le 0$ for some $t$ during this isotopy, by concavity of $\Psi$ and the fact that it tends to zero at the boundary of $\Pol^0$. It follows that $\Pol^0 = Sh^\st_L(X)$.  	
Now by Lemma~\ref{lem:Count=1} and Corollary~\ref{cor:P_monot},
$2c\cdot \P_L^\vee = \Pol^0$.
\end{proof}

Using a similar argument as above we can  show a  result stronger than
Lemma~\ref{lem:Count=1}, saying that for
any Gelfand-Cetlin toric fibre $L_x$, $x \in \Pol^0$, $\Psi$ is realized
by disks with boundary in $L_x$ corresponding to the facets that have the least area. 
This allows to get bounds on 
star-shapes relative to $L_x$, see Corollary~\ref{cor:gc_shape_nonmon} below.

\begin{lem} \label{lem:gc_count_one}
  Let $\{\beta_i\}$ be the set of classes in $H_2(X,L;\Z)$ corresponding to the 
  facets of $\Pol$, as described above. For each $x \in \Pol^0$, consider the corresponding classes
  $\{{\beta_x}_i\}$ in $H_2(X,L_x;\Z)$ as above. Then for all 
  $${\beta_x}_i \in \mathcal{B} =  \{{\beta_x}_i| \ \omega({\beta_x}_i) \le \omega({\beta_x}_j), \forall j\},$$ 
  we have that $\eta_{L_x}({\beta_x}_i)=1$.  
 \end{lem}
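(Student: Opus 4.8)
The plan is to follow the strategy of the proof of Lemma~\ref{lem:Count=1}, but with the relevant star-isotopy emanating from $L_x$ rather than from the monotone fibre, and using the cone description of $\Psi$ from Proposition~\ref{prop:Psi_GCF}. Fix a class ${\beta_x}_i\in\mathcal B$, i.e.\ a facet class of least area for $L_x$. By Proposition~\ref{prop:Psi_GCF} the graph of $\Psi$ over $\Pol^0$ is the cone with apex over the monotone point, so $\Psi(L_x)=\min_j\omega({\beta_x}_j)=\omega({\beta_x}_i)$; thus ${\beta_x}_i$ realises $\Psi(L_x)$. Moreover ${\beta_x}_i$ is still of Maslov index~$2$ (the Maslov index is preserved under the continuation $\beta_i\mapsto{\beta_x}_i$), so on chain level its only $A_\infty$ contribution is $m_{0,{\beta_x}_i}(1)=\#\M_{{\beta_x}_i}(J)\cdot[1_{L_x}]$, as in the proof of Corollary~\ref{cor:P_monot}; and since ${\beta_x}_i$ is area-minimal among $\Psi$-contributing classes, no homological-perturbation correction from a lower-area class can enter, exactly as in the last paragraph of the proof of Lemma~\ref{lem:Count1_p_epsilon}. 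Hence it suffices to produce one almost complex structure $J''$ with $\#\M_{{\beta_x}_i}(J'')\neq0$, and then $\eta_{L_x}({\beta_x}_i)=1$ follows.

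To build $J''$ I would pick $t\in\Pol^0$ close to a point in the \emph{open} part of the facet $\Pol_{\beta_i}$, consider the star-isotopy $\{L_s\}_{s\in[x,t]}$ sweeping the straight segment $[x,t]\subset\Pol^0$ (its flux is linear because the segment lies in the integral affine locus), choose a convex open $D\subset\Pol^0$ with $\{x_L\}\cup[x,t]\subset D$ — possible since the convex hull of $x_L,x,t$ lies in $\Pol^0$ — and set $V=\mu^{-1}(D)$. As in the proof of Lemma~\ref{lem:Count=1}, $V$ is a Liouville neighbourhood of the monotone fibre $L_{x_L}\subset V$ because $\mu$ is a genuine toric fibration over $\Pol^0$, and each $L_s$ lies in $V$ and is Lagrangian isotopic to $L_{x_L}$. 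Fixing $E>\Psi(L_x)$, Lemma~\ref{lem:neck_stretch} yields $J''$, obtained by neck-stretching around $\del V$, for which no fibre over $D$ bounds a $J''$-disk of Maslov index $\le0$ and area $\le E$. Along $[x,t]$ the area $\omega({\beta_s}_i)$ is affine, decreasing from $\omega({\beta_x}_i)=\Psi(L_x)$ towards $0$, hence stays $<E$, so a $J''$-disk in class ${\beta_s}_i$ cannot degenerate: a bubble would be a $J''$-disk of Maslov index $\le0$ and strictly smaller area, and sphere bubbles are excluded since $X$ is Fano. Gromov compactness together with parametric transversality — run via Fukaya's trick as in the proof of Theorem~\ref{th:nbhood_comp} — then shows $\#\M_{{\beta_s}_i}(J'')$ is independent of $s\in[x,t]$.

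It remains to compute this count at $s=t$. Since $t$ is near the open part of $\Pol_{\beta_i}$, the local computation of Lemma~\ref{lem:Count1_p_epsilon} applies verbatim to $L_t$ — it only uses the model $T^*T^{n-1}\times\C$ near the facet and is insensitive to how $L_t$ was reached — giving $\Psi(L_t)=\omega({\beta_t}_i)$ and $\eta_{L_t}({\beta_t}_i)=1$. Because $\eta$ is an invariant of the weak homotopy equivalence class (Proposition~\ref{prp:etabeta}), $\eta_{L_t}({\beta_t}_i)=1$ also holds in a classically minimal model of the $J''$-Fukaya algebra of $L_t$, and as ${\beta_t}_i$ realises $\Psi(L_t)$ this forces $m_{0,{\beta_t}_i}(1)=\#\M_{{\beta_t}_i}(J'')\cdot[1_{L_t}]\neq0$, i.e.\ $\#\M_{{\beta_t}_i}(J'')\neq0$; hence $\#\M_{{\beta_x}_i}(J'')=\#\M_{{\beta_t}_i}(J'')\neq0$, completing the argument. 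I expect the delicate point to be precisely this compatibility: the single, globally neck-stretched $J''$ that suppresses all low-Maslov bubbling along the whole isotopy must still ``witness'' the count-one disk near the facet, and the only way I see to arrange this is to route the \emph{invariant} conclusion $\eta_{L_t}({\beta_t}_i)=1$ of Lemma~\ref{lem:Count1_p_epsilon} through the invariance of $\eta$, rather than comparing $J''$ with a second almost complex structure adapted to the facet — the same device used in the proof of Lemma~\ref{lem:Count=1}. A secondary point requiring care is the identity $\Psi(L_x)=\omega({\beta_x}_i)$, which genuinely uses the cone structure of Proposition~\ref{prop:Psi_GCF} (equivalently, that ${\beta_s}_i$ stays area-minimal near both ends of the segment).
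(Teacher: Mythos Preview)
Your argument is correct and uses the same machinery as the paper (a Liouville neighbourhood of the monotone fibre, the no-bubbling Lemma~\ref{lem:neck_stretch}, and the cone description of $\Psi$ from Proposition~\ref{prop:Psi_GCF}), but you take a detour. The paper's proof simply isotopes $L_x$ back to the monotone fibre $L$ inside a Liouville neighbourhood $V\ni L,L_x$; Lemma~\ref{lem:neck_stretch} then gives a correspondence between Maslov~$2$ disk counts on $L$ and on $L_x$, and since Lemma~\ref{lem:Count=1} has already shown that every facet class $\beta_i$ has $\eta_L(\beta_i)=1$ (in particular nonzero $m_0$-count) on $L$, the same holds for ${\beta_x}_i$ on $L_x$. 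Proposition~\ref{prop:Psi_GCF} then identifies the minimal-area facet classes as those realising $\Psi(L_x)$, finishing in two lines. Your route instead pushes $L_x$ out toward the facet to some $L_t$ and re-invokes Lemma~\ref{lem:Count1_p_epsilon} there; this works, but it essentially re-derives the content of Lemma~\ref{lem:Count=1} rather than citing it.

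One small point to tighten: in your last step you deduce $m_{0,{\beta_t}_i}(1)\neq 0$ in the $J''$-model from $\eta_{L_t}({\beta_t}_i)=1$. The invariance of $\eta$ (Proposition~\ref{prp:etabeta}) alone does not yield this, since $\eta=1$ could in principle be witnessed by some symmetrised $m_{k,{\beta_t}_i}$ with $k\ge 1$. What you actually need is Lemma~\ref{lem:m0_invt}: once $\eta_{L_t}({\beta_t}_i)=1$, the element $m_{0,{\beta_t}_i}(1)$ is itself invariant under weak homotopy equivalence, and the proof of Lemma~\ref{lem:Count1_p_epsilon} computes it to be $[1_{L_t}]\neq 0$ in one model, hence in all. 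With that adjustment your argument is complete.
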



\begin{proof}
  For each $x \in \Pol^0$, we can consider a Liouville neighbourhood $V$ of $L$
  containing $L_x$ and an almost complex structure $J$ as in 
  Lemma~\ref{lem:neck_stretch}, so that there is a correspondence between 
  Maslov index 2 $J$-holomorphic disks with boundary on $L$ and on $L_x$. 
  Since, by Proposition~\ref{prop:Psi_GCF}, $\Psi(L_x) = \omega({\beta_x}_i)$
  for all ${\beta_x}_i \in \mathcal{B}$, it follows that 
  $\eta_{L_x}({\beta_x}_i)=1$.
\end{proof}

Let $\mathcal{B} = \beta_1,\ldots,\beta_k$ be the corresponding classes in
$H_2(X,L_x;\Z)$, with the same area $\Psi(L_x)$ (this means that the
ray from $x_L$ in the direction of $x$ intersects a $\codim = k$ facet).

\begin{cor} \label{cor:gc_shape_nonmon}
 It holds that
	\[ Sh^\st_{L_x}(X) \subset \bigcap_{i}  B_{\beta_i}, \]
	where $B_{\beta_i} = \{\ff \in H^1(X;\R): \Psi + \ff\cdot \del \beta_i > 
	0\}$.\qed  
\end{cor}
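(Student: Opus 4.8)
The plan is to obtain this as an immediate consequence of Theorem~\ref{th:star_shape_char}, once we feed it the list of minimal-area classes supplied by Lemma~\ref{lem:gc_count_one}. Recall that every fibre $L_x$ with $x\in\Pol^0$ is a Lagrangian $n$-torus, hence orientable and spin, so Theorem~\ref{th:star_shape_char} applies to it verbatim.

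First I would invoke Lemma~\ref{lem:gc_count_one}: writing $\mathcal{B}=\{\beta_1,\dots,\beta_k\}$ for the classes in $H_2(X,L_x;\Z)$ coming from those facets of $\Pol$ whose associated disk classes have least area, the lemma gives $\eta_{L_x}(\beta_i)=1$ for every $i$. By the definition of the indicator function $\eta$ (Section~\ref{sec:gen_bounds}), each such class satisfies $\omega(\beta_i)=\Psi(L_x)$; the consistency of this common value across all $i$ is exactly what Proposition~\ref{prop:Psi_GCF} and the definition of $\mathcal{B}$ give. Now apply Theorem~\ref{th:star_shape_char} with the Lagrangian $L_x$ and the class $\beta=\beta_i$: for each $i$ it yields
$$
Sh^\st_{L_x}(X)\subset\{\ff\in H^1(X;\R):\Psi(L_x)+\ff\cdot\del\beta_i>0\}=B_{\beta_i}.
$$
Intersecting over $i=1,\dots,k$ gives $Sh^\st_{L_x}(X)\subset\bigcap_i B_{\beta_i}$, which is the assertion.

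There is no genuine obstacle in this deduction; all the substance has already been absorbed into Lemma~\ref{lem:gc_count_one}, and through it into Proposition~\ref{prop:Psi_GCF}, the concavity Theorem~\ref{th:psi_concave}, and the neck-stretching Lemma~\ref{lem:neck_stretch}. The only points needing care are bookkeeping ones: that $\Psi(L_x)$ is indeed the common area of all classes in $\mathcal{B}$, so the bounding hyperplanes of the half-spaces $B_{\beta_i}$ are the translates by $\Psi(L_x)$ of the annihilators of the $\del\beta_i$; and that $L_x$, being a torus fibre, carries the orientation and spin structure required to define its Fukaya algebra, its $\Psi$-invariant, and its indicator function $\eta_{L_x}$.
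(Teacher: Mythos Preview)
Your argument is correct and matches the paper's own proof, which is the one-line ``It follows from Lemma~\ref{lem:gc_count_one}, Theorem~\ref{th:star_shape_char}, and arguments similar to the above.'' You have simply unpacked that sentence faithfully: Lemma~\ref{lem:gc_count_one} supplies $\eta_{L_x}(\beta_i)=1$ for each $\beta_i\in\mathcal{B}$, and then Theorem~\ref{th:star_shape_char} applied to each such class gives the half-space bound, whose intersection is the claim.
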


\begin{proof}
  It follows from Lemma~\ref{lem:gc_count_one}, 
  Theorem~\ref{th:star_shape_char}, and arguments similar to the above.
\end{proof}

\begin{ex}
Figure~\ref{fig:shcp2} shows bounds on star-shapes of various toric fibres  in $\CP^2$ coming from Corollary~\ref{cor:gc_shape_nonmon}. For non-monotone fibres, we do not know whether they are sharp.
\end{ex}

\begin{figure}[h]
	\includegraphics[]{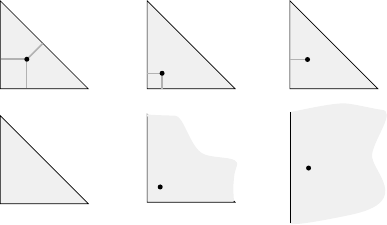}
	\caption{Toric fibres in $\CP^2$ equidistantly close to 3, 2 and 1 side(s) of the triangle (top row), and the corresponding bounds on the star-shape (bottom row).}
	\label{fig:shcp2}
\end{figure}

\subsection{Shapes in complex space} \label{subsec:ShapesCn}
Consider the product Lagrangian torus 
$$T_{\br} = S^1(r_1)\times
\cdots \times S^1(r_n) \subset \C^n.$$
Here $\br = (r_1,\dots,r_n)$, $r_i>0$ and  $S^1(r_i)=\{|z|=r_i\}\subset \C$ is the circle of radius $r_i$.

\begin{thm} \label{th:Shape_Tori_Cn}
	Identify $H^1(T_\br; \R) \cong \R^n$ 
	using the standard basis. For any $n \ge 2$, it holds that: 
	
	\begin{enumerate}[label= (\roman*)] 
		\item $Sh_{T_\br}(\C^n) = \R^n \setminus \left\{-\br +t(-1,\dots,-1) : t \ge 
		0\right\}$;   
		\item $Sh^\st_{T_\br}(\C^n) =
		\left\{(x_1,\dots,x_n) -\br : x_i\in \R\textit{ and }x_i > 0 \ \ \text{if} \ \ r_i = \underset{j=1,\dots ,n}{\min} r_j 
		\right\}.$
	\end{enumerate} 
	
\end{thm}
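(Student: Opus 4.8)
The plan is to prove the four inclusions separately: the inner bound in~(ii) from the machinery of Section~\ref{sec:gen_bounds}, the exclusion of the ray in~(i) by a Gromov-disk argument, and the two ``$\supset$'' inclusions by explicit constructions. First I would fix a Liouville primitive $\lambda$ of $\om$ on $\C^n$, normalised (as the statement implicitly is) so that $\int_{S^1(r)}\lambda=r$, whence $\om(\beta_i)=r_i$ for the $i$-th factor disk class $\beta_i$ of $T_\br$, and record the flux/action dictionary: for a Lagrangian isotopy $\{L_t\}$ with $L_0=T_\br$, $L_1=:\gamma$, total flux $\ff$, and $\gamma_i\in H_1(\gamma;\Z)$ the continuation of the $i$-th standard circle, Stokes' theorem gives $\int_{\gamma_i}\lambda=r_i+\ff_i$, i.e.\ $[\lambda|_\gamma]=\br+\ff$ in the basis of $H^1(\gamma;\R)$ dual to $\{\gamma_i\}$. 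I would also use that the Maslov class $\mu_\gamma\in H^1(\gamma;\Z)$ is a Lagrangian-isotopy invariant equal to $(2,\dots,2)$ in this basis, and that $\langle\mu_\gamma,\del D\rangle=\mu(D)$ for every disk $D$ with $\del D\subset\gamma$, since $\pi_2(\C^n,\gamma)\cong\pi_1(\gamma)$.

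For the exclusion of the ray in~(i): suppose some isotopy from $T_\br$ achieves $\ff=-\br+t(-1,\dots,-1)$ with $t\ge 0$, with endpoint $\gamma$. By the dictionary, $[\lambda|_\gamma]=(-t,\dots,-t)=-\tfrac t2\,\mu_\gamma$ is a non-positive multiple of the Maslov class. On the other hand, for a tame $J$ agreeing with $J_\std$ near the coordinate hyperplanes (so that these are $J$-complex), $\gamma$ bounds a non-constant $J$-holomorphic disk $D$, and positivity of intersection with those hyperplanes together with non-constancy forces $\mu(D)\ge 2$. Hence $0<\om(D)=\int_{\del D}\lambda=\langle[\lambda|_\gamma],\del D\rangle=-\tfrac t2\,\mu(D)\le 0$, a contradiction; so the ray is disjoint from $Sh_{T_\br}(\C^n)$. (Geometrically, such a $\gamma$ would be a monotone-type torus with non-positive monotonicity constant, which cannot exist.)

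For the inner bound in~(ii) I would invoke Section~\ref{sec:gen_bounds}. For $J_\std$ the only Maslov-$2$ classes carrying holomorphic disks are $\beta_1,\dots,\beta_n$, each with algebraic count one, and every class contributing to a symmetrised structure map has area $\ge r_{\min}:=\min_j r_j$ (using positivity of intersections and Lemma~\ref{lem:top_sym} for the zero class); thus $\Psi(T_\br)=r_{\min}$ and, by Theorem~\ref{theorem: explicit}, $\eta_{T_\br}(\beta_i)=1$ exactly for those $i$ with $r_i=r_{\min}$, for which $\del\beta_i=e_i$. Feeding these classes into Theorem~\ref{th:star_shape_char} and intersecting over the minimal indices yields $Sh^\st_{T_\br}(\C^n)\subset\{\ff:\ff_i>-r_{\min}\text{ whenever }r_i=r_{\min}\}$, which is the set in~(ii); non-compactness of $\C^n$ is harmless since all relevant disks stay in a fixed ball by the maximum principle.

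It remains to realise, by explicit isotopies, every flux off the ray (for~(i)) and every flux in the set of~(ii) by a \emph{star}-isotopy. Moving $T_\br$ within the toric fibres of $\C^n$ along an affinely parametrised straight segment is a star-isotopy, and such isotopies realise every $\ff$ with $\br+\ff\in(0,\infty)^n$; this already gives the $\supset$ inclusions when all coordinates of $\br+\ff$ are positive. The remaining admissible fluxes have some coordinate of $\br+\ff$ non-positive, and this is necessarily a non-minimal coordinate (for~(ii) by hypothesis; for~(i), after permuting coordinates, one paired with a strictly larger one). To reach these I would modify the toric fibration by almost-toric surgery, in the spirit of Section~\ref{subsec:ATF-GCF} and the $\CP^2$ ``sausage'' of the Introduction (Figure~\ref{fig:Sausage}): pushing a node towards a non-minimal facet ``opens'' it, extending the Lagrangian torus fibration, and its integral affine base, across that facet, so that continuing along the straight segment in the enlarged base is again a star-isotopy with precisely the required linear flux; for~(i), where the star condition is dropped, one iterates over all non-minimal directions. \emph{The main obstacle is exactly this last step}: verifying that the $\CP^2$ ``sausage'' mechanism has an honest $n$-dimensional analogue in $\C^n$, that the surgery can be carried out past every non-minimal facet while keeping the relevant segment in the smooth part of the base, and that the resulting isotopies sweep out exactly the complement of the ray in~(i) and exactly the set of~(ii).
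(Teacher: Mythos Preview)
Your obstruction arguments are essentially the paper's: the ray exclusion in~(i) is the Audin-type contradiction (the paper cites \cite{CM14}; note that your positivity-of-intersection claim only yields $\mu(D)\ge 0$, not $\ge 2$, but this already suffices once you invoke Gromov's theorem for the existence of some non-constant disk), and the inner bound in~(ii) is exactly Theorem~\ref{th:star_shape_char} applied to the minimal-area $\beta_i$.

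The genuine gap is in the constructions, and it is deeper than your sketch suggests. For~(i), ``opening one non-minimal facet'' via a choice of branch cut only produces an affine half-space of fluxes; iterating over coordinates yields at best the complement of a translated orthant, not the complement of the diagonal ray (and in the monotone case $r_1=\cdots=r_n$ there are no non-minimal directions at all). The paper's mechanism is different: it performs a single nodal trade at the origin of $\C^2$, takes the \emph{loop} isotopy $\{L^k_t\}$ winding $k$ times around the node, checks that this loop has zero flux (using that the monotone torus bounds a Maslov-$0$ Lagrangian vanishing disk of zero area), and then concatenates with a straight segment. The flux of the concatenation is $M^k\bv$ where $M$ is the affine monodromy, and one verifies $\bigcup_{k\in\Z}M^kQ$ exhausts the complement of the ray. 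Higher $n$ is then handled by products with $\C^{n-2}$ and a union over coordinate pairs. Your reference to the $\CP^2$ sausage is the right picture, but the operative ingredient is monodromy-winding, not facet-opening.

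For~(ii) in dimension $n\ge 3$ the obstacle is more serious, and the paper says so explicitly: products of a $2$-dimensional ATF with a standard toric factor are \emph{not} sufficient. When $r_1<r_2\le r_3$ one must realise star-fluxes with $x_1>0$ but both $x_2,x_3\le 0$ simultaneously along a single straight segment, and no coordinate-wise nodal trade does this. The paper instead uses Auroux's genuinely higher-dimensional SYZ fibration on $\C^{n-k}$ built from $f(z)=z_1\cdots z_{n-k}$ and the residual torus action, with fibres $T^c_{\fr,\mu_1,\ldots}$ depending on a slide parameter~$c$; sending $c\to\infty$ pushes the singular locus far enough to accommodate the required straight-line isotopy. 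This construction is the missing ingredient your proposal would need to supply.
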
 

Partial estimates on these shapes have been obtained earlier in \cite[Theorem~1.15, Corollary~1.17,
Corollary~1.18]{EGM16}.

\begin{ex} \label{ex:Mon_Cn_Shape}
	When $\br = (r,\dots,r)$, one has
	$$Sh^\st_{T_\br}(\C^n)=(\R_{> -r})^n.$$ 
	This is the interior of the moment polytope $(\R_{\ge 0})^n$ of the standard toric fibration on $\C^n$, translated so that
	$T_\br$ becomes the preimage of the origin. In particular, all possible star-fluxes can be achieved by the obvious isotopies among toric fibres.     
\end{ex}

\begin{figure}[h!]   
	
	\begin{center}
		
		\includegraphics[scale=0.75]{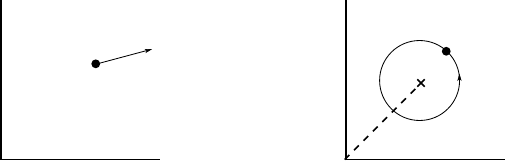}
		
		\caption{The ``segment'' isotopy $\{\t{L}_t\}$  and the ``loop'' isotopy $\{L^1_t\}$ in an almost toric fibration on $\C^2$.}
		\label{fig:C2_LagIsot} 
	\end{center} 
\end{figure}

\begin{proof}[Proof of Theorem \ref{th:Shape_Tori_Cn}~(i)]
	Consider the standard toric fibration $\C^n\to (\R_{\ge 0})^n$;
	the point $\br$ is the image of $T_\br$.  Since,
	$Sh_{T_\br}(\C^n)$ are all the same, up to translation, it is enough to prove the
	result for a specific $\br$. For convenience, we take a monotone fibre
	corresponding to $\br = (r,\dots,r)$. 
	  
	First, let us show that
	\[
	Sh_{T_\br}(\C^n) \subset \R^n \setminus \{-\br +t(-1,\dots,-1) : t \ge 0\}.
	\]
	Suppose that there is a Lagrangian
	isotopy $\{L_s\}$ starting from $L_0=T_\br$ and with flux $-\br +t(-1,\dots,-1) \in \R^n$. Then the Lagrangian torus  $L_1$ satisfies 
	$$[\omega] = - t \mu/{2} \in H^2(\C^n, L_1; \R).$$ 
	
  This contradicts the established Audin conjecture (whose proof for $\C^n$
 readily adopts from \cite{CM14}) asserting that $L_1$ bounds a Maslov index~2
 disk of positive symplectic area.
	
 Now let us restrict to the case $n = 2$. Consider the star-isotopy $$\t{L}_t =
 T_{\br + t\bv},\quad t\in[0,1],\quad \bv \in \left\{(x_1,x_2) -\br : x_i > 0
 \right\}\subset \R^2.$$ We have that $\bv + \br \in (\R_{>0})^2$, see Figure
 \ref{fig:C2_LagIsot}, and tautologically $\bv = \Flux(\{\t{L}_t\})$. 
	
	Consider  an almost toric fibration (ATF) obtained by applying a sufficiently small nodal
	trade \cite[Section~6]{Sym03} to the standard toric fibration on $\C^2$. 
	For additional background on nodal trade, see e.g.~\cite[Section~2]{Vi13}, \cite{Au07,Au09}.
	One can ensure that nodal trade does not
	modify the fibres $\t{L}_t = T_{\br + t\bv}$, for $t\in[0,1]$, see Figure
	\ref{fig:C2_LagIsot}. 
	
 Let $\{L^1_t\}$ be a Lagrangian isotopy from $T_\br$ to itself given by a loop
 in the base of our ATF starting at $\br$ and going once around the nodal
 singularity, say in the counter-clockwise direction, as in Figure
 \ref{fig:C2_LagIsot}. More generally, for each $k\in\Z$ consider the Lagrangian
 isotopy $\{L^k_t\}$ given by a loop in the base of our ATF with wrapping number $k$ around
 the node.

   The isotopy $\{L^1_t\}$ induces a monodromy 
   $$
	M^T \co H_1(T_\br;\Z)\to H_1(T_\br;\Z)$$ whose matrix is the transpose of the monodromy $M$ of the affine structure on the base around the considered loop. See
	\cite[Section~4]{Sym03} and \cite[Section~2.3]{Vi13}. Using the standard identifications
	$$H^1(T_\br; \R) \cong T_\br(\R_{>0})^2 \cong \R^2 \cong (\R^2)^*
	\cong T^*_\br(\R_{>0})^2 \cong H_1(T_\br; \R),$$
	the monodromy matrix $M$ is explicitly given by
	\[M = \begin{pmatrix}
	0 & 1\\
	-1 & 2
	\end{pmatrix}.
	\]
	
 Let $\alpha$ be a class in $H_2(\C^2, T_\br;\Z)$ corresponding to the
 vanishing cycle associated with the nodal fibre, i.e., $\alpha$ can be
 represented by a disk projecting onto a segment connecting $\br$ to the node. Note that $\del \alpha$ corresponds to the invariant
 cycle $(1,1)$ of $M$, up to sign. So when we follow the isotopy $\{L^1_t\}$,
 $\del \alpha_t$ closes up to a nunnhomologous cycle in $\C^2$, hence has zero
 area. Now, consider the cycle $\gamma \in H_1(T_\br; \Z)$
 corresponding to $(1,0)$. Following the isotopy $\{L^1_t\}$, let this cycle sweep a
 cylinder with ends on cycles $\gamma$ and $\gamma - \del \alpha$, the latter one in  class corresponding
 to $(0,-1)$. To close it up to a contractible cycle in $\C^2$, we can just add
 a representative of the class $\alpha$. Then the area of the cylinder, which is
 $\mathrm{Flux}(\{L^1_t\})\cdot\gamma$, equals $-\omega(\alpha)$. Because we chose
 $\br$ so that $L_\br$ is monotone, $\omega(\alpha) = 0$. Indeed, $\alpha$ is
 Maslov 0 and can be represented by a Lagrangian disk. Hence, 
  $\mathrm{Flux}(\{L^1_t\}) = 0$.

	Consider  the concatenation $\{\t{L}_t\}*\{L^k_t\}$. This is a Lagrangian isotopy which first follows the ``loop'' $\{L^k_t\}$ and then ``segment''  $\{\t{L}_t\}$.
	Since $\{L^k_t\}$ has zero flux, one has that 
	$$\Flux(\{\t{L}_t\}*\{L^k_t\}) = M^k\bv.$$ 
	Recall that the vector $\bv$ may be freely chosen from the domain
	$$Q =\left\{(x_1,x_2) -\br : x_i > 0 
	\right\}.$$ 
	
	 Again, we take $\br = (r,\dots,r)$, so it is invariant under $M$.  
	We have that  
	\[ \bigcup_{k\in \Z} M^k Q = \R^2 \setminus \{-\br +t(-1,-1): t \ge   0 \} \subset Sh_{T_\br}(\C^2). \]
	Indeed, this follows by noting that
	\[M^k = \begin{pmatrix}
	1 - k & k\\
	-k & k + 1
	\end{pmatrix},
	\]   
	and that the columns $[1 \mp k, \mp k] \to |k|(-1,-1)$ as $k \to \pm \infty$.
	This completes the proof of Theorem \ref{th:Shape_Tori_Cn}~(i) for $n=2$.  
	
	For a general $n \ge 2$, consider the splitting $\C^n = \C^2 \times \C^{n-2}$,
and the SYZ fibration which is the product of the previously considered ATF on the $\C^2$-factor with the standard toric fibration on the $\C^{n-2}$-factor. 
	
	There is a loop in the base of this SYZ fibration starting at $\br$
	whose monodromy is the block matrix $$\begin{pmatrix} M & 0\\
	0 & \id_{n-2}
	\end{pmatrix}$$ 
	where $M$ appears above. Arguing as before, we conclude that 
	\begin{equation} \label{eq:Subs_Sh_Cn}
	\R^n \setminus \{-\br +(-t,-t, x_3, \dots, x_n) : t \ge 
	0,\ x_k\le 0\textit{ for } k\ge 3\} \subset Sh_{T_\br}(\C^n).  
	\end{equation} 
	The argument can be applied to any pair of coordinates instead of the first two ones. The union of sets as in
	\eqref{eq:Subs_Sh_Cn} arising this way covers the whole claimed shape:
	$$ 
	\begin{array}{l}
	\bigcup_{i,j} \left(\R^n \setminus \{-\br +(x_1, \dots, x_n) : x_i=x_j \le 0 ,\  x_k \le    0\textit{ for } k\neq i,j  \} \right) \\
	= \R^n \setminus \bigcap_{i,j} \{-\br +(x_1, \dots, x_n): x_i=x_j \le 0,\ x_k \le  0\textit{ for  } k\neq i,j \}
	\\
	= \R^n \setminus \{-\br +t(-1,\dots,-1) \in \R^n: t \ge    0\}.   
	\end{array}
	$$
	The result follows.
\end{proof}

\begin{proof}[Proof of Theorem \ref{th:Shape_Tori_Cn} (ii)]
	The inclusion of the star-shape into the desired set
	\begin{equation} \label{eq:Inclusion}
	Sh^\st_{T_\br}(\C^n) \subset
	\left\{(x_1,\dots,x_n) -\br : x_i\in \R\textit{ and }x_i > 0 \ \ \text{if} \ \ r_i = \underset{j=1,\dots ,n}{\min} r_j 
	\right\}
	\end{equation} 
	follows from Theorem~\ref{th:star_shape_char}. 
	
	\begin{figure}[h!]   
		
		\begin{center}
			
			\includegraphics[scale=0.75]{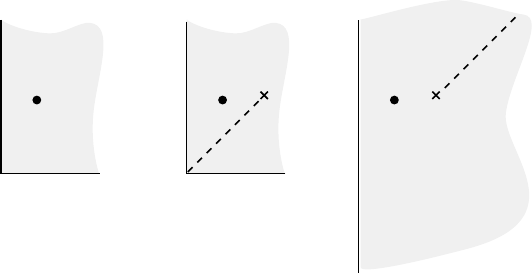}

			\caption{Left: the standard toric  fibration on $\C^2$. Middle and right: two diagrams representing the same almost toric fibration with one nodal fibre, with two different ways of making a cut.}
			\label{fig:2DNodaltrade} 
		\end{center} 
	\end{figure} 
	
	To prove the reverse inclusion, we again start with  $n=2$.  The case $r_1 = r_2$ is clear, since the shape in question
	$\{(x_1,x_2) -\br : x_i > 0 \}$ is realised by isotopies in the standard toric fibration. Assuming $r_1 < r_2$, the set which we must prove to coincide with star-shape is
	$$\left\{(x_1,x_2) -\br : x_1 > 0 \right\}.$$ To compare, isotopies  within the standard toric fibration achieve flux of the form $(x_1,x_2) -\br$ where both $x_1,
	x_2 >0$. Figure \ref{fig:2DNodaltrade} shows star-isotopies that achieve the remaining flux, i.e.~of the form $(x_1,x_2)
	-\br$ where $x_1 >0$ and $x_2 \le 0$. This completes the proof for $n = 2$.
	
	Unlike  the proof  of Theorem \ref{th:Shape_Tori_Cn}~(i), in higher dimensions it will not be enough to consider SYZ fibrations which almost look like the product of the 4-dimensional ATF with the standard toric fibration; we must consider a larger class of SYZ fibrations that exist on $\C^n$.
	
	Let us discuss the case $n = 3$; the details in the general case are analogous. The monotone case $r_1=r_2=r_3$ is again clear.
	Now, the case $r_1 = r_2 < r_3$ is precisely the one when considering the SYZ fibration from Theorem \ref{th:Shape_Tori_Cn}~(i)
	\emph{is} sufficient. Looking at the product of the 4-dimensional ATF on  $\C^2$
	with the standard toric fibration on $\C$, we  obtain 
	any flux of the form $(x_1,x_2,x_3) -\br$, where $x_1,x_2 >0$
	and $x_3 \le 0$, see Figure \ref{fig:3DNodaltrade2}. The remaining flux is realised by isotopies in the standard toric fibration, and we conclude that  
	$$Sh^\st_{T_\br}(\C^3) =
	\left\{(x_1,x_2,x_3) -\br: x_1,x_2 > 0  \right\},$$
	as desired.
	\begin{figure}[h!]   
		
		\begin{center}
			
			\includegraphics[scale=0.75]{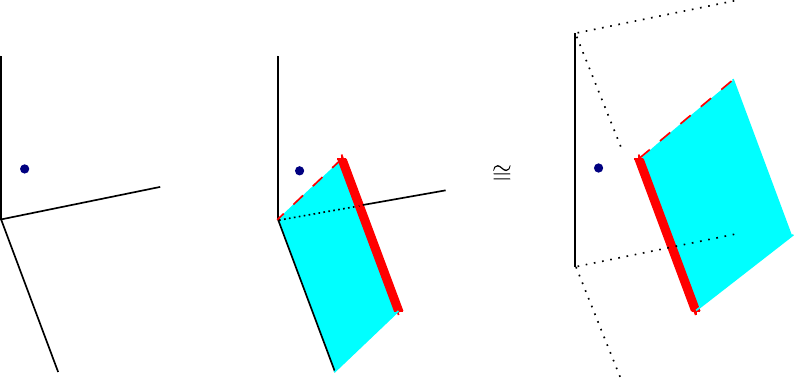}
			
			\caption{Left: the standard toric fibration on $\C^3$. Middle and right: SYZ fibration which is the product of the 4-dimensional ATF with the standard toric fibration on $\C$. Middle and right show the same fibration with two different ways of making a cut.}
			\label{fig:3DNodaltrade2} 
		\end{center} 
	\end{figure} 
	
	We move to the most complicated  case $r_1 < r_2 \le r_3$. We must show that 
	$$Sh^\st_{T_\br}(\C^3) =
	\left\{(x_1,x_2,x_3) -\br : x_1 > 0  \right\},$$
	 and the constructions we discussed so far miss out the subset where $x_2,x_3
 \le 0$. To see the remaining flux, consider an SYZ fibration (see \cite[Example~3.3.1]{Au09}), depending on $c
 >0$, whose fibres are parametrised by $(\fr,\mu_1,\mu_2)$ and are given by:
	
	\begin{multline}
	T^c_{\fr,\mu_1,\mu_2} = \{(z_1,z_2,z_3) :z_i\in\C,\ |z_1z_2z_3 - c| = \fr ,
	\\ 
	\pi(|z_1|^2 - |z_2|^2) = \mu_1,\ \pi(|z_1|^2 - |z_3|^2) = \mu_2 \}\subset \C^3,
	\end{multline}
	 
  see Figure~\ref{fig:3DNodaltrade}. We point out that $(\fr,\mu_1,\mu_2)$ are
  not locally affine coordinates on the base, although $(\mu_1,\mu_2)$ are a
  part of locally affine coordinates. 
	
	A non-singular torus $T^c_{\fr,\mu_1,\mu_2}$ can be understood as follows. Consider the map $f: \C^3
	\to \C$, $(z_1,z_2,z_3) \mapsto z_1z_2z_3$ whose fibres are invariant under
	the $T^2$-action 
	$$(e^{i \theta_1},e^{i \theta_2})\cdot (z_1,z_2,z_3) = (e^{i
		(\theta_1 + \theta_2)}z_1,e^{ -i \theta_1}z_2,e^{-i\theta_2}z_3).$$ Its moment map is
	$(\mu_1,\mu_2) = (\pi(|z_1|^2 - |z_2|^2), \pi(|z_1|^2 - |z_3|^2))$. Then
	$T^c_{\fr,\mu_1,\mu_2}$ is the parallel transport  of an orbit of this $T^2$-action 
	with respect to
	the symplectic fibration $f$, over the radius-$\fr$ circle centred at $c$.
	
	Because $f$ has a singular fibre over $0$, some Lagrangians
	$T^c_{c,\mu_1,\mu_2}$  will be singular. This happens precisely when 
	\begin{eqnarray} \label{eq:condDegFibre}
	\mu_1 = 0, \ \mu_2 <
	0, \quad  \text{or} \quad \mu_1 < 0, \ \mu_2 = 0, \quad \text{or}  \quad  \mu_1 = \mu_2 > 0, 
	\nonumber
	\\ \text{or} \quad \mu_1 = \mu_2 = 0\quad \text{(the most degenerate case)}. 
	\end{eqnarray}
	All other fibres  are smooth Lagrangian tori, for $\fr >0$. Observe that our SYZ fibration extends over $\fr = 0$, where it becomes a singular Lagrangian $T^2$-fibration on $z_1z_2z_3 = c$. 
	Also recall that this construction depends on the parameter $c$,
	and the limiting case $c=0$ is actually the standard toric fibration on $\C^3$.

	\begin{figure}[h!]   
		
		\begin{center}
			
			\includegraphics[scale=0.6]{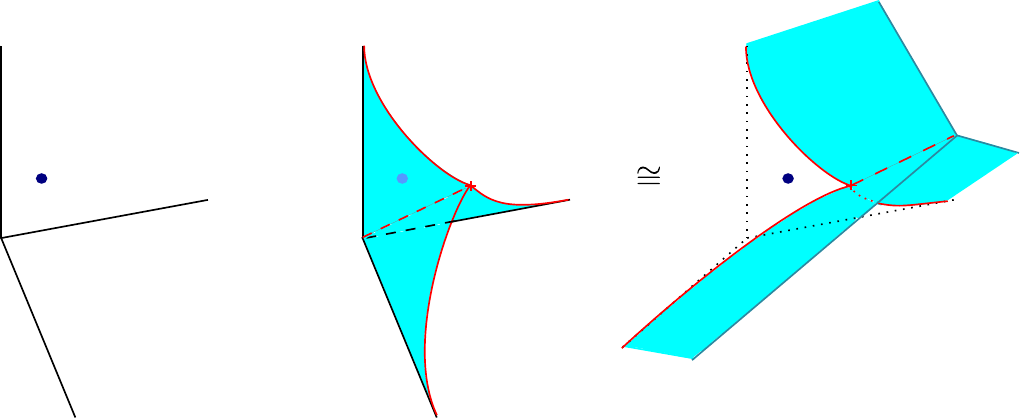}
			
			\caption{Left: the standard  toric fibration on $\C^3$.
				Middle and right: the SYZ fibration  described by
				$T^c_{\fr,\mu_1,\mu_2}$, for some fixed $c$. The red curves show the discriminant locus.
				The most singular fibre $T^c_{c,0,0}$ is marked by the node. The node moves with $c$ in the direction of the dashed line. Middle and right show two ways of making cuts, which are the shaded surfaces.
			}\label{fig:3DNodaltrade} 
		\end{center} 
	\end{figure} 
	
	The complement of $(\R_{\ge 0})^3$ to the discriminant locus of the constructed fibration carries a natural affine structure. Since this affine structure has monodromy around the discriminant locus, it is not globally isomorphic to one induced from the standard affine structure on $(\R_{\ge 0})^3$. However, it is isomorphic to the standard one in the
	complement of a codimension-one set  called a
	\emph{cut}. There are various ways of making a cut, and two of them are shown in Figure
	\ref{fig:3DNodaltrade}. 
	
 Finally, let us discuss the effect of changing the parameter $c$. It
 corresponds to sliding the dashed segment in Figure~\ref{fig:3DNodaltrade};
 this is a higher-dimensional version of nodal slide. Intuitively, by taking $c$
 to be sufficiently large (i.e.~sliding the node sufficiently far towards
 infinity), one can see the existence of a star-isotopy from $T_\br$ to with any
 flux of the form $(x_1,x_2,x_3) - \br \in \R^3$ such that $x_1>0$.
	
	
	To construct this more explicitly,  start with  the initial torus 
	$T_\br$, $\br =(r_1,r_2,r_3)$, $r_1 < r_2 \le r_3$, and observe that
	$$
	T_\br=
	T^0_{\fr,\mu_1,\mu_2},\quad \fr = \left(\frac{r_1r_2r_3}{\pi^3}\right)^{1/2},\quad 
	\mu_1 = r_1 - r_2\quad \mu_2 = r_1 -
	r_3.
	$$
	Note that $\mu_2 \le \mu_1 < 0$. Denote by $\beta \in H_2(\C^3,T_\br)$
	the class of  holomorphic disks with area $r_1$. We will build a
	star-isotopy of the form $T^{c(t)}_{\fr(t),\mu_1(t),\mu_2(t)}$ with flux
	$t((x_1,x_2,x_3) - \br)$, $x_1 > 0$. Note that in order for $T^{c(t)}_{\fr(t),\mu_1(t),\mu_2(t)}$ to be a
	star-isotopy, the relative class 
	$$\beta(t) \in
	H_2\left(\C^3,T^{c(t)}_{\fr(t),\mu_1(t),\mu_2(t)}\right),$$ which is the continuous
	extension of $\beta$, must satisfy $\omega (\beta(t)) = r_1 +
	t(x_1 - r_1)$. We arrange: 
	
	\begin{enumerate}[label= (\Roman*)]
		\item  $\mu_1(t) = r_1 - r_2 + t[x_1 -
		x_2 - (r_1 - r_2)]$; 
		
		\item$\mu_2(t) = r_1 - r_3 + t[x_1 - x_3 - (r_1 - r_3)]$;
		
		\item  $c(t) = \psi(t)c_0$, where $c_0$ is  a large real
		number and $\psi(t)$ is a non-decreasing smooth cutoff function: it satisfies $\psi(0) = 0$ and identically equals $1$
		for $t \ge \epsilon$, where  $\epsilon$ is
		sufficiently small;
		
		\item $\fr(t)$ is chosen so that $\omega (\beta(t)) = r_1 +
		t(x_1 - r_1)$. 
	\end{enumerate}
	
	We first set (I), (II) and choose
	$\epsilon$ small enough to ensure  $\mu_j(t) < 0$ for $t \in [0,
	\epsilon], \ j=1,2$. We need now to set the endpoint of our isotopy by choosing 
	$c_0$ and the corresponding $\fr(1)< c_0$. We can make the area of 
	the corresponding $\beta$ class in a torus of the form 
	$T^{c_0}_{\tilde{\fr},\mu_1(1),\mu_2(1)}$ as big as we want, in particular 
	bigger than $x_1$, by taking $c_0$ sufficiently
	large and then $\tilde{\fr}<c_0$ sufficiently close to $c_0$. Taking such
	$c_0$, we may take $\fr(1)< c_0$, so that for $T^{c_0}_{\fr(1),\mu_1(1),\mu_2(1)}$,
	we have $\omega(\beta(1)) = x_1$. 
	
	Now we choose our cutoff function $\psi(t)$, setting item (III) of our desired 
	list. Since $x_1 > 0$, the expression $r_1+t(x_1-r_1)$ is non-negative, so we can find $\fr(t)$ 
	to ensure we have (IV).
	
	Our setup guarantees that $T^{c(t)}_{\fr(t),\mu_1(t),\mu_2(t)}$ is a smooth 
	torus for all $t \in [0,1]$. Indeed, $T^{c(t_0)}_{\fr(t_0),\mu_1(t_0),\mu_2(t_0)}$ 
	could be non-singular only at
 the moment $t_0$ when $c(t_0) = \fr(t_0)$, but our choice of $\psi(t)$
 ensures that $t_0 < \epsilon$. This implies that $\mu_1(t_0) < 0$,
 $\mu_2(t_0) < 0$ and, hence, $T^{c(t_0)}_{\fr(t_0),\mu_1(t_0),\mu_2(t_0)}$ is
 smooth, recall \eqref{eq:condDegFibre}. This finishes the proof of Theorem
 \ref{th:Shape_Tori_Cn} $(ii)$ for $n = 3$. Conditions (I),(II) and (IV) ensures
 we have a star-isotopy.
	
	The situation in higher dimensions is very similar to the $n
	= 3$ case. Assume that
	$r_1 = \cdots = r_{k+1} < r_{k+2} \le \cdots \le r_n$. The monotone case
	$n = k+1$ is trivial, so we assume $n > k+1$. 
	
	Let us split $\C^n$ as $\C^n = \C^k \times
	\C^{n-k}$, take the standard toric
	fibration in the $\C^k$-factor and the following SYZ fibration 
	in the $\C^{n-k}$ factor. Its fibres 
	$T^c_{\fr,\mu_1, \dots, \mu_{n-k-1}}$ are defined analogously to the previous construction, using the auxiliary symplectic fibration
	$f(z_1,\dots,z_{n-k}) = z_1\dots z_{n-k}$, the corresponding
	$T^{n-k-1}$-action, and the similar parallel transport. One again uses $c$ as a parameter of the construction. 
	
	Given $(x_1,\dots,x_n) \in
	\R^n$ with $x_1,\dots,x_{k+1} > 0$, we can construct a star-isotopy  from  $T_\br =
	T_{(r_1,\dots,r_k)}\times T_{(r_{k+1},\dots,r_{n})}$ to a torus of the form
	$$T_{(x_1,\dots,x_k)} \times T^c_{\fr,\mu_1, \dots, \mu_{n-k-1}},$$ such that the flux of this isotopy
	equals $(x_1,\dots,x_n) - \br$. Indeed, using the condition that $x_1,\dots,x_{k} > 0$, first consider a
	star-isotopy from $T_{(r_1,\dots,r_k)}$ to $T_{(x_1,\dots,x_k)}$ in the $\C^k$-factor
	through toric fibres. Using $x_{k+1} > 0$, there is now a star-isotopy from
	$T_{(r_{k+1},\dots,r_{n})}$ to $T^c_{\fr,\mu_1, \dots, \mu_{n-k-1}}$ in the
	$\C^{n-k}$-factor, analogously to what we did in the case of $\C^3$. 
\end{proof}

\begin{thm} \label{th:Shape_Chekanov_Cn}
	For $n\ge 2$, let $\Theta^{n-1}(r) \subset \C^n$  be the Chekanov torus introduced in \cite{ChSch10},  bounding a Maslov index 2 disk
	of symplectic area $r$. The following holds.
	
	\begin{enumerate}[label= (\roman*)] 
		\item $Sh_{\Theta^{n-1}(r)}(\C^n) = \R^n \setminus \{-\br +t(-1,\dots,-1): t \ge  0\}$ where $\br = (r,\dots,r)$;    
		\item $Sh^\st_{\Theta^{n-1}(r)}(\C^n)=\{(x_1,\dots,x_n) - (r,\dots,r):x_1>0,\ x_i\in\R\}$,  the half-space bounded by the hyperplane $x_1=-r$. 
	\end{enumerate} 
	
\end{thm}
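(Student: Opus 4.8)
The plan is to run, for $\Theta := \Theta^{n-1}(r)$, the same program used for product tori in Theorem~\ref{th:Shape_Tori_Cn}, exploiting that $\Theta$ is a monotone Lagrangian torus (\cite{ChSch10}) and is realised as a smooth fibre of suitable almost toric / SYZ fibrations on $\C^n$. Write $\br=(r,\ldots,r)$ and fix the identification $H^1(\Theta;\R)\cong\R^n$ of \cite{ChSch10}: a basis of $H_1(\Theta;\Z)$ in which the Maslov class satisfies $\mu\leftrightarrow 2(1,\ldots,1)$ (legitimate since $\mu/2$ takes the value $1$ on $\del\beta_0$, hence is primitive), in which $[\omega]\leftrightarrow\tfrac r2\mu\leftrightarrow\br$ by monotonicity, and in which the basic Maslov index $2$ class $\beta_0$ — the one for which $\Theta$ bounds a holomorphic disk of area $r$ with $\#\M_{\beta_0}(J)\neq 0$, cf.~\cite{ChSch10,Au07} — has $\del\beta_0\leftrightarrow e_1=(1,0,\ldots,0)$. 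Since $\Theta$ is monotone, every non-constant holomorphic disk has Maslov index $\geq 2$, hence area $\geq 2c=r$, with area exactly $r$ in index $2$; the $\beta=0$ term in the minimum of Theorem~\ref{theorem: explicit} is killed by Lemma~\ref{lem:top_sym}; and, as in the proof of Corollary~\ref{cor:P_monot}, $m_{0,\beta_0}(1)=\#\M_{\beta_0}(J)\cdot[1_\Theta]\neq 0$ in a classically minimal model (perfect Morse function, or homological perturbation). So $\Psi(\Theta)=\omega(\beta_0)=r$ and $\eta_\Theta(\beta_0)=1$.

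\emph{The star-shape (ii).} The inclusion $\subseteq$ is immediate from Theorem~\ref{th:star_shape_char} applied to $\beta_0$: $Sh^\st_\Theta(\C^n)\subset\{\ff:\Psi(\Theta)+\ff\cdot\del\beta_0>0\}$, which, writing $\ff=(x_1,\ldots,x_n)-\br$ and using $\del\beta_0=e_1$, reads $\{(x_1,\ldots,x_n)-\br:x_1>0\}$. (If $\eta_\Theta=1$ on further classes, Theorem~\ref{th:star_shape_char} only imposes additional constraints, which by the matching lower bound below must be vacuous.) For the reverse inclusion I would realise every such flux by an explicit star-isotopy, mirroring the proof of Theorem~\ref{th:Shape_Tori_Cn}(ii): $\Theta$ is a smooth fibre $T^c_{\fr,0,\ldots,0}$ of the SYZ fibrations used there (on a splitting $\C^n=\C^k\times\C^{n-k}$), with the area of $\beta_0$ governed by $\fr$. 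Moving $\Theta$ within these fibrations — increasing $\fr$, pushing the remaining affine coordinates through $0$, and sliding the node parameter $c$ towards infinity with a cutoff, exactly as in the $\C^3$ case there — produces, once one checks that all intermediate fibres stay smooth and that the flux develops along a fixed ray, star-isotopies attaining every flux $(x_1,\ldots,x_n)-\br$ with $x_1>0$; the positivity $x_1>0$ is forced because $\omega(\beta_0)$ evolves as $r+t(x_1-r)$ and must stay positive. Together with $\subseteq$ this gives (ii).

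\emph{The shape (i).} The inclusion $\subseteq$ is verbatim the product-torus argument: a Lagrangian isotopy from $\Theta$ with flux $-\br+t(-1,\ldots,-1)$, $t\geq 0$, ends at a torus $L_1$ with $[\omega]=-\tfrac t2\mu\in H^2(\C^n,L_1;\R)$, so $L_1$ bounds no positive-area Maslov index $2$ disk, contradicting the Audin conjecture (proof for $\C^n$ as in the proof of Theorem~\ref{th:Shape_Tori_Cn}(i), after \cite{CM14}). For $\supseteq$ I would feed the star-isotopies of part (ii) into the monodromy trick: as in the proof of Theorem~\ref{th:Shape_Tori_Cn}(i), $\Theta$ is a fibre of almost toric fibrations on $\C^n$ furnishing, for each $j=2,\ldots,n$, a zero-flux Lagrangian loop (its vanishing cycle is Maslov $0$ and bounds a Lagrangian disk, hence has zero area) acting on $H_1(\Theta)$ by the $k$-th power of the monodromy matrix $M$ of that proof, embedded in the $(1,j)$-block and identity elsewhere. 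Concatenating such a loop with a part-(ii) star-isotopy realising $\bv\in Q:=\{(x_1,\ldots,x_n)-\br:x_1>0\}$ yields the flux obtained by applying this block power of $M^k$ to $\bv$; hence $Sh_\Theta(\C^n)$ contains the union over $k\in\Z$ and $j=2,\ldots,n$ of these images of $Q$, which the same elementary computation as in the proof of Theorem~\ref{th:Shape_Tori_Cn}(i) identifies with $\R^n\setminus\{-\br+t(-1,\ldots,-1):t\geq 0\}$. With $\subseteq$ this proves (i).

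\emph{Main obstacle.} The substantive work lies in the lower bound of (ii): producing, over the entire half-space of fluxes, explicit star-isotopies of $\Theta$. This repeats, one dimension at a time, the SYZ-fibration bookkeeping from the $\C^3$ case of Theorem~\ref{th:Shape_Tori_Cn}(ii) — choosing a sufficiently flexible family of SYZ fibrations containing $\Theta$, matching each flux coordinate to a fibration parameter, sliding the node to infinity with a cutoff so that no intermediate fibre degenerates, and arranging $\omega(\beta_0)$ to grow linearly so that the isotopy is genuinely a star-isotopy; one must also exhibit, for part (i), the $n-1$ families of zero-flux monodromy loops whose block matrices generate enough of $\SL(n,\Z)$ to sweep $Q$ over the complement of the excluded ray. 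The enumerative input, by contrast, is minimal: only the known nonvanishing of $\#\M_{\beta_0}(J)$ together with monotonicity enters, and the single class $\beta_0$ suffices for both upper bounds.
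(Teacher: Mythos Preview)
Your treatment of part~(ii) and of the upper bound in part~(i) matches the paper: the inclusion $Sh^\st_\Theta(\C^n)\subset\{x_1>-r\}$ via Theorem~\ref{th:star_shape_char} applied to $\beta_0$, the construction of star-isotopies inside the SYZ fibrations $T^c_{\fr,\mu_1,\ldots,\mu_{n-1}}$ (where $\Theta\cong T^c_{\fr,0,\ldots,0}$ with $\fr<c$), and the Audin argument excluding the ray $-\br+t(-1,\ldots,-1)$ are exactly what the paper does.

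For the lower bound in part~(i), however, the paper takes a much shorter route that you miss entirely. The key observation is that $\Theta\cong T^c_{\fr,0,\ldots,0}$ is Lagrangian isotopic \emph{with zero flux} to the monotone product torus $T_\br=T^0_{(r^{1/2}/\pi)^n,0,\ldots,0}$: just slide the parameter $c$ to $0$ (and adjust $\fr$), noting that $\Theta$ is monotone so the Maslov~$0$ vanishing class has zero area. Since shape is invariant under zero-flux Lagrangian isotopy (concatenate isotopies), one gets $Sh_\Theta(\C^n)=Sh_{T_\br}(\C^n)$ and part~(i) follows immediately from Theorem~\ref{th:Shape_Tori_Cn}(i).

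Your alternative---constructing $n-1$ zero-flux monodromy loops \emph{based at $\Theta$} with prescribed block monodromies---is not justified as stated. The ATFs used in the proof of Theorem~\ref{th:Shape_Tori_Cn}(i) are products of a $4$-dimensional ATF with a standard toric fibration on $\C^{n-2}$; the Chekanov-side fibres of such a product are $\Theta^1(r')\times T_{(r_3,\ldots,r_n)}$, not $\Theta^{n-1}(r)$. If instead you work in the full SYZ fibration on $\C^n$, the discriminant is more intricate and the block-monodromy claim requires proof; moreover any loop from $\Theta$ around a component of the discriminant must cross the wall $\fr=c$ to the Clifford side and back, which is nothing other than the zero-flux isotopy the paper invokes directly. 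So your route is either incomplete or reduces to the paper's observation anyway.
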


\begin{proof} 
	The tori $\Theta^{n-1}(r)$ are Hamiltonian isotopic to the tori of the
	form $T^c_{\fr,0, \dots, 0}$ described above, provided that $\fr < c$. It is shown in
	\cite[Section~3.3]{Au09} that there is a unique family of holomorphic disks with
	boundary on $T^c_{\fr,0, \dots, 0}$, and each disk projects via
	$f(z_1, \dots, z_n) = z_1\cdots z_n$ isomorphically onto the disk of radius $\fr$
	centred at $c$. (The values of $c,\fr$ are such that these disks have area $r$.)
	Note that $T^c_{\fr,0, \dots, 0}$ is Lagrangian isotopic with
	\emph{zero flux} to the product torus $$T^0_{(r^{1/2}/\pi)^n,0, \dots, 0}= S^1(r)
	\times \cdots \times S^1(r),$$ 
	Now (i) follows from Theorem
	\ref{th:Shape_Tori_Cn}~(i). For (ii), observe that the proof of
	Theorem \ref{th:Shape_Tori_Cn}~(ii) achieves star-flux of any desired form from the statement. The reverse inclusion follows from  Theorem~\ref{th:star_shape_char}.   
\end{proof}

We note that   Theorem \ref{th:Shape_Chekanov_Cn} improves the computation in \cite[Theorem~1.19]{EGM16}.

\subsection{Wild shapes of toric manifolds} \label{subsec:Unbounded shapes}
In contrast to star-shapes, the (non-star) shapes of compact toric manifolds behave wildly. The idea is that in toric manifolds, there exist loops of embedded Lagrangian tori with various monodromies, and  these monodromies together generate big subgroups of $SL(n,\Z)$.  We shall illustrate the phenomenon by 
looking at $\CP^2$. 

What is perhaps more surprising,   tori $T$ in compact toric varieties usually possess \emph{unbounded product neighbourhoods} $T\times Q$. Figure~\ref{fig:Sausage} from the introduction shows an example for $\CP^2$, where $Q\subset\R^2$ is the unbounded open set shown on the left. Such products cannot be convex or Liouville with respect to the zero-section, by Theorem~\ref{th:nbhood_comp}.

\begin{cor}
	The symplectic neighbourhood $T\times Q$ does not admit a Liouville structure making $T\times \{\mathrm{pt}\}$ exact, where $\mathrm{pt}$ is marked in  Figure~\ref{fig:Sausage} (it is sent to the monotone fibre under the above embedding).\qed
\end{cor}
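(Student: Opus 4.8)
The plan is to play the boundedness of the shape estimate from Theorem~\ref{th:nbhood_comp} against the manifest unboundedness of the shape of $T\times Q$ coming from fibrewise isotopies. Write $\phi\co T\times Q\hookrightarrow\CP^2$ for the symplectic embedding of Figure~\ref{fig:Sausage}, with $\phi(T\times\{\pt\})=L$ the monotone Clifford torus, and put $U=\phi(T\times Q)\subset\CP^2$. Suppose, for contradiction, that $T\times Q$ admits a Liouville $1$-form $\theta$ with $d\theta=\omega_\std$ for which $T\times\{\pt\}$ is $\theta$-exact. Composing with the fibrewise translation of $T^*T$ carrying $\pt$ to $0$ (a symplectomorphism), we may assume $\pt=0$. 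Then $T\times Q\subset T^*L$ is a Liouville neighbourhood of the zero-section in the sense of Definition~\ref{dfn:Liouv_nbhd}, and $\phi$ identifies the zero-section with $L$; hence $U$ satisfies the hypotheses of Theorem~\ref{th:nbhood_comp}.

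Next I would invoke the standard count of Maslov index~$2$ disks on the monotone Clifford torus: there are exactly three classes $\beta_1,\beta_2,\beta_3\in H_2(\CP^2,L;\Z)$ with $\#\M_{\beta_i}(J)=1\neq 0$ (for any tame $J$, by monotonicity), and in a suitable basis of $H_1(L;\Z)\cong\Z^2$ their boundaries are $\del\beta_1=(1,0)$, $\del\beta_2=(0,1)$, $\del\beta_3=(-1,-1)$. Applying Theorem~\ref{th:nbhood_comp} to each $\beta_i$ yields
$$
Sh_L(U)\ \subset\ \bigcap_{i=1}^{3}B_{\beta_i}=\bigl\{\ff=(\ff_1,\ff_2)\in H^1(L;\R):\ 2c+\ff_1>0,\ 2c+\ff_2>0,\ 2c-\ff_1-\ff_2>0\bigr\},
$$
which is a bounded open triangle; indeed it is precisely the set $2c\cdot\P_L^\vee$ of Corollary~\ref{cor:P_monot}.

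On the other hand $Sh_L(U)$ is manifestly unbounded. With the reduction $\pt=0$ made above, for every $q\in Q$ the fibrewise path $t\mapsto\phi\bigl(T\times\{tq\}\bigr)$ is a Lagrangian isotopy inside $U$ from $L$ to $\phi(T\times\{q\})$; sweeping a $1$-cycle $\alpha_0\subset L$ representing $a\in H_1(L;\Z)$ produces a trace $2$-chain of $\omega_\std$-area equal to the pairing $\langle a,q\rangle$, so this isotopy has flux $q\in H^1(L;\R)$, exactly as in the Benci--Sikorav theorem \cite{Si89}. Therefore $Q\subset Sh_L(U)$, and $Q$ is unbounded by its very description in Figure~\ref{fig:Sausage}. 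This contradicts the boundedness of $\bigcap_i B_{\beta_i}$, proving the corollary.

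The argument is short and I do not expect a serious obstacle; the one place that needs care is the first step — checking that the hypothetical Liouville structure genuinely places $T\times Q$ within the scope of Definition~\ref{dfn:Liouv_nbhd} and Theorem~\ref{th:nbhood_comp}, in particular that translating $\pt$ to the origin is harmless and that the symplectic form in play is the standard one on $T^*T$, so that the disk count $\#\M_{\beta_i}(J)$ entering Theorem~\ref{th:nbhood_comp} is the familiar Clifford-torus count. Everything after that is the comparison of a bounded triangle with the unbounded domain $Q$.
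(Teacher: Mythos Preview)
Your argument is correct and is precisely the intended one: the paper states the corollary with a bare \qed, having just remarked that such products ``cannot be convex or Liouville with respect to the zero-section, by Theorem~\ref{th:nbhood_comp}''. Your proof is the natural unpacking of that sentence --- assume a Liouville structure, apply Theorem~\ref{th:nbhood_comp} to the three Maslov~$2$ classes on the Clifford torus to trap $Sh_L(U)$ in the bounded triangle $2c\cdot\P_L^\vee$, and contradict this with the unbounded $Q\subset Sh_L(U)$ coming from fibrewise isotopies.
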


We continue to focus on $\CP^2$. The monotone Clifford torus  $T$ is the fibre 
corresponding to the barycentre of the standard moment triangle $\Pol$ of $\CP^2$. Let us apply 
nodal trades  to each of the three vertices of $\Pol$. Let  $\Pol^0$
be the interior of $\Pol$. For the cuts shown in Figure~\ref{fig:Sausage}, the monodromies around the nodes  are respectively given by:
\[M_1 = \begin{pmatrix}
2 & -1\\
1 &  0
\end{pmatrix}, \ \ 
M_2 = \begin{pmatrix}
3 &  1\\
-4 & -1
\end{pmatrix}, \ \ 
M_3 = \begin{pmatrix}
3 &  4\\
-1 & -1
\end{pmatrix}.
\]

Consider the subgroup $G_{\CP^2}< SL(2,\Z)$, $G_{\CP^2} = \langle M_1, M_2, M_3 \rangle$, generated by the $M_i$. 
Revisiting the proof of Theorem \ref{th:Shape_Tori_Cn} $(i)$ for 
$n=2$, one concludes that $Sh_T(\CP^2)$ contains the orbit of $\Pol^0$ under the total monodromy group action: 
$$G_{\CP^2}\cdot \Pol^0= \{g(x): g \in
G_{\CP^2}, \ x \in \Pol^0  \}\subset \R^2.$$

First, let us check that this orbit is unbounded. If $Q\subset\R^2$ is the domain shown in  Figure~\ref{fig:Sausage}, by
consecutively applying the monodromies one sees that 
\[ Q \subset \Pol^0 \cup \bigcup_{k=1}^{\infty} M_k\cdots M_1 \cdot \Pol^0 \subset G_{\CP^2}, \ x \in \Pol^0, \]
where the subscripts are taken modulo 3: $M_i = M_j$ if and only if $i \equiv j \mod 3$.  Next comes  a question we were not able to answer.

\begin{qu} \label{qu:Dense}
	Is the orbit $G_{\CP^2} \cdot \Pol^0$ dense in $\R^2$?
\end{qu}

Although we do not have an answer, it will be useful to pursue this question. To this end, one computes

\[ M_3 M_2 M_1 = \begin{pmatrix}
1 & -9\\
0 &  1
\end{pmatrix}.
\]
Conjugating by 
\[ P = \begin{pmatrix}
0 & 1\\
-1 & 1
\end{pmatrix}
\]
we get: 

\[ P M_1 P^{-1} = \begin{pmatrix}
1 &  1\\
0 &  1
\end{pmatrix},\ 
P M_2 P^{-1} = \begin{pmatrix}
-5 & 4\\
-9 & 7 
\end{pmatrix},\ 
P M_3 M_2 M_1 P^{-1} = \begin{pmatrix}
1 &  0\\
9 &  1
\end{pmatrix}.
\]
So $PG_{\CP^2}P^{-1}$ is generated by the three matrices above. In particular, $G_{\CP^2}$  contains a subgroup isomorphic to $G_9$ where:

\begin{equation} \label{eq:Gk}
G_k = \left\langle t = \begin{pmatrix}
1 &  1\\
0 &  1
\end{pmatrix} , \  
h_k = \begin{pmatrix}
1 &  0\\
k &  1
\end{pmatrix} \right\rangle.
\end{equation}

Let $G$ be a locally compact Lie group with  the right-invariant Haar measure $\mu$. A discrete subgroup $\Gamma$ of $G$  is called a \emph{lattice} \cite[Section~1.5~b]{FeKa02} if
the induced measure on
$G/\Gamma$ has finite volume. The Haar measure on $PSL(2,\R)$ is induced from the hyperbolic metric on
$\bH^2 \cong PSL(2,\R)/PSO(2,\R)$, so  $\Gamma<PSL(2,\R)$ is a lattice if and only if the induced action of $\Gamma$ on $\bH^2$ produces the quotient
$\bH^2/\Gamma$ of finite area. 

Let us view $S^1$ as the projectivisation of the plane: $S^1 = \mathrm{proj} (\R^2) \cong PSL(2,\R)/U$, where $U < PSL(2,\R)$ is the subgroup
of upper-triangular matrices.
Howe-Moore ergodicity theorem  implies that the action of any lattice $\Gamma < PSL(2,\R)$ 
on $\mathrm{proj}(\R^2)$ is ergodic; see \cite[Theorem~3.3.1, Corollary~3.3.2, Proposition~4.1.1]{FeKa02}.

Now suppose $D\subset \R^2$ is any open subset containing the origin, and $\Gamma<PSL(2,\R)$ is a lattice.
It quickly follows that the orbit $\Gamma\cdot D$  is dense in $\R^2$.

\begin{prp} \label{prp:Lattice}
	The subgroup $G_k < PSL(2,\R)$ is a lattice if and only if $0 < |k| \le 4$.
\end{prp}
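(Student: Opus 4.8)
The statement concerns the group $G_k = \langle t, h_k\rangle < \mathrm{PSL}(2,\R)$ where $t = \begin{pmatrix}1&1\\0&1\end{pmatrix}$ and $h_k = \begin{pmatrix}1&0\\k&1\end{pmatrix}$. The plan is to treat the two directions separately. For the ``if'' direction ($0 < |k| \le 4$ implies $G_k$ is a lattice), I would first recall that $k = \pm 1$ gives $G_k = \mathrm{SL}(2,\Z)$ (or an index-two-related group), which is the classical modular lattice; for $k = \pm 2$ one recognizes $G_k$ as (conjugate to) a congruence subgroup, e.g.\ $\Gamma(2)$ up to finite index, hence also a lattice; and for $k = \pm 3, \pm 4$ one identifies $G_k$ with a Hecke triangle group $\Delta(2,3,\infty)$ or $\Delta(2,\infty,\infty)$ type quotient — more precisely the group generated by the parabolics $t$ and $h_k$ is a known arithmetic or Hecke group of finite covolume. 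The cleanest uniform approach is to use Jorgensen's inequality or, better, the classical fact (going back to Sanov and to the theory of the Hecke groups $H(\lambda)$) that $\langle \begin{pmatrix}1&\lambda\\0&1\end{pmatrix}, \begin{pmatrix}1&0\\\lambda&1\end{pmatrix}\rangle$ is discrete and of finite covolume precisely in a controlled range; here after conjugating $h_k$ to put it in a symmetric position one reduces to the Hecke-group criterion with parameter determined by $k$.

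**Key steps, in order.** First I would establish discreteness and compute a fundamental domain: $t$ identifies the vertical strip sides, and $h_k$ (a parabolic fixing $0$) pairs two arcs of circles tangent to the real axis at $0$; applying the Poincaré polygon theorem to the ideal polygon bounded by the lines $\mathrm{Re}(z) = \pm 1/2$ (paired by $t$) and the two isometric circles of $h_k^{\pm1}$ gives a fundamental domain with finitely many ideal vertices, hence finite hyperbolic area, exactly when the isometric circles of $h_k$ have radius $\le$ a threshold so that the polygon is nondegenerate — this is where the bound $|k| \le 4$ enters numerically (the isometric circle of $h_k$ has radius $1/|k|$, and the cusped polygon closes up iff $1/|k| \ge 1/4$, i.e.\ $|k| \le 4$; at $|k| = 4$ the two side-pairing cusps merge into one). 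For the ``only if'' direction ($|k| \ge 5$), the plan is to show $G_k$ is either non-discrete or, if discrete, of infinite covolume: for $|k| \ge 5$ the two isometric circles of $h_k$ are disjoint from and do not reach the strip $|\mathrm{Re}(z)| \le 1/2$ in a way that would close the polygon, so the Ping-Pong lemma applies to $t^{\pm 1}$ and $h_k^{\pm 1}$ acting on suitable disjoint horoball-like regions, exhibiting $G_k \cong \Z * \Z$ acting with a fundamental domain of infinite area (the quotient surface has infinite-area funnels, not just cusps). I should also note that Question~\ref{qu:Dense} context — the matrices $M_i$ — is not needed for the proof; only the reduction already performed in the text, that $G_{\CP^2} \supset G_9$, is relevant elsewhere.

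**Main obstacle.** The delicate point is the boundary cases $|k| = 4$ and $|k| = 5$, where the naive isometric-circle picture degenerates: at $|k| = 4$ one must check the Poincaré theorem hypotheses still hold (the polygon is a once-punctured-something with the cycle condition satisfied — the sum of angles at the merged cusp is $0$, consistent with a cusp), and at $|k| = 5$ one must rigorously rule out discreteness-with-finite-covolume rather than merely observe that the obvious fundamental domain is infinite-area. For $|k| = 5$ I would invoke Jorgensen's inequality or a direct trace-field/commutator computation: compute $\mathrm{tr}[t, h_k] = 2 + k^2$, and use the inequality $|\mathrm{tr}(A)^2 - 4| + |\mathrm{tr}[A,B] - 2| \ge 1$ for discrete non-elementary $\langle A, B\rangle$ — here $\mathrm{tr}(t)^2 - 4 = 0$, so discreteness forces $|\mathrm{tr}[t,h_k] - 2| = k^2 \ge 1$, which is automatic and hence \emph{not} an obstruction to discreteness; so instead the correct tool is a covolume/covering-area argument showing that even when $G_k$ is discrete for large $|k|$, the limit set is a Cantor set (not all of $\partial \bH^2$), whence infinite covolume. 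Concretely: for $|k| \ge 5$ the Ping-Pong regions for $t$ and $h_k$ can be chosen with disjoint closures strictly inside $\overline{\bH^2}$, so $G_k$ is a Schottky-type free group whose limit set is totally disconnected, hence $\bH^2/G_k$ is a non-compact infinite-area surface and $G_k$ is not a lattice. Assembling the Poincaré-theorem computation for $1 \le |k| \le 4$ and the Schottky/ping-pong argument for $|k| \ge 5$ completes the proof.
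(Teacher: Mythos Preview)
Your proposal is correct and follows essentially the same geometric route as the paper: the fundamental domain is the strip $|\mathrm{Re}(z)|\le 1/2$ with the interiors of the isometric circles of $h_k^{\pm 1}$ (centred at $\pm 1/k$, radius $1/|k|$) removed, and this region has finite hyperbolic area precisely when those circles cover the entire base of the strip, i.e.\ when $2/|k|\ge 1/2$, equivalently $0<|k|\le 4$. The paper carries this out by a bare-hands reduction-theory argument (maximise the $y$-coordinate over the $\langle h_k\rangle$-orbit, then normalise $x$ by $t$), whereas you invoke the Poincar\'e polygon theorem for $|k|\le 4$ and a ping-pong/Schottky argument for $|k|\ge 5$; your machinery is heavier but the core observation is identical. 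Note that your detours through Hecke groups, congruence subgroups, and J{\o}rgensen's inequality are unnecessary (as you yourself concede for the last), and the ``only if'' direction does not actually require showing the limit set is Cantor: once the domain is known to surject onto $\bH^2/G_k$ and has infinite area when $|k|\ge 5$, one only needs the easy fact that distinct interior points of $D_{G_k}$ are $G_k$-inequivalent, which follows from the same max-$y$ characterisation.
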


\begin{proof}
	For $k = 0$, one has
	\[ \bH^2/G_0 = \{ (x,y) \in \bH^2: -1/2 \le x \le 1/2 \}/ 
	(-1/2,y) \sim (1/2,y)\] which is of infinite volume.
	We claim that for $k \ne 0$, the fundamental domain of the action of $G_k$ on $\bH^2$ is: 
	\[ D_{G_k} =  \{ (x,y) \in \bH^2: -1/2 \le x \le 1/2,\ \|(x \pm 1/k , y)\| \ge 1/|k| \}. \]
	We are using the upper half-plane model for the hyperbolic plane.
	Indeed, since $t$ \eqref{eq:Gk} acts by integer translation in 
	the coordinate $x\in\bH^2$, we may assume $-1/2 \le x \le 1/2$. 
	Next, the $y$-coordinate of $h_k^n\cdot(x,y)$ equals 
	\[ \frac{y}{(nkx + 1)^2 + (nky)^2}. \] 
	So $(x,y)$ is the representative of its $\langle h_k \rangle$-orbit
	with the largest value of $y$ if and only if $(nkx + 1)^2 +
	(nky)^2 \ge 1$ for all $n$, equivalently, if and only if $ \|(x \pm 1/k , y)\| \ge 1/|k|.$ It explains that $D_{G_k}$ is a fundamental domain.
	Finally, $D_{G_k} \subset \bH^2$ has finite volume if and only if $0 < |k| \le 
	4$.
\end{proof}

As we have seen above, $G_{\CP^2}$ is generated by $G_9$ and $\left(\begin{smallmatrix}
-5 & 4\\
-9 & 7 
\end{smallmatrix}\right)$. We do not know whether $G_{\CP^2}$ is a lattice, so we could not answer Question~\ref{qu:Dense}.
However, we can now answer the analogous question  for some other symplectic 4-manifolds.

\begin{cor} \label{cor:DenseOrbits}
	Let $X=Bl_{k}\CP^2$ be the blowup of $\CP^2$ at $k\ge 5$ points, with any (not necessarily monotone) symplectic form.
	Let $L$ a fibre of an almost toric fibration on  $X$ whose base is diffeomorphic to a disk (e.g.~a fibre of a toric fibration). Then 
	$Sh_L(X)$ is dense in $\R^2$.
\end{cor}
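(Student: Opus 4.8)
The plan is to run the mechanism from the proof of Theorem~\ref{th:Shape_Tori_Cn}(i): produce a group of monodromies acting on $H^1(L;\R)\cong\R^2$ whose orbit of a fixed open set is contained in $Sh_L(X)$, and then invoke the lattice criterion of Proposition~\ref{prp:Lattice} together with the ergodicity (Howe--Moore) density statement recalled just before it. First I would reduce to a convenient fibration. Concatenating a Lagrangian isotopy from $L$ to $L'$ with isotopies of $L'$ only shifts all fluxes by a fixed vector, so $Sh_{L'}(X)$ and $Sh_L(X)$ differ by a translation; hence ``dense in $\R^2$'' is invariant under Lagrangian isotopies of $L$, and also under replacing the given almost toric fibration by any other one on $(X,\omega)$ having $L$ among its fibres. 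Performing nodal trades supported near the corners of the base (away from $L$) and nodal slides, and using that every corner of the base of an almost toric fibration on a smooth $4$-manifold is a smooth toric corner, hence tradeable, I may assume the base $B$ is a disk with smooth boundary and the fibration has $N$ interior nodes. Since the Euler characteristic of an almost toric fibration over a disk is $\#\{\text{corners}\}+\#\{\text{nodes}\}$ and $\chi(Bl_k\CP^2)=k+3$, trading all corners yields $N=k+3\ge 8$ nodes.

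\emph{A monodromy orbit inside the shape.} As in the proof of Theorem~\ref{th:Shape_Tori_Cn}(i), a loop in $B^\circ$ based at $p=\mu(L)$ encircling a single node lifts to a Lagrangian isotopy of $L$ with zero flux, and composing such ``loop'' isotopies with a ``segment'' isotopy supported in the regular locus yields, for every $g$ in the monodromy group $G=\langle M_{\gamma_1},\dots,M_{\gamma_N}\rangle< SL(2,\Z)$ and every $f$ in a fixed nonempty open set $D\subset H^1(L;\R)\cong\R^2$ (a chamber of the integral affine structure around $p$, translated so that $0\in D$), a Lagrangian isotopy of $L$ with flux $g\cdot f$. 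Therefore $G\cdot D\subseteq Sh_L(X)$. The monodromy around a loop encircling one node is the transvection $T_\ell\colon v\mapsto v+\langle v,\ell\rangle\,\ell$ along that node's vanishing cycle $\ell\in H_1(L;\Z)$, so $G$ is generated by $N\ge 8$ such transvections along primitive integral vectors.

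\emph{$G$ contains a lattice.} For primitive $\ell,\ell'$ with $d=|\langle\ell,\ell'\rangle|$, a direct computation (the product $T_\ell T_{\ell'}$ has trace $2-d^2$) shows $\langle T_\ell,T_{\ell'}\rangle$ is conjugate in $GL(2,\R)$ to a group of the form $G_{m}$ with $m=d^2$, and for $d=1$ it is all of $SL(2,\Z)$. By Proposition~\ref{prp:Lattice}, $G_{d^2}$ is a lattice in $PSL(2,\R)$, hence $\langle T_\ell,T_{\ell'}\rangle$ is a lattice in $SL(2,\R)$, as soon as $d\le 2$. So it suffices to produce, for $k\ge 5$, an almost toric fibration of $Bl_k\CP^2$ over a disk two of whose nodes have vanishing cycles $\ell,\ell'$ with $|\langle\ell,\ell'\rangle|\le 2$ (ideally $=1$, giving the full $SL(2,\Z)$). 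This is the step I expect to be the main obstacle: one must track how the vanishing cycles of the nodes obtained by trading the corners of a toric model of $Bl_k\CP^2$ depend on the successive blow-ups, and verify that $k\ge 5$ blow-ups are enough to bring a pair of vanishing cycles to symplectic distance $\le 2$. The threshold is essential --- for $\CP^2$ only the three nodes at pairwise intersection $3$ are available, giving merely the subgroup $G_9$, which is not a lattice, and this is exactly why Question~\ref{qu:Dense} is left open.

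\emph{Conclusion.} Once $G$ contains a lattice $\Gamma<SL(2,\R)$, the density statement recalled before Proposition~\ref{prp:Lattice} --- a consequence of Howe--Moore ergodicity applied to the $\Gamma$-action on $\R^2\setminus\{0\}\cong SL(2,\R)/U$ with $U$ unipotent --- shows that $\Gamma\cdot D$, a fortiori $G\cdot D\subseteq Sh_L(X)$, is dense in $\R^2$. Hence $Sh_L(X)$ is dense in $\R^2$, which is the assertion of the corollary.
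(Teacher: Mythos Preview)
Your overall framework is right and matches the paper's: build a monodromy group $G< SL(2,\Z)$ acting on $H^1(L;\R)$, show it contains a lattice, and conclude density via Howe--Moore. But the heart of the argument---showing $G$ contains a lattice---is left as an unverified expectation. You propose to find two nodes whose vanishing cycles $\ell,\ell'$ satisfy $|\langle\ell,\ell'\rangle|\le 2$, so that $\langle T_\ell,T_{\ell'}\rangle$ is conjugate to $G_{d^2}$ with $d^2\le 4$, hence a lattice by Proposition~\ref{prp:Lattice}. You then write ``this is the step I expect to be the main obstacle'' and do not carry it out. That is a genuine gap: the entire content of the corollary lies in this step, and without it the proposal is just a restatement of the strategy already laid out for $\CP^2$ before Question~\ref{qu:Dense}.

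The paper avoids your node-by-node analysis with a different idea. Rather than comparing vanishing cycles of individual nodes, it looks at the \emph{total} monodromy around a loop near $\partial\Pol$ encircling all the nodes. After trading corners so that the boundary is a smooth elliptic curve in class $-K_X$, this total monodromy is conjugate to $\left(\begin{smallmatrix}1&0\\K_X^2&1\end{smallmatrix}\right)$; combined with a single-node monodromy this yields (a conjugate of) $G_{K_X^2}$ inside the full monodromy group. For $k=5$ one has $K_X^2=9-5=4$, and $G_4$ is a lattice by Proposition~\ref{prp:Lattice}, finishing that case directly. For $k>5$ the paper does \emph{not} recompute $K_X^2$ (which would give $|9-k|$ possibly $0$ or $>4$, useless), but instead shows via almost toric blowup that the monodromy group of $Bl_m\CP^2$ embeds into that of $Bl_k\CP^2$ for $m\le k$; hence the lattice $G_4$ found for $Bl_5$ persists for all $k\ge 5$. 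This total-monodromy-plus-induction route is what makes the threshold $k\ge 5$ transparent and sidesteps all bookkeeping of individual vanishing cycles.

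A smaller point: your claim that a loop around a single node has zero flux is borrowed from the monotone setting of Theorem~\ref{th:Shape_Tori_Cn}(i), where it relied on $\omega(\alpha)=0$ for the Maslov-zero vanishing disk $\alpha$. For an arbitrary (non-monotone) symplectic form this need not hold, so the achievable fluxes form an affine rather than linear $G$-orbit of your open set $D$. This does not ultimately obstruct density, but it should be addressed rather than asserted.
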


\begin{rmk}
	A symplectic manifold admitting an almost toric fibration with base homeomorphic to a 
	disk is  diffeomorphic to $Bl_k\CP^2$ or $\PxP$ by \cite{SyLe10}.
\end{rmk}

\begin{proof}
	Consider an almost toric fibration from  the statement, and let $\Pol$ be its base. Performing small 
	nodal trades when necessary, we may assume that the preimage of the boundary of
	$\Pol$ is a smooth elliptic curve representing the anticanonical class  $-K_X$  
	\cite[Proposition~8.2]{Sym03}.
	Consider the loop in $\Pol$ which goes once around the boundary $\del \Pol$ sufficiently closely to it, and encloses all singularities of the almost toric fibration. The affine monodromy around this loop is conjugate to
	$$
	\begin{pmatrix} 1 & 0\\ k & 1 \end{pmatrix},
	$$
	because it has an eigenvector given by the fibre cycle of the boundary elliptic curve. Furthermore, it can be shown that $k = K_X^2$; see e.g~\cite{SmG15}.
	Following the proof of Theorem \ref{th:Shape_Tori_Cn}~(i), one argues that 
	$$G_{K_X^2} \cdot
	\Pol^0 \subset
	Sh_L(X).$$ If $0 \ne |K_X^2| \le 4$, the result follows from Proposition~\ref{prp:Lattice} and
	the Howe-Moore theorem, in particular it hods for $Bl_5\CP^2$.

	\begin{figure}[h!]   
		
		\begin{center}
			
			\includegraphics{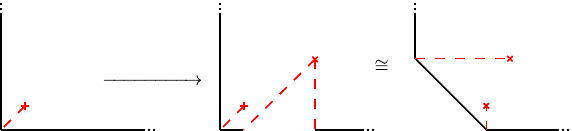}

			\caption{Almost toric blowup of the 
				$A^1_{k-1}$ ATF on $Bl_{k-1}\CP^2$ (left) to the $A^1_{k}$ ATF on 
				$Bl_k\CP^2$ (middle and right). The complement of a neighbourhood of a cut in the
				$A^1_{k}$ ATF (the triangular cut
				in the middle diagram) embeds into the $A^1_{k-1}$ ATF on $Bl_k\CP^2$ (left diagram).}
			\label{fig:ATBlup} 
			
		\end{center} 
	\end{figure}
	In general, denote by $G_X<SL(2,\Z)$ the group generated by all monodromies of an almost toric fibration on $X$ as above.
	We claim that $G_{Bl_m\CP^2}$ is a subgroup of $G_{Bl_k\CP^2}$ for $m \le k$.
	This implies that the result holds for $Bl_k\CP^2$, $k \ge 5$.  
	Indeed, starting with an ATF $A^0_{k}$ on $(Bl_k\CP^2, \omega)$, one  deduces  from
	\cite[Theorem~6.1]{SyLe10} that there is a different ATF $A^1_{k}$ on
	$(Bl_k\CP^2, \omega')$ satisfying: $A^1_{k}$ is obtained from an ATF $A^1_{k-1}$ on $Bl_{k-1}\CP^2$ via
	almost toric blowup (\cite[Section~4.2]{SyLe10}, see also
	\cite[Example~4.16]{Zu03}, \cite[Section~5.4]{Sym03}, and Figure
	\ref{fig:ATBlup}); and $A^1_{k}$ is obtained from the ATF $A^0_{k}$ by deforming
	$\omega$ to $\omega'$ and applying nodal slides. In particular, they have the
	same groups of monodromies. By disallowing to travel around the distinguished nodal fibre coming from
	the almost toric blowup, one gets  an embedding of the monodromy group of $A^1_{k-1}$ into one of $A^1_{k}$, which is the same as for the initial ATF $A^0_{k}$. 
  
\end{proof}

\section{Space of Lagrangian tori in $\CP^2$} \label{sec:LagModuli_CP2}
Given a symplectic manifold $X$, the space of all (not necessarily monotone)
Lagrangian embeddings of a torus into $X$ is usually non-Hausdorff. Despite the
indications that this space should be in some way related to the rigid-analytic
mirror of $X$ (if it exists), we do not seem to have a rigorous understanding of
this connection so far. More basically, there is a lack of examples in the
literature computing these spaces. We shall study this question for $\CP^2$.
Recall that all symplectomorphisms of $\CP^2$ are Hamiltonian.

In \cite{Vi13,Vi14}, it is shown that monotone tori in $\CP^2$ are associated with Markov 
triples. We recall that a \emph{Markov triple} $(a,b,c)$ is a triple of positive integers satisfying the Markov equation:
\begin{equation} \label{eq:Markov}
a^2 + b^2 + c^2 = 3abc.
\end{equation}
All Markov triples are assumed to be unordered.
They form the vertices of the infinite Markov tree with root $(1,1,1)$, whose beginning is shown below.

\begin{table}
	\begin{center}
$${
	\xymatrixcolsep{-1em}
	\xymatrixrowsep{1em}
	\small
	\xymatrix{
		&&&(1,1,1)\ar@{-}[d]&&&\\
		&&&(1,1,2)\ar@{-}[d]&&&\\
		&&&(1,2,5)\ar@{-}[dll]\ar@{-}[drr]&&&\\
		&(2,5,29)\ar@{-}[dl]\ar@{-}[dr]&&&&(1,5,13)\ar@{-}[dl]\ar@{-}[dr]&\\
		(5,29,433)&&(2,29,169)&&(5,13,194)&&(1,13,34)\\
	}
}
$$
\caption*{The Markov tree.}
\end{center}
\end{table}

Two Markov triples  connected by an edge are related by  mutation of the form:
\begin{equation} \label{eq:Markov_mutation}
(a,b,c) \to (a,b, 3ab - c).
\end{equation}
Besides the univalent vertex $(1,1,1)$ and the bivalent vertex $(1,1,2)$,
all vertices of the tree are trivalent.

There is an almost toric fibration (ATF) on $\CP^2$ corresponding to each Markov
triple $(a,b,c)$, constructed in \cite{Vi13,Vi14}. Its base can be represented
by a triangle (with cuts) whose sides have affine lengths $(a^2,b^2,c^2)$. Imposing restrictions on the cuts, one get that the base diagram representing an
ATF containing the monotone fibre $T(a^2,b^2,c^2)$, uniquely determine the
above mentioned triangle, up to $SL(2,\Z)$, c.f.~\cite{Vi13,Vi14}. Slightly
abusing terminology, we call it the \emph{moment triangle} associated to
$T(a^2,b^2,c^2)$. We shall call an $(a,b,c)$-ATF any ATF containing $T(a^2,b^2,c^2)$
as a monotone fibre.

From now, we maintain the following agreement: the nodes of these fibrations are
assumed to be slided arbitrarily close to the vertices of the \emph{moment triangle}. So
when we speak of \emph{a fibre} of the $(a,b,c)$-ATF, we always mean the
preimage of a point in the base triangle with respect to an ATF whose nodes are
closer to the vertices than the point in question. More formally, the fibres of
the $(a,b,c)$-ATF are the regular fibres of the corresponding fibration on the
weighted projective space, pulled back to $\CP^2$ via smoothing (which defines
them up to Hamiltonian isotopy).

By \cite{Vi14},  two monotone tori corresponding to different Markov triples are not Hamiltonian isotopic to each other. Our aim is to study all (not necessarily monotone) fibres of all the ATFs together, modulo symplectomorphisms of $\CP^2$. They form a non-Hausdorff topological space:
$$
\H=\{T\subset \CP^2 \textit{ a Lag.~torus fibre of the }(a,b,c)\text{-ATF}\textit{ for some Markov triple}\}/\sim
$$
where $T_1\sim T_2$ is there exists a symplectomorphism of $\CP^2$ taking $T_1$ to $T_2$.
It is a plausible but hard conjecture that \emph{any} Lagrangian torus in $\CP^2$ is actually isotopic to some $(a,b,c)$-ATF fibre. If this is true, then $\H$ is the space of all Lagrangian tori in $\CP^2$ up to symplectomorphism.

We shall study $\H$
with the help of the numerical invariant arising from the remark made in Section~\ref{subsec:sh_bound_dimf}: 

\begin{equation}
  \Xi_2(L) = \sum_{\substack{\beta\ :\ \mu(\beta)=2,\\ \omega(\beta) = \Psi_2(L)}} \# 
  \M_\beta(J)\in\Z,
\end{equation}
where $\# \M_\beta(J)$ is the number of
$J$-holomorphic disks in class $\beta$ of Maslov index~2,
passing through a fixed point on $L$. Observe that we are only counting disks of lowest area $\Psi_2(L)$.

We start by analysing the space of Lagrangian fibres of the standard toric
fibration on $\CP^2$ up to symplectomorphism. In the above terminology, this is
an $(1,1,1)$-ATF. Let us first take the quotient of the space of toric fibres by
the group $S_3$ of symplectomorphisms permuting the homogeneous coordinates on
$\CP^2$. This leaves us with a ``one-sixth'' slice of the initial moment
triangle. That slice is a closed triangle with one edge removed, see Figure
\ref{fig:ToricFibres}. We will now show that toric fibres corresponding to
different points in this slice are not related by symplectomorphisms of
$\Symp(\CP^2)$.

\begin{figure}[h!]   
  
\begin{center}

\includegraphics{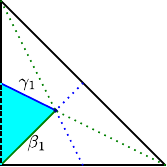}

\caption{The moment polytope of $\CP^2$. Fibres over the shaded region 
represent fibres of the toric fibration modulo action of $\Symp(\CP^2)$.}
\label{fig:ToricFibres} 

\end{center} 
\end{figure}

\begin{prp} \label{prp:ToricFibres}
  Let $T_{(x_1,y_1)}$ and $T_{(x_2,y_2)}$ be toric fibres over distinct points $(x_1,y_1) \ne 
  (x_2,y_2)$ belonging to  the shaded region of Figure \ref{fig:ToricFibres}. Then
  there is no symplectomorphism of $\CP^2$ taking $T_{(x_1,y_1)}$ to $T_{(x_2,y_2)}$.
\end{prp}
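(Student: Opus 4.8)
The plan is to exploit that every symplectomorphism of $\CP^2$ is Hamiltonian, so that $\Psi$, $\Psi_2$ and $\Xi_2$, as well as the whole function "$\Psi$ of the fibre along a star-isotopy", are invariants of a Lagrangian torus in $\CP^2$ up to symplectomorphism. Thus it suffices to recover the point $p$, up to the $S_3$-action cutting out the shaded region of Figure~\ref{fig:ToricFibres}, from such invariants of $T_p$. The first step is the standard toric computation (Cho--Oh; or Lemma~\ref{lem:Count=1} and Proposition~\ref{prop:Psi_GCF} specialized to $\CP^2$): with the standard toric $J_0$, $T_p$ bounds Maslov index~$2$ holomorphic disks precisely in the three classes $\beta_1,\beta_2,\beta_3$ dual to the facets $F_1,F_2,F_3$ of the moment triangle $\Pol$, each with count one, and $\om(\beta_i)=d_i(p)$, the lattice affine distance from $p$ to $F_i$. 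Hence $\Psi(T_p)=\Psi_2(T_p)=\min_i d_i(p)$ and $\Xi_2(T_p)=\#\{i:d_i(p)=\min_j d_j(p)\}$. Since $\sum_i d_i(p)$ is a fixed constant and a point of the fundamental domain is determined by the unordered triple $\{d_1(p),d_2(p),d_3(p)\}$, the task reduces to recovering this multiset from symplectomorphism invariants of $T_p$.

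If $\Xi_2(T_p)\ge 2$ this is immediate: at least two $d_i$ coincide and equal $\Psi_2(T_p)=:m$, so the triple is $\{m,m,m\}$ or $\{m,m,\,\mathrm{const}-2m\}$, both determined by $m$. So assume $\Xi_2(T_p)=1$, say $d_1(p)<d_2(p)\le d_3(p)$; then $\beta_1$ is the unique lowest-area Maslov~$2$ class, so $\del\beta_1\in H_1(T_p;\Z)$ is a canonically defined primitive class. If $\phi$ is a symplectomorphism with $\phi(T_p)=T_{p'}$, then $\phi_*\del\beta_1^{(p)}=\pm\del\beta_1^{(p')}$ and $d_1(p)=\Psi(T_p)=\Psi(T_{p'})=d_1(p')$. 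Next I would choose the primitive covector $u\in H^1(T_p;\R)$ with $u\cdot\del\beta_1^{(p)}=0$ (the direction parallel to $F_1$) and run the fibrewise star-isotopy $\{L^{\pm}_t\}$ of $T_p$ with flux $\pm tu$, i.e.\ move $p$ along the segment of $\Pol$ through $p$ parallel to $F_1$. Along this isotopy the fibres remain toric, $d_1$ is constant, and $d_2,d_3$ are affine in $t$ of slope $\mp1$ (the combinatorial slope for the $\CP^2$ triangle is $1$, by the $S_3$-symmetry of $\Pol$), so $t\mapsto \Psi(L^{\pm}_t)=\min(d_1,d_2(t),d_3(t))$ equals $d_1(p)$ near $t=0$ and then breaks into a strictly decreasing linear piece at the time $t^{\pm}=d_j(p)-d_1(p)$, where $j\in\{2,3\}$ is the facet one moves toward; note $t^{\pm}<d_j(p)$, so the break occurs while the segment is still inside $\Pol^0$.

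The crucial point is then that, because $\phi$ is Hamiltonian, each $\phi(L^{\pm}_t)$ is Hamiltonian isotopic to $L^{\pm}_t$, so $\Psi(\phi(L^{\pm}_t))=\Psi(L^{\pm}_t)$ as functions of $t$; and $\{\phi(L^{\pm}_t)\}$ is a star-isotopy of $T_{p'}$ whose flux direction $\pm\phi_*u$ is again the primitive covector parallel to $F_1$ for $T_{p'}$ (since $\phi_*$ is an isomorphism preserving primitivity and the Poincaré pairing, and $\phi_*\del\beta_1^{(p)}=\pm\del\beta_1^{(p')}$). Hence the break-times $t^{\pm}$ computed from $T_p$ and from $T_{p'}$ agree, so $d_j(p)-d_1(p)=d_{j'}(p')-d_1(p')$ for the matching facets; with $d_1(p)=d_1(p')$ and $\sum_i d_i$ constant this gives $\{d_1(p),d_2(p),d_3(p)\}=\{d_1(p'),d_2(p'),d_3(p')\}$, so $p$ and $p'$ lie in one $S_3$-orbit. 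As distinct points of the shaded region lie in distinct orbits, this contradicts $(x_1,y_1)\neq(x_2,y_2)$, and the proposition follows. (Concavity of $\Psi$ along star-isotopies, Theorem~\ref{th:psi_concave}, is used here only to know that the function $t\mapsto\Psi(L^{\pm}_t)$ has a genuine, unambiguous corner.)

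The hard part will be the bookkeeping of the last two paragraphs, specifically: (i) checking that the "combinatorial slope" is literally the same number on the $T_p$ and $T_{p'}$ sides — this is exactly where the rigidity of the $\CP^2$ triangle, namely that its affine automorphism group inside $GL(2,\Z)\ltimes\R^2$ is precisely $S_3$, is needed; and (ii) keeping track that the $\phi$-image of a \emph{fibrewise} star-isotopy of $T_p$ need not be fibrewise for $T_{p'}$, which forces the argument through the intrinsic data $\big(\Psi(L_t),\,\del\beta_1,\,\text{break-time}\big)$ rather than through the base of the toric fibration directly. The toric input — Cho--Oh's disk count, and the absence of Maslov $\le 0$ holomorphic disks for generic $J$ on a $\CP^2$ toric fibre (which underlies the invariance of $\Psi_2$, $\Xi_2$ and of the lowest-area class, cf.\ Section~\ref{subsec:sh_bound_dimf}) — should be cited rather than reproved.
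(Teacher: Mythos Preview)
Your argument in the case $\Xi_2(T_p)=1$ has a genuine gap. You correctly establish that $\Psi(\phi(L_t^\pm))=\Psi(L_t^\pm)$ and that $\{\phi(L_t^\pm)\}$ is a star-isotopy of $T_{p'}$ in the primitive direction $u'$ annihilating $\partial\beta_1^{(p')}$. But to extract $d_{j'}(p')-d_1(p')$ as the break-time you must compare this isotopy with the \emph{toric} star-isotopy $\{L_t'^{\pm}\}$ from $T_{p'}$ in the same direction, and nothing guarantees that $\Psi(\phi(L_t^\pm))=\Psi(L_t'^{\pm})$ for all $t$. That would follow if $\phi(L_t^\pm)$ were Hamiltonian isotopic to the toric fibre at flux $tu'$ from $T_{p'}$, but two star-isotopies from the same torus with the same flux direction need not pass through Hamiltonian-isotopic Lagrangians at each time. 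Concretely: the only class with $\eta_{T_{p'}}(\cdot)=1$ is $\beta_1'$, and since $u'\cdot\partial\beta_1'=0$, Theorem~\ref{th:star_shape_char} gives no constraint whatsoever on $\Psi$ along star-isotopies in direction $u'$; the paper itself remarks after Corollary~\ref{cor:gc_shape_nonmon} that for non-monotone fibres the star-shape bounds are not known to be sharp, which is precisely the obstacle here.

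The paper's proof is quite different and more elementary. After pinning down $\phi_*\alpha_1=\alpha_2$ from the invariance of the lowest-area Maslov~2 class, it uses no further enumerative input. Instead it writes the unknown $\phi_*\beta_1\in H_2(\CP^2,T_{(x_2,y_2)};\Z)$ in the basis $\{\alpha_2,\beta_2,H\}$, imposes $\mu(\phi_*\beta_1)=2$, the intersection constraint $\partial(\phi_*\beta_1)\cdot\partial\alpha_2=\pm 1$, and the area identity $\omega(\phi_*\beta_1)=y_1$; the inequalities cutting out the shaded region then force $y_1=y_2$ by a short integer case analysis. Your treatment of the case $\Xi_2\ge 2$ is correct and corresponds to the paper's case $x_1=y_1$.
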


\begin{proof}
  We normalise the symplectic form so that the area of the complex line equals~$1$. Assume that there
  is a symplectomorphism $\phi \in \Symp(\CP^2)$ such that $\phi(T_{(x_1,y_1)}) =
  T_{(x_2,y_2)}$. We aim to show that $(x_1,y_1) = 
  (x_2,y_2)$.  The condition that $(x_i,y_i)$ belongs to the shaded region means:

\begin{equation}
\label{eq:ToricCond}
    0   <  x_i  \le  1/3,\quad
  x_i \le y_i  \le  \frac{1 - x_i}{2}.
\end{equation}

Since there is only one monotone fibre $T_{(1/3,1/3)}$, we may assume $x_i < 1/3$ for 
$i=1,2$.  
The torus $T_{(x_i,y_i)}$ bounds Maslov index 2 holomorphic disks 
in three relative classes $\alpha_i$, $\beta_i$, $H - \alpha_i -\beta_i$ and areas
$x_i$, $y_i$ and $1 - x_i - y_i$, respectively. By the 4-dimensional modifications of our results mentioned in Section~\ref{subsec:sh_bound_dimf}, the counts of minimal area holomorphic disks are invariant under symplectomorphisms. 

If $x_1 = y_1$, there are two minimal area holomorphic
disks in classes $\alpha_1$, $\beta_1$, so $\phi$ must send them to classes 
$\alpha_2$, $\beta_2$. In particular, $x_1 = y_1 = x_2 = y_2$.  

Now consider the case $x_i < y_i$. Now $\alpha_i$ is the unique class supporting
the minimal area holomorphic disk. So we must have $\phi_*(\alpha_1) = \alpha_2$
and $x_1 = x_2$. Assume, without loss of generality, that $y_1 \ge y_2$. We
have: 

\begin{align}
    0  \  < \ & x_1 \ = \ x_2 \ < \ 1/3;  \nonumber \\
  x_1 = x_2 \ < \ & y_2 \ \le \ y_1 \  \le \  \frac{1 - x_1}{2} = \frac{1 - x_2}{2}. 
  \label{eq:Txy1}
\end{align}

Since $\phi_*(\beta_1)$ has Maslov index 2, and $\{\alpha_2, \beta_2, H\}$ generate $H_2(\CP^2,T_{(x_2,y_2)};\Z)$, one may write
\[ \phi_*(\beta_1) = \beta_2 + k(H - 3\alpha_2) + l(\alpha_2 - \beta_2)\]
for some $k,l \in \Z$. 
Since $\del \alpha_1 \cdot \del \beta_1 = 1$ and $\phi$ preserves the intersection form on the tori up to sign,
\[ \del \phi_*(\beta_1)\cdot \del \phi_*(\alpha_1) = \pm 1
 = \del(\beta_2 + k(H - 3\alpha_2) + l(\alpha_2 - \beta_2)) \cdot \del\alpha_2 = 
1 - l. \] 
Let us first analyse the case $\del \phi_*(\beta_1)\cdot \del\phi_*(\alpha_1) = - 1$, which means $l = 2$.
Then  $\phi_*(\beta_1) = \beta_2 + k(H - 3\alpha_2) + 2(\alpha_2 - 
\beta_2)$, and by computing symplectic area:
\begin{equation}
  y_1 = 2x_2 -y_2 + k(1 - 3x_2). \label{eq:Txy2}
\end{equation}  
We get from \eqref{eq:Txy1}, \eqref{eq:Txy2} that
\[ 2x_2 < y_1 + y_2 = 2x_2 +  k(1 - 3x_2) \le 1 - x_2.\]
Since $1 - 3x_2 > 0$,  we 
obtain $k \ge 1$ from the first inequality, and  $k \le 1$ from the second inequality.
So $k = 1$ and $y_1 = y_2 = (1 - x_2)/2$. 

It remains to analyse the case $\del\phi_*(\beta_1)\cdot \del\phi_*(\alpha_1) = 1$, which means $l = 0$.
In this case, $\phi_*(\beta_1) = \beta_2 + k(H - 3\alpha_2)$, and by computing  
symplectic area:
\begin{equation}
  y_1 = y_2 + k(1 - 3x_2). \label{eq:Txy3}
\end{equation}
Since $1 - 3x_2 > 0$, $y_1 \ge y_2$, we have $k \ge 0$. 
From  \eqref{eq:Txy1}, \eqref{eq:Txy3}, we get
\[ x_2 + k(1 - 3x_2) < y_2 + k(1 - 3x_2) = y_1 \le \frac{1 - x_2}{2}.\]
We conclude that $k < 1/2$. Hence $k = 0$ and $y_1 = y_2$.
\end{proof}

Below is a useful corollary of the previous proof.

\begin{cor} \label{cor:ToricPrp}
  Let $T \subset \CP^2$ be a Lagrangian torus for which there is a unique class
  $\alpha \in H_2(\CP^2,T)$ realising $\Psi_2(T)$, i.e.~satisfying $\mu(\alpha) = 2$,
  $\omega(\alpha) = \Psi_2(T)$, such that that the count of holomorphic disks in class
  $\alpha$ through a fixed point of $L$ is non-zero. 
  
  Assume that there is another class     
  $\beta \in H_2(\CP^2,T)$ with  $\del \beta \cdot \del \alpha = \pm 1$ and
  such that the areas $x = \omega(\alpha)$, $y = \omega(\beta)$ 
  satisfy inequalities \eqref{eq:ToricCond}. Then the only 
  possible toric fibre in the shaded region of Figure \ref{fig:ToricFibres} that could be symplectomorphic to $T$ is $T_{(x,y)}$.   
    \qed
\end{cor}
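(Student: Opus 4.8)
The plan is to re-run the chain of elementary inequalities from the proof of Proposition~\ref{prp:ToricFibres}, now treating the abstract torus $T$ the way $T_{(x_1,y_1)}$ was treated there, with $\alpha,\beta,x,y$ playing the roles of $\alpha_1,\beta_1,x_1,y_1$. So suppose $\phi\in\Symp(\CP^2)$ satisfies $\phi(T)=T_{(x_2,y_2)}$ for some $(x_2,y_2)$ in the shaded region, i.e.\ satisfying \eqref{eq:ToricCond}; the goal is $(x_2,y_2)=(x,y)$. I would first use that $\phi$ induces a bijection on relative homology preserving Maslov index, symplectic area, and the disk counts $\#\M_\beta(J)$ (after pulling back $J$) --- this is exactly the $4$-dimensional invariance recalled in Section~\ref{subsec:sh_bound_dimf}. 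Hence $T_{(x_2,y_2)}$ also has a \emph{unique} Maslov $2$ class of minimal positive area carrying a nonzero count, namely $\alpha_2:=\phi_*(\alpha)$. Since $T_{(x_2,y_2)}$ bounds Maslov $2$ disks in the three classes $\alpha_2,\beta_2,H-\alpha_2-\beta_2$ of areas $x_2,y_2,1-x_2-y_2$, the constraints \eqref{eq:ToricCond} give $x_2\le\min\{y_2,\,1-x_2-y_2\}$; uniqueness upgrades these to strict inequalities, so $x_2<1/3$, $x_2<y_2$, $\Psi_2(T_{(x_2,y_2)})=x_2$, and therefore $x=\omega(\alpha)=\omega(\alpha_2)=x_2$ and $1-3x_2>0$.

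Next I would track $\phi_*(\beta)$. As $\phi|_T$ is a diffeomorphism of tori it preserves the intersection pairing on $H_1$ up to sign, so $\del\phi_*(\beta)\cdot\del\alpha_2=\pm\,\del\beta\cdot\del\alpha=\pm1$. Writing $\phi_*(\beta)=\beta_2+k(H-3\alpha_2)+l(\alpha_2-\beta_2)$ with $k,l\in\Z$ --- possible since $\alpha_2,\beta_2,H$ generate $H_2(\CP^2,T_{(x_2,y_2)};\Z)$ and the two correction classes are Maslov $0$ --- the intersection computation gives $1-l=\pm1$, hence $l\in\{0,2\}$, exactly as in the proof of Proposition~\ref{prp:ToricFibres} (here one fixes $\beta_2$ so that $\del\beta_2\cdot\del\alpha_2=1$). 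Comparing symplectic areas then gives $y=\omega(\beta)=\omega(\phi_*(\beta))=y_2+k(1-3x_2)+l(x_2-y_2)$.

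Finally I would close the two cases using only \eqref{eq:ToricCond} for $(x,y)$ (a hypothesis) and for $(x_2,y_2)$ (the shaded region), together with $x=x_2$ and $1-3x_2>0$. For $l=2$ the area identity is $y+y_2=2x_2+k(1-3x_2)$; the bounds $x_2\le y,y_2\le(1-x_2)/2$ pin $k\in\{0,1\}$, the value $k=0$ is excluded since $x_2<y_2$, and $k=1$ forces $y=y_2=(1-x_2)/2$. For $l=0$ the identity is $y-y_2=k(1-3x_2)$; since $|y-y_2|\le(1-3x_2)/2<1-3x_2$ one gets $k=0$, hence $y=y_2$. In both cases $(x_2,y_2)=(x,y)$, which is the assertion.

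There is no serious obstacle here: the computational content was already established in Proposition~\ref{prp:ToricFibres}. The only points demanding minor care are (a) checking that the symplectomorphism invariance being invoked --- ``unique minimal Maslov $2$ class carrying a nonzero disk count'' --- is precisely what the $4$-dimensional remark in Section~\ref{subsec:sh_bound_dimf} delivers; and (b) since one can no longer invoke a symmetry of the form $y_1\ge y_2$ as in the original proof, deducing the sign of $k$ in the case $l=0$ from the two-sided bound $|y-y_2|\le(1-3x_2)/2$ rather than from an ordering, which is a trivial modification.
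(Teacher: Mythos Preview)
Your proposal is correct and matches the paper's approach: the corollary is marked \qed\ as an immediate consequence of the proof of Proposition~\ref{prp:ToricFibres}, and you have faithfully re-run that argument with $(T,\alpha,\beta,x,y)$ in the role of $(T_{(x_1,y_1)},\alpha_1,\beta_1,x_1,y_1)$. Your two-sided bound $|y-y_2|\le(1-3x_2)/2$ in the $l=0$ case is a harmless substitute for the paper's WLOG ordering $y_1\ge y_2$.
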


The following notation will help us describe the space $\H$. Consider the
$(a,b,c)$-ATF, and mark the vertices of the moment triangle by the corresponding
Markov numbers. Consider three line segments connecting a vertex to the opposite
edge via the barycentre. The barycentre divides each segment (say, corresponding
to the vertex $a$) into two pieces; for $a \ne 1$, we call them $\beta_a$ and $\gamma_a$ where
$\beta_a$ is the segment containing $a$, see Figure \ref{fig:ATFsabc}. 
We name $\beta_1$ and $\gamma_1$ only the segments showing in 
Figure \ref{fig:ToricFibres}. We call the
fibres over these segments \emph{fibres of type $\beta_a$, $\gamma_a$, etc.}.


\begin{figure}[h!]   
  
\begin{center}

\includegraphics{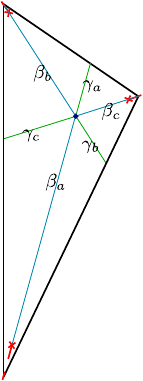}

\caption{Special fibres of type $\beta$ and $\gamma$ type in an $(a,b,c)$-ATF.}
\label{fig:ATFsabc} 

\end{center} 
\end{figure}


\begin{figure}[h!]   
  
\begin{center}

\includegraphics[scale=0.7]{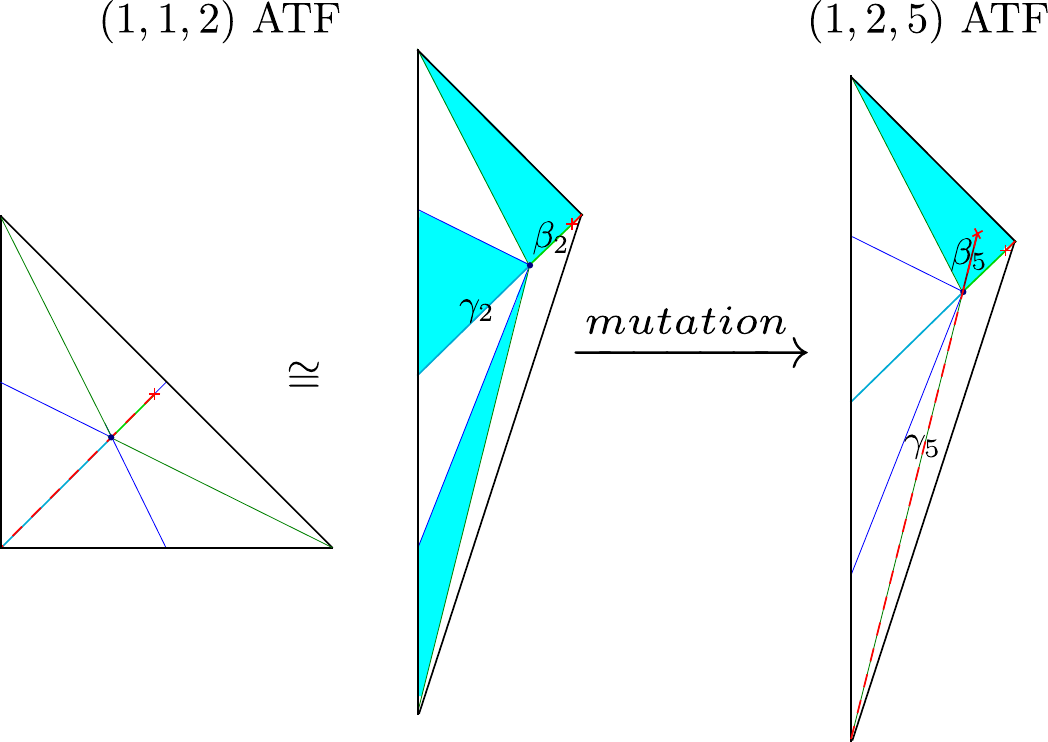}

\caption{Left and middle: $(1,1,2)$-ATF. Right: $(1,2,5)$-ATF.}
\label{fig:ATFs} 

\end{center} 
\end{figure}

 \begin{prp} \label{prp:betaCount}
  If $L$ is a $\beta_a$-type fibre of the $(a,b,c)$-ATF, then $\Xi_2(L) = 2^a$. 
  If $L$ is a $\gamma_a$-type fibre, then $\Xi_2(L) = 1$.
\end{prp}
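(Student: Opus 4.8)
The plan is to compute $\Xi_2(L)$ for these special fibres by reducing everything to neighbourhoods of the vertices of the moment triangle, using the neck-stretching technology of Lemma~\ref{lem:neck_stretch} together with the local model of Lemma~\ref{lem:Count1_p_epsilon}. Recall that $\Xi_2(L)$ only counts Maslov index~$2$ disks of minimal area $\Psi_2(L)$, and that for $(a,b,c)$-ATF fibres these lowest-area disks are the ones whose relative classes $\beta_x$ correspond to the vertex of the triangle nearest (in affine distance) to the point $x$ over which $L$ lies. For a $\gamma_a$-type fibre, the point lies on the barycentre-to-opposite-edge segment \emph{not} containing $a$, so the nearest vertex is one of $b$ or $c$; for a $\beta_a$-type fibre the nearest vertex is $a$ itself. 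So the content of the proposition is: the number of minimal-area holomorphic disks through a point, in the relevant class, is $1$ near a vertex marked by a Markov number equal to $1$, and $2^a$ near a vertex marked $a$.

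First I would set up the local picture near a vertex $v$ of the moment triangle marked by the Markov number $a$. By the almost-toric structure, after sliding the node arbitrarily close to $v$, a neighbourhood of the preimage of a small neighbourhood of $v$ in $\Pol$ is symplectomorphic to a neighbourhood of the monotone fibre in the corresponding weighted projective space $\CP^2(a^2, \cdot, \cdot)$ — more precisely to the local model near a $\frac{1}{a^2}(1,q)$-type orbifold singularity resolved by a chain of nodal fibres — and for $a=1$ this is just the smooth toric vertex $\C^2$. I would then apply Lemma~\ref{lem:neck_stretch} to a Liouville neighbourhood $V$ which is the $\mu$-preimage of a convex neighbourhood of the segment from the monotone point $x_L$ out to the point $x$ over which $L$ sits (as in the proof of Lemma~\ref{lem:Count=1} and Figure~\ref{fig:facet}): this produces a $J$ for which the fibres along the segment bound no disks of Maslov index $\le 0$, so that the signed count of minimal-area Maslov~$2$ disks through a point on $L$ equals that of the corresponding class on the monotone fibre, which in turn is computed by stretching into the vertex neighbourhood as in Lemma~\ref{lem:Count1_p_epsilon}. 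For $a=1$, the vertex neighbourhood is the standard $\C^2$ toric corner, where the count in the vanishing-cycle-transverse class is exactly $1$ by the explicit toric computation (this gives the $\gamma_a$ statement, and the $\beta_1$ statement). For $a>1$, one must count holomorphic disks in the class $\beta_x$ with boundary on a product-type fibre inside the resolved $\frac{1}{a^2}(1,q)$ neighbourhood.

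The main obstacle — and the step I would spend the most care on — is the count $2^a$ in the $\beta_a$ case. The point is that a minimal-area Maslov~$2$ disk ending on a $\beta_a$-type fibre $L$ near the vertex $a$ must, after stretching, decompose as a holomorphic building in the vertex neighbourhood; the degeneration produces, in the limit, the vanishing cycles of the $a-1$ (or so) nodal fibres clustered near $v$, and each nodal fibre contributes a binary choice (pass the boundary on one side or the other of the node, equivalently glue in or not glue in a Lefschetz thimble), giving $2^{a}$ gluings, each contributing $+1$ to the count with consistent orientations. Concretely I would invoke the wall-crossing / mutation description of the ATF from \cite{Vi13,Vi14}: a $\beta_a$-type fibre is obtained from a toric fibre near the weighted vertex by wrapping around the $\gcd$-many nodes, and the disk potential of the monotone fibre of $\CP^2(a^2,b^2,c^2)$ has a monomial with coefficient $2^a$ coming from the term $x_v$ of the potential (the standard computation that the vertex monomial of the Hori–Vafa / Givental potential for a weighted projective point acquires a binomial coefficient sum $\sum_j \binom{a}{j} = 2^a$ upon mutating to the ATF chart). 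I would make this rigorous by a direct SFT count in the local model rather than quoting mirror-symmetry folklore: stretch the neck around the boundary of a disk bundle neighbourhood of the union of the $(a-1)$ nodal fibres near $v$, identify the broken configurations, and check that the relevant moduli spaces are regular $0$-dimensional with exactly $2^a$ points, all counted with sign $+1$ because the building is built from the standard holomorphic disk in $\C^2$ glued to Maslov~$0$ thimbles which carry no sign ambiguity. Once the local count $2^a$ is established, the global statement $\Xi_2(L) = 2^a$ follows exactly as in Lemma~\ref{lem:Count=1}: the Liouville-neighbourhood stretching of Lemma~\ref{lem:neck_stretch} along the segment from $x_L$ to $x$ forbids Maslov $\le 0$ bubbling, so no disks are created or destroyed and the count is transported unchanged from the vertex model to $L$. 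Finally, because these are genuine minimal-area counts they are symplectomorphism-invariant by the four-dimensional remark of Section~\ref{subsec:sh_bound_dimf}, so $\Xi_2$ is well-defined on $\H$, which is what we will use in the sequel.
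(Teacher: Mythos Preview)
Your overall strategy matches the paper's: use Lemma~\ref{lem:neck_stretch} (equivalently, the proof of Theorem~\ref{th:nbhood_comp}) to transport the Maslov~2 enumerative geometry from $L$ to the monotone torus $T(a^2,b^2,c^2)$, then read off $\Xi_2$ from the Landau--Ginzburg potential of the latter. The paper does exactly this in two sentences, citing \cite{PT17} and \cite{Vi14} for the potential and its Newton polytope.

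However, your geometric identification of the minimal-area classes is off in both cases, and this matters. For a $\gamma_a$-type fibre you move from the barycentre toward the \emph{edge} opposite $a$; the minimal-area class is the single Maslov~2 class dual to that edge, which corresponds to a \emph{vertex} of the Newton polytope with coefficient $1$. This has nothing to do with ``a vertex marked by a Markov number equal to $1$'': in a general $(a,b,c)$-ATF none of $a,b,c$ need equal $1$, yet $\Xi_2(L)=1$ for every $\gamma_a$-fibre. For a $\beta_a$-type fibre you move toward the vertex $a$; the minimal-area classes are not a single class with count $2^a$ but the $a+1$ classes sitting on the \emph{edge} of the Newton polytope dual to the vertex $a$, with coefficients $\binom{a}{0},\ldots,\binom{a}{a}$; the sum over classes in the definition of $\Xi_2$ then gives $\sum_j\binom{a}{j}=2^a$. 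You mention this binomial sum in passing, but the surrounding narrative (``the relevant class'', ``exactly $2^a$ points'' in one moduli space) indicates the wrong picture.

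Relatedly, the proposed ``direct SFT count'' is both unnecessary and mis-set-up: in the $(a,b,c)$-ATF there is a \emph{single} nodal fibre near the vertex $a$, not $a-1$ of them, so the ``binary choice at each of $a-1$ nodes'' heuristic does not literally apply. The paper avoids any local SFT analysis by simply quoting the known computation of the potential (wall-crossing formula, \cite{PT17,Vi14}) and reading off which monomials become lowest-area on the $\beta_a$- and $\gamma_a$-segments. Once you correct the identification of minimal-area classes to ``edge of Newton polytope'' versus ``vertex of Newton polytope'', that citation finishes the proof immediately.
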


\begin{proof} By the proof of Theorem~\ref{th:nbhood_comp}, there is an almost complex structure for which $L$ has the same enumerative geometry as the monotone torus  $T(a^2,b^2,c^2)$. The potential of the latter  is computed by the wall-crossing formula \cite{PT17} and its Newton polytope is dual to the moment triangle of the ATF \cite{Vi14}.

There is a unique term in the potential that becomes a lowest area holomorphic disk on a $\gamma_a$-type fibre; it corresponds to the vertex of the Newton polytope, and the count of disks in that class equals~1.
Next, those terms of the potential that become lowest area disks on a $\beta_a$-type fibre correspond to an edge of the Newton polytope. That edge has  $a + 1$ lattice points
and the coefficients in front of the corresponding monomials are the binomial coefficients; their sum is $2^a$.
\end{proof}

\begin{prp} \label{prp:gammaToric}
 Any $\gamma_a$-type fibre is equivalent to a fibre of the $(1,1,1)$-ATF or a $\gamma_2$-type fibre of the $(1,1,2)$-ATF.
\end{prp}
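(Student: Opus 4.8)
The plan is to show that a $\gamma_a$-type fibre $L$ of an $(a,b,c)$-ATF can be connected, by a Lagrangian isotopy of zero flux, to a fibre of a ``smaller'' ATF, and then iterate. The first step is enumerative bookkeeping. By Proposition~\ref{prp:betaCount} (and the analysis in its proof via the wall-crossing potential of $T(a^2,b^2,c^2)$ and \cite{Vi14}), on a $\gamma_a$-type fibre there is a \emph{unique} relative class $\alpha$ realising $\Psi_2(L)$, and the disk count in $\alpha$ equals $1$; moreover this $\alpha$ is the class dual to the vertex of the Newton polytope labelled $a$. So $L$ satisfies the hypotheses of Corollary~\ref{cor:ToricPrp}: it has a unique minimal-area Maslov~$2$ class with nonzero count. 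This already pins down which toric fibre (if any) $L$ could be symplectomorphic to, namely $T_{(x,y)}$ with $x=\omega(\alpha)$ and $y$ the area of a suitable complementary class, and it rules out $L$ being symplectomorphic to a $\beta$-type fibre of any ATF for which $\Xi_2\neq 1$.

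The second, geometric, step is to actually produce the isotopy. I would argue that moving the base point along the segment $\gamma_a$ away from the vertex $a$ — equivalently, performing a nodal slide that pushes the node at vertex $a$ all the way into that vertex and ``undoes'' the nodal trade there — realises $L$ as a fibre of an ATF with one fewer node, hence of an $(a',b',c')$-ATF with a strictly smaller Markov triple in the tree. Concretely, using \cite[Theorem~6.1]{SyLe10} and the nodal slide / inverse nodal trade moves (as used in the proof of Corollary~\ref{cor:DenseOrbits}), one identifies $L$ with a fibre of the parent vertex's ATF, possibly after deforming the symplectic form by a path that keeps $L$ Lagrangian and keeps the relevant class areas fixed; since symplectomorphisms of $\CP^2$ are Hamiltonian (and any two cohomologous symplectic forms on $\CP^2$ are isotopic), this deformation does not change the equivalence class in $\H$. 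Descending the Markov tree, $(a,b,c)\to$ parent $\to\cdots$, the only vertices one can reach are $(1,2,5)$, $(1,1,2)$ and $(1,1,1)$; and when the ``$a$''-label hits $2$ one lands on a $\gamma_2$-type fibre of the $(1,1,2)$-ATF, while if it hits $1$ one lands on a fibre of the $(1,1,1)$-ATF (the standard toric fibration). That is exactly the dichotomy in the statement.

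I expect the main obstacle to be the geometric step: justifying rigorously that sliding the node toward the vertex $a$ and then into it does not cross the segment $\gamma_a$ (so that $L$ genuinely stays a regular fibre throughout and the isotopy is literally a path of smooth Lagrangian tori), and that the accompanying change of symplectic form/cut can be chosen compatibly. This is the same circle of ideas as in the proof of Corollary~\ref{cor:DenseOrbits} and in \cite{Sym03,SyLe10,Vi13,Vi14}, so I would lean on those almost-toric manipulation lemmas rather than redo them; the new content here is just tracking which Markov vertex (and which $\beta$/$\gamma$ label) the base point ends up in. A clean way to package it: show that the $(a,b,c)$-ATF restricted to a neighbourhood of the segment $\gamma_a$ (with the node slid close to $a$) is symplectomorphic to the corresponding piece of the parent ATF near its own barycentric segment, and invoke the uniqueness-of-the-moment-triangle statement from \cite{Vi13,Vi14} to match fibres. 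Corollary~\ref{cor:ToricPrp} then serves as the final consistency check that the toric fibre obtained this way is the expected one.
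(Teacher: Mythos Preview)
Your overall strategy---descend the Markov tree while keeping $L$ as a fibre---is the right one and matches the paper, but the mechanism you propose for the descent is wrong, and the enumerative first step is not needed.

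The geometric error is in how you propose to move down the tree. Pushing the node at vertex $a$ \emph{into} that vertex and ``undoing'' the nodal trade does not produce an ATF with a smaller Markov triple. For $a\neq 1$ the vertex $a$ of the moment triangle is an $A_{a^2-1}$ orbifold corner of the weighted projective space $\CP(a^2,b^2,c^2)$; the nodal trade is precisely what smooths that corner to live on the genuine $\CP^2$. Reversing it takes you to an orbifold, not to a parent ATF on $\CP^2$. Even for $a=1$, undoing the trade just returns the same $(1,b,c)$-ATF drawn without a cut. In either case the Markov triple does not change, and ``one fewer node'' has nothing to do with position in the Markov tree.

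The correct move, which the paper uses, is the opposite one: slide the node from vertex $a$ \emph{away} from that vertex, through the monotone point, toward the opposite edge. That is a mutation and replaces $(a,b,c)$ by $(3bc-a,b,c)$. The key observation is that since $L$ lies on $\gamma_a$ (the half of the $a$-cevian on the far side of the barycentre), one can slide the node past the monotone fibre but stop \emph{before} reaching $L$; so $L$ remains a regular fibre of the mutated ATF. Mutations at $b$ and $c$ also keep $L$ as a fibre for the usual reason (the corresponding nodal slides stay in the other half of the triangle). With all three mutations available one can navigate the Markov tree down to $(1,1,1)$ while maintaining $L$ as a fibre; the only fibres picked up along the way that are not already standard toric fibres are the $\gamma_2$-type ones.

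Finally, your first step via Corollary~\ref{cor:ToricPrp} is a red herring here: that corollary only tells you which toric fibre $L$ \emph{could} be symplectomorphic to, it does not produce any identification. The paper's proof is purely the almost-toric manipulation above and uses no enumerative input.
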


\begin{proof}
 If $L$ is a $\gamma_a$-type fibre of the $(a,b,c)$-ATF, then it is also a fibre
 of one of the two mutated ATFs corresponding to $(a,3ac - b,c)$ or $(a,b, 3ab
 -c)$, see \eqref{eq:Markov_mutation} and \cite[Proposition~2.4]{Vi14}. Now note
 that $L$ can be also seen as a fibre of an $(3bc - a,b,c)$-type ATF if
 (breaking the standing convention) we allow to slide the node associated with $\gamma_a$
 pass the monotone fibre, but not pass $L$. 
 This way, we are able to mutate the
 $(a,b,c)$-type ATF all the way to an $(1,1,1)$-type ATF
 \cite[Section~3.7]{KaNo98} \cite[Proposition~4.9]{Vi16a} maintaining $L$ as a
 fibre. The result follows, noting that fibres of an $(1,1,1)$-ATF in the
 relaxed sense (where we allow the nodes to slide arbitrarily close to the 
 monotone $T(1,1,1)$ fibre) are precisely the
 fibres of the $(1,1,1)$-ATF in our standing agreement, plus the
 $\gamma_2$-fibres. 
 
 \end{proof} 
 
Denote by $\H(a,b,c)$ the space of all fibres of the $(a,b,c)$-ATF modulo
symplectomorphisms of $\CP^2$. Let $\Delta$ be a closed triangle minus an edge,
which is affinely isomorphic to one of the six triangles in Figure
\ref{fig:ToricFibres}, which describes $\H(1,1,1)$. We shall see how copies of
$\Delta$ are embedded in a $(a,b,c)$-ATF in order to describe $\H(a,b,c)$.
In order to keep track of special fibres in an embedding of $\Delta$,
more specifically $\gamma_a$'s and $\beta_a$'s type fibres, we develop
the following notation. Let $x$, $y$, $w_1$, $w_2$, $w_3$ some be half-open segments in $\Delta$ starting at the vertex and ending at 
the missing edge. Among them, $x$ and $y$ must be the edges of $\Delta$, but $w_1,w_2,w_3$ can be arbitrary and go through the interior.
We then denote by $\Delta_{x,y}^{w_1,w_2,w_3}$,
the triangle $\Delta$ labeled by $x$, $y$, $w_1$, $w_2$, $w_3$. 
The set of $w_i$ is also allowed to be empty, in which case  they do not 
appear in the notation.
We can write 
$$\H(1,1,1) \cong \Delta_{\beta_1,\gamma_1},$$
since we can embed $\Delta$ in the toric $(1,1,1)$-ATF as one of the six triangles 
in Figure \ref{fig:ToricFibres}
with edges corresponding to the $\beta_1$ and $\gamma_1$ type fibres.   
    
Let us see how $\H(a,b,c)$ changes as we mutate the Markov triple, beginning
with an analysis of how $\H(1,1,2)$ differs from $\H(1,1,1)$. Their base
diagrams differ by performing nodal trade to a vertex of the standard
$(1,1,1)$-triangle, and sliding the node all the way to the opposite side. Since
nodal slide preserves the Hamiltonian isotopy class of fibres away from the cut,
the only (potentially) new tori lie over the line containing the cut, see
Figure~\ref{fig:ATFs}. These tori are of type $\gamma_2$ or $\beta_2$, according
to our notation.

By Proposition \ref{prp:betaCount} the $\beta_2$-type fibres have invariant
$\Xi_2 = 2^2 = 4$. So these fibres are not equivalent to any toric fibre. Next,
by Corollary \ref{cor:ToricPrp}, a $\gamma_2$-type fibre can only be equivalent
to a toric fibre if it is of $\beta_1$-type, but their $\Xi_2$-invariants equal
$1$ and $2$ respectively. Therefore, the $\gamma_2$-type fibres are also not
equivalent to a toric fibre.
                      
The three shaded triangles in the middle diagram of Figure \ref{fig:ATFs}
are embeddings of $\Delta$ accordingly labeled as $\Delta_{\beta_1,\gamma_1}$,
$\Delta_{\beta_1,\beta_2}$, $\Delta_{\gamma_2,\gamma_1}$.

Let's introduce further notation. Given a finite set of labeled 
triangles $\{\Delta_{x^i,y^i}^{w_1^i,w_2^i,w_3^i}; i = 1, \dots, k\}$, we
define                      
\[\Delta_{x^1,y^1}^{w_1^1,w_2^1,w_3^1}\vee \cdots \vee \Delta_{x^k,y^k}^{w_1^k,w_2^k,w_3^k}
= \amalg_{i=1}^k \Delta_{x^i,y^i}^{w_1^i,w_2^i,w_3^i} / \sim \] 
where we have for $p \in \Delta_{x^i,y^i}^{w_1^i,w_2^i,w_3^i}$ and
$q \in \Delta_{x^j,y^j}^{w_1^j,w_2^j,w_3^j}$, $p \sim q$ if:

\begin{enumerate}[label= (\roman*)]
  \item $p$ and $q$ are the vertices of $\Delta_{x^i,y^i}^{w_1^i,w_2^i,w_3^i}$,
  respectively, $\Delta_{x^j,y^j}^{w_1^j,w_2^j,w_3^j}$; or 
  \item 
  
  $p \notin x^i \cup y^i \cup w_1^i \cup w_2^i \cup w_3^i$,  $q \notin x^j \cup y^j \cup w_1^j \cup w_2^j \cup w_3^j$, and $p,q$ correspond to the same underlying point of $\Delta$; or
  
  \item $p \in x^i \cup y^i \cup w_1^i \cup w_2^i \cup w_3^i$,
  $q \in x^j \cup y^j \cup w_1^j \cup w_2^j \cup w_3^j$, $p,q$ correspond to the same underlying point of $\Delta$, and their segments labels match for $i$ and $j$. 
   
\end{enumerate}

Using this notation, we get that 
\[\H(1,1,2) \cong \Delta_{\beta_1,\gamma_1}
\vee \Delta_{\beta_1,\beta_2} \vee \Delta_{\gamma_2,\gamma_1}.\] 
The vertex corresponds to the monotone Chekanov torus $T(1,1,2^2)$.  
Now, if we forget about the embeddings of $\Delta$ in the 
$(1,1,2)$-ATF, we can describe the $\H(1,1,2)$ space using only 
two labeled triangles as   
\[\H(1,1,2) \cong \Delta_{\beta_1,\gamma_1}
\vee \Delta_{\gamma_2,\beta_2}.\]
We represent $\H(1,1,2)$ in the second diagram in Figure \ref{fig:Moduli},
by superposing these two triangles and keeping track of the labels.

\begin{figure}[h]   
  
\begin{center}

\includegraphics{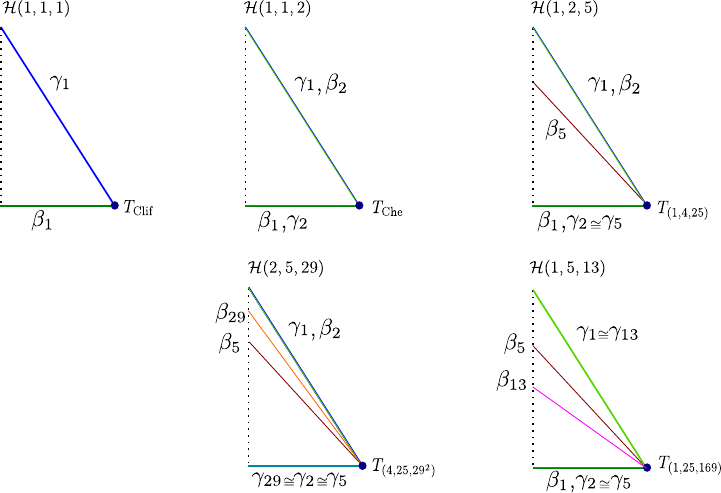}

\caption{Spaces $\H(a,b,c)$ of Lagrangian almost toric fibres.}
\label{fig:Moduli} 

\end{center} 
\end{figure}

Now, consider the diagram in Figure \ref{fig:ATFs} representing the
$(1,2,5)$-ATF. We obtain new fibres of type $\beta_5$ having invariant $\Xi_2 = 2^5$ 
by Proposition \ref{prp:betaCount}. So they are not
equivalent to any of the previously considered fibres. On the other hand, following the proof
of Proposition \ref{prp:gammaToric}, we see that the $\gamma_5$-type fibres are
equivalent to $\gamma_2$-type fibres. So the new triangles that arise in the
$(1,2,5)$-ATF can be labeled as $\Delta_{\beta_1,\beta_2}^{\beta_5}$ and
$\Delta_{\gamma_5,\gamma_1}$, where the fibres of the latter can de identified 
with the fibres of $\Delta_{\gamma_2,\gamma_1}$. 
We arrive at the description:

\[\H(1,2,5) \cong 
\Delta_{\beta_1,\beta_2}^{\beta_5} \vee \Delta_{\gamma_2,\gamma_1},\]
see the third diagram in Figure \ref{fig:Moduli}.

In general, the spaces $\H(a,b,c)$ follow the same pattern.

\begin{thm} \label{th:Moduli}
 Besides the spaces $\H(1,1,1)$, $\H(1,1,2)$, $\H(1,2,5)$ described above, the 
 spaces $\H(a,b,c)$ are given by:

\begin{enumerate}[label= (\roman*)] 
  \item  $\H(1,b,c) \cong \Delta_{\beta_1,\gamma_1}^{\beta_b,\beta_c}
\vee 
\Delta_{\gamma_2,\gamma_1}$;
  \item $\H(2,b,c) \cong  \Delta_{\gamma_2,\beta_2}^{\beta_b,\beta_c}
\vee 
\Delta_{\gamma_2,\gamma_1}$;
\item $\H(a,b,c) \cong \Delta_{\gamma_2,\gamma_1}^{\beta_a,\beta_b,\beta_c} \vee
\Delta_{\gamma_2,\gamma_1}$ for $a,b,c > 2$.
\end{enumerate} 
Moreover, assuming $a \le b < c \ne 2$ and after applying congruences,  the segment corresponding to 
$\beta_c$ lies between the segments corresponding to $\beta_a$ and $\beta_b$.
\end{thm}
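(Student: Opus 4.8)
The plan is to argue by induction on the distance of $(a,b,c)$ from the root $(1,1,1)$ in the Markov tree, the triples $(1,1,1)$, $(1,1,2)$, $(1,2,5)$ being the base cases already treated. For the inductive step I would write a non-base triple (sorted $a\le b\le c$, so $c\ne 2$) as the child of its parent under the mutation $c_0\mapsto c=3xy-c_0$ replacing one of the two smaller entries of the parent, where $x,y$ are the two entries kept; then $c$ is the largest entry of $(a,b,c)$ and $\{a,b\}=\{x,y\}$. By the construction of these fibrations in \cite{Vi13,Vi14}, the parent ATF and the $(a,b,c)$-ATF are related by a sequence of nodal slides together with a change of cut. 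Since a nodal slide preserves the Hamiltonian isotopy class of any fibre it does not sweep across, the fibres of the two fibrations agree up to Hamiltonian isotopy except over a controlled region; comparing the two ATFs there, and using \cite{Vi14} for the monotone fibres together with \cite[Proposition~2.4]{Vi14} (as in the proof of Proposition~\ref{prp:gammaToric}) for the $\gamma_c$-fibres, one sees that the $(a,b,c)$-ATF differs from its parent exactly by replacing the parent's monotone fibre and $\beta_{c_0}$-segment with the new monotone fibre $T(a^2,b^2,c^2)$ and a new $\beta_c$-segment, all the $\beta_a$-, $\beta_b$-, $\gamma_a$-, $\gamma_b$-, $\gamma_c$-type and interior fibres being unchanged.

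Next I would check that these $\beta_c$-fibres, and the new monotone fibre, are genuinely not equivalent to any fibre occurring for the parent. By Proposition~\ref{prp:betaCount} a $\beta_c$-fibre has $\Xi_2=2^c$, while every fibre of the parent ATF is a toric fibre (for which $\Xi_2\in\{1,2,3\}$ by the standard toric count, cf.\ the proof of Proposition~\ref{prp:betaCount}), a $\beta_d$-fibre with $d\le c_0<c$ (with $\Xi_2=2^d<2^c$), or a $\gamma$-type fibre (with $\Xi_2=1$); and \cite{Vi14} shows the new monotone fibre is not symplectomorphic to any previously seen torus. Together with Proposition~\ref{prp:gammaToric} (which identifies every $\gamma$-type fibre with a fibre of the $(1,1,1)$- or $(1,1,2)$-ATF) and Corollary~\ref{cor:ToricPrp} with Proposition~\ref{prp:ToricFibres} (which pin down which toric fibre, if any, is equivalent to a given torus), this reduces everything to the combinatorial bookkeeping of how the new $\beta_c$-segment enters the presentation.

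For that bookkeeping I would decompose the moment triangle of the $(a,b,c)$-ATF by its three medians into six sub-triangles $\Delta_{\beta_a,\gamma_c}$, $\Delta_{\gamma_c,\beta_b}$, $\Delta_{\beta_b,\gamma_a}$, $\Delta_{\gamma_a,\beta_c}$, $\Delta_{\beta_c,\gamma_b}$, $\Delta_{\gamma_b,\beta_a}$, each a copy of $\Delta$ carrying its two median half-edges as labels (the boundary half-edge being deleted) and glued cyclically at the monotone fibre. Relabelling the $\gamma$-edges as $\gamma_1$- or $\gamma_2$-type via Proposition~\ref{prp:gammaToric}, absorbing the $\beta_d$-segments with $d<c$ into interior segments by the inductive hypothesis, and using Corollary~\ref{cor:ToricPrp} and Proposition~\ref{prp:ToricFibres} to exclude spurious coincidences, one rewrites the cyclic join as a join $\Delta\vee\Delta_{\gamma_2,\gamma_1}$ of a labelled copy of $\Delta$ carrying the interior $\beta$-segments with a bare $\Delta_{\gamma_2,\gamma_1}$; the three cases $a=1$, $a=2$, $a>2$ of the statement correspond exactly to whether the median from the smallest vertex $V_a$ is a $\beta_1$-edge, a $\beta_2$-edge, or an interior $\beta_a$-segment, and one verifies the first instance of each regime directly and then propagates it through the inductive step. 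The betweenness assertion I would derive from the affine-geometric fact that under the mutation the median issuing from the new vertex $V_c$ enters the angular sector at the centroid bounded by the medians from the two retained vertices $V_a$, $V_b$ — equivalently, in the Newton polytope of the monotone fibre (dual to the moment triangle by \cite{Vi14}) the new edge lies between the two retained edges — which follows from a short computation in affine coordinates on the base.

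The hard part will be the bookkeeping of the previous paragraph: keeping the $\vee$-presentation consistent across the transitions between the regimes $a=1$, $a=2$, $a>2$, and verifying that the cyclic regrouping of the six sub-triangles really produces exactly two copies of $\Delta_{\gamma_2,\gamma_1}$ — one carrying the interior $\beta$-segments and one bare — with endpoints and edge-labels matching as required by the definition of $\vee$. This is the step for which the figures are indispensable; everything else (Propositions~\ref{prp:betaCount} and~\ref{prp:gammaToric}, Corollary~\ref{cor:ToricPrp}, Proposition~\ref{prp:ToricFibres}, and nodal-slide invariance of the Hamiltonian isotopy type) enters only as a black box.
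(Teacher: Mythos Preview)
Your proposal is correct and follows essentially the same approach as the paper: induction along the Markov tree, using Propositions~\ref{prp:betaCount} and~\ref{prp:gammaToric} to see that a mutation introduces a new $\beta_{c'}$-segment while the $\gamma_{c'}$-fibres are already accounted for, and then combinatorial bookkeeping of the $\vee$-presentation. The paper is considerably terser---it simply asserts that under mutation one replaces $\Delta_{\gamma_2,\gamma_1}^{\beta_a,\beta_b,\beta_c}$ by $\Delta_{\gamma_2,\gamma_1}^{\beta_a,\beta_b,\beta_{c'}}$ and refers to Figure~\ref{fig:ATFsabcDelta}---whereas you spell out the six-sub-triangle decomposition and the regime transitions explicitly; but the logical skeleton and the key inputs are the same, including the inductive derivation of the betweenness claim.
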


\begin{figure}[h!]   
  
\begin{center}

\includegraphics[scale=0.7]{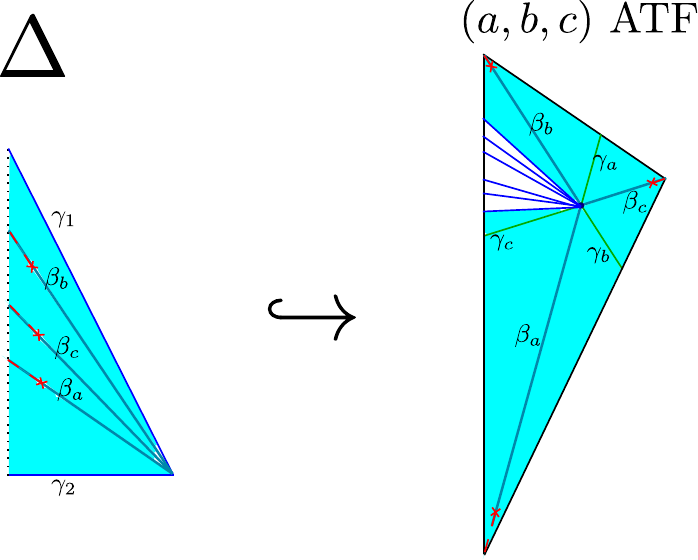}

\caption{The triangle $\Delta_{\gamma_2,\gamma_1}^{\beta_a,\beta_b,\beta_c}$ embedded into an $(a,b,c)$-ATF.}
\label{fig:ATFsabcDelta} 

\end{center} 
\end{figure}

\begin{proof}
  Using Propositions \ref{prp:betaCount}, \ref{prp:gammaToric},
  we are able to give a complete description of what happens to the  space of tori as one mutates from the $(a,b,c)$-ATF
  to the $(a,b,3ab -c)$-ATF. Namely, this mutation introduces a new family of $\beta_{c'}$-type fibres, while the $\gamma_{c'}$-type fibres are not new: they are either toric or 
  of $\gamma_2$-type. Furthermore, the $\beta_c$-type fibres disappear,
unless $c=a$ or $c=b$ as in the mutations 
  $\H(1,1,1) \to \H(1,1,2) \to \H(1,2,5)$. 
  In the case $a,b,c > 2$, this means we are just replacing 
  $\Delta_{\gamma_2,\gamma_1}^{\beta_a,\beta_b,\beta_c}$, by 
  $\Delta_{\gamma_2,\gamma_1}^{\beta_a,\beta_b,\beta_c'}$ as we change from 
  $\H(a,b,c)$ to $\H(a,b,c')$, see Figure~\ref{fig:ATFsabcDelta}.
  The remaining cases are equally easy to consider: just note that, 
  if our mutation is increasing the Markov sum $a + b + c$ \cite[Section~3.7]{KaNo98}\cite[Proposition~4.9]{Vi16a} then, after 
  $(1,2,5)$, all new $\beta$ type fibres will lie inside the consecutive modifications
  of $\Delta_{\beta_1,\beta_2}^{\beta_5}$, see the third diagram of
  Figure \ref{fig:ATFs}.  
  
  The last part of the claim can be proved inductively,
  we still work in the setting of increasing the Markov sum. 
  In Figure \ref{fig:ATFsabcDelta}, this means that $c> a,b$ and to 
  increase the Markov sum we mutate either $a$ or $b$, creating
  $a'> c,b$, resp. $b'> c,a$, with $\beta_{a'}$ appearing between 
  $\beta_b, \beta_c$, resp. $\beta_{b'}$ appearing between 
  $\beta_a, \beta_c$.  
\end{proof}

Joining all the spaces $\H(a,b,c)$ together, one obtains the following
description of the full space $\H$. We develop
another notation $\dot{\vee}$ for describing this space.

Before describing $\dot{\vee}$, we point out that we interpret a wedge of
labeled triangles, as a ``labelled non-Hausdorff triangle'', that projects to
$\Delta$. For instance, $\H(1,1,2) = \Delta_{\beta_1,\gamma_1} \vee
\Delta_{\gamma_2,\beta_2}$, has over the bottom edge of $\Delta$, two
non-Hausdorff open segments, one receiving the label $\beta_1$ and the other
$\gamma_2$, see Figure \ref{fig:Moduli}. Also we point out that, most 
of the points are thought to have empty label. For instance, 
in $\H(1,2,5) =
\Delta_{\beta_1,\beta_2}^{\beta_5} \vee \Delta_{\gamma_2,\gamma_1}$,
we have two non-Hausdorff open segments projecting over the 
segment in $\Delta$ corresponding to $\beta_5$. One of these
segments receive the $\beta_5$ label (corresponding to  
$\beta_5$-type tori) and the other receive the 
empty label (corresponding to tori in the toric diagram), see again Figure \ref{fig:Moduli}.

With the above in mind, we set $\dot{\vee}$ between wedges of labelled triangles
in the same way as $\vee$ with the exception that the vertices are not
identified -- also replace ``same underlying point of $\Delta$'', by 
``projecting to the same point in $\Delta$'' in items (ii) and (iii).

For instance, if we consider 
\[ \H(1,1,1) \dot{\vee} \H(1,1,2) =  \Delta_{\beta_1,\gamma_1} \dot{\vee}  (\Delta_{\beta_1,\gamma_1}
\vee \Delta_{\gamma_2,\beta_2}), \]
we have two non-Hausdorff vertices (corresponding to the Clifford and Chekanov monotone tori),
and two pair of non-Hausdorff edges $(\beta_1, \gamma_2)$ and $(\beta_2, 
\gamma_1)$. All remaining points are separable. Note also that, 
if we approach the vertex, say 
over the $\beta_1$ edge, we have two possible limits. So the space is not
the same as two copies of $\Delta$ identified over the interior. 

Before stating the Theorem, we 
recall the Markov conjecture, that says that a
Markov triple is uniquely determined by its biggest Markov number.

\begin{thm} \label{th:globalModuli} 
 Assume that the Markov conjecture holds
(otherwise see Remark \ref{rmk:MarkovNumbers}). The space $\H$ of all almost
toric fibres of $\CP^2$ modulo symplectomorphisms is the following non-Hausdorff
space: 
\[ \H = \dot{\bigvee}_{(a,b,c) \, \mathrm{Markov} \ \mathrm{triple}} \H(a,b,c) \] 

Note that the vertex of each $\H(a,b,c)$-space is not
identified with the analogous vertex for any other Markov triple, as these
vertices correspond to the different monotone tori $T(a^2,b^2,c^2)$.
\end{thm}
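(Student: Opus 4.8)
The plan is to assemble the global statement from the local computations $\H(a,b,c)$ already established in Theorem~\ref{th:Moduli}, together with the invariants $\Xi_2$ and the toric rigidity results. First I would observe that every fibre $T\subset\CP^2$ appearing in $\H$ is, by construction, a fibre of some $(a,b,c)$-ATF, so the natural surjection $\coprod_{(a,b,c)}\H(a,b,c)\twoheadrightarrow\H$ already exists; what must be determined is exactly which fibres over different Markov triples become identified. The relation $\dot\vee$ is precisely the assertion that the \emph{only} identifications are those forced by a fibre genuinely lying in two ATFs simultaneously, namely the $\gamma_2$-type and $\gamma_1$-type (toric) fibres, which by Proposition~\ref{prp:gammaToric} recur in every ATF.

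The key steps, in order: (1) Record that each $\H(a,b,c)$ is described by Theorem~\ref{th:Moduli} and that within each such space the fibres are already classified up to symplectomorphism; in particular the vertex of $\H(a,b,c)$ is the monotone torus $T(a^2,b^2,c^2)$, and by \cite{Vi14} these monotone tori are pairwise non-symplectomorphic, so no vertex is identified with any other vertex or, since a monotone torus has all three disk areas equal and this is symplectomorphism-invariant via the $\Xi_2$-refined count, with any non-monotone fibre. (2) Show that a $\beta_a$-type fibre of an $(a,b,c)$-ATF with $a\ge 2$ is symplectomorphic to no fibre of any other Markov triple except via the already-noted coincidences: its invariant is $\Xi_2=2^a$ by Proposition~\ref{prp:betaCount}, and a $\beta_{a'}$-fibre has $\Xi_2=2^{a'}$, so $a=a'$; an elementary argument (mirroring the proof of Theorem~\ref{th:Moduli}, using that mutating away from $(a,b,c)$ destroys the $\beta_c$-family unless $c\in\{a,b\}$) then pins down the triple provided the Markov conjecture holds — if $\Xi_2=2^a$ determines $a$ and $a$ is the largest Markov number, the triple is unique. (3) Handle the low-index fibres: $\gamma_2$-type fibres have $\Xi_2=1$ and, by Proposition~\ref{prp:gammaToric}, all of them are mutually symplectomorphic and are exactly the ``extra'' copy of $\Delta_{\gamma_2,\gamma_1}$ that appears in every $\H(a,b,c)$; $\gamma_1$- and $\beta_1$-type (toric) fibres are classified by Proposition~\ref{prp:ToricFibres} and Corollary~\ref{cor:ToricPrp}, and these are the fibres shared with $\H(1,1,1)$. (4) Assemble: the identifications between distinct $\H(a,b,c)$ are generated by identifying the common $\Delta_{\gamma_2,\gamma_1}$ slice and the common toric slice, which is exactly the content of the operation $\dot\vee$ as defined just before the theorem (vertices unidentified; interior and labelled-segment points identified iff they project to the same point of $\Delta$ with matching labels). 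This yields $\H=\dot\bigvee_{(a,b,c)}\H(a,b,c)$.

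The main obstacle I expect is step (2): proving that no \emph{unexpected} symplectomorphism identifies two fibres coming from genuinely different ATFs that are not related by the visible mutation moves. The invariants $\Xi_2$ and $\Psi_2$ (disk areas) cut down the possibilities enormously, but one must still rule out, for a fixed value of $\Xi_2=2^a$ and fixed lowest disk area, that a $\beta_a$-type fibre of one $(a,b,c)$-ATF is symplectomorphic to a $\beta_a$-type fibre of a different Markov triple $(a,b',c')$ — this is where the Markov uniqueness conjecture enters, since $a$ (the largest Markov number in the mutation orbit reached after $(1,2,5)$) would need to determine $(a,b,c)$. Absent the conjecture one only gets that fibres with the same $\Xi_2$-value and area data might a priori be confused, which is the caveat deferred to Remark~\ref{rmk:MarkovNumbers}. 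The remaining steps are bookkeeping: translating the wedge/labelled-triangle combinatorics of Theorem~\ref{th:Moduli} into the global $\dot\vee$-description, and checking that the limiting behaviour near vertices (two distinct limits along $\beta_1$, as noted in the text) is correctly captured by \emph{not} identifying vertices in $\dot\vee$.
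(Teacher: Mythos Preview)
Your overall plan is right and matches how the paper treats the result: the theorem is stated without a separate proof, as a direct consequence of the analysis of the individual $\H(a,b,c)$ in Theorem~\ref{th:Moduli} together with Propositions~\ref{prp:betaCount}, \ref{prp:gammaToric}, \ref{prp:ToricFibres} and the $\Xi_2$-invariant. Your steps (1), (3), (4) are the correct bookkeeping.

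However, step (2) and your ``main obstacle'' paragraph invert the role of the Markov conjecture. You write that one must \emph{rule out} that a $\beta_a$-type fibre of an $(a,b,c)$-ATF is symplectomorphic to a $\beta_a$-type fibre of an $(a,b',c')$-ATF, and that the conjecture ``pins down the triple''. This is backwards on both counts. A $\beta_a$-type fibre \emph{persists} under any mutation keeping $a$ fixed (this is the observation used in the proof of Theorem~\ref{th:Moduli}: mutating $(a,b,c)\to(a,b,c')$ destroys $\beta_c$ but preserves $\beta_a$ and $\beta_b$), so the $\beta_a$-fibres appearing in all triples along the ray through $a$ are literally the same Lagrangian torus up to Hamiltonian isotopy. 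The operation $\dot\vee$ is designed precisely to \emph{identify} them via its label-matching rule~(iii); no triple is ``pinned down'', because the fibre lives in infinitely many triples simultaneously. The Markov conjecture enters not to distinguish fibres but to ensure the label $\beta_a$ is unambiguous: it guarantees that the set of triples containing a given Markov number $a$ forms a single ray, so there is one family of $\beta_a$-fibres to which the label refers. If the conjecture fails, the label must be indexed by the ray rather than by the number---this is the content of Remark~\ref{rmk:MarkovNumbers}, and it is a caveat about the \emph{formulation}, not an obstacle in the argument.

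What genuinely needs to be ruled out, and what $\Xi_2$ together with Corollary~\ref{cor:ToricPrp} handles, is that a $\beta_a$-fibre could be symplectomorphic to a $\beta_{a'}$-fibre with $a\neq a'$, or to a $\gamma$-type or interior (toric) fibre. You have that part right. Once you correct the direction of the $\beta_a$ identifications in step~(2), your outline goes through and coincides with the paper's (implicit) argument.
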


\begin{rmk} \label{rmk:MarkovNumbers} Each time a Markov number $c$ appears in the
Markov  tree, it generates an infinite ray obtained by
mutating the other two Markov numbers indefinitely. The Markov conjecture implies that there is a unique such ray involving the number $c$. 
If the Markov conjecture does not hold, in
Theorem \ref{th:globalModuli}, the $\beta_c$ corresponding to different rays associated to $c$, 
should not be identified, in other words, the label should be indexed by the corresponding ray, instead of the
Markov number. \end{rmk}

\bibliography{Symp_bib,SympRefs}
\bibliographystyle{plain}

\end{document}